\documentclass[a4paper,reqno,twoside,10pt]{amsart}

\usepackage{amsmath,amssymb,amsthm,mathtools}
\usepackage{booktabs,array,longtable,threeparttable}
\usepackage{capt-of}
\usepackage{enumitem}
\usepackage[hidelinks]{hyperref}
\usepackage[T1]{fontenc}
\usepackage{microtype}
\usepackage{newtxtext,newtxmath}
\usepackage{helvet}
\usepackage{courier}

\usepackage{tikz}
\usetikzlibrary{calc}

\tikzset{
  graph/.style={x=0.9cm,y=0.9cm,line width=0.5pt,baseline=(current bounding box.center)},
}

\newcommand{\vtx}[1]{\fill (#1) circle (2pt);}
\newcommand{\chainarm}[2]{%
  \draw (#1) -- ($(#1)!0.36!(#2)$);
  \draw ($(#1)!0.64!(#2)$) -- (#2);
  \foreach \t in {0.44,0.5,0.56}{\fill ($(#1)!\t!(#2)$) circle (0.55pt);}%
}

\setlist[enumerate,1]{label=(\alph*),ref=(\alph*),leftmargin=*,itemsep=\medskipamount}
\setlist[enumerate,2]{label=(\alph{enumi}-\arabic*),ref=(\alph{enumi}-\arabic*),
                      leftmargin=*,topsep=\smallskipamount,itemsep=\smallskipamount}

\newtheorem{theorem}{Theorem}[section]
\newtheorem{maintheorem}{Theorem}

\newtheorem{maintheorem*}{Main Theorem}
\newtheorem{proposition}[theorem]{Proposition}
\newtheorem{lemma}[theorem]{Lemma}
\newtheorem{corollary}[theorem]{Corollary}

\theoremstyle{definition}

\newtheorem{definition}[theorem]{Definition}
\newtheorem{remark}[theorem]{Remark}
\newtheorem*{remark*}{Remark}
\newtheorem{example}[theorem]{Example}

\numberwithin{equation}{section}

\DeclareMathOperator{\Ext}{Ext}

\hypersetup{pdftitle={The classification of taut surface singularities via representations from logarithmic vector fields},pdfsubject={Logarithmic vector fields, exceptional cycles, and taut surface singularities},pdfkeywords={taut singularity, surface singularity, resolution graph, finite-dimensional algebra, logarithmic tangent bundle}}

\begin{document}

\title[Taut surface singularities and representations]{The classification of taut surface singularities via representations from logarithmic vector fields}

\author[H. Park]{Heesang Park}

\address{Department of Mathematics, Konkuk University, Seoul, Republic of Korea}

\email{HeesangPark@konkuk.ac.kr}

\author[D. Shin]{Dongsoo Shin}

\address{Department of Mathematics, Chungnam National University, Daejeon, Republic of Korea}

\email{dsshin@cnu.ac.kr}

\subjclass[2020]{14J17, 14B07, 16G20, 32S25}

\keywords{taut surface singularity, resolution graph, logarithmic tangent bundle, finite-dimensional representation, exceptional cycle}

\begin{abstract}
We study logarithmic vector fields on exceptional cycles through finite-dimensional representations and recover Laufer's classification of taut normal complex surface singularities. For graphs with two nodes, restriction maps define representations of a fixed quiver, with dimensions bounded independently of the number of components and their self-intersections. We identify restriction cohomology with an extension group and use interval decomposition to reduce its vanishing to interpolation on a rational chain. These calculations determine the classification conditions, and vanishing on every analytic realization proves sufficiency, including the nonrational cases. The obstruction cycles also give a uniform criterion: a graph is taut if and only if \(H^1(5E,S_X|_{5E})=0\) for every analytic realization, where \(S_X\) is the ambient logarithmic tangent bundle. We prove that five is the smallest positive integer for which this criterion holds for all graphs.
\end{abstract}

\maketitle

\section{Introduction}
\label{sec:introduction}

We study the cohomology of logarithmic vector fields on exceptional cycles by means of finite-dimensional representations. Let \((X,E)\to(V,0)\) be the minimal good resolution of a normal complex surface singularity, with \(E\) reduced, and put \(S_X=\Theta_X(-\log E)\). For resolution graphs with two nodes, we construct a positive exceptional cycle \(Z\) and decompose it into pieces on which the first cohomology of the restricted logarithmic tangent bundle vanishes. The restriction maps on their intersections define a representation of a fixed quiver, whose dimension is bounded independently of the number of exceptional components and their self-intersections. We identify \(H^1(Z,S_X|_Z)\) with an extension group over the path algebra of this quiver. Interval decomposition expresses its dimension as a sum of three multiplicities. When a nonempty connecting chain remains, only one multiplicity contributes, and the calculation of the restriction images reduces its vanishing to interpolation on a chain of rational curves.

These calculations give another proof of the classification of taut normal complex surface singularities due to Laufer~~\cite{Laufer1973}. Restriction to the reduced exceptional divisor first bounds the topology of a taut graph. The restrictions along the arms determine the one-node cases, and the representations and interpolation calculations determine the remaining conditions for graphs with two nodes. We prove sufficiency by extending cohomology vanishing from the chosen cycles to every analytic realization, including the nonrational cases. The necessity arguments also produce, for every excluded graph, an analytic realization with a cycle \(Y\leq5E\) such that \(H^1(Y,S_X|_Y)\neq0\). Consequently, a graph is taut if and only if \(H^1(5E,S_X|_{5E})=0\) for every analytic realization. An explicit example shows that five is the smallest positive integer for which this criterion holds for all graphs.

\subsection{Main results}

Let \((V,0)\) be a normal complex surface singularity, and let \(\pi\colon(X,E) \to (V,0)\) be its minimal good resolution, with \(E\) reduced. A dual graph is \emph{taut} if any two singularities with that minimal good resolution graph are analytically isomorphic. We work over \(\mathbb{C}\) throughout the paper.

Put
\[
 S_X = \Theta_X(-\log E),\qquad
 L_X = \bigwedge\nolimits^2 S_X
 = \mathcal{O}_X(-K_X-E).
\]

A vertex of valency at least three is called a \emph{node}. We write the self-intersection attached to a vertex \(v\) as \(-b_v\). For a nonempty chain with self-intersections \(-b_1,\ldots,-b_l\), where \(b_i \geq 2\), put
\[
 [b_1,\ldots,b_l]
 := b_1-\frac{1}{b_2-\displaystyle\frac{1}{\ddots-\displaystyle\frac{1}{b_l}}}
 = \frac{n}{a}.
\]
We use the same bracket notation for the chain and its continued fraction. Arms are read from the node outwards. A chain may have length one, and the chain joining two nodes may be empty.

\begin{maintheorem}[Classification]\label{thm:classification}
A connected negative-definite minimal good resolution graph is taut if and only if every vertex has genus zero and the graph belongs to one of the four classes described below. In the following list, every vertex has weight at most \(-2\), except as stated in~\ref{intro:cycles}.
\end{maintheorem}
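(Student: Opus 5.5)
Following the strategy in the introduction, the proof splits into necessity (every taut graph has genus-zero vertices and lies in one of the four classes) and sufficiency (every graph in the four classes is taut). The basic tool is Laufer's criterion in cycle form. Tautness of a graph \(\Gamma\) is governed by the pseudo-taut deformation theory of a neighbourhood of \(E\). The graph is taut as soon as \(H^1(Z,S_X|_Z)=0\) for a sufficiently large positive cycle \(Z\) on every analytic realization, since then the analytic type of a neighbourhood of \(E\) is determined by the plumbing data. Conversely, if some realization has a cycle \(Y\) with \(H^1(Y,S_X|_Y)\neq0\), the corresponding first-order deformation of the neighbourhood changes the analytic type. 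I take this equivalence, and its standard refinement that nonvanishing on a cycle \(Y\le Z\) obstructs tautness, as the framework.

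\textbf{Necessity.} I first restrict to \(Y=E\). Consider the exact sequences \(0\to S_X|_{E_v}(-\sum_{w\neq v}E_w\cap E_v)\to\cdots\), together with \(S_X|_{E_v}\) being an extension of \(\mathcal O_{E_v}(E_v)\) by \(\Theta_{E_v}(-\log(\text{intersection points}))\). A vertex of positive genus, or a node of valency at least four whose cross-ratio moduli survive, gives nonzero \(H^1(E,S_X|_E)\). This bounds the topology: all genera are zero and there are at most two nodes, each of valency three, except for the cycle class permitted in~\ref{intro:cycles}.

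For one node, I restrict to the arms. The restriction of \(S_X\) to the cycle \(E+\sum(\text{arm pieces})\) produces \(H^1\) whose vanishing is governed by the continued fractions \(n/a\) of the three arms, which yields the one-node conditions. For two nodes, I build the cycle \(Z\) described in the introduction, with \(Z\le 5E\). I decompose it into pieces with \(H^1=0\) and use the restriction maps on overlaps to obtain a representation of the fixed quiver. I then identify \(H^1(Z,S_X|_Z)\) with the corresponding \(\Ext^1\) and compute it by interval decomposition as a sum of three multiplicities. When the connecting chain is nonempty, only one multiplicity survives, and its vanishing becomes an interpolation problem on the rational chain. Failure of interpolation produces an explicit realization with \(H^1\neq0\), which excludes every graph outside the list.

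\textbf{Sufficiency.} For each graph in the four classes, I show \(H^1(Z,S_X|_Z)=0\) on every analytic realization and for all large \(Z\). The plan is to prove vanishing for the chosen cycle, typically \(5E\), and then extend to \(Z+E_v\) by the sequences \(0\to S_X|_{E_v}(-Z)\to S_X|_{Z+E_v}\to S_X|_Z\to0\), using a computation sequence in which \(\deg S_X|_{E_v}(-Z)\ge -1\) on each summand. For the nonrational realizations, where \(Z\cdot E_v\) need not be controlled by the fundamental cycle, the interpolation calculation must hold uniformly in the analytic gluing parameters.

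I expect the main obstacle to be the two-node interpolation step: showing that the single surviving multiplicity in the interval decomposition vanishes exactly under the stated arithmetic conditions on the chain. My first approach would be an explicit basis of sections over the chain, written in local coordinates, followed by a rank computation of the evaluation map.
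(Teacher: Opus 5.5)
Your proposal has genuine gaps in both directions.

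\textbf{Sufficiency.} The main gap is here. You never say how vanishing on a fixed cycle is proved for an \emph{arbitrary} analytic realization, and your extension step does not work.

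The arm and node computations are monomial only in a standard plumbing realization. On a general realization, the higher-order gluing near the central curve is exactly what is unknown, and asking for the calculation to be ``uniform in the gluing parameters'' is not an argument. The paper never computes on a general one-node realization. It computes \(H^1(\mathcal O_X)\) and \(H^1(L_X)\) on Pinkham's weighted homogeneous model (Proposition~\ref{star:graded}). It then invokes Wahl's results: \(h^1(L_X)\) depends only on the graph for rational singularities, and \(h^1(S_X)\le h^1(L_X)\) for star-shaped graphs, with rational double points handled separately (Theorem~\ref{star:classification}). For two nodes, the endpoint restriction images on any realization are transported from standard ones through the \emph{tautness} of the one-node completions (Lemma~\ref{rep:neighborhood-comparison}). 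Only the middle chain is handled by degree and first-order transport arguments, which are valid on every realization.

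Your extension from \(5E\) by single-component layers with summand degrees \(\ge-1\) also fails. At a node with \(b_P=2\) one has \(5E\cdot P=5\), and adding other components never lowers \(Z\cdot P\). So the first layer at \(P\) is \(\mathcal O(-c)\oplus\mathcal O(-c-1)\) with \(c\ge5\).

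Starting from an anti-nef cycle does not rescue this in the nonrational family. Take \(Z=E+T\). In any ordering of the step from \(kZ\) to \((k+1)Z\), consider the last component \(E_v\) of the path \(T\) to be added for the first time. Its layer \(S_X(-kZ-Z')|_{E_v}\) has a summand of degree at most \(-2\). The vanishing there is a global transport statement: the determinant \(m+1\) on \(T\), and the four-arm discriminant on \(E\).

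The paper replaces your computation sequence with three ingredients:
\begin{itemize}
\item an anti-nef \(Z\) with \(H^1(Z,\mathcal O_Z)=0\);
\item Lemma~\ref{fnd:nef-twists} together with formal functions;
\item a separate treatment of the nonrational family through \(0\to S_X(-T-kZ)|_E\to S_X(-kZ)|_Z\to S_X(-kZ)|_T\to0\) (Proposition~\ref{cls:four-short}).
\end{itemize}

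\textbf{Necessity.} For one node, the obstructions lie in central normal degree \(k\in\{1,2,4\}\) and need multiplicity \(k+1\) on the central curve (Corollary~\ref{star:central-obstruction}). A cycle of the form \(E+\sum(\text{arm pieces})\) cannot detect them. Indeed, Example~\ref{thick:optimality} gives a non-taut graph with \(H^1(rE,S_X|_{rE})=0\) for \(r\le4\) on every realization.

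For two nodes, the cycle \(Z\) and its eight endpoint cases exist only after the local restrictions of Proposition~\ref{rep:local-restrictions} hold. So failure of interpolation cannot exclude \(b_P=1\), \(b_P=2\) with \(s_P=2\), or overlong initial \((-2)\)-strings. The paper excludes these in two ways:
\begin{itemize}
\item on \(E+E'\le2E\) (Proposition~\ref{rep:first-layer} and Lemma~\ref{rep:boundary-values});
\item by star-shaped subgraph witnesses, plumbed into the full graph and transferred by forgetting branches (Lemma~\ref{rep:prefix-witness}).
\end{itemize}
The last-column condition comes from \(\chi(Y,S_X|_Y)=-1\) (Proposition~\ref{int:critical-necessity}). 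Note also that \(Z\) can have multiplicity six, so your claim \(Z\le5E\) is false.

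Bounding the number of trivalent nodes needs Wahl's count \(h^1(E,S_X|_E)=n+2c-2+\epsilon\) (Proposition~\ref{red:reduced-dimension}); trivalent nodes carry no cross-ratio moduli. Finally, your component sequence is the wrong one. It should be \(0\to\mathcal O_{E_v}\to S_X|_{E_v}\to\Theta_{E_v}(-D_v)\to0\), and it is the surjection of \(H^1\) onto the quotient's \(H^1\) that makes positive genus and valency \(\ge4\) visible.
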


\begin{enumerate}
%
\item\label{intro:chains} \emph{Chains.} Every vertex has weight \(\leq -2\).
  \begin{center}
  \begin{tikzpicture}[graph,scale=0.75]
    \coordinate (A) at (0,0);
    \coordinate (B) at (2,0);
    \chainarm{A}{B}
    \vtx{A}\vtx{B}
  \end{tikzpicture}
  \end{center}
%
\item\label{intro:cycles} \emph{Cycles.} All weights are \(\leq -2\), with at least one \(\leq -3\).
  Two vertices joined by two edges may also have weights \(-1,-c\), where \(c\geq5\).
  \begin{center}
  \begin{tikzpicture}[graph,scale=0.75]
    \coordinate (A) at (-0.9,0);
    \coordinate (B) at (0.9,0);
    \coordinate (C) at (0,1.5);
    \chainarm{A}{B}\chainarm{B}{C}\chainarm{C}{A}
    \vtx{A}\vtx{B}\vtx{C}
    \coordinate (D) at (3.6,0.75);
    \coordinate (E) at (5.2,0.75);
    \draw (D) .. controls ($(D)+(0.4,0.55)$) and ($(E)+(-0.4,0.55)$) .. (E);
    \draw (D) .. controls ($(D)+(0.4,-0.55)$) and ($(E)+(-0.4,-0.55)$) .. (E);
    \vtx{D}\vtx{E}
    \node[left=2pt]  at (D) {\(-1\)};
    \node[right=2pt] at (E) {\(-c\)};
  \end{tikzpicture}
  \end{center}
%
\item\label{intro:one-node} \emph{Trees with one node.} The valency of the node is 3 and all weights are \(\leq -2\).
  \begin{center}
  \begin{tikzpicture}[graph,scale=0.75]
    \coordinate (N) at (0,0);
    \coordinate (L) at (-2,0);
    \coordinate (R) at (2,0);
    \coordinate (U) at (0,1.5);
    \chainarm{N}{L}\chainarm{N}{R}\chainarm{N}{U}
    \vtx{N}\vtx{L}\vtx{R}\vtx{U}
  \end{tikzpicture}
  \end{center}

  \begin{enumerate}
  \item If the node has weight \(\leq -3\), the arms are arbitrary.
  \item If the node has weight \(-2\), then exactly one of the following rows applies.
  \end{enumerate}

    \begin{center}
    \begin{tabular}{@{}lll@{}}
    \toprule
    First arm & Second arm & Third arm \\
    \midrule
    \([p]\) \((p\geq2)\) & \(n_2/a_2\geq2\)          & any \\
    \([p]\) \((p\geq3)\) & \([2,m]\) \((m\geq2)\)   & \(n_3/a_3<2\) \\
    \([2]\)              & \([2,m]\) \((m\geq2)\)   & \(4/3\leq n_3/a_3<2\), or \([2,2,2,q]\) \((q\geq2)\) \\
    \bottomrule
    \end{tabular}
    \end{center}
%
\item\label{intro:two-node} \emph{Trees with two nodes.} The valencies of both nodes \(P\) and \(Q\) are 3.
  \begin{center}
  \begin{tikzpicture}[graph,scale=0.75]
    \coordinate (P)  at (0,0);
    \coordinate (Q)  at (2.2,0);
    \coordinate (PL) at (-2,0);
    \coordinate (PU) at (0,1.5);
    \coordinate (QR) at (4.2,0);
    \coordinate (QU) at (2.2,1.5);
    \chainarm{P}{PL}\chainarm{P}{PU}
    \chainarm{Q}{QR}\chainarm{Q}{QU}
    \chainarm{P}{Q}
    \vtx{P}\vtx{Q}\vtx{PL}\vtx{PU}\vtx{QR}\vtx{QU}
    \node[below=3pt] at (P) {\(P\)};
    \node[below=3pt] at (Q) {\(Q\)};
  \end{tikzpicture}
  \end{center}

  Let \(-b_P\) and \(-b_Q\) be the weights of \(P\) and \(Q\), respectively; then \(b_P,b_Q\geq2\).
  The chain connecting \(P\) and \(Q\) has weights \(-c_1,\dotsc,-c_e\), read from \(P\), and we put
  \(d=\sum_{i=1}^{e}(c_i-2)\).
  The integers \(s_P\) and \(s_Q\) count the outward arms of length \(\geq2\) at \(P\) and \(Q\), respectively.

  \begin{enumerate}
  \item If \(b_P\geq3\) and \(b_Q\geq3\), then the four outward arms and the connecting chain are arbitrary.
  \item Assume that \(b_P=2\). The outward arms at \(P\) and the connecting chain must satisfy
    Table~\ref{intro:two-node-table}. In addition, if \(s_P=1\) and \(d=0\), then \(b_Q\geq3\) and
    \(s_Q\leq b_Q-2\). If \(b_Q=2\), the same conditions apply after interchanging \(P\) and \(Q\)
    and reading the connecting chain from \(Q\).
  \end{enumerate}
\end{enumerate}

\begin{table}[htbp]
\centering
\small
\setlength{\tabcolsep}{5pt}
\begin{threeparttable}
\begin{tabular}{@{}c l l >{\raggedright\arraybackslash}p{0.38\linewidth} c@{}}
\toprule
\(s_P\) & First arm & Second arm & Chain \([c_1,\dotsc,c_e]\) & Condition \\
\midrule
\(0\) & \([p]\) & \([q]\) & any & --- \\
\midrule
\(1\) & \([p]\)              & \([a_1,\dotsc,a_n]\) \((a_1\geq3)\)     & any & --- \\
      & \([p]\) \((p\geq3)\) & \([2,m]\)                               & any & --- \\
      & \([2]\)              & \([2,m]\)                               & [\,], or does not begin with \([2,2,\dotsc]\) & --- \\
      & \([2]\)              & \([2,m]\)                               & \([2,2]\), or \([2,2,c_3,\dotsc,c_e]\) \((c_3\geq3)\) & \(b_Q+d\geq4\) \\
      & \([p]\)              & \([2,a_2,\dotsc,a_n]\) \((n\geq3)\)     & \([\,]\), or \([c_1,\dotsc,c_e]\) \((c_1\geq3)\) & \(b_Q+d\geq4\) \\
\bottomrule
\end{tabular}
\begin{tablenotes}[flushleft]\footnotesize
\item \([\,]\) denotes the empty chain; a dash means no condition.
      The last column applies only when \(s_Q\geq1\).
\end{tablenotes}
\end{threeparttable}
\caption{Conditions at a node \(P\) with \(b_P=2\).}
\label{intro:two-node-table}
\end{table}


\begin{remark*}
We require every irreducible component of the exceptional divisor of a good resolution to be smooth. For the two-vertex cycle with weights \(-1,-c\), where \(c \geq 5\), contracting the component of self-intersection \(-1\) produces a nodal rational curve. Thus this contraction does not give a good resolution.
\end{remark*}

\begin{remark*}
The list above coincides with Laufer's list of taut resolution graphs~~\cite[pp.~136--139]{Laufer1973}. The continued-fraction conditions describe the same one-node families, and the two-node conditions are equivalent to Laufer's Table~\textup{(IV)} with its permitted contraction operations. See Section~\ref{cls:laufer-comparison} for the correspondence between the families.
\end{remark*}

The obstruction calculations also give a uniform criterion on a fixed thickening of the reduced exceptional divisor.

\begin{maintheorem}[Theorem~\ref{thick:criterion} and Example~\ref{thick:optimality}]\label{intro:thickening}
A connected negative-definite minimal good resolution graph \(\Gamma\) is taut if and only if
\[
H^1(5E,S_X|_{5E}) = 0
\]
for every analytic realization \((X,E) \to (V,0)\) of \(\Gamma\). The integer \(5\) cannot be replaced by any smaller positive integer.
\end{maintheorem}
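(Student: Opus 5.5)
The plan is to deduce the statement from Laufer's deformation-theoretic criterion for tautness together with the classification in Theorem~\ref{thm:classification}. Recall Laufer's criterion: a graph \(\Gamma\) is taut if and only if, for every analytic realization, \(H^1(Z,S_X|_Z)=0\) for all positive exceptional cycles \(Z\). The key input is that \(H^1(X,S_X)=\varprojlim H^1(Z,S_X|_Z)\) governs equisingular deformations. The restriction \(H^1(Z',S_X|_{Z'})\to H^1(Z,S_X|_Z)\) for \(Z\le Z'\) is surjective, since the kernel sheaf is supported on a curve and \(H^2\) of coherent sheaves on curves vanishes. Consequently \(H^1(Y,S_X|_Y)\ne0\) for some \(Y\le 5E\) forces \(H^1(5E,S_X|_{5E})\ne0\).

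\emph{The ``if'' direction.} Suppose \(H^1(5E,S_X|_{5E})=0\) for every realization, and suppose for contradiction that \(\Gamma\) is not taut. Then \(\Gamma\) lies outside the list of Theorem~\ref{thm:classification}: either some vertex has positive genus, or the graph falls in one of the excluded one-node or two-node families. The necessity half of the classification produces, in each excluded case, an analytic realization and a cycle \(Y\le5E\) with \(H^1(Y,S_X|_Y)\ne0\). I would go through those cases and record the cycle used:
\begin{itemize}
\item restriction to \(E\) for the topological bounds (genus, valency, number of nodes);
\item cycles supported along arms with multiplicity at most \(2\) or \(3\) for the one-node rows;
\item the cycle \(Z\) built for two-node graphs, whose coefficients near a \((-2)\)-node reach at most \(5\) in the \([2],[2,m],[2,2,2,q]\)-type configurations.
\end{itemize}
Surjectivity of restriction then gives \(H^1(5E,S_X|_{5E})\ne0\), a contradiction. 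The main obstacle is this bookkeeping step: verifying that every obstruction cycle constructed in the necessity proofs really has all coefficients at most \(5\). The delicate cases are the two-node cases with \(b_P=2\), \(s_P=1\) and a connecting chain beginning \([2,2,\dots]\), where the multiplicities propagate along the chain. I would check there that the cycle can be truncated, since the interpolation obstruction on the rational chain is detected already near \(P\).

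\emph{The ``only if'' direction.} If \(\Gamma\) is taut then \(H^1(X,S_X)=0\) for the unique realization up to isomorphism; this is Laufer's theorem that tautness is equivalent to vanishing of \(H^1(S_X)\), proved in the sufficiency part via vanishing on every realization. Surjectivity of \(H^1(X,S_X)\to H^1(5E,S_X|_{5E})\) then gives \(H^1(5E,S_X|_{5E})=0\).

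\emph{Optimality.} I would exhibit a single non-taut graph, presumably from the two-node \(b_P=2\) family where the obstruction first appears with coefficient \(5\). For this graph I would show \(H^1(4E,S_X|_{4E})=0\) for every realization. The method is to filter \(4E\) by successive reduced pieces \(E_i\) and use the exact sequences
\[
0\to S_X|_{E_i}\otimes\mathcal{O}(-D)\to S_X|_{D+E_i}\to S_X|_D\to0,
\]
checking the degree of \(S_X|_{E_i}(-D)\) on each rational component: \(\deg\ge -1\) kills \(H^1\). The coefficient-\(5\) obstruction then shows the graph is not taut. Since \(H^1(kE)\) is a quotient of \(H^1(4E)\) for \(k\le4\), this rules out every smaller integer.
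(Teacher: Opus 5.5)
Your ``only if'' direction is the paper's. So is your plan for the ``if'' direction: take, from each necessity argument, a realization and a cycle \(Y\leq5E\) with \(H^1(Y,S_X|_Y)\neq0\), then restrict from \(5E\). The bookkeeping you sketch, however, is wrong where it matters.

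\emph{One-node exclusions.} Not all of these are caught with multiplicity \(2\) or \(3\). Corollary~\ref{star:central-obstruction} detects a failure in normal degree \(k\in\{1,2,4\}\) on \((k+1)D\). Graphs with central weight \(-2\), arms \([2],[2,m]\), and a third arm with \(n/a<4/3\) not of the form \([2,2,2,q]\) fail only in degree \(4\), hence on \(5E\). This is one of the places where \(5\) is forced, and the paper's optimality example is of exactly this one-node type.

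\emph{Two-node exclusions.} The cycle \(Z\) is not contained in \(5E\): in case (vii) it has coefficient \(6\) on \(P\). Saying you would ``truncate because the interpolation obstruction is seen near \(P\)'' is not an argument. The paper instead uses two devices. For the excluded initial sequences, it takes star-shaped full subgraphs with arms \([p],[2,u,v],[2,u',v']\) or \([2],[2,a],[2,2,2,u,v]\), obstructed on \(3D\) or \(5D\), and transfers the obstruction to the full graph by Lemmas~\ref{fnd:plumbing} and~\ref{fnd:forget-branches}. For the last column of Table~\ref{intro:two-node-table}, it takes the explicit cycle \(Y=5P+2B+3A_1+A_2+4C_1+3C_2+2\sum_{i\geq3}C_i+2Q+D_1+D_2\) (multiplicities at most \(3\) in case (viii)). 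For this \(Y\), Riemann--Roch gives \(\chi(Y,S_X|_Y)=-1\) in every realization.

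The genuine gap is optimality. You propose to prove \(H^1(4E,S_X|_{4E})=0\) by filtering \(4E\) by reduced pieces and killing each layer by ``degree \(\geq-1\) on every rational component''. This cannot work for the relevant graphs, all of which have a trivalent node \(P\) of weight \(-2\). That holds for the paper's example (central weight \(-2\), arms \([2],[2,3],[2,2,2,2,2]\)) and for your two-node candidates with \(b_P=2\).

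\begin{itemize}
\item \emph{Layers \(E\subset 2E\subset\cdots\).} Since \(E\cdot P=1\), one has \(S_X(-E)|_P\simeq\mathcal{O}(-1)\oplus\mathcal{O}(-2)\), so already \(H^1(E,S_X(-E)|_E)\neq0\).
\item \emph{One component at a time.} Adding \(P\) needs \(Y\cdot P\leq0\), i.e.\ \(2y_P\) at least the sum of the three neighbouring multiplicities. The first addition of each neighbour needs \(y_P\leq1\). Hence no such sequence reaches \(2E\).
\item \emph{Several components per layer.} Componentwise degrees \(\geq-1\) no longer imply \(H^1=0\). For example, after \(P+B+A_1\), the piece \(P+C_1\) has such degrees but Euler characteristic \(-1\).
\end{itemize}

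So vanishing on \(rE\) for \(r\leq4\) is not a layerwise degree phenomenon. It comes from connecting maps and needs analytic input. The paper obtains it from Wahl's results, in two steps.
\begin{itemize}
\item \emph{Non-quasi-homogeneous realizations.} Every realization of the example is rational with \(h^1(L_X)=1\), and \(h^1(S_X)<h^1(L_X)\) unless the realization is quasi-homogeneous. So \(H^1(X,S_X)=0\) for these realizations.
\item \emph{The quasi-homogeneous realization.} It is unique, and \(H^1(X_{\mathrm{w}},S_{X_{\mathrm{w}}})\) lies in weight \(4\). Mayer--Vietoris over \(4P\) and the \(4A_i\), with intersection rings \(\mathbb{C}\{x,t\}/(x^4,t^4)\), allows only weights \(0,\dots,3\) in \(H^1(4E_{\mathrm{w}})\). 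The equivariant surjection then forces \(H^1(4E_{\mathrm{w}})\) to vanish.
\end{itemize}
Without such an argument, and without a concrete graph, you have not established that \(5\) is minimal.
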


The sheaf is the restriction of the ambient logarithmic tangent bundle. Vanishing is required for every analytic realization; the theorem does not give a criterion for a fixed realization.

For a two-node graph satisfying the restrictions stated in Proposition~\ref{rep:local-restrictions}, we choose a positive exceptional cycle \(Z\). It is the union of two cycles near the nodes and, when a nonempty connecting chain remains, twice that reduced chain. Their intersections have coordinate rings \(\mathbb{C}[t,n]/(t^j,n^2)\), with \(j \leq 4\). The images of the section spaces under restriction, together with the restriction maps from the middle cycle, define the representation in the following theorem.

For \(1 \leq i \leq j \leq 5\), let \(I[i,j]\) denote the interval representation of the quiver in the following theorem, with a one-dimensional space at vertices \(i,\ldots,j\), zero elsewhere, and identity maps between its nonzero spaces.

\begin{maintheorem}[Propositions~\ref{rep:ext-comparison}, \ref{thm:three-intervals}, and~\ref{prop:single-end}]
\label{thm:restriction-representation}
Let \(\Lambda\) be the path algebra of \(1 \to 2 \leftarrow 3 \to 4 \leftarrow 5\), and put \(J_0 = I[1,5]\). For every analytic realization, the chosen cycle defines a \(\Lambda\)-module \(M_0\), of dimension at most \(96\), such that
\[
H^1(Z,S_X|_Z) \simeq \operatorname{Ext}^1_{\Lambda}(J_0,M_0).
\]
If \(m_{ij}\) is the multiplicity of \(I[i,j]\) in \(M_0\), then
\[
h^1(Z,S_X|_Z) = m_{22}+m_{24}+m_{44}.
\]
If the part of the connecting chain outside the two cycles near the nodes is nonempty, then \(m_{22} = m_{44} = 0\). If the two cycles intersect and no connecting component remains, only \(m_{22}\) can contribute.
\end{maintheorem}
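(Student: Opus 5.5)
The plan is to realize $H^1(Z,S_X|_Z)$ through a Mayer--Vietoris sequence and then compute the resulting Ext group with interval decomposition. Write $Z=Z_P\cup Z_C\cup Z_Q$: here $Z_P$ and $Z_Q$ are the cycles near the nodes, and $Z_C$ is twice the remaining reduced connecting chain, which may be empty. The pieces are chosen so that $H^1$ of $S_X$ restricted to each of them vanishes. This should follow from the earlier vanishing results on arms and node neighbourhoods, together with the vanishing of $H^1$ of the restricted bundle on a doubled rational chain. The intersections $W_P=Z_P\cap Z_C$ and $W_Q=Z_C\cap Z_Q$ are zero-dimensional, with coordinate rings $\mathbb{C}[t,n]/(t^j,n^2)$ and $j\le 4$. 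Hence $H^0(W,S_X|_W)$ has dimension at most $2\cdot 2\cdot 4=16$, since $S_X$ has rank two.

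The quiver arises from the restriction maps. Put $M_0(1)=H^0(Z_P)$, $M_0(2)=H^0(W_P)$, $M_0(3)=H^0(Z_C)$, $M_0(4)=H^0(W_Q)$ and $M_0(5)=H^0(Z_Q)$. To keep the dimension bounded, the spaces at vertices $1,3,5$ are replaced by the images of the restriction maps. This changes nothing in the cokernel computation below, because only the images matter. The arrows are the restrictions, so the dimension count is at most $16+16+16+16+32\le 96$. Mayer--Vietoris together with the vanishing on the pieces gives
\[H^1(Z,S_X|_Z)\simeq \operatorname{coker}\Bigl(M_0(1)\oplus M_0(3)\oplus M_0(5)\to M_0(2)\oplus M_0(4)\Bigr).\]
On the other hand, $J_0=I[1,5]$ has the standard projective resolution
\[0\to P_2\oplus P_4\to P_1\oplus P_3\oplus P_5\to J_0\to 0\]
over the path algebra of this zigzag quiver, where the sinks are $2,4$. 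Applying $\operatorname{Hom}(-,M_0)$ yields exactly the same map with the same cokernel, once the signs are matched. This proves the Ext comparison.

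For the dimension formula, decompose $M_0$ into intervals $I[i,j]$; this is possible by Gabriel's theorem for type $A_5$. Then compute $\operatorname{Ext}^1(J_0,I[i,j])$ as the cokernel above for each interval. The cokernel is nonzero exactly when the interval contains a sink but is not covered by the images from the neighbouring sources. Checking the $15$ cases directly leaves only $I[2,2]$, $I[4,4]$ and $I[2,4]$, each contributing one dimension. For $I[2,4]$, the source $3$ maps onto both sinks by the same vector, so one dimension of the two-dimensional target survives. This gives $h^1=m_{22}+m_{24}+m_{44}$.

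The last two statements are where the real work lies, and I expect this to be the main obstacle. When $Z_C$ is nonempty, one must show that the restriction $H^0(Z_C)\to H^0(W_P)$ is surjective, and likewise for $H^0(Z_C)\to H^0(W_Q)$. That excludes any summand supported only at vertex $2$ or only at vertex $4$. I would try to prove this by interpolation on the doubled rational chain. Vanishing of $H^1$ on $Z_C$ twisted by $-W_P$ (respectively $-W_Q$) follows from the degree computation of $S_X$ on each component, since there is positive normal degree along the chain. When no connecting component remains and $Z_P\cap Z_Q$ is a single intersection, vertices $3$ and $4$ degenerate: $M_0(3)=M_0(4)=0$. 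Only a summand at vertex $2$ that is not in the image of $1$ or $5$ can then survive, which is $m_{22}$.
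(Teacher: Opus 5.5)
Your Ext comparison (union sequence plus the projective resolution of $J_0$), the bound $96$, and your direct check of the fifteen intervals giving $h^1=m_{22}+m_{24}+m_{44}$ are correct and agree with the paper. The gap is in the step $m_{22}=m_{44}=0$ for $K\neq 0$.

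You propose to prove that restriction $H^0(2K,S_X|_{2K})\to H^0(D_L,S_X|_{D_L})$ is surjective, but this is false. Modulo the ideal $(n_L)$ of $K$, its image factors through $H^0(K,S_X|_K)\simeq H^0(K,\mathcal{O}_K^{\oplus 2})=\mathbb{C}^2$. By contrast, $P_L$ reduces to $T_L^{\oplus2}=(\mathbb{C}[t_L]/(t_L^{J_L}))^{\oplus 2}$, which has dimension $2J_L\geq 4$. Your twisted-vanishing argument fails for the same reason. Twisting by the ideal of $D_L$ gives degree $-J_L\leq -2$ on the end component of the reduced chain, where $S_X|_K$ is trivial. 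Hence $H^1(2K,S_X(-W_L)|_{2K})$ surjects onto a nonzero group. The positive degree you have in mind lies only in the square-zero layer $I=\mathcal{O}_K(-K)$, and it cannot supply the missing reduced coefficients.

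What $m_{22}=0$ actually requires is surjectivity of the difference map $V_L\oplus V_K\to P_L$, that is, $V_L+\operatorname{pr}_L(V_K)=P_L$. This needs information about the endpoint piece that your proposal never uses. In the paper, Proposition~\ref{rep:endpoint-images} gives $\overline{V}_L=\mathbb{C}w_L+t_LT_L^{\oplus 2}$. So $W_L$ supplies every positive-order reduced coefficient, and constant sections on $K$, lifted to $2K$, supply the constant coefficient. The remaining nilpotent quotient $n_LT_L^{\oplus 2}/N_L$ involves jets of length at most $r_L+\epsilon_L$. It is filled by $H^0(K,I\otimes S_K)$ because the degree of $I$ on the end component is at least $r_L+\epsilon_L-1$. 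In cases (vii) and (viii) this bound uses that the adjacent component of $K$ has self-intersection at most $-3$. Without this case-by-case endpoint calculation the last assertions are not established.

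Separately, your degenerate representation for $K=0$ is set up incorrectly. With the pieces at vertices $1$ and $5$ and zeros at $3,4$, vertex $5$ maps to $M_0(4)=0$. The Ext group would then be only $\operatorname{coker}(V_L\to P)$ and would ignore the second piece. The two pieces must sit at the sources $1$ and $3$ adjacent to the sink $2$, with zeros at $4$ and $5$. Then only $I[2,2]$ can contribute, by support alone.
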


The quiver and the bound \(96\) are independent of the lengths of the arms and connecting chain and of all vertex weights. The dimension bound follows from the bounded lengths of the intersection schemes used to define \(M_0\).

\subsection{The representation-theoretic calculation}

Laufer~~\cite[Section~2]{Laufer1973} computes tangent sheaf cohomology on infinitesimal neighborhoods constructed by plumbing. His calculations use explicit \v{C}ech cocycles and the continued fractions of the rational chains. We also calculate restrictions of logarithmic vector fields along rational chains. In the two-node case, we express the obstruction to extending compatible sections across an exceptional cycle as an extension group of a finite-dimensional representation. This separates the calculation of the local restriction images from the calculation of their common cokernel.

The cohomological criterion states that a graph is taut if and only if \(H^1(X,S_X)=0\) for every analytic realization; see Wahl~~\cite[Theorem~9.1]{Wahl2016}. We first study this cohomology on a positive exceptional cycle \(Z\). Restriction gives a surjection from \(H^1(X,S_X)\) to \(H^1(Z,S_X|_Z)\), so nonvanishing on \(Z\) excludes tautness. For the admitted graphs, we prove vanishing on \(Z\) for every analytic realization and then extend it to the resolution by exact sequences and the theorem on formal functions.

For a graph with two nodes, we choose cycles \(W_L,W_R\) near the nodes. If a nonempty connecting chain remains outside their supports, let \(K\) be its reduced cycle and put
\[
Z=W_L+2K+W_R,\qquad D_i=W_i\cap2K
\quad(i=L,R).
\]
The cycles are chosen so that the restrictions of \(S_X\) to \(W_L\), \(2K\), and \(W_R\) have vanishing first cohomology. The exact sequence for their union therefore identifies \(H^1(Z,S_X|_Z)\) with the cokernel of the difference of the restriction maps
\[
H^0(W_L,S_X|_{W_L})
\oplus H^0(2K,S_X|_{2K})
\oplus H^0(W_R,S_X|_{W_R})
\to
H^0(D_L,S_X|_{D_L})
\oplus H^0(D_R,S_X|_{D_R}).
\]
Thus the required cohomology measures whether sections on the three pieces can be chosen to agree on both intersection schemes.

The five vertices of the quiver correspond to the two endpoint pieces, the middle piece, and their two intersections. More precisely, put
\[
\begin{aligned}
P_i&=H^0(D_i,S_X|_{D_i}),\\
V_i&=\operatorname{im}\bigl(
H^0(W_i,S_X|_{W_i})\to P_i
\bigr),\\
V_K&=\operatorname{im}\bigl(
H^0(2K,S_X|_{2K})\to P_L\oplus P_R
\bigr).
\end{aligned}
\]
The inclusions and projections define the representation
\[
V_L\to P_L\leftarrow V_K\to P_R\leftarrow V_R.
\]
The two restrictions from the middle piece are retained in the single space \(V_K\); they must arise from the same section on \(2K\). Replacing the section spaces by these images preserves the cokernel. The projective resolution of \(J_0=I[1,5]\) gives the same cokernel, and hence
\[
H^1(Z,S_X|_Z)
\simeq
\operatorname{Ext}^1_{\Lambda}(J_0,M_0).
\]
The intersection schemes have uniformly bounded lengths. Consequently, the dimension of \(M_0\) is bounded independently of the lengths of the arms and connecting chain and of their self-intersections.

Every representation of this quiver is a direct sum of interval representations. Computing the extension group against each interval shows that only three interval types contribute. If \(m_{ij}\) denotes the multiplicity of \(I[i,j]\), then
\[
h^1(Z,S_X|_Z)=m_{22}+m_{24}+m_{44}.
\]
When \(K\neq0\), a section on \(2K\), together with a section on either endpoint piece, can satisfy the restriction condition at that endpoint. These two surjectivity statements give \(m_{22}=m_{44}=0\). The remaining multiplicity \(m_{24}\) measures the obstruction to satisfying both endpoint conditions simultaneously. When \(K=0\), the two endpoint cycles meet directly, and their union sequence gives the corresponding representation supported on three vertices.

We calculate the restriction images in Section~\ref{sec:representations}. Along an arm, the self-intersections determine the propagation of monomial exponents and logarithmic coefficients. These calculations give the allowable constant coefficients and higher-order coefficients at each intersection. On the middle piece, put \(I=\mathcal{O}_K(-K)\). The exact sequence
\[
0\to I\otimes S_X|_K
\to S_X|_{2K}
\to S_X|_K
\to0
\]
separates the restriction to the reduced chain from the terms in the square-zero ideal. Since \(S_X|_K\simeq\mathcal{O}_K^{\oplus2}\), the latter terms are sections of \(I^{\oplus2}\). The remaining cokernel is therefore determined by whether these sections attain the prescribed endpoint jets, modulo the images supplied by \(W_L\) and \(W_R\). This is the interpolation problem treated in Section~\ref{sec:interpolation}. Tridiagonal determinants calculate the transport along the chain and determine the relative positions of the endpoint lines. The resulting vanishing conditions give the two-node restrictions in Theorem~\ref{thm:classification}.

The necessity arguments also control the multiplicities needed to detect a failure of tautness. For every excluded graph, they produce a suitable analytic realization and a cycle \(Y\leq5E\) with \(H^1(Y,S_X|_Y)\neq0\). In particular, Proposition~\ref{int:critical-necessity} proves the last two-node restriction by constructing a cycle with
\[
\chi(Y,S_X|_Y)=-1.
\]
It follows that \(h^1(Y,S_X|_Y)=h^0(Y,S_X|_Y)+1>0\). Restriction from \(5E\) to \(Y\) then gives nonvanishing on \(5E\). Together with vanishing for every realization of an admitted graph, this proves Theorem~\ref{thick:criterion}. Finally, Example~\ref{thick:optimality} gives a non-taut graph for which the restriction cohomology vanishes on \(rE\), for every analytic realization and every \(1\leq r\leq4\). Hence five is the smallest positive integer that gives this criterion for all graphs.

\subsection{Outline of the proof}

Section~\ref{sec:foundations} reduces the possible graphs to chains, cycles, and trees with at most two trivalent nodes, and proves tautness for chains and cycles. Section~\ref{sec:stars} classifies the one-node graphs by restriction to the central curve and the arms, together with the weighted homogeneous calculation and Wahl's vanishing theorem.

Sections~\ref{sec:representations} and~\ref{sec:interpolation} construct the two-node cycles, compute their restriction images, and derive the remaining conditions by interval decomposition and interpolation. Section~\ref{sec:classification} extends vanishing from these cycles to every analytic realization. When \(H^1(Z,\mathcal{O}_Z)=0\), nef twists and formal functions extend the vanishing to \(X\). The exceptional nonrational two-node family is treated by exact sequences on two reduced rational trees. Finally, the nonvanishing cycles used in the necessity proofs can be chosen inside \(5E\), giving Theorem~\ref{intro:thickening}; Example~\ref{thick:optimality} proves optimality.

\subsection*{Statements and Declarations}

\begin{itemize}[labelindent=0pt, leftmargin=*, align=left]
\item \textbf{Funding.} Heesang Park was supported by the Basic Science Research Program through the National Research Foundation of Korea (NRF) (RS-2026-25590534). Dongsoo Shin was supported by the Basic Science Research Program through the National Research Foundation of Korea (NRF) (NRF-2023R1A2C1007293).


\item \textbf{Use of artificial intelligence.} The authors used ChatGPT-6 Astra for assistance with logarithmic restriction and representation calculations, checking proofs, and language editing. The authors take full responsibility for the contents of this paper.
\end{itemize}

\section{Logarithmic restrictions and possible graphs}
\label{fnd:section}\label{sec:foundations}

Let \(\pi\colon (X,E) \to (V,0)\) be the minimal good resolution of a normal complex surface singularity. We choose a sufficiently small Stein representative of \(V\). Thus \(X\) is smooth, \(\pi\) is proper, and its fibers have dimension at most one. Write
\[
E = \sum_{v}E_v,\qquad E_v^2 = -b_v,\qquad g_v = g(E_v).
\]
Every \(E_v\) is smooth, and \(E\) has simple normal crossings. Distinct components may meet more than once. The valency \(t_v\) is the number of intersection points of \(E_v\) with the other components, counted separately. A node is a vertex of valency at least three.

We use the locally free sheaves
\[
S_X = \Theta_X(-\log E),
\qquad
L_X = \det S_X = \mathcal{O}_X(-K_X-E).
\]
If \(E = (y = 0)\), the sheaf \(S_X\) has local basis \(\partial_x,y\partial_y\). At a crossing \(E = (xy = 0)\), it has basis \(x\partial_x,y\partial_y\). An exceptional cycle is an effective integral divisor supported on \(E\). If \(Z\) is such a cycle and \(\mathcal{F}\) is locally free on \(X\), we write \(\mathcal{F}|_Z = \mathcal{F}\otimes_{\mathcal{O}_X}\mathcal{O}_Z\). All restrictions in this paper refer to ambient sheaves in this sense.

\subsection{Restriction to finite cycles}

Restriction to exceptional cycles and the theorem on formal functions relate cohomology on \(X\) to cohomology on finite cycles.

\begin{lemma}
\label{fnd:restriction}
For every coherent sheaf \(\mathcal{G}\) on \(X\), one has \(H^q(X,\mathcal{G}) = 0\) for \(q \geq 2\). Consequently, if \(\mathcal{F}\) is locally free and \(Z\) is an exceptional cycle, restriction induces a surjection
\[
H^1(X,\mathcal{F}) \twoheadrightarrow H^1(Z,\mathcal{F}|_Z).
\]
If \(Y \leq Z\), restriction also induces a surjection
\[
H^1(Z,\mathcal{F}|_Z) \twoheadrightarrow H^1(Y,\mathcal{F}|_Y).
\]
\end{lemma}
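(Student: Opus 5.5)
The proof has two parts: a vanishing theorem for higher cohomology on \(X\), and long exact sequences attached to closed subschemes.

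\textbf{Vanishing.} Since \(V\) is a Stein representative and \(\pi\) is proper, Cartan's Theorem B gives \(H^p(V,R^q\pi_*\mathcal{G})=0\) for \(p>0\) and every coherent \(\mathcal{G}\). The coherent sheaves \(R^q\pi_*\mathcal{G}\) are coherent by Grauert's direct image theorem, and the Leray spectral sequence then degenerates to
\[
H^q(X,\mathcal{G})\simeq H^0(V,R^q\pi_*\mathcal{G}).
\]
The fibres of \(\pi\) have dimension at most one. For \(q\ge 2\), the theorem on formal functions (or the standard bound on higher direct images for a proper map with fibres of dimension \(\le 1\)) gives \(R^q\pi_*\mathcal{G}=0\). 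Concretely, the stalk at \(0\) is \(\varprojlim_n H^q(nE,\mathcal{G}\otimes\mathcal{O}_X/\mathcal{I}^n)\), where \(\mathcal{I}\) is the ideal of \(E\). This vanishes because each \(nE\) is a one-dimensional compact analytic space, so coherent cohomology vanishes above degree one. Away from \(0\), the map \(\pi\) is an isomorphism, so those stalks vanish trivially. Hence \(H^q(X,\mathcal{G})=0\) for \(q\ge 2\).

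\textbf{Surjection \(X\to Z\).} For \(\mathcal{F}\) locally free and \(Z\) an exceptional cycle, tensor
\[
0\to\mathcal{O}_X(-Z)\to\mathcal{O}_X\to\mathcal{O}_Z\to0
\]
with \(\mathcal{F}\). The sequence stays exact because \(\mathcal{F}\) is flat, giving
\[
0\to\mathcal{F}(-Z)\to\mathcal{F}\to\mathcal{F}|_Z\to0.
\]
The long exact sequence contains \(H^1(X,\mathcal{F})\to H^1(Z,\mathcal{F}|_Z)\to H^2(X,\mathcal{F}(-Z))\), and the last group is zero by the first part. So the restriction map is surjective.

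\textbf{Surjection \(Z\to Y\).} For \(Y\le Z\), the kernel of \(\mathcal{O}_Z\to\mathcal{O}_Y\) is \(\mathcal{O}_X(-Y)/\mathcal{O}_X(-Z)\), which gives
\[
0\to\mathcal{F}(-Y)|_{Z-Y}\to\mathcal{F}|_Z\to\mathcal{F}|_Y\to0.
\]
This sheaf is supported on the one-dimensional compact space \(\operatorname{supp}(Z-Y)\subset E\), so its \(H^2\) vanishes (Grothendieck dimension bound, or its analytic analogue). The long exact sequence then gives the surjection \(H^1(Z,\mathcal{F}|_Z)\twoheadrightarrow H^1(Y,\mathcal{F}|_Y)\).

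Alternatively, this follows from the second part alone: the restriction \(H^1(X,\mathcal{F})\to H^1(Y,\mathcal{F}|_Y)\) factors through \(H^1(Z,\mathcal{F}|_Z)\) and is surjective, so the second map must be surjective. This avoids the dimension argument entirely.

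\textbf{Main obstacle.} The only nontrivial step is the first: justifying \(R^q\pi_*\mathcal{G}=0\) for \(q\ge2\). This needs the analytic comparison theorem on formal functions together with Mittag-Leffler-type control of the inverse system. The inverse system is harmless here, since every term already vanishes for \(q\ge 2\). Alternatively one can cite Siu's or Grauert's result directly: \(X\) is a strongly pseudoconvex (1-convex) surface, so \(H^q(X,\mathcal{G})\) is finite-dimensional for \(q\ge1\) and vanishes for \(q\ge2\).
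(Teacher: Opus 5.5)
Your proof is correct and follows the paper's argument. Grauert's coherence and formal-functions theorems, with the one-dimensional fibres, give \(R^q\pi_*\mathcal{G}=0\) for \(q\geq2\); Leray over the Stein base gives the vanishing; and the long exact sequences of \(0\to\mathcal{F}(-Z)\to\mathcal{F}\to\mathcal{F}|_Z\to0\) and of the kernel supported on \(Z-Y\) give the two surjections.

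Two small remarks. First, formal functions identifies the completion of the stalk \((R^q\pi_*\mathcal{G})_0\), not the stalk itself, with the inverse limit, so coherence is what lets you conclude that the stalk vanishes. Second, your factorization through the surjection from \(H^1(X,\mathcal{F})\) is an equally valid and slightly shorter proof of the last surjection.
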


\begin{proof}
The coherence of the higher direct images follows from Grauert~~\cite[\S6, Hauptsatz~I, p.~59]{Grauert1960}. The theorem on formal functions of Grauert~~\cite[\S6, Hauptsatz~II~a, p.~59]{Grauert1960} and the dimension of the fibers give \(R^q\pi_*\mathcal{G}=0\) for \(q\geq2\). Leray over the Stein base gives \(H^q(X,\mathcal{G})=0\) in these degrees. Apply this vanishing to the kernel of \(\mathcal{F}\to\mathcal{F}|_Z\). The second surjection follows because the kernel of \(\mathcal{F}|_Z\to\mathcal{F}|_Y\) is supported in dimension at most one.
\end{proof}

For effective exceptional cycles \(A,B\), the exact sequence of ideal sheaves gives
\begin{equation}
\label{fnd:cycle-sequence}
0 \to \mathcal{F}(-A)|_B
 \to \mathcal{F}|_{A+B}
 \to \mathcal{F}|_A
 \to 0.
\end{equation}
Its first term is obtained from \(\mathcal{O}_X(-A)/\mathcal{O}_X(-A-B)\), so the sequence also holds when the cycles have common components.

\begin{lemma}
\label{fnd:formal-vanishing}
Let \(Z\) be an exceptional cycle whose support is \(E\), and let \(\mathcal{F}\) be locally free on \(X\). If
\[
H^1\left(Z,\mathcal{F}(-kZ)|_Z\right) = 0
\quad\text{for all }k \geq 0,
\]
then \(H^1(X,\mathcal{F}) = 0\).
\end{lemma}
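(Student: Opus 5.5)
The plan is to show that \(H^1(nZ,\mathcal{F}|_{nZ})=0\) for every \(n\geq1\) and then pass to the limit with the theorem on formal functions. The induction uses the sequence \eqref{fnd:cycle-sequence} with \(A=kZ\) and \(B=Z\):
\[
0\to \mathcal{F}(-kZ)|_Z\to \mathcal{F}|_{(k+1)Z}\to \mathcal{F}|_{kZ}\to 0 .
\]
The first term has vanishing \(H^1\) by hypothesis. All sheaves here are supported on the compact curve \(E\), so \(H^2\) vanishes, and the long exact sequence gives a surjection \(H^1(\mathcal{F}|_{(k+1)Z})\to H^1(\mathcal{F}|_{kZ})\) with zero kernel on the left-hand contribution. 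Hence \(H^1((k+1)Z,\mathcal{F}|_{(k+1)Z})\) injects into \(H^1(kZ,\mathcal{F}|_{kZ})\). The base case \(k=0\) is the hypothesis itself, read as \(H^1(Z,\mathcal{F}|_Z)=0\). Induction on \(k\) then gives \(H^1(nZ,\mathcal{F}|_{nZ})=0\) for all \(n\geq1\).

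Next I pass to the completion. Since \(\operatorname{supp}Z=E=\pi^{-1}(0)\) set-theoretically, the ideals \(\mathcal{O}_X(-nZ)\) are cofinal with the powers of \(\mathfrak{m}_{V,0}\mathcal{O}_X\). Concretely, \(\mathfrak{m}^N\mathcal{O}_X\subset\mathcal{O}_X(-nZ)\) for \(N\gg n\) because \(\mathfrak{m}\mathcal{O}_X\) vanishes on \(E\). Conversely, \(\mathcal{O}_X(-nZ)\subset \mathfrak{m}\mathcal{O}_X\) up to a fixed multiple: choose a function \(f\in\mathfrak{m}\) with \((\pi^*f)\geq\) some positive cycle, and note \(Z\geq E\), so \(cZ\geq\) the divisor of \(\pi^*f\) restricted to \(E\). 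Alternatively, I would cite the form of Grauert's comparison theorem that allows any cofinal system of ideals defining \(E\). With this, the theorem on formal functions, as in the proof of Lemma~\ref{fnd:restriction}, gives
\[
(R^1\pi_*\mathcal{F})^{\wedge}_0\simeq \varprojlim_n H^1(nZ,\mathcal{F}|_{nZ}) = 0 .
\]

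Finally I descend to \(X\). The sheaf \(R^1\pi_*\mathcal{F}\) is coherent by Grauert and is supported at \(0\), since \(\pi\) is an isomorphism off \(E\). Its stalk is therefore a finite-length module, which equals its completion, and so it is zero. Leray over the Stein base then gives \(H^1(X,\mathcal{F})=H^0(V,R^1\pi_*\mathcal{F})=0\).

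I expect the only delicate point to be the cofinality of \(\{nZ\}\) with the \(\mathfrak{m}\)-adic thickenings, or equivalently the validity of formal functions for the system \(\mathcal{O}_X/\mathcal{O}_X(-nZ)\). If the set-theoretic support argument feels too quick, I would handle it by the Mittag-Leffler property. The transition maps \(H^0(\mathcal{F}|_{(k+1)Z})\to H^0(\mathcal{F}|_{kZ})\) are surjective, because the vanishing of \(H^1(\mathcal{F}(-kZ)|_Z)\) makes the long exact sequence surjective there. This lets the inverse limit interchange with cohomology without trouble.
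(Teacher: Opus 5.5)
Your argument is the paper's proof. The cycle sequence with \(A=kZ\), \(B=Z\) gives \(H^1(nZ,\mathcal{F}|_{nZ})=0\) by induction. The theorem on formal functions, coherence of \(R^1\pi_*\mathcal{F}\) with support at \(0\), and Leray over the Stein base then finish.

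\textbf{The step to fix.} Your explicit proof of the inclusion \(\mathcal{O}_X(-nZ)\subset\mathfrak{m}^N\mathcal{O}_X\) for \(n\gg N\) does not work, and this is the direction that matters. It is what transfers your vanishing on \(nZ\) to the \(\mathfrak{m}\)-adic thickenings: the kernel of the restriction is supported on \(E\), so \(H^1\) surjects as in the proof of Lemma~\ref{fnd:restriction}. A single \(f\in\mathfrak{m}\) cannot give this inclusion. The function \(\pi^*f\) also vanishes on its strict transform, and \(\mathfrak{m}\mathcal{O}_X\) may have base points. So bounding the exceptional part of \((\pi^*f)\) by \(cZ\) gives no inclusion into \(\mathfrak{m}\mathcal{O}_X\).

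\textbf{The correct reason.} Use what the paper uses. The ideals \(\mathfrak{m}\mathcal{O}_X\) and \(\mathcal{O}_X(-E)\) have the same zero set \(E\). R\"uckert's Nullstellensatz and compactness of \(E\) then give \(\mathcal{O}_X(-cE)\subset\mathfrak{m}\mathcal{O}_X\) near \(E\) for some \(c\). Hence \(\mathcal{O}_X(-nZ)\subset\mathcal{O}_X(-cNE)\subset\mathfrak{m}^N\mathcal{O}_X\) for \(n\geq cN\).

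Your Mittag-Leffler remark does not address this point. It is also unnecessary, since every \(H^1(nZ,\mathcal{F}|_{nZ})\) already vanishes.
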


\begin{proof}
Apply the cycle sequence with \(A=kZ\) and \(B=Z\). Its kernel is \(\mathcal{F}(-kZ)|_Z\), so induction gives \(H^1(kZ,\mathcal{F}|_{kZ})=0\) for every \(k\geq1\). The ideals of \(Z\) and of the exceptional fiber have the same radical, so their powers define cofinal systems of neighborhoods of \(E\). The theorem on formal functions of Grauert~~\cite[\S6, Hauptsatz~II~a, p.~59]{Grauert1960} therefore identifies the completion of \((R^1\pi_*\mathcal{F})_0\) with zero. Coherence implies that this stalk vanishes. Since \(\pi\) is an isomorphism away from \(0\), it follows that \(R^1\pi_*\mathcal{F}=0\), and Leray over the Stein base gives the assertion.
\end{proof}

\subsection{Component restrictions and logarithmic fibers}

We use the component restrictions and the ordered logarithmic fibers described by Wahl~~\cite[\S2 and Lemma~2.1]{Wahl2016}. Let \(D_v\) be the reduced intersection divisor on \(E_v\).

\begin{lemma}[{Wahl~~\cite[\S2 and Lemma~2.1]{Wahl2016}}]
\label{fnd:component}\label{fnd:logarithmic-fiber}
There is an exact sequence
\begin{equation}
\label{fnd:component-sequence}
0 \to \mathcal{O}_{E_v}\to S_X|_{E_v}\to\Theta_{E_v}(-D_v)\to0.
\end{equation}
It splits if \(t_v>0\). If also \(E_v\simeq\mathbb{P}^1\), then
\[
S_X|_{E_v}\simeq\mathcal{O}_{\mathbb{P}^1}\oplus\mathcal{O}_{\mathbb{P}^1}(2-t_v).
\]
The first map sends \(1\) to the normal Euler vector field. At a crossing \(E=(xy=0)\), the classes of \(x\partial_x,y\partial_y\) form an ordered basis of the logarithmic fiber independent of the equations for the two ordered branches.
\end{lemma}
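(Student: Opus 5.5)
The plan is to work locally along \(E_v\) in the two standard charts for \(S_X\) and then assemble a global statement on \(E_v\).

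\emph{The sequence.} At a smooth point of \(E\) on \(E_v\) write \(E_v=(y=0)\), so \(S_X\) has basis \(\partial_x,y\partial_y\). The vector field \(y\partial_y\) restricts to the normal Euler field, which is nonvanishing on \(E_v\). The class of \(\partial_x\) restricts to \(\partial_x\in\Theta_{E_v}\). At a crossing \(E=(xy=0)\), with \(E_v=(y=0)\), the basis is \(x\partial_x,y\partial_y\). Again \(y\partial_y\) gives the Euler field, and \(x\partial_x\) restricts to a generator of \(\Theta_{E_v}(-p)\) at the crossing point \(p\).

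Define \(S_X|_{E_v}\to\Theta_{E_v}(-D_v)\) as the composite \(\Theta_X(-\log E)|_{E_v}\to\Theta_X|_{E_v}\to N\)-quotient, that is, restriction of a vector field tangent to \(E_v\) to a vector field on \(E_v\). The chart computations show that it is surjective onto \(\Theta_{E_v}(-D_v)\) and that its kernel is the free rank-one subsheaf generated by \(y\partial_y\). We must check that this subsheaf is independent of the choice of \(y\). Under \(y\mapsto uy\), the field \(y\partial_y\) changes by a multiple of \(y\cdot(\text{field})\), which vanishes on \(E_v\). Hence the Euler section is global and nowhere vanishing, and it identifies the kernel with \(\mathcal{O}_{E_v}\). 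The same computation shows that the ordered basis at a crossing is canonical: replacing \(x\) by \(ux\) and \(y\) by \(wy\) changes \(x\partial_x\) and \(y\partial_y\) by terms vanishing at \(p\), so their classes in the fiber at \(p\) are independent of the equations.

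\emph{Splitting.} This is the main point. The extension class lies in \(H^1(E_v,\Theta_{E_v}(-D_v)^\vee)=H^1(E_v,K_{E_v}(D_v))\), which is dual to \(H^0(E_v,\mathcal{O}(-D_v))\). That group vanishes when \(t_v>0\), so the extension splits. Alternatively, the extension class is the restriction of the class in \(H^1(\Omega^1)\) given by \(c_1(N_{E_v})\) to \(E_v\setminus D_v\); it dies under the map to \(H^1(K(D_v))=0\) because \(\deg D_v>0\). Either way, vanishing of \(H^1(K_{E_v}(D_v))\) suffices.

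\emph{Rational case.} If \(E_v\simeq\mathbb{P}^1\), then \(\Theta_{E_v}(-D_v)\simeq\mathcal{O}(2-t_v)\), and the splitting gives \(S_X|_{E_v}\simeq\mathcal{O}\oplus\mathcal{O}(2-t_v)\).
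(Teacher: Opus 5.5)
Your proof is correct and is the standard argument for this result, which the paper does not prove but quotes from Wahl. The local frames give the sequence with the normal Euler field spanning the kernel, and the extension class lies in $\operatorname{Ext}^1(\Theta_{E_v}(-D_v),\mathcal{O}_{E_v})=H^1(E_v,K_{E_v}(D_v))\simeq H^0(E_v,\mathcal{O}_{E_v}(-D_v))^{\vee}$, which vanishes in any genus once $t_v>0$.

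Two points of wording should be fixed. First, the image of $S_X|_{E_v}$ in $\Theta_X|_{E_v}$ lies in the tangent subbundle $\Theta_{E_v}$; the composite to the normal quotient is zero. Second, for the independence of the Euler section you need the difference of the two Euler fields to lie in $y\,S_X$. It does, being $y$ times a combination of $y\partial_y$ and $\partial_x$ (or $x\partial_x$ at a crossing). Being $y$ times an arbitrary ambient field is not enough, since such a product need not vanish in $S_X|_{E_v}$, as $y\partial_y=y\cdot\partial_y$ shows.
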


\begin{lemma}
\label{fnd:two-point-transport}
Let \(C\simeq\mathbb{P}^1\) have self-intersection \(-b\) and meet the remaining divisor at exactly two points \(p,q\). In logarithmic coefficients ordered tangent first and normal second, transport from \(p\) to \(q\), and then exchange of the entries at the next crossing, are respectively
\[
A_b=\begin{pmatrix}-1&0\\b&1\end{pmatrix},
\qquad
T_b=\begin{pmatrix}b&1\\-1&0\end{pmatrix}.
\]
\end{lemma}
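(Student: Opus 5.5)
We will prove the lemma by a direct computation in local coordinates on a neighbourhood of \(C\). Since \(C\simeq\mathbb{P}^1\) meets the rest of \(E\) only at \(p\) and \(q\), Lemma~\ref{fnd:logarithmic-fiber} gives \(S_X|_C\simeq\mathcal{O}\oplus\mathcal{O}(2-2)=\mathcal{O}^{\oplus2}\). A section of \(S_X|_C\) is therefore determined by its value at any point, and transport from \(p\) to \(q\) is well defined. We compute this transport in the ordered bases of the logarithmic fibers at the crossings. Those bases are canonical by Lemma~\ref{fnd:logarithmic-fiber}, so the result will not depend on the choice of local equations.

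We cover a neighbourhood of \(C\) by the two standard charts of the total space of the normal bundle \(\mathcal{O}(-b)\). Working to first order in the normal direction is enough, because only the restriction to \(C\) matters.

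\begin{itemize}
\item Near \(p\), take coordinates \((x,y)\) with \(C=(y=0)\), the other branch at \(p\) equal to \((x=0)\), and \(p=(0,0)\).
\item Near \(q\), take coordinates \((x',y')\) with \(x'=1/x\) and \(y'=x^{b}y\), modulo terms that vanish on \(C\). Here \(C=(y'=0)\), and the branch through \(q\) is \((x'=0)\).
\end{itemize}

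At \(p\), the ordered basis is tangent first and normal second, namely \(x\partial_x\) and \(y\partial_y\). The key step is to write \(x\partial_x\) and \(y\partial_y\) in the primed coordinates, using the chain rule with \(x=1/x'\) and \(y=x'^{b}y'\):
\[
y\partial_y = y'\partial_{y'},\qquad
x\partial_x = -x'\partial_{x'} + b\,y'\partial_{y'}.
\]
The second identity holds because \(x\partial_x(x')=-x'\) and \(x\partial_x(y')=b\,y'\).

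The global section that equals \((\alpha,\beta)\) at \(p\) is \(\alpha\,x\partial_x+\beta\,y\partial_y\), and it is constant in the trivialization. At \(q\) it therefore has coefficients \(-\alpha\) on \(x'\partial_{x'}\) and \(b\alpha+\beta\) on \(y'\partial_{y'}\). This gives the matrix \(A_b\). One caveat concerns the higher-order terms in the gluing \(y'=x^b y\,(1+O(y))\). These only change the vector fields by multiples of \(y'\) times the basis vectors, which vanish on \(C\), so they do not affect the fiber computation.

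The exchange at the next crossing \(q\) swaps the two roles:

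\begin{itemize}
\item the tangent direction of the next curve \(C'=(x'=0)\) is the normal direction of \(C\);
\item the normal direction of \(C'\) is the tangent direction of \(C\).
\end{itemize}

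In the canonical basis \(x'\partial_{x'}\), \(y'\partial_{y'}\) this exchange is the permutation matrix. Composing it with \(A_b\) gives
\[
\begin{pmatrix}0&1\\1&0\end{pmatrix}A_b=\begin{pmatrix}b&1\\-1&0\end{pmatrix}=T_b.
\]
So \(T_b\) records the combined step of transport across \(C\) followed by the exchange.

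The main obstacle is fixing the sign and orientation conventions: which chart carries \(-b\), and whether the exchange is applied before or after \(A_b\). I would settle these by checking the determinant, which should be \(-1\) for \(A_b\) and \(1\) for \(T_b\), and by checking the chain case against the continued fraction \([b_1,\ldots,b_l]\) through products of the matrices \(T_{b_i}\).
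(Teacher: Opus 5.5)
Your chain-rule computation is the one in the paper. With transition \(x'=x^{-1}\), \(y'=x^{b}y\) you get \(x\partial_x=-x'\partial_{x'}+b\,y'\partial_{y'}\) and \(y\partial_y=y'\partial_{y'}\), which is \(A_b\), and composing with the exchange gives \(T_b\).

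The gap is that you never justify the existence of the charts in which this computation takes place. You assert that a neighbourhood of \(C\) is covered by the two standard charts of the total space of \(\mathcal{O}(-b)\). In these charts \(C=\{y=0\}=\{y'=0\}\), the given branches at \(p\) and \(q\) are \(\{x=0\}\) and \(\{x'=0\}\), and \(x\), \(x'\) are coordinates along all of \(C\setminus\{q\}\) and \(C\setminus\{p\}\), respectively. This is a normalization statement that needs proof. Requiring the given transverse branches to be the coordinate axes is a constraint beyond identifying the normal bundle.

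That constraint cannot be dropped. The field \(x\partial_x\) is logarithmic near \(p\) only if the branch at \(p\) is \(\{x=0\}\). Likewise, \((x'\partial_{x'},y'\partial_{y'})\) is the ordered basis of Lemma~\ref{fnd:logarithmic-fiber} at \(q\) only if the branch at \(q\) is \(\{x'=0\}\). The independence of branch equations in that lemma concerns equations of the actual branches, not coordinates whose axes are other curves. Supplying these coordinates is the one substantive step of the paper's proof. Branch-preserving coordinates on the first infinitesimal neighbourhood with transition \(z'=z^{-1}\), \(y'=z^by\) exist because the obstruction lies in \(H^1(C,\Theta_C(-p-q)\otimes N_{C/X}^{\vee})=H^1(\mathbb{P}^1,\mathcal{O}(b))=0\).

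Your caveat argument extends to corrections \(x'=x^{-1}+O(y)\), and it then shows you need less than the paper's normalization. All coefficient corrections lie in the ideal of \(C\) on the overlap. So it suffices to have three things:
\begin{enumerate}
\item the branch conditions above;
\item \(x'|_C=1/x|_C\);
\item \(y'\equiv x^by\) modulo \(y^2\).
\end{enumerate}
As written, ``\(y'=x^by\) modulo terms that vanish on \(C\)'' is vacuous, since both sides vanish on \(C\).

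These weaker conditions can be arranged directly. First, extend over a Stein neighbourhood of \(C\setminus\{q\}\) the function equal to the affine coordinate on \(C\) and to zero on the branch at \(p\). Second, normalize the transition of \(N_{C/X}^{\vee}\simeq\mathcal{O}(b)\) on \(C\setminus\{p,q\}\) to \(z^b\) by a Laurent splitting. Add either this construction or the paper's obstruction argument; with it, your proof is complete.
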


\begin{proof}
As in Wahl~~\cite[proof of Lemma~2.5]{Wahl2016}, coordinates on the first infinitesimal neighborhood preserving the two marked branches have transition \(z'=z^{-1},y'=z^by\). The obstruction to this normalization lies in \(H^1(C,\Theta_C(-p-q)\otimes N_{C/X}^{\vee})=H^1(\mathbb{P}^1,\mathcal{O}(b))=0\). Thus
\[
z\partial_z=-z'\partial_{z'}+by'\partial_{y'},\qquad y\partial_y=y'\partial_{y'}.
\]
These identities give \(A_b\); exchanging the two entries gives \(T_b\). The logarithmic fibers are independent of the chosen branch equations by Lemma~\ref{fnd:logarithmic-fiber}.
\end{proof}

\subsection{Tautness and restriction after forgetting components}

The following criterion characterizes tautness by logarithmic cohomology. For the formulation using a sufficiently thick plumbing scheme, see Laufer~~\cite[Theorems~3.9--3.10]{LauferDeformations}.

\begin{theorem}[{Wahl~~\cite[Theorem~9.1]{Wahl2016}}]
\label{fnd:tautness}
A graph \(\Gamma\) is taut if and only if
\[
H^1(X,S_X) = 0
\]
for every analytic realization \((X,E) \to (V,0)\) of \(\Gamma\).
\end{theorem}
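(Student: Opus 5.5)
The plan is to deduce the criterion from Laufer's deformation-theoretic characterization of tautness, rather than reprove it from scratch. It has three steps.

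\emph{Step 1 (Laufer's criterion).} Laufer~\cite[Theorems~3.9--3.10]{LauferDeformations} shows the following. A graph is taut if and only if, for a sufficiently thick plumbing scheme, the analytic structure of every neighborhood of \(E\) is determined by the plumbing data. Equivalently, the infinitesimal deformations of a neighborhood of \(E\) that preserve the normal crossing divisor \(E\) are all trivial. These deformations are governed by \(H^1\) of the sheaf of vector fields tangent to \(E\), namely \(S_X=\Theta_X(-\log E)\).

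\emph{Step 2 (passage to \(X\)).} First I pass from finite cycles to \(X\) using the results of \S2.1. By Lemma~\ref{fnd:restriction}, \(H^1(X,S_X)\) surjects onto every \(H^1(Z,S_X|_Z)\). Conversely, Grauert's theorem on formal functions, in the form used in the proof of Lemma~\ref{fnd:formal-vanishing}, identifies the completion of \((R^1\pi_*S_X)_0\) with \(\varprojlim H^1(kZ,S_X|_{kZ})\). Hence \(H^1(X,S_X)=0\) exactly when the inverse system of \(H^1(kZ,S_X|_{kZ})\) has zero limit. This links Laufer's thick-neighborhood formulation to the cohomology on \(X\).

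\emph{Step 3 (the equivalence).} For the direction ``taut implies vanishing'', I argue by contradiction. Suppose \(H^1(X,S_X)\neq0\) for some realization. A nonzero class then gives a nontrivial first-order equisingular deformation of the germ \((X,E)\). Laufer's local moduli theory for neighborhoods of exceptional sets~\cite{LauferDeformations} says the germs of such neighborhoods have a versal deformation with tangent space \(H^1(X,S_X)\), and that the topological type is constant along it. A nonzero tangent vector then yields non-isomorphic neighborhoods with the same graph, contradicting tautness. Here one has to check that distinct fibers really give non-isomorphic singularities and not merely non-isomorphic resolutions. Since the minimal good resolution is unique, an isomorphism of singularities lifts to an isomorphism of resolutions preserving \(E\). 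This reduces the claim to the versal family having nontrivial moduli whenever its tangent space is nonzero.

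For the direction ``vanishing implies taut'', vanishing of \(H^1(X,S_X)\) for every realization makes every realization rigid among equisingular deformations. Following Laufer, one connects any two realizations of \(\Gamma\) through plumbing data. The space of gluing parameters for a thick plumbing is connected, being a product of affine spaces of cocycle choices. Rigidity at every point makes the isomorphism class locally constant on this connected parameter space, so it is constant, and \(\Gamma\) is taut.

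\emph{Main obstacle.} The hard part is the first direction. It requires a nonzero class in \(H^1(X,S_X)\) to produce a genuinely different analytic type. This needs versality of Laufer's construction, and it needs automorphisms of \((X,E)\) not to absorb the deformation. The automorphism contribution is governed by \(H^0(X,S_X)\), which acts on the base of the versal family; the point to prove is that this action cannot make the whole family isotrivial when the tangent space is nonzero. I would first try to invoke Laufer's results directly for this step, rather than reproving versality.
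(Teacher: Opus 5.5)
The paper does not prove this theorem. It imports it as Wahl's Theorem~9.1 and cites Laufer's Theorems~3.9--3.10 for the plumbing-scheme version, so deferring to Laufer is consistent with the paper. As written, though, your Step~1 already asserts the equivalence, and both steps where you add content have a concrete gap.

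\textbf{Step~2 works with the wrong sheaf.} Formal functions relate \(H^1(X,S_X)\) to the ambient restrictions \(S_X|_{kE}=S_X/\mathcal{I}^kS_X\), where \(\mathcal{I}\) is the ideal of \(E\). A plumbing scheme, however, is an abstract nonreduced space, and the sheaf in that formulation is its own tangent sheaf. Derivations of \(\mathcal{O}_{kE}\) are automatically logarithmic, and locally they form \(S_X/\mathcal{I}^k\Theta_X\). This is a proper quotient of \(S_X|_{kE}\), differing in the top normal order; it is the intrinsic-versus-ambient distinction the paper draws in its closing remark. So your claim that Step~2 ``links Laufer's formulation to the cohomology on \(X\)'' is not yet justified. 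It can be repaired by the interleaving \(\mathcal{I}^kS_X\subset\mathcal{I}^k\Theta_X\subset\mathcal{I}^{k-1}S_X\): the two inverse systems are pro-isomorphic and have the same \(H^1\) limit. Even then, the direction taut \(\Rightarrow\) vanishing needs finite-order vanishing at all large orders, not just on a single thick scheme.

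\textbf{Step~3 assumes the hard direction.} The claim that a nonzero tangent vector yields non-isomorphic neighborhoods is the entire content of that direction, and versality does not provide it. The obstacle is also not the \(H^0(X,S_X)\)-action: infinitesimal automorphisms do not make the Kodaira--Spencer map of a miniversal family vanish at its base point. What is missing is the statement that a complete family over a reduced base, all of whose fibers are isomorphic, has zero Kodaira--Spencer map. Here the base is smooth because \(H^2(X,S_X)=0\), but the fibers are noncompact germs, so Fischer--Grauert does not apply.

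Laufer's way around this is finite-dimensional:
\begin{enumerate}
\item Truncated gluing data of order \(k\) form a connected parameter space, and every point of it is realized by an actual plumbing.
\item A group of truncated coordinate changes, extended by graph automorphisms, acts on this space, and its orbits are the isomorphism classes.
\item The Kodaira--Spencer map to \(H^1\) is surjective, because \v{C}ech cocycles for the plumbing cover are perturbations of transition functions. Its kernel is the tangent space of the orbit.
\item Tautness makes the action transitive. Hence the Kodaira--Spencer map vanishes, and \(H^1=0\) at every order.
\item Conversely, vanishing at every point makes every orbit open, so there is a single orbit.
\end{enumerate}
This also replaces your step ``rigidity makes the isomorphism class locally constant'', which would otherwise need a local-triviality theorem for families of noncompact neighborhoods. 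Laufer's finite determinacy, namely that a neighborhood is determined by a plumbing scheme of bounded order, then transfers the conclusion from schemes to neighborhoods.
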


Thus one realization with nonzero first cohomology disproves tautness, whereas the sufficient direction requires vanishing on every realization.

\begin{lemma}
\label{fnd:forget-branches}
Let \(D\) be a reduced union of components of \(E\), and put
\[
S_D = \Theta_X(-\log D).
\]
If \(Y\) is an effective cycle supported on \(D\), there is an exact sequence
\[
0 \to S_X|_Y \to S_D|_Y \to Q_Y \to 0,
\]
where \(Q_Y\) has zero-dimensional support. Consequently,
\[
H^1(X,S_X) \twoheadrightarrow 
H^1(Y,S_X|_Y) \twoheadrightarrow 
H^1(Y,S_D|_Y).
\]
\end{lemma}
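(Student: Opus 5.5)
The plan is to compare the two sheaves directly. Both \(S_X=\Theta_X(-\log E)\) and \(S_D=\Theta_X(-\log D)\) are locally free of rank two, and since \(D\le E\) we have an inclusion \(S_X\subset S_D\) of subsheaves of \(\Theta_X\). This inclusion is an isomorphism away from the points where a component of \(E-D\) meets \(D\), and away from \(E-D\) itself. First I would check the local structure in the coordinates of the previous subsection.

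At a point of \(D\) not on \(E-D\), the two sheaves coincide. At a crossing \(xy=0\) with \(y=0\) a branch of \(D\) and \(x=0\) a branch of \(E-D\), the local bases are \(x\partial_x,y\partial_y\) and \(\partial_x,y\partial_y\). Hence \(S_D/S_X\simeq\mathcal{O}_X/(x)\) there, the structure sheaf of the forgotten branch. At a crossing where both branches lie in \(D\), the two sheaves agree again. Globally, this gives an exact sequence
\[
0\to S_X\to S_D\to\mathcal{Q}\to0,
\]
with \(\mathcal{Q}\) supported on \(E-D\) and locally isomorphic to the structure sheaf of a branch of \(E-D\), in particular locally a line bundle on that branch.

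Next I tensor with \(\mathcal{O}_Y\). This gives the exact sequence
\[
\mathcal{T}or_1(\mathcal{Q},\mathcal{O}_Y)\to S_X|_Y\to S_D|_Y\to\mathcal{Q}\otimes\mathcal{O}_Y\to0.
\]
Since \(Y\) is supported on \(D\), the sheaf \(\mathcal{Q}\otimes\mathcal{O}_Y\) is supported on \((E-D)\cap D\), a finite set, so \(Q_Y:=\mathcal{Q}\otimes\mathcal{O}_Y\) has zero-dimensional support. The main point is left exactness. Locally \(\mathcal{O}_Y=\mathcal{O}/(y^m)\) and \(\mathcal{Q}=\mathcal{O}/(x)\). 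The \(\mathcal{T}or_1\) is the \(y^m\)-torsion in \(\mathcal{O}/(x)\simeq\mathbb{C}\{y\}\), which vanishes. More cleanly, \(S_X|_Y\) is a locally free \(\mathcal{O}_Y\)-module and \(Y\) is Cohen--Macaulay of dimension one. The kernel of \(S_X|_Y\to S_D|_Y\) is supported on a finite set, because the map is an isomorphism off finitely many points. Since \(\mathcal{O}_Y\) has no embedded points, such a kernel is zero. Either argument gives the short exact sequence of the statement.

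For the cohomology, \(H^1(Y,Q_Y)=0\) because \(Q_Y\) has zero-dimensional support. Also \(H^2(Y,S_X|_Y)=0\), since \(Y\) is a compact curve. The long exact sequence then gives the surjection \(H^1(Y,S_X|_Y)\twoheadrightarrow H^1(Y,S_D|_Y)\). The surjection \(H^1(X,S_X)\twoheadrightarrow H^1(Y,S_X|_Y)\) is Lemma~\ref{fnd:restriction} applied to \(\mathcal{F}=S_X\) with \(Z=Y\). Composing the two gives the claim.

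The only step requiring care is the injectivity of \(S_X|_Y\to S_D|_Y\) when \(Y\) is nonreduced. I would rely on the absence of embedded points of the Cartier divisor \(Y\) together with the explicit local computation above.
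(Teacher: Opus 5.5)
Your proposal is correct and follows the paper's proof. Near each point of \(D\cap(E-D)\) you identify \(S_D/S_X\) with \(\mathcal{O}_{(x=0)}\partial_x\), and multiplication by \(y^m\) is injective on it; this is your Tor computation, and it gives exactness after restriction with a finite-length quotient. The surjections then follow from \(H^1(Q_Y)=0\) and Lemma~\ref{fnd:restriction}. Your alternative argument via the absence of embedded points of \(Y\) is also valid, but it is not needed.
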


\begin{proof}
The sheaves differ only where \(D\) meets an omitted component. In coordinates \(D=(y=0)\), \(E-D=(x=0)\), their quotient is \(S_D/S_X=\mathcal{O}_{(x=0)}\partial_x\). If \(Y=(y^n=0)\), multiplication by \(y^n\) is injective on this quotient. Restriction therefore preserves exactness, and its quotient has finite length. The cohomology surjections follow from \(H^1(Q_Y)=0\) and Lemma~\ref{fnd:restriction}.
\end{proof}

\subsection{Analytic realization of subgraphs}

We shall use analytic plumbing only to construct a realization with a prescribed obstruction. We do not assume that every realization of a graph contains a fixed analytic realization of a smaller graph.

\begin{lemma}
\label{fnd:plumbing}
Let \(D\) be a compact connected normal-crossings divisor on a smooth complex surface. Suppose that the weighted graph of \(D\) is a full subgraph of a negative-definite weighted graph \(\Gamma\). Assume that the components to be added are smooth rational curves, and that each new edge incident with \(D\) is assigned a distinct smooth point of \(D\). After shrinking the neighborhood of \(D\), there is a smooth complex surface containing that neighborhood and a normal-crossings divisor with graph \(\Gamma\). The resulting divisor can be contracted to a normal surface singularity.
\end{lemma}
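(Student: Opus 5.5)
The plan is to build the new surface in three stages: construct local plumbing pieces, glue them onto a small neighborhood of \(D\), and then contract the resulting divisor.

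\emph{Stage 1: neighborhoods of \(D\) and of the new curves.} First fix a tubular neighborhood \(U\) of \(D\). For each new edge from a new component \(C\) to a component \(D_v\), take the assigned smooth point \(p\in D_v\) (not a crossing of \(D\)) and choose local coordinates \((x,y)\) near \(p\) with \(D_v=(y=0)\). For each new component \(C\), which is a smooth rational curve with prescribed weight \(-b_C\), take the total space of \(\mathcal{O}_{\mathbb{P}^1}(-b_C)\). We use its standard bidisc charts \((u,w)\), in which \(C=(w=0)\). On \(C\) choose distinct points, one for each edge at \(C\).

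\emph{Stage 2: gluing.} Glue by the standard plumbing identification \(u\leftrightarrow y\), \(w\leftrightarrow x\) on small bidiscs around the chosen points. Edges between two new curves are treated the same way, between the two line-bundle models. Shrinking \(U\) first and then the bidiscs keeps the chosen points disjoint. Because of this, the glued space is Hausdorff and is a smooth complex surface containing a neighborhood of \(D\). Under the gluing the curves meet transversally exactly at the glued points, so the union is a normal-crossings divisor. Self-intersections are preserved, because the normal bundle of each curve is unchanged away from the plumbing discs and plumbing does not alter degrees. The weighted dual graph is therefore exactly \(\Gamma\). The fact that \(\Gamma\) is a graph, possibly with multiple edges, is handled by using distinct points for distinct edges.

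\emph{Stage 3: contraction.} Contractibility follows from Grauert's criterion: a compact connected divisor with negative-definite intersection matrix in a smooth surface has a strictly pseudoconvex neighborhood, and it contracts to a normal singular point. The step needing the most care is the Hausdorff property and the shrinking argument when several new curves form cycles with \(D\). This is handled by taking the plumbing discs small and pairwise disjoint, as in standard plumbing constructions (for example Laufer's). Negative-definiteness is given by hypothesis, so no further check is needed there.
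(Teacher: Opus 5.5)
Your route is the paper's: you attach each new curve by Laufer's plumbing with tangent and normal coordinates exchanged, read the weights off the normal bundles, and contract by Grauert using the negative-definite hypothesis. The paper phrases the attachment differently. It completes the transverse disc \(\{x=0\}\) already present in the surface by the single chart \(s'=s^{-1}\), \(u'=s^bu\), instead of gluing in a separate copy of the total space of \(\mathcal{O}_{\mathbb{P}^1}(-b)\). This is the same construction.

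The step that is wrong as written is your justification of the Hausdorff property.

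\begin{itemize}
\item \emph{Why your version fails.} Small, pairwise disjoint plumbing bidiscs do not imply Hausdorffness. The order you describe (shrink \(U\), then shrink the bidiscs, and glue into the full total space) actually produces a non-Hausdorff space. Take \(B_U=\{|x|<\varepsilon,\ |y|<\varepsilon\}\), and suppose \(U\) still contains \(q=(x,y)=(0,\varepsilon)\). Then \(q\) and the point \(q'=(u,w)=(\varepsilon,0)\) of \(C\) are both limits of the identified points \((0,\varepsilon(1-1/n))\sim(\varepsilon(1-1/n),0)\). Neither lies in a glued bidisc, so they are distinct points that cannot be separated. Geometrically, the transverse disc \(\{x=0\}\) in \(U\) and the curve \(C\) both continue past the bidisc.

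\item \emph{The fix.} What you need is that the graph of the gluing map be closed in the product. Fix \(\varepsilon\) first. Then cut both sides down to disc bundles of fibre radius \(\varepsilon\) over the plumbing discs. Near \(p\), \(U\) should contain no point with \(|x|\le\varepsilon\), \(|y|\ge\varepsilon\). Near the chosen point, the model of \(C\) should contain no point with \(|u|\le\varepsilon\), \(|w|\ge\varepsilon\).

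\item \emph{Why the fix works.} A boundary point of \(B_U\) lying in \(U\) then has \(|x|=\varepsilon\). Its would-be partner has \(|w|=\varepsilon\), which lies outside the model. The same holds with the roles of the two sides exchanged.
\end{itemize}

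The paper's one-chart formulation needs the same shrinking, at the outer edge of the overlap annulus. That is what the statement's ``after shrinking the neighborhood of \(D\)'' and the proof's ``nested coordinate neighborhoods'' refer to. With this correction, your argument is complete at the level of detail of the paper's proof.
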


\begin{proof}
At a prescribed attachment choose coordinates \(D=(y=0)\). Complete the transverse disk \(x=0\) to a rational curve with normal degree \(-b\) by the transition \(s'=s^{-1},u'=s^bu\), taking \(s=y,u=x\) on the initial disk. This preserves the original charts near \(D\). For further attachments use distinct marked disks and interchange tangent and normal coordinates. The prescribed normal bundles fix all self-intersections. The plumbing construction of Laufer~~\cite[Section~2]{Laufer1973}, with nested coordinate neighborhoods and sufficiently small transverse disks, gives a Hausdorff surface containing a neighborhood of the original divisor. Its full intersection matrix is negative definite, so Grauert~~\cite{Grauert1962} gives a contraction of the divisor.
\end{proof}

In our applications every vertex of the subgraph has weight at most \(-2\), so the subgraph is the graph of a minimal good resolution.

\subsection{Nef twists on finite cycles}

The following elementary argument uses the divisor method in the proof of Lipman~~\cite[Proposition~11.1(i)--(ii)]{Lipman1969}.

\begin{lemma}
\label{fnd:nef-twists}\label{cls:nef}
Let \(C\) be a positive exceptional cycle whose reduced divisor is a tree of rational curves. Suppose \(H^1(C,\mathcal{O}_C) = 0\). If \(\mathcal{F}\) is locally free on \(C\), and a line bundle \(\mathcal{A}\) has nonnegative degree on every irreducible component, then \(H^1(C,\mathcal{F}) = 0\) implies that \(H^1(C,\mathcal{F}\otimes\mathcal{A}) = 0\).
\end{lemma}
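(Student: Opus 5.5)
We argue by the divisor method, inducting on the length of a computation sequence. Since \(\mathcal{A}\) has nonnegative degree on every component of a tree of smooth rational curves \(C_{\mathrm{red}}\), and the Picard group of \(C\) is controlled by \(H^1(C,\mathcal{O}_C)=0\), we first show that \(\mathcal{A}\simeq\mathcal{O}_C(\Delta)\) for an effective Cartier divisor \(\Delta\) on \(C\). The divisor \(\Delta\) is a disjoint union of reduced transverse curvettes, each meeting \(C_{\mathrm{red}}\) at a smooth point of \(E\) and not at a crossing. Concretely, choose \(\deg(\mathcal{A}|_{E_v})\) general points on each \(E_v\) away from the crossings, and transverse local curves \(\Gamma_{v,i}\) through them. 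Let \(\mathcal{A}'=\mathcal{O}_X(\sum\Gamma_{v,i})|_C\). Then \(\mathcal{A}\otimes\mathcal{A}'^{-1}\) has degree zero on every component. The exponential sequence on \(C\), together with \(H^1(C,\mathcal{O}_C)=0\), shows that \(\operatorname{Pic}(C)\to\operatorname{Pic}(C_{\mathrm{red}})\) is injective. This holds because the kernel of \(\mathcal{O}_C^*\to\mathcal{O}_{C_{\mathrm{red}}}^*\) is filtered by coherent sheaves that are quotients of \(\mathcal{O}_C\)-ideal pieces. More directly, \(\operatorname{Pic}(C)\simeq H^2(C,\mathbb{Z})\) via degrees, since \(H^1(C,\mathcal{O}_C)=0\), \(H^2(C,\mathcal{O}_C)=0\), and \(\operatorname{Pic}(C)=H^1(C,\mathcal{O}_C^*)\). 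On a tree of \(\mathbb{P}^1\)'s the degree map is an isomorphism. Hence \(\mathcal{A}\simeq\mathcal{A}'\).

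Next we tensor the inclusion \(\mathcal{O}_C\to\mathcal{O}_C(\Delta)\) with the locally free sheaf \(\mathcal{F}\). Because each \(\Gamma_{v,i}\) is transverse to \(E\) at a smooth point, the local equation of \(\Delta\) is a nonzerodivisor on \(\mathcal{O}_C\) (locally \(C=(y^m=0)\) and \(\Delta=(x=0)\)). This gives a short exact sequence
\[
0\to\mathcal{F}\to\mathcal{F}\otimes\mathcal{A}\to\mathcal{F}\otimes\mathcal{A}|_{\Delta\cap C}\to0,
\]
whose cokernel has zero-dimensional support. The long exact sequence yields \(H^1(C,\mathcal{F})\to H^1(C,\mathcal{F}\otimes\mathcal{A})\to H^1(\text{finite})=0\), so this map is surjective. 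Since \(H^1(C,\mathcal{F})=0\), we conclude \(H^1(C,\mathcal{F}\otimes\mathcal{A})=0\).

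The main obstacle is the identification \(\mathcal{A}\simeq\mathcal{O}_C(\Delta)\), namely that line bundles on the nonreduced cycle \(C\) are determined by their component degrees. I would prove this by induction along the filtration \(0\to\mathcal{O}_{E_v}(-C')\to\mathcal{O}_{C'+E_v}\to\mathcal{O}_{C'}\to0\), using \(1+\mathcal{I}\simeq\mathcal{I}\) for square-zero ideals \(\mathcal{I}\). This gives \(H^1\) of the kernel of \(\mathcal{O}^*_{C'+E_v}\to\mathcal{O}^*_{C'}\) as \(H^1(E_v,\mathcal{O}_{E_v}(-C'))\). That group vanishes whenever \(H^1(\mathcal{O})\) vanishes along a suitable computation sequence, which follows from \(H^1(C,\mathcal{O}_C)=0\) and Lemma~\ref{fnd:restriction}-type surjectivity on subcycles. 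Injectivity of \(\operatorname{Pic}(C)\to\operatorname{Pic}(C_{\mathrm{red}})\) then reduces the claim to the reduced tree, where it is the standard gluing of \(\mathcal{O}(d_v)\)'s.
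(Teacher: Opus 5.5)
Your proposal is correct and is essentially the paper's argument: you realize \(\mathcal{A}\) as \(\mathcal{O}_C(\Delta)\) for an effective Cartier divisor supported at smooth points of \(C_{\mathrm{red}}\), whose local equation is a nonzerodivisor on \(C\). The finite-length cokernel of \(\mathcal{F}\to\mathcal{F}\otimes\mathcal{A}\) then makes \(H^1(C,\mathcal{F})\to H^1(C,\mathcal{F}\otimes\mathcal{A})\) surjective.

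Your exponential-sequence identification \(\operatorname{Pic}(C)\simeq H^2(C,\mathbb{Z})\simeq\mathbb{Z}^{\#\text{components}}\) is a valid substitute for the paper's nilradical argument. There, constants lift, so \(H^1(\mathcal{I})=0\), and \(1+\mathcal{I}\simeq\mathcal{I}\) gives \(\operatorname{Pic}(C)\simeq\operatorname{Pic}(C_{\mathrm{red}})\). So the computation-sequence induction in your last paragraph can be dropped. As written, its claimed vanishing of \(H^1(E_v,\mathcal{O}_{E_v}(-C'))\) is neither justified nor needed. Injectivity of \(\operatorname{Pic}(C'+E_v)\to\operatorname{Pic}(C')\) only requires the connecting map from \(H^0(\mathcal{O}^*_{C'})\) to be onto, and that follows from \(H^1(\mathcal{O}_{C'+E_v})=0\) via the exponential on the square-zero ideal.
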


\begin{proof}
Let \(\mathcal{I}\) be the nilradical. Constants on \(C_{\mathrm{red}}\) lift to \(C\), so the nilradical sequence gives \(H^1(C,\mathcal{I})=0\); also \(H^2(C,\mathcal{I})=0\). The finite logarithm identifies \(1+\mathcal{I}\) with \(\mathcal{I}\), and the unit sequence gives \(\operatorname{Pic}(C)\simeq\operatorname{Pic}(C_{\mathrm{red}})\). Since the reduced divisor is a rational tree, these groups are determined by the component degrees.

Choose an effective Cartier divisor \(B\), supported at smooth points of the reduced components, with these degrees. A tangent parameter is a nonzerodivisor on \(C\), so these divisors exist also when \(C\) is nonreduced. The degree description gives \(\mathcal{A}\simeq\mathcal{O}_C(B)\), and multiplication by its section gives
\[
0\to\mathcal{F}\to\mathcal{F}\otimes\mathcal{A}\to Q\to0
\]
with \(Q\) of finite length. Hence \(H^1(C,\mathcal{F})\to H^1(C,\mathcal{F}\otimes\mathcal{A})\) is surjective.
\end{proof}

\subsection{The underlying graph}
\label{red:section}\label{sec:reduction}

The restriction calculation of Wahl~~\cite[Lemmas~2.3--2.5 and Proposition~2.6]{Wahl2016} bounds the number of branching vertices without using a classification of taut graphs.

\begin{proposition}\label{red:component-obstructions}
If the graph of \(E\) is taut, every component is rational and has valency at most three.
\end{proposition}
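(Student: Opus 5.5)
We argue by contraposition: if some component \(E_v\) has genus \(g_v\geq1\) or valency \(t_v\geq4\), we produce one analytic realization with \(H^1(X,S_X)\neq0\). By Theorem~\ref{fnd:tautness} this shows the graph is not taut. By Lemma~\ref{fnd:restriction}, applied with \(Y=E_v\leq E\), it suffices to exhibit a realization with \(H^1(E_v,S_X|_{E_v})\neq0\). The main tool is the component sequence~\eqref{fnd:component-sequence}. It gives
\[
H^1(E_v,S_X|_{E_v})\to H^1(E_v,\Theta_{E_v}(-D_v))\to0,
\]
because \(E_v\) is a curve and so \(H^2(E_v,\mathcal{O}_{E_v})=0\). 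Thus it is enough to show \(H^1(E_v,\Theta_{E_v}(-D_v))\neq0\), or, when that fails, to use the term \(H^1(E_v,\mathcal{O}_{E_v})\) coming from the left of the sequence.

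\emph{Valency.} Suppose \(E_v\simeq\mathbb{P}^1\) with \(t_v\geq4\). Then \(\Theta_{E_v}(-D_v)\simeq\mathcal{O}(2-t_v)\) has degree at most \(-2\), so \(h^1=t_v-3\geq1\). Equivalently, Lemma~\ref{fnd:logarithmic-fiber} gives \(S_X|_{E_v}\simeq\mathcal{O}\oplus\mathcal{O}(2-t_v)\), whose first cohomology is nonzero. This holds for every realization, so no choice is needed here.

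\emph{Genus.} Suppose \(g_v\geq1\). The sequence~\eqref{fnd:component-sequence} gives \(H^1(\mathcal{O}_{E_v})\to H^1(S_X|_{E_v})\to H^1(\Theta_{E_v}(-D_v))\to0\). If \(g_v\geq2\), the degree \(2-2g_v-t_v\) is negative, so \(h^1(\Theta_{E_v}(-D_v))\geq g_v-1+\ldots>0\) by Riemann--Roch, and we are done. If \(g_v=1\) and \(t_v\geq1\), the degree is \(-t_v<0\), so \(h^1=t_v\geq1\). If \(g_v=1\) and \(t_v=0\), then \(E=E_v\) is a single elliptic curve and \(S_X|_{E}\) is an extension of \(\mathcal{O}_E\) by \(\mathcal{O}_E\). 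Its first cohomology surjects onto \(H^1(\Theta_E)=H^1(\mathcal{O}_E)\neq0\). Hence nonvanishing holds in every case.

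\emph{Alternative via moduli.} The genus case can also be handled without the computation above. Varying the complex structure (for \(g\geq1\)) or the positions of the attachment points (for \(t_v\geq4\)) by plumbing, as in Lemma~\ref{fnd:plumbing}, yields non-isomorphic singularities with the same graph. This needs an argument that the analytic type of \(E_v\) with its marked points is an invariant of the minimal good resolution. The cohomological argument above avoids that step.

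\emph{Main obstacle.} The one delicate point is the surjectivity onto \(H^1(\Theta_{E_v}(-D_v))\) in the sequence. It is immediate from the vanishing of \(H^2\) on a curve, so I expect no real difficulty. Care is needed only for the isolated elliptic case with \(t_v=0\), where the sequence may not split. There we use the fact that \(H^1(\Theta_E)\) is one-dimensional, which still gives nonvanishing.
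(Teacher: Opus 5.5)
Your proposal is correct and follows the paper's own argument: restriction from \(X\) to \(E_v\), followed by the surjection \(H^1(E_v,S_X|_{E_v})\to H^1(E_v,\Theta_{E_v}(-D_v))\) coming from the component sequence, shows that this last group is a quotient of \(H^1(X,S_X)\). It is nonzero in positive genus and has dimension \(t_v-3\) for a rational component with \(t_v\geq4\). Your explicit genus case analysis, including the unsplit elliptic case with \(t_v=0\), only spells out the paper's remark that the group is nonzero in positive genus; the moduli aside is not needed.
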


\begin{proof}
Restriction and Lemma~\ref{fnd:component} make \(H^1(E_v,\Theta_{E_v}(-D_v))\) a quotient of \(H^1(X,S_X)\). The former group is nonzero for positive genus, and for a rational component it has dimension \(\max(t_v-3,0)\). Theorem~\ref{fnd:tautness} proves the assertion.
\end{proof}

\begin{proposition}[{Wahl~~\cite[Proposition~2.6]{Wahl2016}}]\label{red:reduced-dimension}\label{red:sections}
Suppose every component is rational, all valencies are at most three, and there are \(n\geq1\) trivalent vertices. Let \(c\) be the first Betti number of the graph, and put \(\epsilon=1\) for a one-node tree and \(\epsilon=0\) otherwise. Then
\[
h^1(X,S_X)\geq h^1(E,S_X|_E)=n+2c-2+\epsilon.
\]
\end{proposition}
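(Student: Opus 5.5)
The plan is to compute \(h^1(E,S_X|_E)\) by gluing the component restrictions along the crossing points, and to get the first inequality from Lemma~\ref{fnd:restriction} applied with \(Z=E\).

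For the equality, I will use the normalization sequence of the reduced divisor. Put \(\tilde E=\bigsqcup_v E_v\), and let \(N\) be the set of crossing points; write \(r=|N|\) and \(s\) for the number of components. Since \(S_X|_E\) is locally free on \(E\), tensoring \(0\to\mathcal{O}_E\to\bigoplus_v\mathcal{O}_{E_v}\to\bigoplus_{p\in N}\mathbb{C}_p\to0\) with it gives
\[
0\to S_X|_E\to\bigoplus_v S_X|_{E_v}\xrightarrow{\ \delta\ }\bigoplus_{p\in N}S_X(p)\to H^1(E,S_X|_E)\to\bigoplus_v H^1(E_v,S_X|_{E_v})\to0 .
\]
By Lemma~\ref{fnd:component}, \(S_X|_{E_v}\simeq\mathcal{O}\oplus\mathcal{O}(2-t_v)\) with \(t_v\le3\). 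Hence \(H^1(E_v,S_X|_{E_v})=0\), except when \(E_v=E\) is a single curve, which cannot happen since \(n\ge1\). So \(h^1(E,S_X|_E)=2r-\operatorname{rank}\delta\), and everything reduces to computing \(\operatorname{coker}\delta\).

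To compute \(\operatorname{coker}\delta\), I will use the ordered logarithmic fiber basis \(x\partial_x,y\partial_y\) at each crossing from Lemma~\ref{fnd:logarithmic-fiber}. The Euler summand \(\mathcal{O}_{E_v}\) restricts at each point of \(E_v\) to the normal coordinate with value \(1\). The second summand contributes the tangent coordinate: \(H^0(\mathcal{O}(2-t_v))\) has dimension \(3-t_v\) for \(t_v\le2\) and \(0\) for \(t_v=3\). Thus a node contributes only its Euler field. A vertex of valency two contributes its Euler field and a one-dimensional tangent field, which is the vector field vanishing at nothing but scaling the two points oppositely, in accordance with the matrix \(A_b\) of Lemma~\ref{fnd:two-point-transport}. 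An end contributes the Euler field and a two-dimensional tangent space that surjects onto the tangent fiber at its point.

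At a crossing \(p=E_u\cap E_w\), the two coordinates of \(S_X(p)\) are \((\text{tangent to }E_u,\text{normal to }E_u)=(\text{normal to }E_w,\text{tangent to }E_w)\). Each edge therefore receives contributions from both endpoints. Counting the dimension of the cokernel then becomes a linear-algebra problem on the graph:
\begin{itemize}
\item chains of valency-two vertices transport the pair via \(T_b\), which is invertible, so a whole chain behaves like one edge;
\item an arm ending at an end vertex kills both coordinates at its attaching point, since the end supplies a free tangent and a free Euler value that propagate up the chain;
\item contracting arms and chains leaves a trivalent graph with \(n\) nodes and \(e\) edges; when \(n\ge2\) or \(c\ge1\), \(e-n=c-1+(\text{number of arms})\), where arms here means arms ending in an end vertex.
\end{itemize}

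The main obstacle is the bookkeeping of this cokernel. Each remaining edge carries two unknowns, and each node imposes one Euler value shared by its three incident edges. Arms are fully absorbed, and each loop or connecting chain contributes \(2\) minus the freedom of the Euler values at its endpoints.

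I would organize the count as \(\dim\operatorname{coker}\delta=2(\#\text{internal edges})-n+\dim\ker\), where \(\ker\) is the space of compatible global Euler data. This kernel is \(1\)-dimensional, coming from the global Euler-type field that is constant, or \(0\)-dimensional. The one-node tree needs the extra \(\epsilon\): there are no internal edges, and the three arms, each killed, still leave the node's Euler value unconstrained \emph{relative} to nothing, producing \(1\). I will verify the formula in two cases, the tree case \(c=0\), where the internal edges number \(n-1\), and then induct on \(c\) by cutting a loop edge, which adds \(2\). Any discrepancy in the constant will be fixed by checking the two-node tree, where the value should be \(n=2\).
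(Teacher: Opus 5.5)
Your setup is correct, and reproving Wahl's count is a legitimate alternative to the paper, which simply cites Wahl's formula \(h^1(E,S_X|_E)=r+4c-4+\epsilon\) (with \(r\) the number of ends) and substitutes \(r=n+2-2c\). The correct parts are:
\begin{itemize}
\item the normalization sequence and the vanishing of \(H^1(E_v,S_X|_{E_v})\);
\item the absorption of each arm: its sections surject onto the fiber at the attaching point, leaving one free parameter on the end component;
\item the count \(\dim\operatorname{coker}\delta=2e_{\mathrm{int}}-n+\dim\ker\).
\end{itemize}
With \(2e_{\mathrm{int}}=3n-r\) and \(r=n+2-2c\), this gives \(n+2c-2+\dim\ker\). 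Equivalently, the Euler characteristic gives \(h^1(E,S_X|_E)=4c-4+h^0(E,S_X|_E)\). So the whole content of the proposition is \(\dim\ker=\epsilon\), i.e.\ \(h^0(E,S_X|_E)=r+\epsilon\). That is exactly the step you leave open. You only say the kernel has dimension one or zero, attribute the first case to an unspecified ``constant Euler-type field'', and plan to fix the constant by checking an example. Neither an example nor an induction on \(c\) by cutting edges can determine this kernel.

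The missing idea is the point where negative definiteness enters. The kernel consists of node Euler coefficients \((c_v)\) that can be matched along every connecting chain, including a loop at a node. For a chain \(C_1,\dots,C_m\) from \(P\) to \(Q\), the matching condition is \(T_{b_m}\cdots T_{b_1}(c_P,0)=(0,c_Q)\). The first coordinate of \(T_{b_m}\cdots T_{b_1}(1,0)\) is the continuant of \(b_1,\dots,b_m\), equal to \(1\) for the empty chain. It is the determinant of the negated intersection matrix of the chain, which is nonzero because it is a principal minor of a negative-definite matrix. Hence \(c_P=0\), and then \(c_Q=0\); for a loop (\(P=Q\)) the same equation gives \(c_P=0\). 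So the kernel vanishes once there is one internal edge, and it is one-dimensional only for the one-node tree.

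This nonsingularity is what the paper points to in Wahl's Lemma~2.5. It should be used in this form rather than through \(b_j\geq2\), because a minimal good resolution graph may contain a \((-1)\)-curve in a two-edge cycle. Without this argument your formula is unproved: if a chain matrix were singular, the transported Euler line would match the one at the far node, and the kernel would grow.

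As a check on the constant, the two-node tree has \(h^1(E,S_X|_E)=0\); your own count gives \(2\cdot1-2+0\). This is why such trees survive Theorem~\ref{red:reduction}.
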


\begin{proof}
Write \(r\) for the number of leaves. Wahl's formula is \(h^1(E,S_X|_E)=r+4c-4+\epsilon\); substituting \(r=n+2-2c\) gives the assertion. The restriction surjection gives the inequality. The proof of Wahl~~\cite[Lemma~2.5]{Wahl2016} uses the nonsingularity of the intersection matrix of a connecting chain, so it also applies to a component of self-intersection \(-1\) in a two-edge cycle.
\end{proof}

\begin{theorem}\label{red:reduction}
Every nonempty taut minimal good resolution graph is a chain or a cycle of rational curves, or a rational tree with one or two trivalent vertices. A cycle may consist of two components meeting twice.
\end{theorem}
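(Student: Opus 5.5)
We combine Theorem~\ref{fnd:tautness} with the restriction bounds of Propositions~\ref{red:component-obstructions} and~\ref{red:reduced-dimension}. Let \(\Gamma\) be a nonempty taut minimal good resolution graph and fix any analytic realization. By Theorem~\ref{fnd:tautness}, \(H^1(X,S_X)=0\). Proposition~\ref{red:component-obstructions} then shows that every component is rational and every vertex has valency at most three. Here the valency counts intersection points separately, so a two-edge cycle has valency two at each vertex. The hypotheses of Proposition~\ref{red:reduced-dimension} therefore hold whenever there is at least one trivalent vertex.

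\textbf{Case with no trivalent vertex.} All valencies are at most two and the graph is connected, so the underlying graph is a single vertex, a chain, or a cycle. A cycle of length two consists of two components meeting at two points. A single vertex counts as a chain of length one, and a single vertex with a self-loop cannot occur because components are smooth. This gives the first two alternatives.

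\textbf{Case with \(n\geq1\) trivalent vertices.} Proposition~\ref{red:reduced-dimension} gives
\[
0=h^1(X,S_X)\geq n+2c-2+\epsilon .
\]
\begin{itemize}[labelindent=0pt, leftmargin=*, align=left]
\item If \(c\geq1\), then \(n+2c-2\geq n\geq1\), which is a contradiction. So the graph is a tree.
\item For a tree, \(c=0\). If \(n=1\), then \(\epsilon=1\) and the bound is \(n-1=0\), which is allowed. If \(n\geq2\), then \(\epsilon=0\) and the bound is \(n-2\leq0\), which forces \(n=2\).
\end{itemize}
Hence the graph is a rational tree with one or two trivalent vertices.

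\textbf{Main obstacle.} The delicate point is the applicability of Proposition~\ref{red:reduced-dimension} when a vertex has weight \(-1\) or two components meet twice. Its proof already remarks that the argument covers these situations, and the cases with \(c\geq1\) are excluded directly by the inequality. We also check that minimality imposes no further topological restriction in this statement, since weights are constrained only later. Finally, we verify the bookkeeping that valency three is counted with multiplicity of intersection points, consistent with Lemma~\ref{fnd:component}.
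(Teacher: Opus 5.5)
Your proposal is correct and follows essentially the same route as the paper: Proposition~\ref{red:component-obstructions} gives rational components of valency at most three, connectedness handles the case without trivalent vertices, and the inequality of Proposition~\ref{red:reduced-dimension} excludes \(c\geq1\) and, for trees, \(n\geq3\). The paper's proof additionally notes that minimality forces the nonbranching components of the remaining trees to have self-intersection at most \(-2\), which the statement itself does not require.
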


\begin{proof}
Proposition~\ref{red:component-obstructions} leaves rational components of valency at most three. Without a trivalent vertex, connectedness gives a chain or cycle. Otherwise Proposition~\ref{red:reduced-dimension} excludes \(c\geq1\) and, for a tree, \(n\geq3\). Minimality then makes every nonbranching component of either remaining tree have self-intersection at most \(-2\).
\end{proof}

\subsection{Chains and cycles}

Component restrictions prove tautness for chains and cycles in every analytic realization. For cycles, negative definiteness makes the reduced logarithmic cohomology vanish.

\begin{proposition}\label{red:chains}
Every negative-definite minimal good resolution graph that is a chain of rational curves is rational and taut. Every self-intersection is at most \(-2\).
\end{proposition}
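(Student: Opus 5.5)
The proof has three parts: the weights, rationality, and tautness. For the weights: in a minimal good resolution of a chain, a component of self-intersection \(-1\) has valency at most two. It meets at most two other components, each at most once, so contracting it gives again a good resolution with smooth components and normal crossings. This contradicts minimality, so every self-intersection is at most \(-2\). Rationality I take as standard: a chain of rational curves with all weights \(\leq -2\) is the resolution graph of a cyclic quotient singularity. Alternatively, I would verify directly that \(H^1(E,\mathcal{O}_E)=0\) for the reduced tree and that \(H^1(C,\mathcal{O}_C)=0\) for every positive cycle \(C\). The latter follows by Laufer's computation sequence, since adding a component \(E_v\) with \(C\cdot E_v\leq 1\) costs nothing in \(H^1\). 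This is the input Lemma~\ref{fnd:nef-twists} needs.

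For tautness I use Theorem~\ref{fnd:tautness} and Lemma~\ref{fnd:formal-vanishing}. Take \(Z=E\). It suffices to show
\[
H^1\bigl(E,S_X(-kE)|_E\bigr)=0\quad\text{for all }k\geq0
\]
in every analytic realization. Since \(E\) is a reduced chain of \(\mathbb{P}^1\)'s, I would use the normalization-type sequence
\[
0\to S_X(-kE)|_E\to\bigoplus_v S_X(-kE)|_{E_v}\to\bigoplus_{\text{crossings}}\mathbb{C}^2\to0.
\]
The component pieces are handled by Lemma~\ref{fnd:component}, which gives
\[
S_X|_{E_v}\simeq\mathcal{O}\oplus\mathcal{O}(2-t_v).
\]
Here \(t_v\leq2\), so both summands have degree \(\geq0\). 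Twisting by \(\mathcal{O}_X(-kE)|_{E_v}\) multiplies by a line bundle of degree \(-kE\cdot E_v\geq0\). The reason is that \(E\cdot E_v=-b_v+t_v\leq 0\), because \(b_v\geq2\geq t_v\). Thus each component restriction has vanishing \(H^1\).

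The main obstacle is the surjectivity of \(H^0\) onto the crossing fibers. The sum \(\bigoplus_v H^0\to\bigoplus\mathbb{C}^2\) must be onto, which requires prescribing values independently at the two ends of each interior curve. Alternatively, I would avoid this by invoking Lemma~\ref{fnd:nef-twists}. I would first show \(H^1(E,S_X|_E)=0\), either from Proposition~\ref{red:reduced-dimension}-type reasoning with \(n=0\) or directly. Then I would apply the lemma with the nef bundle \(\mathcal{A}=\mathcal{O}_X(-kE)|_E\), using \(H^1(E,\mathcal{O}_E)=0\) for the rational tree.

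For \(H^1(E,S_X|_E)=0\) itself, I would argue by induction along the chain, peeling off an end curve \(E_1\) meeting the rest at one point. The sequence \eqref{fnd:cycle-sequence} with \(A=E-E_1\) and \(B=E_1\) has kernel \(S_X(-(E-E_1))|_{E_1}\), which is \(\mathcal{O}(-1)\oplus\mathcal{O}(0)\) up to the splitting and so has vanishing \(H^1\). The induction then reduces to a single \(\mathbb{P}^1\), where \(S_X|_{E_1}\simeq\mathcal{O}\oplus\mathcal{O}(2-t)\) with \(t\leq 1\) has no \(H^1\). Care is needed that the kernel's twist interacts correctly with the splitting; checking this is the one computation I expect to require some attention.
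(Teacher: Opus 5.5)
Your proof is correct, but for the key vanishing it takes a different route from the paper. The paper treats every twist directly. It computes the summand degrees of \(S_X(-mE)|_E\): \(m(b-1),m(b-1)+1\) on an end component and \(m(b-2),m(b-2)\) internally. It then passes to the Serre dual using the dualizing sheaf of the chain, which has degree \(-1\) on the ends and \(0\) internally, and lets a dual section vanish first on the ends and then successively inward. This duality step is what gets around the obstacle you flagged in the normalization approach: there you cannot prescribe independent values at the two ends of an internal component whose twisted restriction is trivial.

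You instead prove only \(H^1(E,S_X|_E)=0\), by peeling ends with \eqref{fnd:cycle-sequence}. Each kernel is \(\mathcal{O}(-1)\oplus\mathcal{O}(1-t_j)\) with \(t_j\in\{1,2\}\), so there is no subtlety with the splitting. You then let Lemma~\ref{fnd:nef-twists} absorb all twists, since \(\mathcal{O}_E(-kE)\) has degrees \(k(b_v-t_v)\geq0\) and \(H^1(E,\mathcal{O}_E)=0\). This gives an elementary base case, at the cost of the Picard-group argument inside that lemma.

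Three small corrections:
\begin{enumerate}
\item For a one-component chain the sequence of Lemma~\ref{fnd:component} does not split, and \(S_X|_C\simeq\mathcal{O}(1)^{\oplus2}\) rather than \(\mathcal{O}\oplus\mathcal{O}(2)\). This is harmless: the bundle still has no \(H^1\) and nonnegative degrees.
\item Proposition~\ref{red:reduced-dimension} does not cover \(n=0\), so rely on the peeling argument.
\item For rationality, the computation-sequence remark only controls the cycles actually built that way, not every positive cycle. It is also unnecessary. Lemma~\ref{fnd:nef-twists} with \(\mathcal{F}=\mathcal{O}_E\) gives \(H^1(E,\mathcal{O}_E(-kE))=0\) for all \(k\), and Lemma~\ref{fnd:formal-vanishing} applied to \(\mathcal{O}_X\) then gives \(H^1(X,\mathcal{O}_X)=0\) in every realization, essentially as in the paper. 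The cyclic-quotient citation also works, but only combined with Artin's graph criterion or with the tautness you prove.
\end{enumerate}
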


\begin{proof}
Minimality excludes a component of self-intersection \(-1\). If the chain has at least two components, \(S_X(-mE)|_E\), for \(m\geq0\), has nonnegative summand degrees: they are \(m(b-1),m(b-1)+1\) on an end component of self-intersection \(-b\), and \(m(b-2),m(b-2)\) on an internal component. Its Serre dual has strictly negative summand degrees on the ends and nonpositive degrees internally. A section vanishes on the ends and then successively on all internal components. Thus \(H^1(E,S_X(-mE)|_E)=0\).

For a single component \(C\), the extension in \eqref{fnd:component-sequence} has class \(c_1(N_{C/X})\neq0\), so \(S_X|_C\simeq\mathcal{O}_{\mathbb{P}^1}(1)^{\oplus2}\). Again all the groups \(H^1(C,S_X(-mC)|_C)\) vanish. Lemma~\ref{fnd:formal-vanishing} gives \(H^1(X,S_X)=0\) in every realization, proving tautness. Finally \(\mathcal{O}_E(-mE)\) has nonnegative component degrees; normalization on the rational chain gives its \(H^1\)-vanishing. Formal functions, or Lemma~\ref{fnd:formal-vanishing}, proves rationality.
\end{proof}

\begin{lemma}\label{red:cycle-restriction}
For a negative-definite cycle \(E\) of smooth rational curves, one has \(H^0(E,S_X|_E)=H^1(E,S_X|_E)=0\).
\end{lemma}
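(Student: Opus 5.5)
First note that \(\chi(E,S_X|_E)=0\) for a cycle, so the two vanishings are equivalent. Let the components be \(E_1,\dots,E_n\) in cyclic order, with \(E_i^2=-b_i\). Each has valency \(t_i=2\); in the two-component case each component meets the other at two points. By Lemma~\ref{fnd:component}, \(S_X|_{E_i}\simeq\mathcal{O}^{\oplus2}\), so \(\chi(S_X|_{E_i})=2\) for each \(i\). The normalization sequence
\[
0\to S_X|_E\to\bigoplus_i S_X|_{E_i}\to\bigoplus_{p}S_X(p)\to0,
\]
with \(n\) crossing points \(p\), each contributing a two-dimensional logarithmic fiber, gives \(\chi(S_X|_E)=2n-2n=0\). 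So it suffices to show \(H^0(E,S_X|_E)=0\).

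A global section is a tuple of constant sections \(s_i\in H^0(E_i,S_X|_{E_i})\simeq\mathbb{C}^2\) that agree at every crossing in the logarithmic fiber. I would express this using Lemma~\ref{fnd:two-point-transport}. In tangent/normal coordinates, the value of \(s_i\) at its second crossing is \(A_{b_i}\) applied to its value at the first crossing. Identifying with \(E_{i+1}\)'s ordering exchanges the entries, which gives \(T_{b_i}\). Going once around the cycle, a section is therefore a vector \(w\in\mathbb{C}^2\) with
\[
Mw=w,\qquad M=T_{b_n}\cdots T_{b_1}.
\]
I would double-check that the ordered basis \(x\partial_x,y\partial_y\) at a crossing is canonical (Lemma~\ref{fnd:logarithmic-fiber}), so that no extra sign appears at the crossings. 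The two-component cycle with two crossing points is handled identically, with \(n=2\) and the transports taken along each component between its two points.

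So \(H^0=0\) iff \(1\) is not an eigenvalue of \(M\). Since \(\det T_b=1\), this is equivalent to \(\operatorname{tr}M\neq2\). For \(b_i\ge2\), the product \(T_{b_n}\cdots T_{b_1}\) is a standard continued-fraction matrix, with entries given by tridiagonal determinants of the cycle's intersection matrix. I expect \(2-\operatorname{tr}M\) to equal \(\pm\det\) of the full intersection matrix of the cycle. The main computation is this identity, which I would prove by expanding the determinant of the cyclic tridiagonal matrix along the corner entries. In the two-vertex multi-edge case, the corner entries add to the off-diagonal and the same formula should come out, including weight \(-1\) (which is why Proposition~\ref{red:reduced-dimension}'s proof remark applies). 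Negative definiteness then gives a nonzero determinant, hence \(\operatorname{tr}M\neq2\), so \(H^0=0\) and therefore \(H^1=0\).

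The main obstacle is getting the sign and trace identity exactly right, in particular the exchange convention in \(T_b\) and the corner terms. As a sanity check I would use the all-\((-2)\) cycle, which is degenerate and should give \(\operatorname{tr}M=2\). There, \(\operatorname{tr}(T_2^n)=2\) because \(T_2\) is unipotent, consistent with the claim.
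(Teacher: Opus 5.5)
Your proposal is correct. It uses the same ingredients as the paper: the trivial rank-two restrictions \(S_X|_{E_i}\simeq\mathcal{O}^{\oplus2}\), the count \(\chi=2n-2n=0\) from the normalization sequence, and the transport matrices \(T_b\) of Lemma~\ref{fnd:two-point-transport} composed around the cycle.

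The only difference is the final step. The paper does not pass through \(\operatorname{tr}M\). It writes the compatibility conditions as \(a_{i+1}=b_ia_i+n_i\), \(n_{i+1}=-a_i\) and eliminates the normal coefficients, which gives \(a_{i-1}-b_ia_i+a_{i+1}=0\) cyclically, with \(2a_j\) in the two-component case. This identifies the global sections directly with the kernel of the intersection matrix, so negative definiteness applies with no determinant identity needed.

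The identity you left as an expectation does hold, with the sign \(\operatorname{tr}M=2+\det\bigl(-(E_i\cdot E_j)\bigr)\). For example, \(\operatorname{tr}(T_{b_2}T_{b_1})-2=b_1b_2-4\) in the two-edge case, and this is valid also when \(b_1=1\). So your route works, and it shows in addition that the monodromy is hyperbolic. The elimination, however, reaches the conclusion without having to prove the periodic continuant identity.
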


\begin{proof}
Write \(E=C_1+\cdots+C_r\), with the components ordered cyclically, and put \(C_i^2=-b_i\). On \(C_i\), order the logarithmic coefficients at the incoming crossing as tangent first and normal second, and denote them by \((a_i,n_i)\). Lemma~\ref{fnd:two-point-transport} shows that compatibility at the outgoing crossing is equivalent to
\[
a_{i+1}=b_i a_i+n_i,
\qquad
n_{i+1}=-a_i.
\]
Eliminating the normal coefficients gives
\[
-b_i a_i+a_{i-1}+a_{i+1}=0.
\]
For two components meeting twice, the two neighboring terms are both \(a_j\), so their sum is \(2a_j\). These equations are precisely the kernel equations for the intersection matrix of \(E\). Negative definiteness gives \(a_i=0\) for every \(i\), and the matching equations then give \(n_i=0\). Hence \(H^0(E,S_X|_E)=0\).

Each component restriction is trivial of rank two. The normalization sequence therefore gives \(\chi(E,S_X|_E)=2r-2r=0\), and consequently \(H^1(E,S_X|_E)=0\).
\end{proof}

\begin{proposition}\label{red:cycles}\label{red:nodal-convention}
Every negative-definite minimal good resolution graph that is a cycle of rational curves is taut. With at least three components its weights are \(-b_i\), where all \(b_i\geq2\) and some \(b_i\geq3\). For two components meeting twice, either \(b_1,b_2\geq2\) and \(b_1b_2>4\), or, after interchange, \(b_1=1\) and \(b_2\geq5\).
\end{proposition}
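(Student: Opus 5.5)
The weight restrictions come first, and they are elementary. The intersection matrix of a cycle with \(r\geq3\) components has the \(-b_i\) on the diagonal and \(1\) for cyclically adjacent components. Minimality excludes any \(b_i=1\). Contracting such a curve makes its two neighbours meet transversally, and they stay smooth, so the result is still a good resolution; this holds also for \(r=3\), where the neighbours then meet twice. Hence all \(b_i\geq2\). If every \(b_i=2\), the vector \((1,\dots,1)\) lies in the kernel. Conversely, if some \(b_i\geq3\), the matrix is diagonally dominant, weakly but strictly in some row, and its graph is connected. So \(-M\) is irreducibly diagonally dominant and hence positive definite.

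For two components meeting twice the matrix is \(\begin{pmatrix}-b_1&2\\2&-b_2\end{pmatrix}\). It is negative definite iff \(b_1b_2>4\), given \(b_i\geq1\). If \(b_1=1\), contracting \(C_1\) makes \(C_2\) nodal. By the Remark following the main theorem, this is not a good resolution, so \(b_1=1\) is allowed precisely when \(b_2\geq5\).

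For tautness I would apply Theorem~\ref{fnd:tautness} and then Lemma~\ref{fnd:formal-vanishing}. In the main case all \(b_i\geq2\), and I take \(Z=E\). Then \(S_X(-kE)|_E\simeq S_X|_E\otimes\mathcal{O}_E(-kE)\). On each component this is \(\mathcal{O}(d_i)^{\oplus2}\) with \(d_i=k(b_i-2)\geq0\), since \(S_X|_{C_i}\) is trivial by Lemma~\ref{fnd:component}. The case \(k=0\) is Lemma~\ref{red:cycle-restriction}.

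For \(k\geq1\) I use the normalization sequence
\[
0\to \mathcal{F}\to\bigoplus_i\mathcal{F}|_{C_i}\to\bigoplus_{\text{crossings}}\mathcal{F}_p\to0 .
\]
The components have no \(H^1\), so \(H^1(E,\mathcal{F})\) is the cokernel of the global section map onto the crossing fibers. Choose a component \(C_j\) with \(b_j\geq3\), so \(d_j\geq1\). On it, \(H^0(\mathcal{O}(d_j)^{\oplus2})\) surjects onto the direct sum of the fibers at its two crossings. Components with \(d_i=0\) carry constant sections, which are transported across by the invertible matrices of Lemma~\ref{fnd:two-point-transport}. Start from arbitrary crossing data and propagate it around the chain of the remaining components. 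Then use the freedom on \(C_j\) to match the data at both of its ends. This shows the map is surjective, so \(H^1(E,S_X(-kE)|_E)=0\) for every \(k\geq0\), and \(H^1(X,S_X)=0\) follows.

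The main obstacle is the two-component case with \(b_1=1\). There \(\mathcal{O}_E(-E)\) has degree \(-1\) on \(C_1\), so the twist with \(Z=E\) fails. I would instead take \(Z=2C_1+C_2\). Then \(-Z\cdot C_1=2-2=0\) and \(-Z\cdot C_2=b_2-4\geq1\), so \(\mathcal{O}_Z(-Z)\) is nef.

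It then suffices to show \(H^1(Z,S_X(-kZ)|_Z)=0\) for all \(k\geq0\). I would use the cycle sequence \eqref{fnd:cycle-sequence} with \(A=E\) and \(B=C_1\):
\[
0\to S_X(-E-kZ)|_{C_1}\to S_X(-kZ)|_{Z}\to S_X(-kZ)|_E\to0 .
\]
The kernel is \(\mathcal{O}_{\mathbb{P}^1}(-E\cdot C_1)^{\oplus2}=\mathcal{O}(1-2)^{\oplus2}=\mathcal{O}(-1)^{\oplus2}\), which has no cohomology. The right-hand term has degrees \(-kZ\cdot C_i\geq0\) on the components. The gluing argument above then applies, using \(C_2\) with positive degree when \(k\geq1\), and Lemma~\ref{red:cycle-restriction} when \(k=0\). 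The step I expect to need care is checking that the twist of the sequence by \(-kZ\) behaves as claimed, in particular that the degree on \(C_1\) stays zero. Lemma~\ref{fnd:formal-vanishing} then gives \(H^1(X,S_X)=0\) in every realization.
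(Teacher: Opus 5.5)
Your proposal is correct and follows the paper's proof essentially step for step: the same minimality and kernel-vector arguments fix the weights, then \(Z=E\) with Lemma~\ref{red:cycle-restriction} and formal vanishing handles the case where all \(b_i\geq2\), and \(Z=2C_1+C_2\) with the cycle sequence whose kernel is \(\mathcal{O}_{C_1}(-1)^{\oplus2}\) handles the \((-1,-c)\) case. The only difference is cosmetic. The paper proves \(H^1\)-vanishing of the nonnegatively twisted restrictions to the reduced cycle by Serre duality, using the trivial dualizing sheaf of the cycle so that dual sections vanish on a negative-degree component and then everywhere by connectedness. You instead check directly that the normalization map onto the crossing fibers is surjective, by propagating data around the cycle and matching on a component of positive degree, which is the dual form of the same argument.
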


\begin{proof}
With at least three components, a component of self-intersection \(-1\) can be contracted while keeping smooth components with normal crossings. Minimality therefore gives \(b_i\geq2\), and negative definiteness excludes all \(b_i=2\). With two components, negative definiteness is equivalent to \(b_1,b_2>0\) and \(b_1b_2>4\). Contracting a component of self-intersection \(-1\) makes the other component nodal, so this contraction is not a good resolution.

Suppose all \(b_i\geq2\). The dualizing sheaf of the reduced cycle is trivial: differentials with opposite residues at the two marked points on each normalization component glue to a nowhere-zero section. For \(m\geq1\), the dual of \(S_X(-mE)|_E\) has degrees \(-m(b_i-2)\) in both summands on each component. Its global sections vanish first where \(b_i>2\), then on the degree-zero components by connectedness. Serre duality gives \(H^1(E,S_X(-mE)|_E)=0\). Together with Lemma~\ref{red:cycle-restriction} and formal vanishing, this proves \(H^1(X,S_X)=0\).

In the remaining case write \(E=C+D\), where \(C^2=-1\), \(D^2=-b\), \(CD=2\), and \(b\geq5\). Put \(Z=2C+D\). Then \(ZC=0\), \(ZD=4-b<0\), and \(EC=1\). For \(m\geq0\), set \(\mathcal{F}_m=S_X(-mZ)\). The cycle sequence for \(Z=E+C\) is
\[
0\to\mathcal{F}_m(-E)|_C
\to\mathcal{F}_m|_Z
\to\mathcal{F}_m|_E
\to0.
\]
Its first term is \(\mathcal{O}_C(-1)^{\oplus2}\), independently of \(m\). For \(m=0\), the last term has vanishing first cohomology by Lemma~\ref{red:cycle-restriction}. For \(m\geq1\), its Serre dual restricts to \(\mathcal{O}_C^{\oplus2}\) on \(C\) and to \(\mathcal{O}_D(-m(b-4))^{\oplus2}\) on \(D\). A global section therefore vanishes on \(D\), and hence also on \(C\). Thus \(H^1(E,\mathcal{F}_m|_E)=0\) for every \(m\geq0\). The exact sequence gives
\[
H^1(Z,S_X(-mZ)|_Z)=0
\qquad (m\geq0).
\]
Lemma~\ref{fnd:formal-vanishing} gives \(H^1(X,S_X)=0\). All these arguments apply to every analytic realization, and Theorem~\ref{fnd:tautness} proves tautness.
\end{proof}

\section{Graphs with one node}\label{star:section}\label{sec:stars}

Let \(\Gamma\) be a negative-definite tree with one trivalent node, all of whose vertices have genus zero. Its central curve has self-intersection \(-b\), and its three arms have Hirzebruch--Jung expansions
\[
\frac{n_i}{a_i} = [b_{i1},\ldots,b_{is_i}],
\qquad
0 < a_i < n_i,\qquad
\gcd(a_i,n_i) = 1,\qquad
b_{ij} \geq 2.
\]
The arms are read from the central curve towards their ends. Set
\[
q_k = bk-\sum_{i = 1}^3\left\lceil\frac{ka_i}{n_i}\right\rceil,\qquad
c_k = \#\{i:ka_i \equiv 1\pmod{n_i}\}.
\]
Negative definiteness is equivalent to \(b-\sum_i a_i/n_i > 0\).

\begin{definition}\label{star:standard-plumbing}
Assign the arms to \(0,1,\infty\) on the central curve, whose normal bundle has transition \((z',x')=(z^{-1},z^bx)\). At the attachments use \((t_p,n_p)=(z,x),(z-1,x),(z',x')\), respectively. On an arm component of weight \(-b_j\), use the transition and plumbing identifications
\[
(T'_j,X'_j)=(T_j^{-1},T_j^{b_j}X_j),\qquad
(T_1,X_1)=(n_p,t_p),\qquad
(T_{j+1},X_{j+1})=(X'_j,T'_j).
\]
The sufficiently small surface neighborhood obtained by Lemma~\ref{fnd:plumbing} is called the \emph{standard plumbing realization} of \(\Gamma\). Grauert's theorem contracts its reduced divisor \(D\); if \(\Gamma\) is minimal good, this is the minimal good resolution of the contraction.
\end{definition}

\begin{lemma}\label{star:plumbing-automorphisms}
Let \((X,D)\) and \((\widehat{X},\widehat{D})\) be standard plumbing realizations of the same weighted graph, possibly with different assignments of the arms to \(0,1,\infty\). Every automorphism \(\sigma\) of the weighted graph is induced by a biholomorphism between neighborhoods of \(D\) and \(\widehat{D}\).
\end{lemma}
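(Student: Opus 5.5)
The plan is to build the biholomorphism directly from the plumbing data, one component at a time. An automorphism \(\sigma\) of a one-node weighted tree fixes the central vertex and permutes arms with identical weight sequences. Since \(\sigma\) preserves weights, it maps each arm to an arm with the same continued fraction, matching the \(j\)-th component of one to the \(j\)-th component of the other. So I have to produce a biholomorphism that sends the central curve of \(X\) to the central curve of \(\widehat X\), and the arm attached at one marked point to the \(\sigma\)-image arm in \(\widehat X\).

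\textbf{Step 1: the central curve.} The central neighborhood is the total space of \(\mathcal{O}(-b)\), with charts \((z,x)\) and \((z',x')=(z^{-1},z^bx)\). Let \(\{0,1,\infty\}\) be the points carrying the arms in \(X\), and choose the Möbius transformation \(\phi\) of \(\mathbb{P}^1\) sending each marked point to the point that carries the \(\sigma\)-image arm in \(\widehat X\). Since \(\mathrm{PGL}_2\) acts simply transitively on ordered triples, this \(\phi\) exists. It lifts to a fibrewise-linear bundle automorphism \(\Phi\) of \(\mathcal{O}(-b)\), because \(\mathcal{O}(-b)\) is \(\mathrm{SL}_2\)-linearized up to the sign ambiguity. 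The lift acts on the fibre coordinate by \(x\mapsto \lambda(z)\,x\) with \(\lambda\) a unit given by a power of the derivative-type cocycle \((cz+d)^{b}\).

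\textbf{Step 2: normalizing the attachments.} At a marked point \(p\), \(\Phi\) pulls back \((\hat t_{\phi(p)},\hat n_{\phi(p)})\) to \((\alpha t_p+O(t_p^2),\,\beta(t_p)n_p)\) with \(\alpha,\beta(0)\neq0\). This need not equal \((t_p,n_p)\) on the nose, so I will correct it along the arm. Since the arm chart has \((T_1,X_1)=(n_p,t_p)\), the required map near the first arm component is \((T_1,X_1)\mapsto(\beta(X_1)T_1,\ \alpha X_1+\dots)\). This is a germ of biholomorphism preserving both axes, and it should extend to a neighborhood of the whole first arm curve. Concretely, a map of the form \(T\mapsto u(X)\,T\), \(X\mapsto X\,v(X,T)\) with \(v\) a unit has to be written in the other chart \((T^{-1},T^bX)\) and shown to stay holomorphic there.

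The main obstacle is that a general germ does not extend, and I would try to reduce to cases where it does. By a further automorphism of \(\mathcal{O}(-b)\) scaling the fibre (\(x\mapsto cx\)) and a choice of lift, I would first arrange \(\beta\) constant. A diagonal scaling \((T,X)\mapsto(\mu T,\nu X)\) extends to every arm component, since in the other chart it becomes \((\mu^{-1}T',\mu^{b_j}\nu X')\). Such scalings propagate through the plumbing identifications \((T_{j+1},X_{j+1})=(X'_j,T'_j)\) with consistent constants, so they can be passed down the whole arm.

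\textbf{Step 3: removing the nonlinear part.} The remaining higher-order terms in \(\alpha t_p+O(t_p^2)\) I would absorb by modifying \(\phi\)'s lift off the fibre. The map \(\Phi\) restricted to the central curve is linear in \(z\) near \(0\) when \(\phi\) is affine. So I would first use the permutation freedom so that every \(\phi\) needed is affine in the relevant chart at each marked point, or else compose with the chart inversion; the only permutations of \(\{0,1,\infty\}\) are \(z\mapsto 1-z\), \(1/z\), and similar, whose local expansions are \(t\mapsto \pm t/(1\pm t)\). If the nonlinearity cannot be eliminated, I would instead shrink the neighborhoods and use that only a neighborhood of \(D\) is required. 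The germ \((T,X)\mapsto(u(X)T,\,Xw(X))\) extends over the arm curve because, in the far chart, \(T'\mapsto u(X)^{-1}T'\) and \(X'=T^bX\mapsto u^bT^bXw\), which equals \(X'\,u(X)^bw(X)\). Since \(X=T'^{b}X'\), this is holomorphic near \(T'=0\). The arm endpoint corrections then propagate inductively, and gluing the pieces gives the biholomorphism inducing \(\sigma\).
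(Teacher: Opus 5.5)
Your Step 1 and the extension computation at the end of Step 3 use the same mechanism as the paper. You lift the M\"obius map \(\phi\) linearly to \(\mathcal{O}(-b)\), so that near an attachment it reads \((t,n)\mapsto(t\alpha(t),n\beta(t))\). You then continue it over the arm as the diagonal rescaling \((T_1,X_1)\mapsto(\beta(X_1)T_1,\alpha(X_1)X_1)\).

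Delete the detours in Step 2 and at the start of Step 3. They are false as well as unnecessary:
\begin{itemize}
\item A lift of \(\phi\) to \(\mathcal{O}(-b)\) is unique up to a constant, so fibre scaling cannot make \(\beta\) constant. For \(\phi(z)=1-z\) one gets \(\beta(t)=\lambda(t-1)^b\) at \(\infty\).
\item \(\phi\) is determined by \(\sigma\) and the two assignments, so there is no freedom to make it affine at the marked points. For example, \(z\mapsto1/z\) at \(1\) is \(t\mapsto-t/(1+t)\).
\end{itemize}
Your extension computation handles arbitrary units anyway.

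The genuine gap is the sentence ``the arm endpoint corrections then propagate inductively''. After the identification \((T_2,X_2)=(X_1',T_1')\), your map on the second component is \((T_2,X_2)\mapsto(u^bw\,T_2,\,u^{-1}X_2)\). Here \(u,w\) are evaluated at \(X=X_2^{b_1}T_2\), not at the normal coordinate \(X_2\). So your one-component argument, which assumes coefficients depending on the normal coordinate, does not apply as stated.

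What must be proved is that the single function \(t=X_1\) is holomorphic in every chart of the arm. Its orders along the successive components satisfy \(r_0=0\), \(r_1=1\), \(r_{j+1}=b_jr_j-r_{j-1}\). The hypothesis \(b_j\geq2\) gives \(r_{j+1}\geq r_j+1\), so every order is nonnegative. This is the one place the hypothesis enters, and it is needed: \(b_1=b_2=1\) gives \(r_4=-1\). Your proposal never uses it.

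With this, the whole correction is one map, \(\Phi(P)=(\beta(t(P)),\alpha(t(P)))\cdot P\), the torus action on the toric arm surface with parameters depending on \(t\). It is holomorphic on a neighborhood of the entire arm and preserves each component. It agrees with the central lift on the overlap.

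You also do not address invertibility. Since \(t\circ\Phi=\varphi\circ t\) with \(\varphi(s)=s\alpha(s)\) and \(\varphi'(0)\neq0\), the inverse is explicit: \(Q\mapsto(\beta(s),\alpha(s))^{-1}\cdot Q\) with \(s=\varphi^{-1}(t(Q))\). This is exactly the paper's argument.
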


\begin{proof}
The automorphism fixes the node and permutes arms with identical weight sequences. The induced permutation of their attachment points is realized by a M\"obius transformation \(\mu\). Its linear lift to \(\mathcal{O}_{\mathbb{P}^1}(-1)\), tensored \(b\) times, gives a biholomorphism of the central normal bundle. Near an attachment point it has the form
\[
(t,n) \mapsto (t\alpha(t),n\beta(t)),\qquad \alpha(0)\beta(0) \neq 0.
\]
Identify the source and target arms by their plumbing coordinates. The charts of an arm form a smooth toric surface. The monomial \(t = X_1\) has orders \(r_0 = 0,r_1 = 1\) along the initial boundary and first arm component, with
\[
r_{j+1} = b_jr_j-r_{j-1}.
\]
Since \(b_j \geq 2\), induction gives \(r_{j+1} \geq r_j+1\). Thus \(t\) is holomorphic near the entire arm and vanishes on it.

The torus action extends \((\lambda,\nu)\cdot(T_1,X_1) = (\lambda T_1,\nu X_1)\). On a sufficiently small neighborhood of the arm define
\[
\Phi(P) = (\beta(t(P)),\alpha(t(P)))\cdot P.
\]
This is holomorphic and agrees with the central lift on the overlap. Put \(\varphi(s) = s\alpha(s)\). Since \(t(\Phi(P)) = \varphi(t(P))\) and \(\varphi'(0) \neq 0\), its inverse is
\[
Q \mapsto (\beta(s),\alpha(s))^{-1}\cdot Q,\qquad s = \varphi^{-1}(t(Q)).
\]
Each arm component is torus invariant, so these maps induce the required component permutation. They glue with the central lift, and their inverses glue as well.
\end{proof}

\subsection{Restriction along an arm}

We work in a standard plumbing neighborhood (Definition~\ref{star:standard-plumbing}). Let \(P\) be a central component, and let \(C_1,\ldots,C_s\) be an arm, read away from \(P\), with self-intersections \(-b_i\), where \(b_i \geq 2\). The last component is an end component of the full exceptional divisor, and the other arm components have valency two. Write \(n/a = [b_1,\ldots,b_s]\). Near \(P\cap C_1\), let \(x = 0\) define \(P\), and let \(t = 0\) define \(C_1\). A local frame for \(S_X\) at this crossing is \((x\partial_x,t\partial_t)\).

\begin{lemma}\label{star:vector-arm}
Fix nonnegative integers \(k,j\). The coefficient vectors \((A,B)\) for which
\[
x^kt^j(Ax\partial_x+Bt\partial_t)
\]
extends to a logarithmic vector field on a neighborhood of the whole arm form the subspace
\[
U_{k,j} = \begin{cases}
0,&nj-ak \leq -2,\\
\{(A,B):nA+aB = 0\},&nj-ak = -1,\\
\mathbb{C}^2,&nj-ak \geq 0.
\end{cases}
\]
Put \(A_{\mathrm{arm}}=C_1+\cdots+C_s\). In the chosen frame, the completed restriction of a section of \(S_X|_{(k+1)A_{\mathrm{arm}}}\) at \(P\cap C_1\) belongs to \(\left(\mathbb{C}[[x,t]]/(t^{k+1})\right)^{\oplus2}\). For \(0\leq j\leq k\), the image of the map from \(H^0((k+1)A_{\mathrm{arm}},S_X|_{(k+1)A_{\mathrm{arm}}})\) extracting the coefficient of \(x^kt^j\) is precisely \(U_{k,j}\).
\end{lemma}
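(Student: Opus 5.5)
The plan is to work entirely inside the toric charts of the standard plumbing (Definition~\ref{star:standard-plumbing}). By the proof of Lemma~\ref{star:plumbing-automorphisms}, a neighborhood of the arm is an open subset of a smooth toric surface with torus coordinates \((T_1,X_1)=(x,t)\). Its torus-invariant curves are:
\begin{itemize}
\item \(P=(x=0)\), which plays the role of \(D_0\);
\item the arm components \(C_1,\ldots,C_s\);
\item a final invariant curve \(D_{s+1}\), the last coordinate axis in the chart of \(C_s\), which meets \(C_s\) transversally and is \emph{not} part of \(E\).
\end{itemize}
The fields \(x\partial_x\) and \(t\partial_t\) are the infinitesimal generators of the torus action. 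Hence \(Ax\partial_x+Bt\partial_t\) is logarithmic along every invariant curve. In the chart of \(C_i\) it is a constant-coefficient combination of the two Euler fields of that chart. Near \(D_{s+1}\), with local equation \(w=0\), its normal part is \(\langle (A,B),v_{s+1}\rangle\, w\partial_w\), where \(v_{s+1}\) is the primitive ray of \(D_{s+1}\).

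The first step is bookkeeping of orders. Put \(u_i=\operatorname{ord}_{C_i}(x^kt^j)\), with \(u_0=k\) along \(P\) and \(u_{s+1}\) along \(D_{s+1}\). The monomials \(t\) and \(x\) give sequences \(r_i\) and \(\rho_i\), both satisfying \(r_{i+1}=b_ir_i-r_{i-1}\), with initial values \((r_0,r_1)=(0,1)\) and \((\rho_0,\rho_1)=(1,0)\). Therefore \(u_i=jr_i+k\rho_i\) obeys the same recursion. Continued-fraction identities then give
\[
u_{s+1}=\pm(nj-ak).
\]
The sign must be fixed with the conventions \(n/a=[b_1,\ldots,b_s]\) and \(\gcd(a,n)=1\). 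The same identities give \(v_{s+1}\) proportional to \((n,a)\) in the \((x\partial_x,t\partial_t)\) pairing, so that \(\langle(A,B),v_{s+1}\rangle=0\) becomes \(nA+aB=0\).

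Since \(b_i\geq2\), the sequence \((u_i)\) is convex: \(u_{i+1}-u_i\geq u_i-u_{i-1}\). So \(\min_{1\le i\le s}u_i\) is governed by \(u_1=j\geq0\) and the final value \(u_{s+1}\). Checking each chart then gives the three cases:
\begin{itemize}
\item if \(nj-ak\geq0\), the monomial is holomorphic near the arm, and the field is logarithmic for all \((A,B)\);
\item if \(nj-ak=-1\), we have a simple pole along \(D_{s+1}\). The field \(w^{-1}\cdot(\text{toric field})\) is holomorphic exactly when its \(w\partial_w\)-coefficient vanishes, i.e. when \(nA+aB=0\). Its \(\partial_{\text{tangent}}\)-part is harmless because it already carries a factor \(w\), and \(D_{s+1}\notin E\), so no logarithmic condition is imposed there;
\item if \(nj-ak\leq-2\), even \(w^{-1}\) times a torus field has a pole, so only \((A,B)=0\) survives.
\end{itemize}
Convexity also shows that \(u_i\geq0\) for \(1\le i\le s\) in the first two cases; this point needs verification.

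The second step is the statement on the thickening \((k+1)A_{\mathrm{arm}}\). This subscheme is torus invariant, and \(H^0\) is finite dimensional with an algebraic torus action. Hence every section is a finite sum of weight vectors, each of the form (monomial)\(\times\)(constant torus field) in the completed crossing chart. Extracting the \(x^kt^j\)-coefficient is therefore the projection onto the weight-\((k,j)\) component. It remains to show that a weight-\((k,j)\) section of \(S_X|_{(k+1)A_{\mathrm{arm}}}\) exists exactly when the monomial field extends near the arm. One direction is restriction. For the other, the conditions computed above are pole conditions along \(C_i\) and \(D_{s+1}\) that are read off modulo the ideal of \((k+1)A_{\mathrm{arm}}\).

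The main obstacle is this last point, specifically near \(C_s\cap D_{s+1}\) and near \(P\cap C_1\), where the thickening ideal is a power of the local equation of the arm component. I would first check that \(j\leq k\) ensures the relevant exponents lie below the truncation, so that nonvanishing modulo the ideal coincides with genuine nonvanishing. Only then would I conclude that the image is exactly \(U_{k,j}\).
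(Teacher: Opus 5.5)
Your weight-decomposition set-up is sound, and it is essentially the paper's monomial argument. The paper transports coefficients by the matrices \(T_{b_i}\) and notes that distinct characters cannot cancel; your torus-invariant frame and weight projection do the same job more cleanly. The sign is \(u_{s+1}=nj-ak\). However, two steps you rely on are wrong.

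\emph{Convexity.} The sequence \((u_i)\) is not convex, because \(u_{i+1}-2u_i+u_{i-1}=(b_i-2)u_i\). It is convex only where \(u_i\geq0\) and concave where \(u_i<0\); for \([3,3]\), \(k=1\), \(j=0\) one gets \(1,0,-1,-3\). Even granting convexity, it would bound the maximum of the sequence by its endpoint values, not the minimum. Nonnegativity of the compact exponents when \(u_{s+1}\geq-1\) comes from the concave side instead. If \(u_i<0\leq u_{i-1}\), then \(u_{i+1}-u_i\leq u_i-u_{i-1}<0\). So all later exponents strictly decrease, which gives \(u_{s+1}\leq-2\).

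\emph{Terminal chart.} You have swapped the two coefficients there. Write the torus field as \(\alpha\,y\partial_y+\beta\,w\partial_w\), with \(C_s=\{y=0\}\) and \(D_{s+1}=\{w=0\}\). Since \(D_{s+1}\not\subset E\), the frame of \(S_X\) there is \((y\partial_y,\partial_w)\). Then \(w^{-1}(\alpha\,y\partial_y+\beta\,w\partial_w)=\alpha w^{-1}y\partial_y+\beta\partial_w\) is a section if and only if \(\alpha=0\). So the part tangent to \(D_{s+1}\) carries the pole, and the \(w\partial_w\) part is the harmless one. Here \(y=x^nt^a\), so \(\alpha=nA+aB\). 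Your vector \((n,a)\) is the exponent vector of this character, which annihilates the ray \(v_{s+1}\) (orders \((n,-a)\) of \(t,x\)). The correct condition is that the field be proportional to \(v_{s+1}\), not that a pairing with \(v_{s+1}\) vanish.

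\emph{The thickening.} The genuine gap is the thickening statement, which you leave open, and \(j\leq k\) alone does not settle it. In your framework, a weight-\((k,j)\) section on \((k+1)A_{\mathrm{arm}}\) is a torus field (times the monomial) on each chart. The fields on the two charts adjacent to \(C_i\) are forced to agree exactly when \(0\leq u_i\leq k\), since \(C_i\) has multiplicity \(k+1\). If some \(u_i\geq k+1\) occurred before regularity fails, the monomial would vanish there. The chain of equalities would then break, and the thickening could admit coefficient vectors that do not extend.

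The missing idea is convexity on the nonnegative range. If \(u_{s+1}\leq-1\), a strict increase \(u_{i+1}>u_i\geq0\) would persist and force \(u_{s+1}>0\). Hence the sequence is nonincreasing from \(u_0=k\) up to its first negative term, so every exponent before the first failure of regularity is at most \(k\). Consequently:
\begin{enumerate}
\item all gluing conditions survive on the thickening;
\item the first nonregular chart forces the field to vanish, whether through a negative compact exponent or through \(u_{s+1}\leq-2\);
\item for \(u_{s+1}=-1\), the surviving term \(y^{u_s}\partial_w\) is nonzero modulo \(y^{k+1}\) and gives exactly the line \(nA+aB=0\).
\end{enumerate}
When \(u_{s+1}\geq0\), the field extends to a neighbourhood and hence restricts. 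This is precisely the paper's argument. With it and the two corrections above, your outline becomes a complete proof.
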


\begin{proof}
The exponents satisfy
\[
r_0=k,\qquad r_1=j,\qquad r_{i+1}=b_ir_i-r_{i-1},\qquad r_{s+1}=nj-ak.
\]
The coefficient vector is multiplied by \(T_{b_i}\) at each transition. The first row of \(T_{b_s}\cdots T_{b_1}\) is \((n,a)\), so its terminal normal coefficient is \(nA+aB\). Interior regularity requires nonnegative exponents. At the terminal chart, with exceptional component \(y=0\) and outer divisor \(q=0\), the ambient logarithmic frame is \((y\partial_y,\partial_q)\). An outer exponent \(-1\) therefore permits precisely the line \(nA+aB=0\), whereas an exponent at most \(-2\) permits no nonzero vector. If a compact exponent first becomes negative, the recurrence makes all subsequent exponents strictly decrease. Hence an outer exponent at least \(-1\) ensures nonnegative compact exponents, proving the first assertion.

For the cycle assertion, suppose \(r_{s+1}\leq-1\). Every nonnegative exponent before the first negative exponent is at most \(k\): any strict increase between nonnegative exponents persists by the recurrence and would force a positive outer exponent. Restriction to \((k+1)A_{\mathrm{arm}}\) therefore does not remove any compatibility condition before the first failure of regularity. The invertible transition matrices transport the coefficient vector to the first nonregular chart, where a negative compact exponent or an outer exponent at most \(-2\) forces vanishing; an outer exponent \(-1\) imposes exactly the same terminal line. Distinct characters cannot cancel, since transitions and cycle ideals are monomial. Conversely, a section extending along the whole arm restricts to the cycle. If \(r_{s+1}\geq0\), every coefficient vector extends and therefore restricts.
\end{proof}

Put
\[
d_i = \left\lceil\frac{a_i k-1}{n_i}\right\rceil,\qquad
\epsilon_i = \begin{cases}1,&a_i k \equiv 1\pmod{n_i},\\0,&\text{otherwise.}\end{cases}
\]
One has \(0 \leq d_i \leq k\). The lemma says that both logarithmic coefficient functions must vanish to order \(d_i\) at the attachment. If \(\epsilon_i = 1\), their coefficient of order \(d_i\) must also belong to the indicated one-dimensional subspace. No further condition occurs. In particular the continued fraction and the subspace condition have been obtained from the component restriction maps.

\subsection{The central restriction bundle}

Suppose now that \(P^2 = -b\), that \(P\) has three arms, and that the attachment points are \(0,1,\infty\). In this subsection we index the arms by their attachment points. Set
\[
c = \epsilon_0+\epsilon_1+\epsilon_\infty,\qquad
m = bk-d_0-d_1-d_\infty,\qquad
e = b-\frac{a_0}{n_0}-\frac{a_1}{n_1}-\frac{a_\infty}{n_\infty}.
\]
Let \(\mathcal{F}_k\) be the rank-two bundle of coefficients of degree \(k\) with respect to the normal coordinate of \(P\) in logarithmic vector fields satisfying the arm restrictions of Lemma~\ref{star:vector-arm}.

\begin{proposition}\label{star:central-bundle}
If \(c \leq 2\), then
\[
\mathcal{F}_k \simeq \mathcal{O}_{\mathbb{P}^1}(m-c)\oplus\mathcal{O}_{\mathbb{P}^1}(m-1).
\]
\end{proposition}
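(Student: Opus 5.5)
The plan is to describe \(\mathcal{F}_k\) as an iterated elementary modification of an explicit split bundle on \(P\simeq\mathbb{P}^1\), and then to read off the splitting type by choosing the splitting adapted to the modification lines.

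\textbf{Step 1: the unconstrained bundle.} First identify the bundle of degree-\(k\) coefficients before any arm condition is imposed. These are the coefficients of \(x^k\), where \(x\) is the normal coordinate of \(P\), in sections of \(S_X\) near \(P\). They form \(S_X|_P\otimes N_{P/X}^{\vee\otimes k}\). Lemma~\ref{fnd:component} with \(t_P=3\) gives \(S_X|_P\simeq\mathcal{O}\oplus\mathcal{O}(-1)\). The split summand \(\mathcal{O}\) is spanned by the normal Euler field \(x\partial_x\), and the quotient is \(\Theta_P(-0-1-\infty)\). Twisting by \(N^{\vee k}=\mathcal{O}(bk)\), which is visible from the transition \((z',x')=(z^{-1},z^bx)\), gives
\[
\mathcal{O}(bk)\oplus\mathcal{O}(bk-1),
\]
with the first summand the Euler direction. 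In the frame \((x\partial_x,t\partial_t)\) at an attachment point, the Euler direction is the vector \((A,B)=(1,0)\).

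\textbf{Step 2: order-\(d_i\) vanishing.} Lemma~\ref{star:vector-arm} and the discussion after it state the arm restriction precisely: both coefficient functions must vanish to order \(d_i\) at the attachment point \(p_i\), and if \(\epsilon_i=1\), the order-\(d_i\) coefficient must lie on the line \(\ell_i=\{n_iA+a_iB=0\}\). The vanishing conditions at \(0,1,\infty\) twist both summands by \(-(d_0+d_1+d_\infty)\). This gives \(\mathcal{O}(m)\oplus\mathcal{O}(m-1)\), with the Euler summand still equal to \(\mathcal{O}(m)\). I must check that the local coefficient extraction matches the global twist, so that the conditions are exactly the ones stated and nothing more. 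This follows from the monomial nature of the transitions in Lemma~\ref{star:vector-arm}.

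\textbf{Step 3: the line conditions.} Next impose the conditions at the \(c\leq2\) points with \(\epsilon_i=1\). Each one is an elementary modification along the line \(\ell_i\) in the fibre. The key observation is that \(\ell_i\) never equals the Euler line: \((1,0)\in\ell_i\) would force \(n_i=0\), which is impossible. The complement of \(\mathcal{O}(m)\) in \(\mathcal{O}(m)\oplus\mathcal{O}(m-1)\) is not canonical. Splittings correspond to
\[
\operatorname{Hom}(\mathcal{O}(m-1),\mathcal{O}(m))=H^0(\mathcal{O}(1)),
\]
a 2-dimensional space, and a section of \(\mathcal{O}(1)\) can take arbitrary values at two distinct points. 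Since each \(\ell_i\) is transverse to the Euler line, it is the graph of a value of such a homomorphism at \(p_i\). I can therefore choose a complement \(\mathcal{L}\simeq\mathcal{O}(m-1)\) whose fibre at each such \(p_i\) is \(\ell_i\).

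With this splitting, a section \((u,v)\in\mathcal{O}(m)\oplus\mathcal{L}\) satisfies the line condition at \(p_i\) exactly when \(u(p_i)=0\). Hence
\[
\mathcal{F}_k\simeq\mathcal{O}(m)\Bigl(-\textstyle\sum_{\epsilon_i=1}p_i\Bigr)\oplus\mathcal{L}\simeq\mathcal{O}(m-c)\oplus\mathcal{O}(m-1).
\]

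\textbf{Main obstacle.} The hypothesis \(c\leq2\) is used exactly in Step 3, and I expect that step to need the most care. The issue is the identification of each \(\ell_i\), written in the local frame \((x\partial_x,t\partial_t)\), with a fibre line of the global bundle. In particular, I must confirm that the Euler summand of Lemma~\ref{fnd:component} is the \(x\partial_x\) direction in every one of the three charts, including the chart at \(\infty\) after the coordinate change. The transversality \(\ell_i\neq\) Euler line, and hence the whole argument, rests on this identification.
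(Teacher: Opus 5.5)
Your proposal is correct and is essentially the paper's argument: the Euler subbundle \(\mathcal{O}(bk)\) with quotient \(\mathcal{O}(bk-1)\), the twist by the vanishing orders \(d_i\), and then the line conditions, which are transverse to the Euler line. Your worry about the chart at \(\infty\) is harmless, since \(x'\partial_{x'}=x\partial_x\) in the standard plumbing. The only difference is in the last step. The paper writes \(0\to\mathcal{O}(m-c)\to\mathcal{F}_k\to\mathcal{O}(m-1)\to0\) and splits it because \(H^1(\mathbb{P}^1,\mathcal{O}(1-c))=0\). You instead choose a complement with fibres \(\ell_i\) by interpolating a section of \(\mathcal{O}(1)\) at the at most two special points, and this is equivalent to the same vanishing through the evaluation sequence for \(\mathcal{O}(1)\).
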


\begin{proof}
Before imposing the arm conditions, the line subbundle generated by the normal Euler vector field is \(\mathcal{O}(bk)\), and the quotient is \(\mathcal{O}(bk-1)\). Imposing vanishing of both coefficients to order \(d_i\) at each attachment \(p_i\) tensors these line bundles by \(\mathcal{O}(-\sum_i d_i p_i)\). Each remaining condition \(n_iA+a_iB = 0\) defines a line that projects isomorphically to the tangent direction. It therefore replaces the Euler subbundle by its twist by \(\mathcal{O}(-p_i)\) and leaves the quotient unchanged. We obtain
\[
0 \to \mathcal{O}(m-c) \to \mathcal{F}_k \to \mathcal{O}(m-1) \to 0.
\]
The extension group is \(H^1(\mathbb{P}^1,\mathcal{O}(1-c)) = 0\) when \(c \leq 2\).
\end{proof}

\begin{corollary}\label{star:central-obstruction}
Let \(D\) be the reduced star in its standard plumbing realization. If \(c \leq 2\), there is a natural injection
\[
H^1(\mathbb{P}^1,\mathcal{F}_k) \hookrightarrow H^1((k+1)D,S_X|_{(k+1)D}),
\]
and
\[
h^1(\mathcal{F}_k) = \max(c-m-1,0)+\max(-m,0).
\]
\end{corollary}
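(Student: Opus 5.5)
The dimension formula is a direct computation from Proposition~\ref{star:central-bundle}. For a line bundle \(\mathcal{O}(r)\) on \(\mathbb{P}^1\) we have \(h^1=\max(-r-1,0)\). Applied to the splitting \(\mathcal{F}_k\simeq\mathcal{O}(m-c)\oplus\mathcal{O}(m-1)\), this gives the two terms \(\max(c-m-1,0)\) and \(\max(-m,0)\). So the formula is immediate once the splitting is in hand.

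For the injection, I will work with the \(x\)-adic filtration on \((k+1)D\), where \(x\) is the normal coordinate to \(P\).

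\emph{The graded piece.} Consider the sections of \(S_X|_{(k+1)D}\) near \(P\) whose coefficients vanish to order \(k\) in \(x\), i.e. lie in the top graded piece. By Lemma~\ref{star:vector-arm}, applied at each attachment with \(j\) ranging over \(0,\ldots,k\), the conditions for such a section to extend along the arm portion of \((k+1)A_{\mathrm{arm}}\) are exactly the order-\(d_i\) vanishing together with the line condition when \(\epsilon_i=1\).

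\emph{A short exact sequence.} I would therefore construct a short exact sequence of sheaves on \((k+1)D\),
\[
0\to \iota_*\mathcal{F}_k\to S_X|_{(k+1)D}\to \mathcal{Q}\to 0 ,
\]
where \(\iota\colon P\to (k+1)D\) is the inclusion of the central curve. Here \(\mathcal{F}_k\) is realized as the subsheaf of sections that are divisible by \(x^k\) near \(P\) and vanish identically on the arm parts. This is exactly where the arm restrictions enter. Away from the attachment points, a section divisible by \(x^k\) is just a degree-\(k\) coefficient. Near an attachment point, Lemma~\ref{star:vector-arm} says that the local sections of this form which are compatible with the arm are precisely those satisfying the conditions defining \(\mathcal{F}_k\).

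\emph{Injectivity.} The long exact sequence gives
\[
H^0(\mathcal{Q})\to H^1(\mathcal{F}_k)\to H^1((k+1)D,S_X|_{(k+1)D}).
\]
Injectivity of the second map is equivalent to the connecting map \(H^0(\mathcal{Q})\to H^1(\mathcal{F}_k)\) being zero. This is the main obstacle. My approach is to use the torus action of the standard plumbing realization: the \(\mathbb{C}^*\) scaling the normal coordinate of \(P\) extends to the arms, as in the proof of Lemma~\ref{star:plumbing-automorphisms}. With respect to this action, \(x\)-degree becomes a weight grading, because all transition maps and cycle ideals are monomial. The sheaf \(S_X|_{(k+1)D}\) then splits into weight pieces, and \(\iota_*\mathcal{F}_k\) is the weight-\(k\) part of the normal-coordinate grading, a direct summand as an equivariant piece. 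Consequently \(H^1(\mathcal{F}_k)\) is a direct summand of \(H^1((k+1)D,S_X|_{(k+1)D})\), which yields a natural injection (indeed a split one).

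The remaining care is to check that the weight-\(k\) part on the arms contributes nothing beyond the conditions already imposed. This again follows from Lemma~\ref{star:vector-arm}, since that lemma identifies the weight-\((k,j)\) restriction images exactly.
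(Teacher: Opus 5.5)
Your dimension formula is correct. The injection argument, however, fails at the sheaf-level step.

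The conditions defining \(\mathcal{F}_k\) are not local conditions at the attachment point \(p_i\). These conditions are vanishing to order \(d_i\) and, when \(\epsilon_i=1\), the line \(n_iA+a_iB=0\). Lemma~\ref{star:vector-arm} describes which coefficients of \(x^kt^j\) extend along the \emph{whole} arm. Equivalently, it describes the image of the global section space \(H^0((k+1)A_{\mathrm{arm}},S_X|_{(k+1)A_{\mathrm{arm}}})\), and the answer depends on the continued fraction \(n_i/a_i\) of the entire arm. Locally at \(p_i\), every germ \(x^kt^j(Ax\partial_x+Bt\partial_t)\) is a section of \(S_X|_{(k+1)D}\). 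For \(j\le k\) such a germ does not vanish on the arm part, since it is nonzero modulo \(t^{k+1}\).

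The germs divisible by \(x^k\) that do vanish on the arm cycle are exactly \(x^kt^{k+1}(\cdot)\). These give \(\mathcal{G}_k(-(k+1)\sum_ip_i)\), where \(\mathcal{G}_k=S_X|_P\otimes N_{P/X}^{-k}\), and not \(\mathcal{F}_k\). So the sequence \(0\to\iota_*\mathcal{F}_k\to S_X|_{(k+1)D}\to\mathcal{Q}\to0\) does not exist as you describe it.

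The torus does not repair this at the sheaf level either. The \(\mathbb{C}^*\) scaling \(x\) fixes \(P\) pointwise, but it acts nontrivially on each arm curve: on \(C_1\) it scales the coordinate \(T_1=x\). Hence \(S_X|_{(k+1)D}\) does not split into weight subsheaves, and \(\iota_*\mathcal{F}_k\) is not an equivariant direct summand of it.

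What the grading legitimately provides is a weight decomposition of the finite-dimensional section spaces. That decomposition has to be fed into a complex built from those spaces. The paper decomposes \((k+1)D\) into \((k+1)P\) and the three arm cycles and uses Mayer--Vietoris.

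The cokernel of
\[
H^0((k+1)P)\oplus\bigoplus_iH^0((k+1)A_i)\to\bigoplus_iH^0((k+1)P\cap(k+1)A_i)
\]
injects into \(H^1((k+1)D,S_X|_{(k+1)D})\). All maps are equivariant, so the degree-\(k\) part of this cokernel is a direct summand of it.

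In degree \(k\), the intersection terms are the jets \(x^kt^j\) with \(0\le j\le k\). Modulo the arm images \(U_{k,j}\) of Lemma~\ref{star:vector-arm}, they give \(H^0(P,\mathcal{G}_k/\mathcal{F}_k)\). Here \(d_i\le k\) is needed so that every jet condition is visible in \(\mathbb{C}\{x,t\}/(x^{k+1},t^{k+1})\).

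Since \(H^1(P,\mathcal{G}_k)=0\), the remaining cokernel of \(H^0(P,\mathcal{G}_k)\to H^0(P,\mathcal{G}_k/\mathcal{F}_k)\) is \(H^1(P,\mathcal{F}_k)\). This identification is the real content of your closing sentence, and your proposal leaves it unargued.

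If you want to keep a sheaf sequence, use the honest kernel of restriction from \((k+1)D\) to the three arm cycles. That kernel is supported on \((k+1)P\) and does split by weight. Then take the quotient of its \(H^1\) by the connecting image of the arm sections, which amounts to the same Mayer--Vietoris computation.
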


\begin{proof}
Let \(N=N_{P/X}\), and put \(\mathcal{G}_k=S_X|_P\otimes N^{-k}\) and \(\mathcal{Q}_k=\mathcal{G}_k/\mathcal{F}_k\). The quotient \(\mathcal{Q}_k\) is supported at the three attachment points. Since \(\mathcal{G}_k\simeq\mathcal{O}(bk)\oplus\mathcal{O}(bk-1)\), one has \(H^1(P,\mathcal{G}_k)=0\). The sequence \(0\to\mathcal{F}_k\to\mathcal{G}_k\to\mathcal{Q}_k\to0\) gives
\[
H^1(P,\mathcal{F}_k)\simeq\operatorname{coker}\left(H^0(P,\mathcal{G}_k)\to H^0(P,\mathcal{Q}_k)\right).
\]
Decompose \((k+1)D\) into \((k+1)P\) and the three disjoint arm cycles. Scaling the normal coordinate of \(P\) extends over the arms through the monomial plumbing transitions, and induces an action on these cycles preserving the restriction maps. In degree \(k\), Lemma~\ref{star:vector-arm} identifies \(H^0(P,\mathcal{Q}_k)\) with the section spaces on the intersections modulo the images from the arm cycles. Indeed, \(0\leq d_i\leq k\), so all jet conditions defining \(\mathcal{Q}_k\) occur on these intersections. The remaining section map is the displayed map from \(H^0(P,\mathcal{G}_k)\). Its cokernel injects into \(H^1((k+1)D,S_X|_{(k+1)D})\) by Mayer--Vietoris. Proposition~\ref{star:central-bundle} gives the dimension formula.
\end{proof}

Since \(d_i = \lceil ka_i/n_i\rceil-\epsilon_i\), one has \(m = q_k+c_k\). If \(c_k = 3\), then \(m = ke+\sum_i1/n_i > 0\). Thus \(q_k+c_k < 0\) forces \(c_k \leq 2\), and Corollary~\ref{star:central-obstruction} gives a nonzero obstruction on \((k+1)D\). Only this standard plumbing realization is needed to exclude tautness. The calculation of the determinant line below proves rationality and sufficiency.

\subsection{Functions and the determinant line}

The construction of Pinkham~~\cite{Pinkham1977}, as described by Wahl~~\cite[proofs of Proposition~3.1 and Theorem~3.3]{Wahl2015}, gives the weighted homogeneous model from distinct points \(p_1,p_2,p_3\) and an integral divisor \(B\) of degree \(b\). Put
\[
\mathcal{D} = B-\sum_{i = 1}^3\frac{a_i}{n_i}p_i,\qquad
\mathcal{A} = \bigoplus_{k \geq 0}\mathcal{O}_{\mathbb{P}^1}(\lfloor k\mathcal{D}\rfloor).
\]
The partial resolution \(Y_{\mathrm{w}}=\operatorname{Spec}_{\mathbb{P}^1}\mathcal{A}\) maps to \(V_{\mathrm{w}}=\operatorname{Spec}H^0(\mathbb{P}^1,\mathcal{A})\). Resolving its three cyclic quotient points gives the resolution with graph \(\Gamma\). One has \(\deg\mathcal{D}>0\) and \(\deg\lfloor k\mathcal{D}\rfloor=q_k\).

\begin{lemma}\label{star:plumbing-model}
Take \(p_1=0\), \(p_2=1\), \(p_3=\infty\), and \(B=b[\infty]\) in Pinkham's construction. A neighborhood of the exceptional divisor in its minimal good resolution is isomorphic to the standard plumbing realization. Under this isomorphism, the grading agrees with scaling the normal coordinate of the central component.
\end{lemma}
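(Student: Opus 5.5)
The plan is to compare the two surfaces chart by chart. Both are covered by the same kinds of pieces: the central normal-bundle charts, and over each \(p_i\) the toric charts of an arm. On each piece I will match coordinates by monomials, so the gluings agree and the \(\mathbb{C}^*\)-action given by the grading becomes scaling of \(x\).

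\emph{Step 1: away from the marked points.} With \(B=b[\infty]\), on \(U=\mathbb{P}^1\setminus\{0,1,\infty\}\) one has \(\lfloor k\mathcal{D}\rfloor|_U=\mathcal{O}_U\). On \(\mathbb{P}^1\setminus\{0,1\}\), \(\lfloor k\mathcal{D}\rfloor=\mathcal{O}(kb[\infty])\) away from \(\infty\)'s fractional correction. Thus over the complement of small disks around \(0,1,\infty\), \(\mathcal{A}\) is \(\operatorname{Sym}\mathcal{O}(B)\). Its relative Spec is the total space of \(\mathcal{O}(-b)\). Let \(x\) be the degree-one generator \(1\in H^0(\mathcal{O}(B))\) near \(z=0\), and \(x'\) the corresponding generator near \(\infty\). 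Then \(x'=z^bx\) and \(z'=z^{-1}\), which is the central transition of Definition~\ref{star:standard-plumbing}. The grading is the exponent of \(x\), so the \(\mathbb{C}^*\)-action is scaling of the normal coordinate.

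\emph{Step 2: at each \(p_i\).} Take the local coordinate \(t=t_p\), namely \(z\), \(z-1\), or \(z'\), and the matching generator \(x\), \(x\), or \(x'\). The local sections of \(\mathcal{O}(\lfloor k\mathcal{D}\rfloor)\) are the functions of order at least \(\lceil ka_i/n_i\rceil\), times \(x^k\). Hence, after completion (or over a small disk), \(\mathcal{A}\) is the semigroup ring spanned by \(t^jx^k\) with \(j,k\ge0\) and \(n_ij-a_ik\ge0\). This is the affine toric cyclic quotient singularity of type \(n_i/a_i\), embedded as the quotient of the rational cone. Its minimal resolution is the toric surface of the subdivided fan. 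The successive rays are given by the Hirzebruch--Jung recursion \(r_{j+1}=b_jr_j-r_{j-1}\) from Lemma~\ref{star:vector-arm}, with \([b_{i1},\dots,b_{is_i}]=n_i/a_i\). I will write the affine charts explicitly: the chart coordinates \((T_j,X_j)\) are the monomials attached to consecutive rays. Then \((T_1,X_1)=(x,t)=(n_p,t_p)\), and consecutive charts satisfy \(T'_j=T_j^{-1}\), \(X'_j=T_j^{b_j}X_j\), \((T_{j+1},X_{j+1})=(X'_j,T'_j)\). This verification is purely combinatorial: the determinant-one condition on adjacent rays and the relation \(r_{j-1}+r_{j+1}=b_jr_j\) give exactly these monomial substitutions. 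Since \(t,x\) are monomials, the \(\mathbb{C}^*\)-action scaling \(x\) is a subtorus action on each chart, which is compatible with Step 1.

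\emph{Step 3: gluing.} The toric charts over the punctured disk around \(p_i\) are identified with the central chart through the same coordinates \((t,x)\). So the resolved space is obtained from the central normal-bundle chart and the three arm chains by precisely the identifications of Definition~\ref{star:standard-plumbing}. Its exceptional divisor is the chain-star \(\Gamma\). All \(b_{ij}\ge2\), and the node is trivalent with smooth components, so this is the minimal good resolution. Restricting to a sufficiently small neighborhood of the exceptional divisor, which is allowed on both sides, gives the isomorphism with the standard plumbing realization. The grading on \(\mathcal{A}\) becomes scaling of the normal coordinate of the central curve.

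The main obstacle is Step 2 at \(p_3=\infty\). There the coordinate \(t_p=z'\) is used, and the divisor \(B\) also contributes, so the degree-one generator must be \(x'\) rather than \(x\). I will first check that the extra factor \(z^{kb}\) is absorbed exactly by the central transition \(x'=z^bx\). After that, the semigroup description near \(\infty\) in \((z',x')\) takes the same form as at \(0\) and \(1\). The remaining concern is orientation: the fan must be subdivided in the order that makes the first chart \((x,t)\) and not \((t,x)\). I will fix this by requiring the first exceptional curve \(C_1\) to be \(t=0\), which meets the strict transform of the central curve \(x=0\).
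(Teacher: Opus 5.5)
Your proposal is correct and is essentially the paper's argument. In both, the degree-\(k\) sections over each marked point are the monomials \(t^jx^k\) with \(nj-ak\geq0\), using \((z',x')\) and \(x'=z^bx\) at \(\infty\); the resulting cyclic quotient chart is resolved by the rays \(v_0=(0,1)\), \(v_1=(1,0)\), \(v_{j+1}=b_jv_j-v_{j-1}\), whose adjacent charts are exactly the plumbing charts; and the grading is carried by the character \(x\). The only difference is direction: the paper contracts the plumbing arms and matches the quotient charts with \(\operatorname{Spec}_{\mathbb{P}^1}\mathcal{A}\), whereas you start from \(\mathcal{A}\) and resolve torically.
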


\begin{proof}
Consider an arm with \(n/a=[b_1,\dotsc,b_s]\). In the lattice dual to the characters \((t,x)\) at its attachment, the plumbing charts have rays
\[
v_0=(0,1),\qquad v_1=(1,0),\qquad
v_{j+1}=b_jv_j-v_{j-1}\quad(1\leq j\leq s).
\]
The continued-fraction recurrence gives \(v_{s+1}=(n,-a)\), and each adjacent pair has determinant \(-1\). Contracting the arm gives the cyclic quotient chart with invariant characters
\[
t=u^n,\qquad x=u^{-a}v,
\]
for the action \((u,v)\mapsto(\zeta u,\zeta^a v)\), where \(\zeta^n=1\). These characters form a basis of the invariant character lattice. For \(k\geq0\), a function \(g(t)x^k\) is regular on the quotient chart precisely when
\[
\operatorname{ord}_0(g)\geq\left\lceil\frac{ka}{n}\right\rceil.
\]
Together with the central transition \(x'=z^b x\), these conditions identify the contracted plumbing with a neighborhood of the zero section in \(\operatorname{Spec}_{\mathbb{P}^1}\mathcal{A}\). Resolving its quotient charts recovers the original plumbing charts. The character \(x\) has degree one in both constructions, so this identification preserves the grading.
\end{proof}

\begin{lemma}\label{star:local-direct-images}
Let \(f:\widetilde U\to U\) be the minimal resolution of one of the cyclic quotient charts in Pinkham's partial resolution. Let \(B\) be the reduced union of the strict transform of the central curve and the exceptional chain, and put \(M=\mathcal{O}_{\widetilde U}(-(K_{\widetilde U}+B))\). Then
\[
R^1f_*\mathcal{O}_{\widetilde U}=R^1f_*M=0.
\]
\end{lemma}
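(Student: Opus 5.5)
The plan is to exploit the toric structure of the chart. By Lemma~\ref{star:plumbing-model}, the quotient chart \(U\) is the affine toric surface of the cone spanned by \(v_0=(0,1)\) and \(v_{s+1}=(n,-a)\). Its minimal resolution \(\widetilde U\) is the smooth toric surface whose fan has rays \(v_0,v_1,\dotsc,v_{s+1}\), with \(v_{j+1}=b_jv_j-v_{j-1}\). The torus-invariant prime divisors are as follows:
\begin{itemize}[labelindent=0pt, leftmargin=*, align=left]
\item \(D_0\), corresponding to \(v_0\), is a disk in the strict transform of the central curve;
\item \(C_1,\dotsc,C_s\), corresponding to \(v_1,\dotsc,v_s\), are the compact components of the exceptional chain;
\item \(D_{s+1}\), corresponding to \(v_{s+1}\), is the noncompact invariant curve at the far end of the arm. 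It is the strict transform of the second invariant curve of \(U\).
\end{itemize}
Thus \(B=D_0+C_1+\cdots+C_s\) is the toric boundary with \(D_{s+1}\) omitted.

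For the first vanishing, I would argue as in the proof of Proposition~\ref{red:chains}. Every \(b_j\geq2\), so \(\mathcal{O}_C(-mC)\) has nonnegative degree on each component of the reduced chain \(C=C_1+\cdots+C_s\) for \(m\geq0\). Normalization on the rational chain then gives \(H^1(C,\mathcal{O}_C(-mC)|_C)=0\). Applying the argument of Lemma~\ref{fnd:formal-vanishing} to the proper map \(f\) over the Stein base \(U\) gives \(R^1f_*\mathcal{O}_{\widetilde U}=0\). Alternatively, one can simply cite that toric (cyclic quotient) singularities are rational.

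For \(M\), the key step is the toric canonical class identity. On a smooth toric surface, \(K_{\widetilde U}\) is linearly equivalent to minus the sum of all invariant divisors, witnessed by the invariant \(2\)-form \(\frac{dt}{t}\wedge\frac{dx}{x}\). Hence
\[
K_{\widetilde U}+B\sim -D_{s+1},
\qquad
M\simeq\mathcal{O}_{\widetilde U}(D_{s+1}).
\]
Next I would use the exact sequence
\[
0\to\mathcal{O}_{\widetilde U}\to\mathcal{O}_{\widetilde U}(D_{s+1})\to\mathcal{O}_{D_{s+1}}(D_{s+1})\to0.
\]
The restriction of \(f\) to the smooth curve \(D_{s+1}\) is finite; it is an isomorphism onto its image away from the point, and \(D_{s+1}\) meets the chain in one point. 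So \(R^1f_*\) of the last term vanishes. The first term has vanishing \(R^1f_*\) by the previous step. The long exact sequence of direct images then gives \(R^1f_*M=0\).

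The main obstacle is ensuring the identification \(K_{\widetilde U}+B\sim-D_{s+1}\) holds as an honest linear equivalence on the analytic neighborhood, rather than only up to the central-curve chart. This holds because the chart of Lemma~\ref{star:plumbing-model} is literally the toric variety, so the invariant form is global on \(\widetilde U\).

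As a cross-check on orientation, I would also verify the vanishing directly with cycles via Lemma~\ref{fnd:formal-vanishing}. Since \(D_{s+1}\cdot C_s=1\) and \(D_{s+1}\cdot C_j=0\) for \(j<s\), the restrictions \(M(-mC)|_C\) have nonnegative degrees on every component. Their \(H^1\) therefore vanishes on the rational chain, which confirms the result without the exact sequence.
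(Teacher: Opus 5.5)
Your proof is correct, but it takes a different route from the paper's.

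\textbf{What is shared.} Both arguments begin from \(M\simeq\mathcal{O}_{\widetilde U}(D_{s+1})\), obtained from the toric canonical divisor.

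\textbf{The paper's route.} It computes \(H^1\) one character at a time. It covers \(\widetilde U\) by its affine toric charts and identifies the contribution of each character with one less than the number of connected components of the set of rays where regularity fails. The recurrence \(r_{i+1}=b_ir_i-r_{i-1}\) shows that this set is an interval for functions, and that it stays connected when the last condition is relaxed to \(r_{s+1}\geq-1\). Affineness of \(U\) then gives both vanishings at once.

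\textbf{Your route.} You obtain \(R^1f_*\mathcal{O}_{\widetilde U}=0\) from formal functions and the nonnegative degrees of \(\mathcal{O}_C(-mC)\) on the rational chain, or from rationality of cyclic quotient singularities. You then deduce \(R^1f_*M=0\) in one of two ways. The first uses the sequence \(0\to\mathcal{O}\to\mathcal{O}(D_{s+1})\to\mathcal{O}_{D_{s+1}}(D_{s+1})\to0\) together with finiteness of \(f|_{D_{s+1}}\). The second works directly from the degrees \(\delta_{js}+m(-C\cdot C_j)\geq0\) of \(M(-mC)|_{C_j}\) via Lemma~\ref{fnd:formal-vanishing}. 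Your cross-check is already a complete proof on its own.

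\textbf{What each buys.} Your argument is shorter and uses only tools already in the paper, namely Lemma~\ref{fnd:formal-vanishing} and normalization on rational chains. It avoids the Mayer--Vietoris count of connected components, which the paper states rather tersely. The paper's argument stays inside the monomial exponent calculus used throughout Section~\ref{sec:stars} and treats both sheaves uniformly. It also gives the vanishing weight by weight, without formal functions or coherence of higher direct images.
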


\begin{proof}
Write the toric boundary of \(\widetilde U\) as \(D_0,\ldots,D_{s+1}\), where \(D_0\) is the strict transform of the central curve, \(D_1,\ldots,D_s\) form the exceptional chain, and \(D_{s+1}\) is the noncompact outer divisor. Since \(B=D_0+\cdots+D_s\), the toric canonical divisor gives \(M\simeq\mathcal{O}_{\widetilde U}(D_{s+1})\). For a fixed character, write \(r_i\) for its order along \(D_i\). These orders satisfy
\[
r_{i+1}=b_i r_i-r_{i-1},\qquad b_i\geq2.
\]
The regularity conditions for functions are \(r_i\geq0\) on every ray. For \(M\), only the last condition changes, to \(r_{s+1}\geq-1\).

Cover \(\widetilde U\) by its successive affine toric charts. A character contributes a section on a chart if the conditions along both boundary divisors hold; on the intersection with the preceding charts, only the condition along the common divisor is required. The resulting Mayer--Vietoris sequences identify the dimension of its first cohomology with one less than the number of connected components of the set of ray indices where regularity fails, or with zero if this set is empty.

For functions, that set is an interval. Indeed, once a negative order is followed by a nonnegative order, the subsequent orders are positive and increasing; once a nonnegative order is followed by a negative order, the subsequent orders are negative and decreasing. Relaxing the condition at the last ray to \(r_{s+1}\geq-1\) cannot disconnect the set of failures. Thus first cohomology vanishes for every character of either sheaf. Since \(U\) is affine, this proves both higher-direct-image vanishings.
\end{proof}

\begin{proposition}\label{star:graded}
Let \((X_{\mathrm{w}},E_{\mathrm{w}})\) be the minimal good resolution of the weighted homogeneous realization of \(\Gamma\). With the grading indexed by the normal coordinate of the central curve, one has
\[
H^1(X_{\mathrm{w}},\mathcal{O}_{X_{\mathrm{w}}})_k
 \simeq H^1(\mathbb{P}^1,\mathcal{O}_{\mathbb{P}^1}(q_k)),
\qquad
H^1(X_{\mathrm{w}},L_{X_{\mathrm{w}}})_k
 \simeq H^1(\mathbb{P}^1,\mathcal{O}_{\mathbb{P}^1}(q_k-1+c_k))
\]
for \(k \geq 0\), and the negative-degree groups vanish. The Euler derivation gives an exact sequence
\begin{equation}\label{star:euler-sequence}
0 \to \mathcal{O}_{X_{\mathrm{w}}} \to S_{X_{\mathrm{w}}} \to L_{X_{\mathrm{w}}} \to 0.
\end{equation}
In particular, if \(q_k+c_k < 0\), then \(H^1(X_{\mathrm{w}},S_{X_{\mathrm{w}}})_k \neq 0\).
\end{proposition}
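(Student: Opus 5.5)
The plan is to pull both graded computations back to \(\mathbb{P}^1\) through the morphism \(\rho\colon X_{\mathrm{w}}\to Y_{\mathrm{w}}\to\mathbb{P}^1\). First I will show that higher direct images along the partial resolution vanish. Since \(\rho_0\colon X_{\mathrm{w}}\to Y_{\mathrm{w}}\) is the minimal resolution of the three cyclic quotient points, Lemma~\ref{star:local-direct-images} gives \(R^1\rho_{0*}\mathcal{O}=0\). Near each quotient point \(L_{X_{\mathrm{w}}}=\mathcal{O}(-(K+E))\) restricts to the sheaf \(M\) of that lemma, so \(R^1\rho_{0*}L_{X_{\mathrm{w}}}=0\) as well. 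Leray therefore reduces both groups to cohomology on \(Y_{\mathrm{w}}\).

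Next I will use that \(Y_{\mathrm{w}}\to\mathbb{P}^1\) is affine with \(\mathcal{O}\)-algebra \(\mathcal{A}\). The \(\mathbb{C}^*\)-action, identified with the normal scaling by Lemma~\ref{star:plumbing-model}, decomposes everything by degree.

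For functions, \(\rho_*\mathcal{O}_{X_{\mathrm{w}}}=\mathcal{A}\) has degree-\(k\) piece \(\mathcal{O}(\lfloor k\mathcal{D}\rfloor)\) of degree \(q_k\). This gives \(H^1(X_{\mathrm{w}},\mathcal{O})_k=H^1(\mathbb{P}^1,\mathcal{O}(q_k))\). The pieces in negative degree vanish because \(\mathcal{A}\) has no negative-degree part.

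For \(L\), I will compute \(\rho_{0*}L_{X_{\mathrm{w}}}\) as the reflexive sheaf \(\omega_{Y_{\mathrm{w}}}(E_0)^{\vee}\) on \(Y_{\mathrm{w}}\), where \(E_0\) is the zero section. Its graded pieces are determined by the character calculation already done in the proof of Lemma~\ref{star:vector-arm} and Lemma~\ref{star:local-direct-images}. The condition at the outer ray relaxes from \(r_{s+1}\ge0\) to \(r_{s+1}\ge-1\), so \(\lceil ka_i/n_i\rceil\) is replaced by \(d_i=\lceil(ka_i-1)/n_i\rceil\). The central part contributes \(\Theta_{\mathbb{P}^1}\) of degree \(2\), twisted by the conormal factor, giving \(bk-1\) before the arm corrections. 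The degree-\(k\) piece is then a line bundle of degree \(bk-1-\sum d_i=q_k+c_k-1\), using \(m=q_k+c_k\). Degrees \(k<0\) will be checked to be \(\le -2\) and zero respectively: in negative degree the ceilings force sections to vanish, and the affine structure yields no cohomology.

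For the Euler sequence, the Euler derivation of the \(\mathbb{C}^*\)-action is a global section of \(\Theta_X\) that is tangent to \(E\). It is therefore a section of \(S_X\), and it is nowhere zero on \(X_{\mathrm{w}}\): away from \(E\) it generates the orbit direction, and along \(E\) it is the normal Euler field of Lemma~\ref{fnd:component} on the central curve and a toric Euler field on the arms. The inclusion \(\mathcal{O}\to S_X\) then has locally free quotient \(\det S_X=L_X\).

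Taking degree-\(k\) parts of the long exact sequence, and using \(H^2=0\) from Lemma~\ref{fnd:restriction}, gives a surjection \(H^1(S)_k\twoheadrightarrow H^1(L)_k\). Finally, \(q_k+c_k<0\) means the degree \(q_k+c_k-1\) is at most \(-2\), so \(H^1(L)_k\ne0\), and hence \(H^1(S)_k\ne0\).

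The main obstacle is the precise identification of the graded pieces of \(\rho_*L\) with \(\mathcal{O}(\lfloor k\mathcal{D}\rfloor+K_{\mathbb{P}^1}^{-1}+\sum_i(\text{corrections}))\) of degree exactly \(q_k-1+c_k\). This requires checking that the exceptional condition \(ka_i\equiv1\pmod{n_i}\) produces exactly one extra section per arm. I would attack it through the character inequality \(r_{s+1}\ge-1\) at each quotient chart, which reads \(nj-ak\ge -1\) exactly as in Lemma~\ref{star:vector-arm}.
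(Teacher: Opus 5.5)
Your proposal is correct and follows the paper's route: Lemma~\ref{star:local-direct-images} with Leray, affineness of the partial resolution over \(\mathbb{P}^1\), the character computation at the three quotient charts, and the Euler sequence with \(H^2=0\). The differences are minor: you compute in the logarithmic frame \(z\partial_z\wedge T\partial_T\) with vanishing orders \(d_i\), whereas the paper uses \(\partial_z\wedge T\partial_T\) with orders \(\lceil(ka_i+n_i-1)/n_i\rceil=d_i+1\); and you check the nonvanishing of the Euler field by hand instead of citing Wahl.

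Two small corrections are needed. First, in your frame the central contribution is \(\Theta_{\mathbb{P}^1}(-p_1-p_2-p_3)\) twisted by the \(k\)-th power of the conormal bundle, which has degree \(bk-1\); it is not \(\Theta_{\mathbb{P}^1}\) of degree \(2\). Second, at the point where the last arm component meets the noncompact outer curve, the toric Euler field is nonzero in \(S_X\) only because the \(\mathbb{C}^*\)-cocharacter (the ray of the central curve) is not proportional to the outer ray, and this should be said explicitly.
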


\begin{proof}
Contract the arms to the cyclic quotient points of the partial resolution over the central \(\mathbb{P}^1\). At the \(i\)-th point use the quotient chart
\[
(u,v) \mapsto (\zeta u,\zeta^{a_i}v),\qquad
\zeta^{n_i} = 1,\qquad
z = u^{n_i},\qquad T = u^{-a_i}v.
\]
The central curve is \(v = 0\). A degree-\(k\) function \(g(z)T^k\) is regular if and only if \(n_i\operatorname{ord}(g) \geq ka_i\). The degree-\(k\) function sheaf on the central curve therefore has degree \(q_k\).

For the determinant line use
\[
\partial_z\wedge T\partial_T
 = \frac1{n_i}u^{1-n_i}\partial_u\wedge v\partial_v.
\]
Regularity requires \(n_i\operatorname{ord}(g)\geq ka_i+n_i-1\). Since \(\lceil(ka_i+n_i-1)/n_i\rceil=\lceil ka_i/n_i\rceil+1-\mathbf{1}_{\{ka_i\equiv1\pmod{n_i}\}}\), the resulting line bundle has degree
\[
2+bk-\sum_i\left\lceil\frac{ka_i+n_i-1}{n_i}\right\rceil=q_k-1+c_k.
\]
These conditions also give extension across the resolved arms. In a toric logarithmic bivector frame, a monomial has nonnegative exponents along the compact toric divisors and may have exponent \(-1\) along the noncompact outer divisor. The exponents satisfy \(r_{j+1} = b_{ij}r_j-r_{j-1}\). If an interior exponent becomes negative after a nonnegative predecessor, the following exponent is at most \(-2\) and the negative exponents subsequently decrease. Thus the regularity inequality along the outer divisor implies the regularity inequalities along all compact toric divisors. 

The partial resolution is affine over \(\mathbb{P}^1\): locally its projection is \(\operatorname{Spec}\mathbb{C}[u,v]^{\mu_{n_i}} \to \operatorname{Spec}\mathbb{C}[z]\), and globally it is the relative spectrum of the graded algebra with degree-\(k\) part \(\mathcal{O}_{\mathbb{P}^1}(q_k)\). The pushforward of the determinant line has degree-\(k\) part \(\mathcal{O}_{\mathbb{P}^1}(q_k-1+c_k)\). Lemma~\ref{star:local-direct-images} and Leray prove the formulas. A negative degree violates the regularity inequality along the central component, so no negative-degree part occurs. Finally, \(b-\sum_i a_i/n_i > 0\) makes the degrees of both line bundles tend to infinity. Only finitely many weight spaces of the cohomology groups are nonzero; the theorem on formal functions identifies their sum with the cohomology on a sufficiently small analytic resolution neighborhood.

The existence of the nowhere-zero Euler logarithmic section, including at the cyclic quotient resolutions, is proved by Wahl~~\cite[Proposition~3.1]{Wahl2015}. Its quotient is the determinant line, proving \eqref{star:euler-sequence}. Since \(H^2(X_{\mathrm{w}},\mathcal{O}_{X_{\mathrm{w}}}) = 0\), the resulting map \(H^1(S_{X_{\mathrm{w}}}) \to H^1(L_{X_{\mathrm{w}}})\) is onto in every degree. This proves the assertion.
\end{proof}

The sufficiency argument uses two results about rational singularities. In every application of these results below, all exceptional self-intersections are at most \(-2\), so the minimal good resolution is also the minimal resolution. For a rational singularity which is not a rational double point, Wahl~~\cite[Corollary~4.5(iii)]{Wahl2015} proves that \(h^1(L_X)\) is determined by the resolution graph. If its graph is noncyclic and star-shaped, then
\begin{equation}\label{star:rational-inequality}
h^1(S_X) \leq h^1(L_X).
\end{equation}
This inequality, with equality if and only if the singularity is quasi-homogeneous, is proved by Wahl~~\cite[Proposition~4.8]{Wahl2015}. These two results of Wahl suffice for the star-shaped graphs considered here. We treat rational double points separately, using the following lemma.

\begin{lemma}\label{star:all-two}
Let \((X,E)\) resolve a rational singularity, and suppose that \(E\) is a rational tree with every self-intersection equal to \(-2\). Then \(K_X\) is trivial on a sufficiently small resolution neighborhood and \(H^1(X,L_X) = 0\).
\end{lemma}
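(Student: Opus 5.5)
The plan has two parts. First show that \(K_X\) is trivial near \(E\). Then compute \(H^1(X,L_X)=H^1(X,\mathcal{O}_X(-K_X-E))\) by filtering \(\mathcal{O}_X(-E)\) and using rationality.

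\emph{Triviality of \(K_X\).} Every component is a smooth rational curve with \(E_v^2=-2\). Adjunction gives \(K_X\cdot E_v=-2-E_v^2=0\) for all \(v\), so \(K_X\) is numerically trivial on \(E\). For a rational singularity, line bundles on a small resolution neighborhood are determined by their degrees on the components. The reason is that \(H^1(X,\mathcal{O}_X)=0\), \(H^2(X,\mathcal{O}_X)=0\) (Lemma~\ref{fnd:restriction}) and \(H^2(X,\mathbb{Z})\simeq\mathbb{Z}^{\#E}\). Combining these with the exponential sequence gives \(\operatorname{Pic}(X)\simeq H^2(X,\mathbb{Z})\), which is detected by the component degrees. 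Alternatively one can argue on the finite cycles \(kE\): apply Lemma~\ref{fnd:nef-twists}-style arguments to get \(\operatorname{Pic}(kE)\simeq\operatorname{Pic}(E)\), and pass to the limit via formal functions. Either way \(K_X\simeq\mathcal{O}_X\) after shrinking.

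\emph{Vanishing of \(H^1(X,L_X)\).} Since \(K_X\) is trivial, \(L_X\simeq\mathcal{O}_X(-E)\). The sequence
\[
0\to\mathcal{O}_X(-E)\to\mathcal{O}_X\to\mathcal{O}_E\to0
\]
gives \(H^0(X,\mathcal{O}_X)\to H^0(E,\mathcal{O}_E)\to H^1(X,\mathcal{O}_X(-E))\to H^1(X,\mathcal{O}_X)=0\). The reduced divisor \(E\) is a connected tree of rational curves, so \(H^0(E,\mathcal{O}_E)=\mathbb{C}\). The constants map onto it, so the first map is surjective. Hence \(H^1(X,L_X)=0\).

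The only delicate point is the first step: identifying line bundles with their degree vectors. I would cite the standard fact for rational singularities (Lipman's \cite{Lipman1969} argument, that \(\operatorname{Pic}X\to\mathbb{Z}^{\#E}\) is injective) rather than reprove it. Note also that \(K_X\) is only required on a sufficiently small neighborhood, so shrinking is allowed.
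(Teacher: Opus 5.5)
Your proposal is correct and follows essentially the same route as the paper. Adjunction gives \(K_X\cdot E_v=0\), and the exponential sequence with \(H^1(X,\mathcal{O}_X)=H^2(X,\mathcal{O}_X)=0\) identifies \(\operatorname{Pic}(X)\) with \(H^2(X,\mathbb{Z})\), which is detected by component degrees. This step uses that a small neighborhood retracts onto the rational tree \(E\), which the paper states explicitly. The sequence \(0\to\mathcal{O}_X(-E)\to\mathcal{O}_X\to\mathcal{O}_E\to0\) together with rationality then gives \(H^1(X,L_X)=0\).
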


\begin{proof}
The sufficiently small resolution neighborhood retracts onto \(E\), as is seen from its tubular plumbing neighborhoods. Since \(H^1(X,\mathcal{O}_X) = H^2(X,\mathcal{O}_X) = 0\), the exponential sequence identifies \(\operatorname{Pic}(X)\) with \(H^2(X,\mathbb{Z})\). Because \(E\) is a rational tree, the latter group is identified with the component multidegrees. Adjunction gives \(K_X\cdot E_v = 0\) on every component, so \(K_X\) is trivial. Thus \(L_X \simeq \mathcal{O}_X(-E)\). The map \(H^0(X,\mathcal{O}_X) \to H^0(E,\mathcal{O}_E) = \mathbb{C}\) is onto by constants. The sequence \(0 \to \mathcal{O}_X(-E) \to \mathcal{O}_X \to \mathcal{O}_E \to 0\) and rationality give \(H^1(X,L_X) = 0\).
\end{proof}

Rational double points of types \(D\) and \(E\) are weighted homogeneous: by Brieskorn~~\cite[Theorems~2.10--2.11]{Brieskorn1968}, they are quotients of \(\mathbb{C}^2\) by finite linear groups, and scalar multiplication induces a positive grading on the invariant ring. Wahl~~\cite[Proposition~3.1]{Wahl2015} therefore gives the exact sequence
\[
0 \to \mathcal{O}_X \to S_X \to L_X \to 0.
\]
Rationality and Lemma~\ref{star:all-two} imply \(H^1(X,S_X)=0\).

\begin{lemma}\label{star:arithmetic}
Assume \(b=2\). The following numerical conditions are equivalent.
\begin{enumerate}
\item \(q_k+c_k\geq0\) for \(k=1,2,4\).
\item The arms belong to one of the three families in Theorem~\ref{star:classification}.
\item \(q_k\geq-1\) and \(q_k+c_k\geq0\) for every \(k\geq1\).
\end{enumerate}
Each of these three families is automatically negative definite.
\end{lemma}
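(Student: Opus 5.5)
We would prove the cycle (c)\(\Rightarrow\)(a)\(\Rightarrow\)(b)\(\Rightarrow\)(c). The first implication is trivial. Throughout we write \(x_i=a_i/n_i\in(0,1)\), so that \(q_k=2k-\sum_i\lceil kx_i\rceil\). We use two elementary dictionaries.
\begin{itemize}
\item The congruence \(ka\equiv1\pmod n\) with \(k=1\) means \(a=1\), that is, the arm is a single vertex \([p]\).
\item With \(k=2\) it means \(n=2a-1\), that is, \(n/a=2-1/a=[2,a]\), an arm \([2,m]\).
\end{itemize}
For general \(k\) we read \(ka\equiv1\pmod n\) as saying that the arm's continued fraction is the one attached to the inverse residue. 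Negative definiteness, \(2-\sum x_i>0\), will be checked family by family. It is immediate in the first row, where \(x_1,x_2\leq1/2\) and \(x_3<1\). In the other rows it follows from \(x_1\leq1/2\), \(x_2=m/(2m-1)\) and the bound on \(x_3\).

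For (a)\(\Rightarrow\)(b), we start with \(k=1\). Since \(q_1=2-3=-1\), we need \(c_1\geq1\), so some arm, called the first, is \([p]\) with \(x_1=1/p\leq1/2\).

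Next take \(k=2\). Here \(\lceil2x\rceil=1\) if \(x\leq1/2\) (equivalently \(n/a\geq2\)) and \(\lceil 2x\rceil=2\) otherwise.
\begin{itemize}
\item If some other arm has \(n/a\geq2\), we are in the first row.
\item Otherwise \(q_2=-1\), and \(c_2\geq1\) forces an arm of the form \([2,m]\). The arm \([p]\) with \(p=2\) has \(2a=2\not\equiv1\pmod 2\), so the \([2,m]\) arm must be the second arm, and the third arm has \(n_3/a_3<2\).
\end{itemize}

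It then remains to use \(k=4\), with \(x_2=m/(2m-1)\in(1/2,2/3]\) and \(x_3>1/2\).
\begin{itemize}
\item If \(p\geq3\), one checks that \(q_4+c_4\geq0\) automatically, which gives the second row.
\item If \(p=2\), then \(\lceil4x_1\rceil=2\) and \(\lceil4x_2\rceil=3\), so \(q_4=3-\lceil4x_3\rceil\). This is \(\geq0\) exactly when \(x_3\leq3/4\), i.e.\ \(n_3/a_3\geq4/3\). When \(x_3>3/4\) we get \(q_4=-1\). Then \(c_4\geq1\) must come from the third arm, since \(4a\equiv1\) fails for \([2]\) and for \([2,m]\) with \(x_2\leq2/3\) (the latter needs a small check at \(m=2\)). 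This forces \(n_3=4a_3-1\), i.e.\ \(n_3/a_3=[2,2,2,q]\). This is the third row.
\end{itemize}
This step needs a short but careful enumeration of residues, and we expect it to be routine.

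The main obstacle is (b)\(\Rightarrow\)(c), since it requires control for all \(k\).
\begin{itemize}
\item \emph{First row.} Use \(\lceil kx\rceil\leq\lceil k/2\rceil\) for \(x\leq1/2\) and \(\lceil kx_3\rceil\leq k\), which gives \(q_k\geq-1\). Equality forces \(k\) odd and \(\lceil k/p\rceil=(k+1)/2\). That happens only for \(k=1\) or \(p=2\), and in both cases the arm \([p]\) satisfies \(ka\equiv1\pmod n\), so \(c_k\geq1\).
\item \emph{Second and third rows.} The plan is the same: bound \(\lceil kx_1\rceil+\lceil kx_2\rceil+\lceil kx_3\rceil\leq 2k+1\) using \(x_1+x_2\leq 1/3+2/3\) (respectively \(1/2+2/3\) with \(x_3\leq3/4\)). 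Then characterize the equality cases modulo \(n_i\) and show that in each of them one of the three congruences \(ka_i\equiv1\) holds.
\end{itemize}
We expect this equality-case bookkeeping, especially for \([2,2,2,q]\) with \(x_3=q'/(4q'-1)\), to be the delicate part. We would organize it by writing \(kx_i=\lfloor kx_i\rfloor+\{kx_i\}\) and noting that a ceiling excess of one occurs exactly when the fractional parts are large. For \(x=a/(ra-1)\), the fractional part \(\{kx\}\) is close to \(1\) precisely when \(ka\equiv1\pmod n\).
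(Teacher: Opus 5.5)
Your skeleton (c)\(\Rightarrow\)(a)\(\Rightarrow\)(b)\(\Rightarrow\)(c) is the same as the paper's. Your degree-one and degree-two steps in (a)\(\Rightarrow\)(b) are correct, and so is your treatment of the first family in (b)\(\Rightarrow\)(c), which is slightly more uniform than the paper's split into \(p\geq3\) and \(p=2\).

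The degree-four step, however, contains an error. When \(p=2\) and \(x_3>3/4\), one has \(3n_3<4a_3<4n_3\). So \(4a_3\equiv1\pmod{n_3}\) forces \(4a_3=3n_3+1\), that is, \((n_3,a_3)=(4t+1,3t+1)\) and \(n_3/a_3=[2,2,2,t+1]\). Your relation \(n_3=4a_3-1\) would instead give \([4,a_3]\) with \(x_3<1/3\). The slip carries into your plan for the third family, where you use \(x_3=q'/(4q'-1)\) and the bound \(x_3\leq3/4\). In fact \(x_3=(3t+1)/(4t+1)\in(3/4,4/5]\), so this alternative is not covered by your bound. With \(x_3\leq4/5\) one still has \(\sum_i\lceil kx_i\rceil\leq2k+1\) and \(\sum_ix_i\leq59/30<2\). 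Likewise, negative definiteness in the second family needs \(x_1\leq1/3\), not \(x_1\leq1/2\), since \(x_3\) may be arbitrarily close to \(1\).

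The genuine gap is that (b)\(\Rightarrow\)(c) for the second and third families is only announced, and this all-degree verification is the substance of the lemma. The bound \(\sum_i\lceil kx_i\rceil\leq2k+1\) yields \(q_k\geq-1\). It does not show that some congruence \(ka_i\equiv1\pmod{n_i}\) holds whenever equality occurs. Your heuristic also points the wrong way: \(ka\equiv1\pmod n\) means \(\{kx\}=1/n\) is the \emph{smallest} positive fractional part, so the ceiling excess \(1-1/n\) is the largest.

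Explicit arguments such as the following are needed.

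In the second family, equality forces \(3\nmid k\). Then \((p-3)l<1\) for \(k=3l+1\), so \([p]\) contributes, or \(l(m-2)<1\) for \(k=3l+2\), so \([2,m]\) contributes.

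In the third family, odd \(k\) are covered by the arm \([2]\). For \(k=2l\) one has \(q_{2l}=\lfloor2lz\rfloor-\lceil l/n\rceil\), where \(z=1-a_3/n_3\geq1/5\) and \(n=2m-1\).
For \(n\geq5\), the lower bound \(\lfloor2l/5\rfloor-\lceil l/5\rceil\) is negative only at \(l=1,2\).
For \(n=3\), the function \(f(l)=\lfloor2l/5\rfloor-\lceil l/3\rceil\) satisfies \(f(l+15)=f(l)+1\). For \(1\leq l\leq15\) it equals \(-1\) exactly at \(l=1,2,4,7\).
When \(l\equiv1\pmod n\), the arm \([2,m]\) supplies the congruence. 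This covers \(l=1\), and for \(n=3\) also \(l=4,7\).
At \(l=2\), either \(z\geq1/4\) gives \(q_4\geq0\), or the arm \([2,2,2,t+1]\) contributes.

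For the arms \([2],[2,2],[2,2,2,2]\) one indeed has \(q_8=q_{14}=-1\). So the equality cases are not confined to small degrees, and an explicit argument of this kind cannot be skipped. Without it your proposal does not establish (b)\(\Rightarrow\)(c).
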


\begin{proof}
Assume the first condition. Since \(q_1 = -1\), some arm satisfies \(a_i = 1\), so it consists of a single curve \([p]\). If another continued fraction is at least \(2\), then the first family occurs. Otherwise both other continued fractions are less than \(2\), and the integers \(\lceil2a_i/n_i\rceil\) are \(1,2,2\). Hence \(q_2 = -1\). The arm \([p]\) contributes nothing to \(c_2\), so one of the two other arms satisfies \(2a \equiv 1\pmod n\). Because \(n < 2a < 2n\), this is \(2a = n+1\). Its unique Hirzebruch--Jung expansion is
\[
\frac{n}{a} = \frac{2m-1}{m} = [2,m].
\]
If \(p \geq 3\), the second family occurs. If \(p = 2\), write \(n/a\) for the remaining continued fraction. If \(n/a \geq 4/3\), then the first alternative of the third family holds. If \(n/a < 4/3\), then the integers \(\lceil4a_i/n_i\rceil\) are \(2,3,4\), so \(q_4 = -1\). Neither \([2]\) nor \([2,m]\) contributes to \(c_4\), because \(4m \equiv 2\pmod{2m-1}\). Thus \(4a \equiv 1\pmod n\). Since \(3n < 4a < 4n\), one has \(4a = 3n+1\), which gives
\[
(n,a) = (4t+1,3t+1),\qquad
\frac{n}{a} = [2,2,2,t+1].
\]
This proves \((1)\Rightarrow(2)\) using only the three specified degrees, without a rationality assumption.

We prove the inequalities for every \(k \geq 1\) in each family. In the first family, order the arms so that the first is \([p]\) and \(n_2/a_2 \geq 2\). If \(p \geq 3\), then for every \(k \geq 2\),
\[
\left\lceil\frac{k}{p}\right\rceil+\left\lceil\frac{ka_2}{n_2}\right\rceil
 \leq \left\lceil\frac{k}{3}\right\rceil+\left\lceil\frac{k}{2}\right\rceil
 \leq k.
\]
The last inequality follows by writing \(k\) as \(2l\) or \(2l+1\). Since \(\left\lceil\frac{ka_3}{n_3}\right\rceil \leq k\), this gives \(q_k \geq 0\). At degree one the arm \([p]\) contributes to \(c_1\). If \(p = 2\), the sum \(\left\lceil\frac{ka_1}{n_1}\right\rceil+\left\lceil\frac{ka_2}{n_2}\right\rceil\) is at most \(2\lceil k/2\rceil\). Thus \(q_k \geq 0\) for even \(k\), while \(q_k \geq -1\) for odd \(k\), when the arm \([2]\) contributes to \(c_k\). This proves both inequalities in the first family.

In the second family put \(n = 2m-1\). Since \(p \geq 3\) and \(m/n \leq 2/3\),
\[
\left\lceil\frac{k}{p}\right\rceil+\left\lceil\frac{km}{n}\right\rceil
 \leq \left\lceil\frac{k}{3}\right\rceil+\left\lceil\frac{2k}{3}\right\rceil
 \leq k+1.
\]
Hence \(q_k \geq -1\). If \(q_k = -1\), equality holds in this bound and \(3\nmid k\). For \(k = 3l+1\), equality requires \(\lceil k/p\rceil = l+1\), hence \((p-3)l < 1\). Thus \(l = 0\) or \(p = 3\), and \(k \equiv 1\pmod p\); the arm \([p]\) contributes to \(c_k\). For \(k = 3l+2\), equality requires \(\lceil km/n\rceil = 2l+2\), hence \(l(m-2) < 1\). Thus \(l = 0\) or \(m = 2\), and \(km \equiv 1\pmod n\); the arm \([2,m]\) contributes to \(c_k\). This proves the inequalities in the second family.

In the third family put \(n = 2m-1\) and \(z = 1-a_3/n_3\). In its first alternative \(z \geq 1/4\). In the second alternative \(z = t/(4t+1) \geq 1/5\). For odd \(k\), substitution in the definition of \(q_k\) gives
\[
q_k = \lfloor kz\rfloor-\left\lceil\frac12+\frac{k}{2n}\right\rceil
 \geq \left\lfloor\frac{k}{5}\right\rfloor-\left\lceil\frac12+\frac{k}{6}\right\rceil
 \geq -1.
\]
Indeed, for odd \(k\),
\[
\left\lceil\frac{k}{6}-\frac12\right\rceil
 = \left\lfloor\frac{k+1}{6}\right\rfloor
 \leq \left\lfloor\frac{k}{5}\right\rfloor.
\]
For \(k \geq 5\) the last inequality follows from \((k+1)/6 \leq k/5\), and \(k = 1,3\) are immediate. The arm \([2]\) contributes to \(c_k\) in every odd degree.

For even \(k = 2l\), the formula becomes
\[
q_{2l} = \lfloor2lz\rfloor-\left\lceil\frac{l}{n}\right\rceil,
\]
and the arm \([2,m]\) contributes to \(c_{2l}\) if and only if \(l \equiv 1\pmod n\). If \(n \geq 5\), then
\[
q_{2l} \geq \left\lfloor\frac{2l}{5}\right\rfloor-\left\lceil\frac{l}{5}\right\rceil.
\]
Writing \(l = 5s+r\), \(0 \leq r < 5\), the expression on the right is \(s\) for \(r = 0,3,4\) and \(s-1\) for \(r = 1,2\). It is nonnegative except at \(l = 1,2\), where it is \(-1\). At \(l = 1\) the arm \([2,m]\) contributes to \(c_2\). At \(l = 2\), either \(z \geq 1/4\) gives \(q_4 \geq 0\), or the exceptional third arm contributes to \(c_4\).

It remains to take \(n = 3\). Define
\[
f(l) = \left\lfloor\frac{2l}{5}\right\rfloor-\left\lceil\frac{l}{3}\right\rceil.
\]
One has \(f(l+15)=f(l)+1\), and for \(1\leq l\leq15\) the values are
\[
f(l)=\begin{cases}
-1,&l\in\{1,2,4,7\},\\
1,&l=15,\\
0,&\text{otherwise}.
\end{cases}
\]
At \(l=1,4,7\), the arm \([2,2]\) contributes to \(c_{2l}\); the calculation for \(k=4\) treats \(l=2\). Thus the inequalities hold in every degree. This proves \((2)\Rightarrow(3)\); the implication \((3)\Rightarrow(1)\) is immediate.

For negative definiteness, in the first two families one has \(a_1/n_1+a_2/n_2\leq1\), while \(a_3/n_3<1\). In the third family, \(\sum_i a_i/n_i\leq1/2+2/3+4/5=59/30<2\).
\end{proof}

\begin{theorem}\label{star:classification}
If \(b \geq 3\), every negative definite graph \(\Gamma\) as above is rational and taut. If \(b = 1\), no such graph is taut. If \(b = 2\), the following conditions are equivalent.
\begin{enumerate}
\item The graph \(\Gamma\) is taut.
\item The inequalities \(q_k \geq -1\) and \(q_k+c_k \geq 0\) hold for every \(k \geq 1\).
\item Those inequalities hold for \(k = 1,2,4\).
\item After a permutation of the arms, one of the following holds:
\begin{enumerate}
\item the arms include \([p]\), \(p \geq 2\), and an arm with continued fraction \(n/a \geq 2\);
\item the arms include \([p]\), \(p \geq 3\), and \([2,m]\), \(m \geq 2\);
\item the arms are \([2]\), \([2,m]\), \(m \geq 2\), and an arm with continued fraction \(n/a \geq 4/3\), or with expansion \([2,2,2,t+1]\), \(t \geq 1\).
\end{enumerate}
\end{enumerate}
Every graph in the three displayed families is rational. The unspecified third arm in the first two families is arbitrary.
\end{theorem}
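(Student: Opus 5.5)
We prove the three assertions in turn: \(b\geq3\), \(b=1\), and \(b=2\). In every case tautness is tested through Theorem~\ref{fnd:tautness}.

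\emph{Nonvanishing.} To exclude tautness, it suffices to find one realization and one degree \(k\) with \(q_k+c_k<0\). As observed after Corollary~\ref{star:central-obstruction}, this forces \(c_k\leq2\) and \(m=q_k+c_k<0\). The corollary then gives a nonzero class in \(H^1((k+1)D,S_X|_{(k+1)D})\) for the standard plumbing realization. Lemma~\ref{fnd:restriction} carries it back to a nonzero class in \(H^1(X,S_X)\).

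For \(b=1\), take \(k=1\). Each arm has \(\lceil a_i/n_i\rceil=1\), so \(q_1=1-3=-2\). Moreover \(c_1\leq3\), and \(q_1+c_1=1\) only if all three arms are single \([2]\)-type curves with \(a_i=1\); but then negative definiteness \(1-\sum a_i/n_i>0\) fails. We will check that negative definiteness in fact forces \(c_1\leq1\), giving \(q_1+c_1<0\). If a borderline case survives, we try \(k=2\) instead. This settles \(b=1\).

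For \(b=2\), the same argument gives \((1)\Rightarrow(3)\): a failure of \(q_k+c_k\geq0\) at some \(k\) yields nonvanishing. A failure of \(q_k\geq-1\) at one of \(k=1,2,4\) also implies a failure of \(q_k+c_k\geq0\), because \(c_k\leq1\) in the relevant situation. Lemma~\ref{star:arithmetic} gives \((2)\Leftrightarrow(3)\Leftrightarrow(4)\), and it also gives negative definiteness.

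\emph{Sufficiency.} For \(b\geq3\), and for \(b=2\) under \((2)\), we must show that every realization is rational and has \(H^1(X,S_X)=0\).

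Rationality is the first step. In the weighted homogeneous model, Proposition~\ref{star:graded} shows that \(h^1(\mathcal{O})\) is the sum of \(h^1(\mathcal{O}(q_k))\). This vanishes when \(q_k\geq-1\) for all \(k\geq0\). That inequality is hypothesis \((2)\) when \(b=2\); when \(b\geq3\) it follows because \(\lceil ka_i/n_i\rceil\leq k\). Rationality of a graph is determined by the graph, via Laufer/Artin's criterion, so every realization is rational.

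Next, \(h^1(L_X)\) is a graph invariant by Wahl, Corollary~4.5(iii), unless the singularity is a rational double point. We compute it on the weighted homogeneous model: it equals \(\sum_k h^1(\mathcal{O}(q_k-1+c_k))\), which vanishes when \(q_k+c_k\geq0\). For \(b\geq3\) we need to check \(q_k\geq0\) for \(k\geq1\): since \(a_i/n_i<1\), we have \(\lceil ka_i/n_i\rceil\leq k\), so \(q_k\geq bk-3k\geq0\). The inequality \eqref{star:rational-inequality} then gives \(h^1(S_X)\leq h^1(L_X)=0\) for every realization.

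The rational double point case occurs only when all weights are \(-2\), i.e.\ type \(D\) or \(E\). It is handled by the paragraph after Lemma~\ref{star:all-two}, since such singularities are taut as quotients.

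The main obstacle is bookkeeping rather than new ideas: we must ensure that Wahl's invariance of \(h^1(L_X)\) legitimately transfers the weighted homogeneous computation to every analytic realization, including the exclusion of rational double points. We must also verify the \(b=1\) degree choice against negative definiteness.
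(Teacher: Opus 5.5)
\paragraph{The gap: the case \(b=1\).} Here \(q_1=-2\), and \(c_1\) counts the arms that consist of a single curve. So \(q_1+c_1<0\) holds exactly when \(c_1\leq1\). Your claim that negative definiteness forces \(c_1\leq1\) is false:
\begin{itemize}
\item the arms \([4],[4],[4]\) give \(c_1=3\), with \(1-\tfrac34>0\);
\item the arms \([4],[4],[3,2]\) give \(c_1=2\), with \(1-\tfrac12-\tfrac25>0\).
\end{itemize}
Likewise, \(q_1+c_1=1\) occurs for any three single-curve arms, not only for three copies of \([2]\). In these cases your test ``\(q_k+c_k<0\)'' does not fire at \(k=1\). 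The fallback ``try \(k=2\)'' is never carried out, so the \(b=1\) assertion is not proved as written.

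The underlying oversight is that the dimension formula in Corollary~\ref{star:central-obstruction} has two terms. With \(m=q_k+c_k\) and \(c=c_k\leq2\), the first term \(\max(c-m-1,0)=\max(-q_k-1,0)\) is already positive when \(q_k\leq-2\), whatever the value of \(q_k+c_k\). The paper uses exactly this at \(k=1\): it obtains cokernel dimension \(3-c_1\geq1\) for \(c_1\leq2\), and passes to \(k=2\) only when \(c_1=3\), where \(q_2=-1\) and \(c_2=0\).

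Alternatively, \(k=2\) works uniformly for \(b=1\). Since \(2a_i-1\geq1\), each \(d_i=\lceil(2a_i-1)/n_i\rceil\geq1\), so \(m=2-\sum_id_i\leq-1\). This also forces \(c_2\leq2\), and the corollary then gives an obstruction on \(3D\) for every \(b=1\) graph.

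\paragraph{The rest of the proposal.} Elsewhere your outline follows the paper. Necessity for \(b=2\) goes through the corollary and Lemma~\ref{star:arithmetic}. Sufficiency goes through Proposition~\ref{star:graded}, Artin's criterion, Wahl's invariance of \(h^1(L_X)\), and inequality~\eqref{star:rational-inequality}, with the \(D\)/\(E\) rational double points treated separately.

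One side claim there should be dropped: that ``a failure of \(q_k\geq-1\) implies a failure of \(q_k+c_k\geq0\) because \(c_k\leq1\).'' This is false in general: the arms \([2,3],[2,3],[2,3]\) with \(b=2\) give \(q_2=-2\) and \(c_2=3\). The claim is also unnecessary. Lemma~\ref{star:arithmetic} needs only \(q_k+c_k\geq0\) for \(k=1,2,4\), and it returns \(q_k\geq-1\) in every degree.
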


\begin{proof}
Suppose first that \(b = 1\). In degree one with respect to the normal coordinate of \(P\), one has \(q_1 = -2\), and \(c_1\) counts the arms consisting of a single curve. If \(c_1 = 0,1,2\), the restriction map from the central section space to the three jet quotients has cokernel of dimension \(3,2,1\), respectively, by Corollary~\ref{star:central-obstruction}. If \(c_1 = 3\), every arm consists of a single curve. In degree two, each integer \(\lceil2a_i/n_i\rceil\) equals one and \(c_2 = 0\). Thus \(q_2 = -1\), and the same corollary gives a nonzero cokernel of the restriction map. Hence \(\Gamma\) is not taut.

For \(b=2\), tautness and Corollary~\ref{star:central-obstruction} imply \(q_k+c_k\geq0\) for \(k=1,2,4\). Lemma~\ref{star:arithmetic} then gives the three families and both inequalities in every degree. The same lemma proves the equivalence of the second, third, and fourth conditions. Moreover, for \(b=1\), or for \(b=2\) outside these families, the obstruction occurs in degree one, two, or four. Thus Corollary~\ref{star:central-obstruction} gives a standard plumbing realization with \(H^1(rE,S_X|_{rE})\neq0\) for some \(r\in\{2,3,5\}\).

Conversely, suppose that \(b=2\) and the arms belong to a displayed family. Lemma~\ref{star:arithmetic} gives \(q_k\geq-1\) and \(q_k+c_k\geq0\) for every \(k\geq1\). Proposition~\ref{star:graded} gives \(H^1(\mathcal{O}_{X_{\mathrm{w}}}) = H^1(L_{X_{\mathrm{w}}}) = 0\). Rationality is a property of the weighted resolution graph by the numerical criterion for rational singularities of Artin~~\cite[Theorem~3, p.~132]{Artin1966}. For singularities other than rational double points, graph invariance gives \(H^1(L_X) = 0\), and \eqref{star:rational-inequality} gives \(H^1(S_X) = 0\), for every realization. For rational double points, the argument following Lemma~\ref{star:all-two} gives \(H^1(S_X) = 0\) directly. The cohomological criterion for tautness proves sufficiency.

If \(b\geq3\), then \(\lceil ka_i/n_i\rceil\leq k\) gives \(q_k\geq(b-3)k\geq0\) and \(q_k+c_k\geq0\) for every \(k\geq1\). The preceding cohomological argument again proves rationality and tautness. Lemma~\ref{star:arithmetic} also supplies the asserted negative definiteness of the three families.
\end{proof}

In the first family, an arm with continued fraction \(n/a \geq 2\) is either the single curve \([2]\), or its first component has self-intersection at most \(-3\). In the third family, the inequality \(n/a \geq 4/3\) means that either the arm has at most three components, each with self-intersection \(-2\), or at most two initial components have self-intersection \(-2\) before the first component with self-intersection at most \(-3\). These equivalences follow by iterating \(r \mapsto 2-1/r\) and impose no restriction on the remaining self-intersections.

\section{Local restrictions at two branching components}\label{sec:representations}

Let \(E\) be a rational tree with exactly two trivalent components \(P\) and \(Q\). Write \(P^2 = -b_P\), and let \(s_P\) be the number of its two outward arms containing at least two components; define \(b_Q,s_Q\) similarly. Let \(C = C_1+\cdots+C_m\) be the reduced chain strictly between \(P\) and \(Q\), in that order, and write \(C_j^2 = -c_j\). Put
\[
d = \sum_{j = 1}^m(c_j-2).
\]
If the connecting chain is empty, set \(d = 0\). Sequences describing arms are read away from the branching component. Every component other than \(P,Q\) has self-intersection at most \(-2\).

\subsection{Necessary local restrictions}

Cohomology on cycles contained in \(2E\) gives necessary conditions on the node self-intersections and outward arms. One-node subgraphs impose further restrictions on initial components of self-intersection \(-2\).

\begin{proposition}\label{rep:first-layer}
Put \(R_{\mathrm{end}} = \sum E_i\), where the sum is over the four end components, and set \(E' = E-R_{\mathrm{end}}\). Then
\[
H^0(E',S_X|_{E'}) = H^1(E',S_X|_{E'}) = 0.
\]
Consequently the cycle \(Z_2 = E+E' \leq 2E\) satisfies
\[
H^1(Z_2,S_X|_{Z_2}) \simeq H^1(E,S_X(-E')|_E).
\]
If \(b_P = 1\), or if \(b_P = 2,s_P = 2\), then every analytic realization has \(H^1(X,S_X) \neq 0\).
\end{proposition}

\begin{proof}
On a trivalent component \(S_X\) restricts to \(\mathcal{O}\oplus\mathcal{O}(-1)\); on a component of valency two it restricts to \(\mathcal{O}^{\oplus2}\). Removing the outward chains in the normalization sequence for \(E'\) leaves the two normal Euler lines and the chain joining them. In the ordered logarithmic bases the transport across a component of self-intersection \(-b\) is
\[
T_b = \begin{pmatrix}b&1\\-1&0\end{pmatrix}.
\]
The cone \(\alpha > 0,\beta \leq 0,\alpha+\beta \geq 0\) is preserved by \(T_b\) for \(b \geq 2\), since the image has coordinate sum \((b-2)\alpha+\alpha+\beta\). The left normal Euler line starts at \((1,0)\), whereas the right line, in the ordered logarithmic bases at the crossing, is \((0,1)\). They cannot coincide after transport. Thus \(H^0(E',S_X|_{E'}) = 0\). The normalization sequence gives \(\chi(E',S_X|_{E'}) = 0\), so \(H^1\) is also zero. The exact sequence
\[
0 \to S_X(-E')|_E \to S_X|_{E+E'} \to S_X|_{E'} \to 0
\]
gives the asserted isomorphism.

Set \(F = S_X(-E')\). At \(P\), the equality \(E'\cdot P = -b_P+1+s_P\) gives
\[
F|_P \simeq 
\mathcal{O}_{\mathbb{P}^1}(b_P-1-s_P)
\oplus\mathcal{O}_{\mathbb{P}^1}(b_P-2-s_P).
\]
Normalization gives a natural surjection \(H^1(E,F|_E) \to H^1(P,F|_P)\). For \(b_P = 2,s_P = 2\), its target contains \(H^1(\mathcal{O}_{\mathbb{P}^1}(-2))\). For \(b_P = 1,s_P \geq 1\), its dimension is \(2s_P-1\). If \(b_P = 1,s_P = 0\), project the normalization targets to the two edges joining \(P\) to its outward end components. The target has dimension four, while the only contributing section spaces have total dimension three: one from \(P\) and one from each end component. Thus its cokernel is nonzero. Restriction from \(X\) to \(Z_2\) is surjective on \(H^1\) by Lemma~\ref{fnd:restriction}, proving the last assertion.
\end{proof}

\begin{lemma}[Boundary values after matching the outward arms]\label{rep:boundary-values}
Assume that \(b_P,b_Q\geq2\), and that \(s_U\leq1\) whenever \(b_U=2\), for \(U=P,Q\). Put \(F=S_X(-E')\). Let \(E_P\) be the reduced union of \(P\) and its two outward arms, and let \(p_P\) be the point where \(P\) meets the connecting path towards \(Q\). Define
\[
W_P=\operatorname{im}\left(H^0(E_P,F|_{E_P})\to F|_{p_P}\right),
\]
and define \(W_Q\) similarly. In ordered logarithmic bases, these spaces are
\[
\begin{array}{c|c}
(b_P,s_P)&W_P\\ \hline
b_P \geq 4,\ \text{or }b_P = 3,s_P \leq 1&\mathbb{C}^2\\
b_P = 3,s_P = 2&\mathbb{C}(0,1)\\
b_P = 2,s_P = 0&\mathbb{C}(1,-\tau_P)\\
b_P = 2,s_P = 1&0
\end{array}
\qquad
\tau_P = 2-\frac1p-\frac1q \geq 1
\]
in the third row, where the two outward end components have self-intersections \(-p,-q\). If \(d=0\), then
\[
\begin{gathered}
H^1(E,F|_E)\simeq\mathbb{C}^2/(Q_mW_P+W_Q),\\
Q_m=T_2^mJ=\begin{pmatrix}m&m+1\\1-m&-m\end{pmatrix},\qquad
J=\begin{pmatrix}0&1\\1&0\end{pmatrix}.
\end{gathered}
\]
If \(d>0\), then \(H^1(E,F|_E)=0\). Consequently, if the graph is taut and \(b_P=2,s_P=1,d=0\), then
\begin{equation}
b_Q \geq 4,\qquad\text{or}\qquad b_Q = 3,\ s_Q \leq 1.
\label{rep:condition-b}
\end{equation}
\end{lemma}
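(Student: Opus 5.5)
Work throughout with \(F=S_X(-E')\) on the reduced divisor \(E\), and compute \(H^1(E,F|_E)\) with the normalization sequence. The starting point is the component degrees of \(F\). As in the proof of Proposition~\ref{rep:first-layer}, at \(P\) one has
\[
F|_P\simeq\mathcal{O}(b_P-1-s_P)\oplus\mathcal{O}(b_P-2-s_P).
\]
On the other components:
\begin{itemize}
\item an end component \(C\) of weight \(-p\) has \(E'\cdot C=1\), so \(F|_C\simeq\mathcal{O}(1)\oplus\mathcal{O}(0)\), since \(S_X|_C\simeq\mathcal{O}\oplus\mathcal{O}(1)\) and \(\mathcal{O}(-E')|_C\simeq\mathcal{O}(-1)\);
\item an interior arm component of weight \(-c\) has \(E'\cdot C=2-c\), so \(F|_C\simeq\mathcal{O}(c-2)^{\oplus2}\);
\item an interior arm component adjacent to the end has \(E'\cdot C=1-c\), so \(F|_C\simeq\mathcal{O}(c-1)^{\oplus2}\);
\item a connecting chain component \(C_j\) has \(F|_{C_j}\simeq\mathcal{O}(c_j-2)^{\oplus2}\).
\end{itemize}

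Compute \(W_P\) first. Every arm component has nonnegative degrees, so \(H^1\) vanishes on each arm. The image \(W_P\) then comes from global sections on \(P\) subject to matching at the two outward crossings.

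Take an outward arm of length \(\geq2\). By the transport of Lemma~\ref{fnd:two-point-transport}, run from the end inward: the end component gives all values, the degrees \(c-2\geq0\) let values propagate freely, and so the arm imposes no condition at its crossing with \(P\). So I expect the full fiber \(\mathbb{C}^2\) to be attainable, though this needs checking with the twist.

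Take an outward arm \([p]\) of length one. The end component has \(F\) of degrees \(1,0\). Its sections at the single crossing span the fiber, but the twist by \(\mathcal{O}(-E')\), whose order differs from the earlier one, introduces a normal-Euler constraint. I expect this constraint to be a line condition whose slope depends on \(p\).

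Counting sections then gives the table. On \(P\), \(h^0(F|_P)=2b_P-2s_P-1\) (when nonnegative), and there are \(2-s_P\) one-dimensional jet conditions from the short arms.
\begin{itemize}
\item For \(b_P\geq4\), or for \(b_P=3\) with \(s_P\leq1\), enough sections remain to surject onto the fiber at \(p_P\).
\item For \(b_P=3\) with \(s_P=2\), we have \(F|_P=\mathcal{O}(0)\oplus\mathcal{O}(-1)\), so only the constant Euler direction \((0,1)\) survives.
\item For \(b_P=2\) with \(s_P=1\), we have \(F|_P=\mathcal{O}(0)\oplus\mathcal{O}(-1)\), and the one short arm condition kills the constant, giving \(0\).
\item For \(b_P=2\) with \(s_P=0\), we have \(F|_P=\mathcal{O}(1)\oplus\mathcal{O}(0)\) with two line conditions at \(0\) and \(\infty\). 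One direction survives, and evaluating at the third point gives \((1,-\tau_P)\), with \(\tau_P=2-1/p-1/q\) obtained by combining the slopes \(1/p,1/q\).
\end{itemize}
Computing this explicit vector, with correct signs in the ordered bases, is the step I expect to be the main obstacle. I would first carry it out in the standard plumbing coordinates of Definition~\ref{star:standard-plumbing}, then check that the answer is independent of the realization, using the fact that the arm conditions only involve monomial data.

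Next, handle the connecting chain. If \(d=0\), every \(C_j\) has \(F|_{C_j}\simeq\mathcal{O}^{\oplus2}\), so sections are constant and are transported by \(T_2\) at each crossing. Mayer--Vietoris over \(E=E_P\cup C\cup E_Q\) identifies \(H^1(E,F|_E)\) with \(\mathbb{C}^2/(Q_mW_P+W_Q)\), with the factor \(J\) accounting for the orientation swap at the final crossing. Here we use \(H^1=0\) on each piece: on \(E_P\) this holds because the degrees are \(\geq-1\) and the matching is surjective in the admissible cases. The closed form of \(T_2^m\) follows by induction.

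If \(d>0\), some \(C_j\) has degree \(\geq1\) in both summands. Sections on that component then take arbitrary independent values at its two crossings. This decouples the two sides, each side's transported image becomes surjective onto the fiber, and so the cokernel vanishes.

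Finally, the consequence. When \(b_P=2\), \(s_P=1\) and \(d=0\), we have \(W_P=0\), so \(H^1(E,F|_E)\simeq\mathbb{C}^2/W_Q\). This group is nonzero unless \(W_Q=\mathbb{C}^2\), which is exactly \eqref{rep:condition-b}. Tautness forces this: by Proposition~\ref{rep:first-layer}, \(H^1(E,F|_E)\simeq H^1(Z_2,S_X|_{Z_2})\), which is a quotient of \(H^1(X,S_X)\) by Lemma~\ref{fnd:restriction}, and \(H^1(X,S_X)\) vanishes for a taut graph by Theorem~\ref{fnd:tautness}.
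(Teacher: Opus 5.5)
Your skeleton matches the paper's proof. You compute \(W_P\) from sections of \(F|_P\) subject to the outward-arm conditions. You eliminate the connecting chain by normalization, with transport \(T_2\) and the swap \(J\). You use a component with \(c_j\geq3\) to decouple the two ends when \(d>0\). Finally, you deduce \eqref{rep:condition-b} from Proposition~\ref{rep:first-layer} and Lemma~\ref{fnd:restriction}. Those steps are fine, given the table.

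The gap is in the table itself, which is the substance of the lemma, and it starts with a degree error. On an end component \(C\) of weight \(-p\), the bundle \(S_X|_C\simeq\mathcal{O}\oplus\mathcal{O}(1)\) twisted by \(\mathcal{O}(-E')|_C\simeq\mathcal{O}(-1)\) is \(\mathcal{O}(-1)\oplus\mathcal{O}\), not \(\mathcal{O}(1)\oplus\mathcal{O}\). With your degrees, a leaf would span the fiber at its crossing and impose no condition. That would give \(W_P=\mathbb{C}(0,1)\) for \((b_P,s_P)=(2,1)\) and \(W_P=\mathbb{C}^2\) for \((2,0)\). The ``normal-Euler constraint introduced by the twist'' you invoke to avoid this is not derived from anything.

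With the correct degree, \(H^0(C,F|_C)\) is one-dimensional. In coordinates with \(r'=w^pr\) and the attachment at \(w=0\), it is spanned by \(w(w\partial_w-pr\partial_r)\). In the ordered frame of \(P\), its value is the line \(\mathbb{C}(1,-1/p)\), which is transverse to the normal Euler direction \((0,1)\). You need this transversality both for the row \((2,1)\) and for your claim that \(H^1(E_P,F|_{E_P})=0\).

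Similarly, for a long arm the end component does not ``give all values''. The arm is condition-free because its last nonterminal component carries \(\mathcal{O}(c-1)^{\oplus2}\) with \(c-1\geq1\), so its values at its two crossings can be prescribed independently. You computed this degree but did not use it.

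Beyond that, rows one and three are asserted rather than proved. A count such as \(2b_P-s_P-3\geq2\) does not by itself give surjectivity onto the fiber at \(p_P\), and you leave \((1,-\tau_P)\) uncomputed. What is missing is an explicit description of \(H^0(P,F|_P)\). Put \(p_P\) at \(x=0\) and the outward attachments at \(x=\pm1\). Sections are \(A(x)x(x^2-1)\partial_x+B(x)y\partial_y\), with \(\deg A\leq b_P-2-s_P\), \(\deg B\leq b_P-s_P\), and \(B_{b_P-s_P}=-b_PA_{b_P-2-s_P}\). The last relation comes from the first-order transition \(y'=x^{b_P}y\) at \(x=\infty\). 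That transition is unobstructed and so holds in every realization, which replaces your detour through standard plumbings and a ``monomial'' independence argument.

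The value at \(p_P\) is \((-A(0),B(0))\), and the leaves impose \(B(1)=-2A(1)/p\) and \(B(-1)=-2A(-1)/q\). The rows then follow:
\begin{itemize}
\item For \((2,1)\), you get \(A=0\) and \(B\) constant, and the leaf condition kills the constant.
\item For \((2,0)\), you get \(A=a\) and \(B=-2ax^2+ex+c\). Adding the two leaf equations gives \(c=\tau_Pa\), so \(W_P=\mathbb{C}(1,-\tau_P)\).
\item For row one, you must show that sections vanishing at \(p_P\) absorb the \(2-s_P\) short-arm conditions. For instance, replace \(B\) by \(B+xL(x)\) with \(\deg L\leq1-s_P\). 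Since \(b_P\geq3\), this leaves the leading relation untouched and does not change \((A(0),B(0))\). It again uses the transversality of the leaf lines to the Euler direction.
\end{itemize}
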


\begin{proof}
Choose the connecting attachment on \(P\) to be \(x = 0\), and the outward attachments to be \(x = 1,-1\). A section of \(F|_P\) is
\[
A(x)x(x^2-1)\partial_x+B(x)y\partial_y,
\]
after choosing a local generator of \(\mathcal{O}_X(-E')\), with
\[
\deg A \leq b_P-2-s_P,\qquad
\deg B \leq b_P-s_P,\qquad
B_{b_P-s_P} = -b_P A_{b_P-2-s_P}.
\]
If the bound on \(A\) is negative, \(A = 0\) and the degree of \(B\) drops by one. Its value at the attachment to the connecting chain is \((-A(0),B(0))\). An outward end component of self-intersection \(-p\) imposes \(B(1) = -2A(1)/p\); at \(-1\) the analogous equation is \(B(-1) = -2A(-1)/q\). An arm of length at least two imposes no condition: at its last nonterminal component the twist has positive degree, so its two values can be prescribed independently, while all preceding components have nonnegative degree and evaluation from its terminal component has one-dimensional image.

These equations verify every row. If \(b_P = 2,s_P = 1\), \(A = 0\), \(B\) is constant, and the remaining end condition forces \(B = 0\). If \(b_P = 3,s_P = 2\), \(A = 0\) and \(B\) is an arbitrary constant. If \(b_P = 2,s_P = 0\), \(A = a\), \(B = -2ax^2+dx+c\), and the two equations give \(c = (2-1/p-1/q)a\), hence the stated line. For \(b_P = 3,s_P = 1\), choose \(A\) constant and \(B = -3Ax^2+dx+c\); its free coefficient \(d\) can be chosen to satisfy the condition at the outward attachment without changing \(A,c\). For \(b_P = 3,s_P = 0\), choose \(A\) constant and use the two free coefficients of a quadratic \(B\) other than its prescribed constant to satisfy the conditions at both outward attachments. For \(b_P = 4,s_P = 2\), the constant \(A\) and \(B = -4Ax^2+dx+c\) already give arbitrary values at the connecting attachment. For all larger remaining values, interpolation with the same constant \(A\) and quadratic \(B\) satisfies the degree bounds and the conditions on the highest coefficients, giving the full fiber.

All component first cohomology groups vanish under the hypotheses. When \(d=0\), normalization therefore reduces the cohomology to the displayed quotient. The transport along the \(m\) connecting components is \(T_2^m\); the final exchange of tangent and normal entries gives the factor \(J\). For the empty chain this reads \(Q_0=J\), so the same formula applies. If a connecting component has self-intersection at most \(-3\), the twist has degree at least one on that component. Evaluation at its two attachment points is then surjective onto the product of the fibers, and the remaining cokernel is zero. Finally, when \(b_P=2,s_P=1,d=0\), the table gives \(W_P=0\). By Proposition~\ref{rep:first-layer} and the restriction surjection, tautness requires \(W_Q=\mathbb{C}^2\), which is exactly \eqref{rep:condition-b}.
\end{proof}

\begin{lemma}\label{rep:prefix-witness}\label{rep:prefix-necessity}
Suppose every component of a negative-definite graph with two nodes has self-intersection at most \(-2\). Let \(P^2 = -2\), with outward arms \([p]\) and \(A\). Either of the following conditions gives an analytic realization and an effective cycle \(Y\) with \(H^1(Y,S_X|_Y) \neq 0\):
\begin{enumerate}
\item \(A\) has length at least three, and the first component of each of \(A\) and the connecting chain has self-intersection \(-2\);
\item \(p = 2\), \(A = [2,a]\), and the first three components of the connecting chain have self-intersection \(-2\).
\end{enumerate}
One may take \(Y \leq 3E\) in the first case and \(Y \leq 5E\) in the second. Consequently neither condition occurs in a taut graph.
\end{lemma}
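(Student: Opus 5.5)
The plan is to reduce both conditions to the one-node obstruction of Corollary~\ref{star:central-obstruction}, using Lemma~\ref{fnd:forget-branches} to forget components. Let \(D\) be the reduced star consisting of \(P\), its two outward arms, and an initial segment of the connecting chain regarded as a third arm. The segment is cut off just before the node \(Q\) or at a chosen length. Every vertex of \(D\) has weight at most \(-2\), so it is a minimal good one-node graph with central weight \(-2\).

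\textbf{Realization and forgetting.} Start from the standard plumbing realization of \(D\) (Definition~\ref{star:standard-plumbing}) and enlarge it to a realization of the full graph by Lemma~\ref{fnd:plumbing}. For any cycle \(Y\) supported on \(D\), Lemma~\ref{fnd:forget-branches} gives
\[
H^1(Y,S_X|_Y)\twoheadrightarrow H^1(Y,S_D|_Y).
\]
Here \(S_D=\Theta_X(-\log D)\) agrees near \(D\) with the logarithmic tangent sheaf of the standard plumbing of \(D\). It therefore suffices to find \(Y=(k+1)D\) with \(k=2\) in case (a) and \(k=4\) in case (b), and to show that \(q_k+c_k<0\) for the star \(D\). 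The computation preceding Proposition~\ref{star:graded} rules out \(c_k=3\) in that situation. Corollary~\ref{star:central-obstruction} then gives a nonzero subspace of \(H^1((k+1)D,S_D|_{(k+1)D})\). Since \((k+1)D\leq(k+1)E\), this yields \(Y\leq3E\) and \(Y\leq5E\), respectively. Lemma~\ref{fnd:restriction} and Theorem~\ref{fnd:tautness} then exclude tautness.

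\textbf{Case (a).} Truncate the connecting chain to its first component \([2]\), which has \(a/n=1/2\). Let \(A=[2,a_2,\dots]\) have length at least three. Its fraction \(n/a\) lies in \((1,2)\). Then \(\lceil 2\cdot\tfrac12\rceil=1\) and \(\lceil 2/p\rceil=1\), while \(\lceil 2a_A/n_A\rceil=2\). This gives
\[
q_2=4-4=0,
\]
which is not negative. The truncation therefore has to be chosen differently, and \(k=1\) gives \(q_1=2-3=-1\) with \(c_1\geq1\) from the arm \([p]\). So one should look for the degree used in the proof of Lemma~\ref{star:arithmetic}. That argument forces the arm \(A\) to be \([2,m]\) when \(q_2=-1\). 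Here the star \([p],A,[2]\) has two arms with \(n/a<2\) and one arm \([p]\). The proof of Lemma~\ref{star:arithmetic} then says the configuration lies outside the three families unless \(A\) or the chain arm equals \([2,m]\).

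So I would choose the truncated chain arm to be \([2]\) if \(c_2\geq3\), or \([2,c_2]\) truncated to length one. The aim is to use \(k=2\) together with the fact that \(A\) of length \(\geq3\) cannot satisfy \(2a\equiv1\pmod n\). Since \(A\neq[2,m]\), \(c_2\) receives no contribution from \(A\). The chain arm \([2]\) contributes to \(c_2\) only if \(2\equiv1\pmod2\), which is false. The arm \([p]\) also gives no contribution. Recomputing with \(\lceil 2\cdot1/2\rceil=1\) for \([2]\) gives \(q_2=0\), so the chain arm must instead be taken of length two, \([2,c_2]\), whose ceiling is \(2\). Then \(q_2=4-(1+2+2)=-1\) and \(c_2=0\). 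If \([2,c_2]\) has \(c_2\) such that \(2a\equiv1\), truncating further or to length three avoids the exception. This case is the bookkeeping I expect to need care.

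\textbf{Case (b).} Take the third arm to be the first three chain components \([2,2,2]\), or \([2,2,2,\dots]\) truncated before \(Q\). Its fraction is below \(4/3\) and is not of the form \([2,2,2,t+1]\) after a suitable truncation. The obstruction in degree \(4\) from Lemma~\ref{star:arithmetic} then gives \(q_4=-1\), \(c_4=0\), hence \(Y=5D\leq5E\).

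The main obstacle is choosing the truncation length of the connecting chain so that the star lies outside Theorem~\ref{star:classification}'s families. The truncation must also keep the arm compatible with the forgetting lemma, so that the endpoint of the truncated arm becomes an end component of \(D\).
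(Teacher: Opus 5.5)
Your framework is the paper's. You realize a star-shaped full subgraph \(D\) by its standard plumbing and take the degree-\(2\) (resp.\ degree-\(4\)) obstruction of Corollary~\ref{star:central-obstruction} on \(3D\) (resp.\ \(5D\)). You then extend by Lemma~\ref{fnd:plumbing} and transfer the obstruction by Lemma~\ref{fnd:forget-branches}. The gap is in the choice of the third arm.

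\textbf{The arms you choose give no obstruction.} In case~(a), the chain arm \([2,c]\) you settle on has \(n=2c-1\) and \(a=c\), so \(2a=n+1\) for \emph{every} \(c\geq2\). It therefore always contributes to \(c_2\), giving \(q_2+c_2=0\) and no obstruction; the situation you treat as an exception is in fact the generic one. What works is a third arm of length exactly three beginning with \(2\), say \([2,u',v']\), for which \(2a-n=v'\geq2\). The orders are then \((1,2,2)\), with \(m=-1\), \(c=0\), and \(h^1(\mathcal{F}_2)=1\). In case~(b), \([2,2,2]\) has \(n/a=4/3\), and \([2,2,2,c_4]\) is exactly the exceptional taut arm \([2,2,2,t+1]\), so both give taut stars. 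You need five components \([2,2,2,u,v]\), for which \(4a-3n=v\geq2\).

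\textbf{The missing idea: continue through \(Q\).} The arms above need not fit inside the connecting chain, and you cut the third arm off before \(Q\). Condition~(a) allows a chain of one or two components, and condition~(b) one of three or four. For a single-component chain, every star you can form contains the arms \([p]\) and \([2]\), so it lies in the first taut family of Theorem~\ref{star:classification}. In case~(b) with three or four chain components, the available third arms \([2]\), \([2,2]\), \([2,2,2]\), \([2,2,2,c_4]\) all give taut stars. A taut star has vanishing logarithmic cohomology on every cycle of its standard plumbing, so no obstruction can come from it.

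The paper instead takes a path of three (resp.\ five) components starting along the connecting chain. When the path reaches \(Q\), it continues along one outward arm of \(Q\). The full subgraph on these components is still a star, because only one outward arm of \(Q\) is retained and \(Q\) has valency two in it. All its weights are at most \(-2\) because \(b_Q\geq2\). Such a path always exists, and the arm computations above then apply verbatim. Keeping all of \(A\), as you do, rather than only its first three components is harmless: any arm of length at least three beginning with \(2\) has \(2a-n\geq2\).
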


\begin{proof}
In the first case retain the arm consisting of a single component, the first three components of \(A\), and a three-component path starting along the connecting chain. If that path reaches the other node, continue along one of its outward arms. Such a path exists even when the connecting chain has only one component. The full subgraph on these components is a star-shaped graph \(D\) with arms \([p],[2,u,v],[2,u',v']\). Its matrix is a principal submatrix of the negative-definite intersection matrix, and every component has self-intersection at most \(-2\). For \(n/a = [2,u,v]\),
\[
a = uv-1,\qquad n = 2uv-v-2,\qquad 2a-n = v \geq 2.
\]
In degree two in the normal coordinate of the central component the three vanishing orders in Proposition~\ref{star:central-bundle} are therefore \((1,2,2)\), and none of the three congruences \(ka_i \equiv 1\pmod{n_i}\) holds. Thus \(m = 4-5 = -1,c = 0\), and
\[
\mathcal{F}_2 \simeq \mathcal{O}_{\mathbb{P}^1}(-1)\oplus\mathcal{O}_{\mathbb{P}^1}(-2),\qquad h^1(\mathcal{F}_2) = 1.
\]
Corollary~\ref{star:central-obstruction} gives a nonzero obstruction on \(3D\) in the standard plumbing realization.

In the second case retain the two outward arms and a five-component path starting along the connecting chain, again continuing along one outward arm at the other node when necessary. The path exists because the connecting chain has at least three components. The star-shaped full subgraph formed by these components has arms \([2],[2,a],[2,2,2,u,v]\). For the last arm,
\[
\begin{gathered}
n'/a' = [2,2,2,u,v],\qquad a' = 3uv-2v-3,\qquad n' = 4uv-3v-4,\\
4a'-3n' = v \geq 2.
\end{gathered}
\]
In degree four in the normal coordinate of the central component the vanishing orders are \((2,3,4)\). The first arm does not satisfy the congruence condition; on the second, \(4a \equiv 2\pmod{2a-1}\); and the displayed inequality excludes the congruence condition on the last arm. Hence \(m = 8-9 = -1,c = 0\). The bundle \(\mathcal{F}_4\) is again \(\mathcal{O}(-1)\oplus\mathcal{O}(-2)\), and Corollary~\ref{star:central-obstruction} gives a nonzero obstruction on \(5D\).

In either case extend this standard realization to the full graph by Lemma~\ref{fnd:plumbing}. Lemma~\ref{fnd:forget-branches} gives a surjection from \(H^1(Y,S_X|_Y)\) to \(H^1(Y,\Theta_X(-\log D)|_Y)\), which is nonzero. This preserves the bounds \(3D \leq 3E\) and \(5D \leq 5E\). Restriction from \(X\) to that cycle and Theorem~\ref{fnd:tautness} prove the last assertion.
\end{proof}

\begin{proposition}\label{rep:local-restrictions}
For a taut graph with two nodes, every component has self-intersection at most \(-2\). For each ordered pair \((P,Q)\) with \(b_P=2\), one has \(s_P\leq1\). At each such node, if \(s_P=1\), then the outward arms and the connecting chain have one of the forms in Table~\ref{intro:two-node-table}, without imposing its last column. If \(s_P=1\) and \(d=0\), then \eqref{rep:condition-b} holds.
\end{proposition}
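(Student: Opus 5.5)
The proof assembles the earlier necessity results; no new cohomology computation should be needed beyond bookkeeping on the continued fractions.

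\emph{Self-intersections.} By Theorem~\ref{red:reduction}, every nonbranching component of a taut two-node tree has self-intersection at most \(-2\). For a node, Proposition~\ref{rep:first-layer} excludes \(b_P=1\): every analytic realization then has \(H^1(X,S_X)\neq0\), and Theorem~\ref{fnd:tautness} rules out tautness. A node cannot have \(b_P\leq0\), by negative definiteness. The same proposition excludes \(b_P=2,s_P=2\), which gives \(s_P\leq1\) whenever \(b_P=2\). The last assertion is the final statement of Lemma~\ref{rep:boundary-values}. Its hypotheses \(b_P,b_Q\geq2\), and \(s_U\leq1\) when \(b_U=2\), have just been established for both nodes, so for \(s_P=1,d=0\) it yields \eqref{rep:condition-b}.

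\emph{The table for \(s_P=1\).} Here the outward arms are \([p]\) and an arm \(A\) of length at least two. I will split according to the shape of \(A\).
\begin{enumerate}
\item If the first entry of \(A\) is at least \(3\), the table row \([a_1,\dots,a_n]\) with \(a_1\geq3\) allows any chain.
\item If \(A=[2,m]\) and \(p\geq3\), any chain is allowed.
\item If \(A=[2,m]\) and \(p=2\), the table allows every chain except those beginning with three components of weight \(-2\). That is, it allows the empty chain, chains not beginning with \([2,2,\dots]\), the chain \([2,2]\), and chains \([2,2,c_3,\dots]\) with \(c_3\geq3\). The excluded chains are exactly case (b) of Lemma~\ref{rep:prefix-witness}.
\item If \(A=[2,a_2,\dots,a_n]\) with \(n\geq3\), the table allows the empty chain or a chain with \(c_1\geq3\). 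The excluded chains, those with \(c_1=2\), are case (a) of Lemma~\ref{rep:prefix-witness}.
\end{enumerate}
The lemma applies because every component has self-intersection at most \(-2\) by the first step, and negative definiteness is given. This shows that every taut configuration with \(s_P=1\) lies in some row of Table~\ref{intro:two-node-table}, ignoring its last column.

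The main obstacle is making this case split genuinely exhaustive against the table's phrasing. One check is that ``does not begin with \([2,2,\dots]\)'' together with \([2,2]\) and \([2,2,c_3,\dots]\) (\(c_3\geq3\)) covers precisely the chains without a \([2,2,2]\) prefix. Another is the chain \([2]\) alone: it does not begin with \([2,2]\), so it falls under the third row. A third is that case (a) of Lemma~\ref{rep:prefix-witness} needs \(A\) of length at least three, which matches the last row's \(n\geq3\). Finally, I need that the lemma's path construction still works when the connecting chain is short. The lemma's proof already handles this by continuing along an outward arm at \(Q\).
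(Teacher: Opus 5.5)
Your proposal is correct and follows essentially the same route as the paper: minimality for the nonbranching components, Proposition~\ref{rep:first-layer} at each node, the two cases of Lemma~\ref{rep:prefix-witness} to cut down the table rows, and Lemma~\ref{rep:boundary-values} for \eqref{rep:condition-b}. Your explicit remark that negative definiteness excludes \(b_P\leq0\) fills in a step the paper leaves implicit.
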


\begin{proof}
Every nonbranching component has self-intersection at most \(-2\), since the resolution is minimal and good. Proposition~\ref{rep:first-layer} excludes \(b_P=1\), and excludes \(s_P=2\) when \(b_P=2\). Apply the same argument at \(Q\).

Suppose \(b_P=2,s_P=1\), and write the outward arms as \([p]\) and \(A\). If the first component of \(A\) has self-intersection at most \(-3\), the first row with \(s_P=1\) in Table~\ref{intro:two-node-table} applies. Otherwise \(A\) begins with a component of self-intersection \(-2\). For an arm of length two, \(A=[2,a]\), the next row applies when \(p\geq3\). When \(p=2\), the second condition in Lemma~\ref{rep:prefix-witness} excludes three initial connecting components of self-intersection \(-2\). Thus their maximal initial sequence has length zero, one, or two, giving the next two rows of the table. If \(A\) has length at least three, the first condition in that lemma forces the connecting chain to be empty or to begin with a component of self-intersection at most \(-3\), giving the last row. These alternatives give all the required arm and chain forms without their last-column conditions.

When \(s_P=1,d=0\), Lemma~\ref{rep:boundary-values} gives \eqref{rep:condition-b}.
\end{proof}

\subsection{A cycle assembled from eight local cases}

We now assume that every component has self-intersection at most \(-2\) and that the restrictions stated in Proposition~\ref{rep:local-restrictions} hold. At a branching component \(P\), consider the maximal initial sequence of connecting components, read from \(P\), all of self-intersection \(-2\). Let \(K_1,\ldots,K_h\) be the connecting components included with \(P\) below. In cases (i)--(iii) none is included, so \(h = 0\). In cases (iv)--(vii) include this entire maximal sequence; in cases (v)--(vii) its length is \(0,1,2\), respectively, and the arm and chain forms in Table~\ref{intro:two-node-table} exclude greater lengths. In case (viii) the maximal sequence is empty by the arm and chain forms in Table~\ref{intro:two-node-table}, and \(h = 0\). In the table, coefficients are cycle multiplicities, and \(p,a,b_P\) are defined by the self-intersections \(-p,-a,-b_P\). Put \(T = P+K_1+\cdots+K_h\).

\begin{table}[ht]
\centering
\begin{tabular}{c|p{0.38\textwidth}|p{0.36\textwidth}|c}
Case&Local data&Cycle \(W\)&\(J\)\\ \hline
(i)&\(b_P = 2\), arms \([p]\), \(A = A_1+\cdots+A_l\), \(l \geq 2\), first component of self-intersection at most \(-3\)&\(2P+B_1+\sum_{j = 1}^lA_j\)&2\\
(ii)&\(b_P = 2\), two end components \(B_1,B_2\)&\(2P+B_1+B_2\)&2\\
(iii)&\(b_P \geq 3\), arbitrary two outward arms&twice \(P\) and every nonterminal outward component, once every outward end component&2\\
(iv)&\(b_P = 2\), arms \([p],[2,a]\), \(p \geq 3\)&\(3T+B_1+2A_1+A_2\)&3\\
(v)&\(b_P = 2\), arms \([2],[2,a]\), \(h = 0\)&\(3P+2B_1+2A_1+A_2\)&3\\
(vi)&same arms, \(h = 1\)&\(4P+2B_1+3A_1+2A_2+3K_1\)&3\\
(vii)&same arms, \(h = 2\)&\(6P+3B_1+4A_1+2A_2+5K_1+4K_2\)&4\\
(viii)&\(b_P = 2\), arms \([p]\), \(A_1,\ldots,A_l\), \(l \geq 3\), first component of self-intersection \(-2\)&\(4P+2B_1+\sum_{j = 1}^l\min(4,l+1-j)A_j\)&4
\end{tabular}
\caption{The eight effective cycles. The last column gives the coefficient of the last component of \(T\).}\label{rep:case-table}
\end{table}

The included sequences at the two ends do not overlap. If \(d > 0\), a component of self-intersection at most \(-3\) separates them. If \(d = 0\), the restriction in \eqref{rep:condition-b} prevents both ends from including connecting components.

These cases exhaust the possibilities allowed by the restrictions stated in Proposition~\ref{rep:local-restrictions}. Indeed a branching component of self-intersection \(-2\) has either two end components or one end component and one arm of length at least two. If the first component of that arm has self-intersection at most \(-3\), use (i). Otherwise an arm of length two gives (iv)--(vii); an arm of greater length forces \(h = 0\), giving (viii). A branching component of self-intersection at most \(-3\) gives (iii).

Let \(W_L,W_R\) be the resulting cycles at the two ends. If a nonempty connecting chain remains, denote its reduced cycle by \(K\), and set
\[
Z = W_L+2K+W_R.
\]
If all connecting components have been included in \(W_L\) or \(W_R\), put \(Z = W_L+W_R\). The reduced supports meet transversely at the indicated points, and the cycles meet along the corresponding scheme-theoretic intersections. When \(K = 0\), one of \(W_L,W_R\) is of case (iii), unless both are of case (ii). This follows from the restriction in \eqref{rep:condition-b}.

\begin{proposition}\label{rep:cycles}
Every coefficient of \(Z\) is at most six. If the condition in the last column of Table~\ref{intro:two-node-table} also holds, then \(Z\) is anti-nef. The construction, including the restrictions of \(S_X\), is defined for every analytic realization and uses no choice of a weighted homogeneous structure on the full graph.
\end{proposition}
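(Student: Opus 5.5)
Three assertions need proof. The coefficient bound and the independence statement are bookkeeping. Anti-nefness is a finite computation of \(Z\cdot E_v\) for every component \(E_v\), and it uses the last column of Table~\ref{intro:two-node-table}.

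For the bound, Table~\ref{rep:case-table} lists every coefficient of \(W_L\) and \(W_R\). The largest is \(6\), occurring at \(P\) in case (vii); all others are at most \(5\). In case (iii) the coefficients are \(1\) or \(2\), and the middle piece \(2K\) has coefficient \(2\). By the remark preceding the proposition, the supports of \(W_L\), \(2K\) and \(W_R\) are pairwise disjoint. The included sequences do not overlap, and \(K\) consists exactly of the remaining connecting components. Hence every coefficient of \(Z\) is a single table entry or \(2\), so it is at most \(6\).

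For the independence statement, the cycles are defined purely from the weighted graph. The sheaves \(S_X|_{W_i}\), \(S_X|_{2K}\) and their restrictions are defined for any resolution \((X,E)\). No plumbing coordinates or weighted homogeneous structure enter, so this is immediate.

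For anti-nefness, I would compute \(Z\cdot E_v\) component by component, organised by the local cases.

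\emph{Outward arm components.} At an outward component \(E_v\) with coefficient \(z_v\) and neighbours \(z_u,z_w\), one has \(Z\cdot E_v=-b_vz_v+z_u+z_w\). In cases (i)--(viii) I would check directly from the listed multiplicities that this is \(\le 0\). For example, in (vii):
\begin{itemize}
\item at \(B_1\): \(-2\cdot3+6=0\);
\item at \(A_1\): \(-8+6+2=0\);
\item at \(A_2\): \(-2a+4\le0\) since \(a\ge2\).
\end{itemize}
In (viii) the coefficients \(\min(4,l+1-j)\) decrease by one at the tail, and each \(A_j\) has weight at most \(-2\), so the sum is nonpositive. In case (i) the coefficient \(1\) on \(A_1\) is compensated by \(b_{A_1}\ge3\). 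In case (iii) the twice/once pattern gives \(-2b+2+2\le0\) at interior components and \(-b+2\le0\) at ends.

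\emph{Components of \(T\).} At \(P\) and at the included \(K_i\), the contribution from the other side enters through the neighbour outside \(W\). This neighbour is either a component of \(K\), with coefficient \(2\), or the last component of \(T\) (or \(P\)) at the other end, when \(K=0\). Here the column \(J\) of Table~\ref{rep:case-table} is what matters. For instance, in (vii) at \(K_2\) one gets \(-8+5+2=-1\). At \(P\) in (vii) one gets \(-12+3+4+5=0\). The remaining cases are checked the same way.

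\emph{The middle piece \(K\).} At an interior \(K\)-component the value is \(-2c_j+4\le0\). At an end of \(K\) adjacent to \(T\) the value is \(-2c_j+2+J\). This is \(\le0\) automatically when \(J=2\), and also when \(J\le4\) and \(c_j\ge3\). It can fail only when \(J\ge3\) and \(c_j=2\), or when \(K\) has a single component adjacent to two \(T\)'s with large \(J\).

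The hard step is these boundary components: the end of \(K\), or the junction \(T_L\)–\(T_R\) when \(K=0\). Positivity there is exactly what the last-column condition \(b_Q+d\ge4\) (when \(s_Q\ge1\)) and condition \eqref{rep:condition-b} are meant to exclude. In cases (iv)–(vii) the maximal \(-2\) sequence has been absorbed into \(T\), so the first \(K\)-component has \(c\ge3\); cases (viii) and (i) have \(h=0\). I would enumerate the pairs (case at \(L\), case at \(R\)) with \(K=0\) or \(K\) of length one. Using that one side is case (iii) or both are (ii), I would verify \(-b_{v}z_v+\sum\le0\) from \(b_Q+d\ge4\). I expect this enumeration, specifically the (vii)/(viii)-against-(iii) junctions with \(d\) small, to be the delicate part.
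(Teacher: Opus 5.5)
Your route is the paper's. You read the coefficient bound off Table~\ref{rep:case-table}, then check \(Z\cdot E_v\leq0\) piece by piece with exterior neighbour coefficient \(2\), and treat the boundary components separately through the column \(J\). The local values you actually compute are correct.

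The gap is that the proposal stops exactly where the hypothesis of the proposition is used. The boundary verification is only announced, and the sketch you give for it rests on the wrong inputs. A minor point first: in case (viii) it is the chain form in Table~\ref{intro:two-node-table}, not \(h=0\), that makes the adjacent component of \(K\) have self-intersection at most \(-3\). Beyond that, two computations are missing.

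\emph{\(K\) a single component.} Let \(K=C\) with \(C^2=-c\), so \(Z\cdot C=J_L+J_R-2c\).
The dichotomy you invoke (one side of case (iii), or both of case (ii)) is not available here; it holds only when \(K=0\).
For \(c\geq4\) the inequality is automatic.
For \(c=2\), your adjacency fact forces \(J_L=J_R=2\).
For \(c=3\), either some endpoint is of case (i), (ii) or (iii) and contributes \(J=2\), or both nodes have self-intersection \(-2\) and \(s=1\). In the latter situation \(d=1\), and the last column (\(b_Q+d=3<4\)) excludes (vii) and (viii) at both ends, leaving \(J_L,J_R\leq3\).

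\emph{\(K=0\) with a case (iii) node \(Q\).} Here \(Z\cdot Q=-2b_Q+2+s_Q+J\), which is nonpositive for \(J\leq2\) or \(b_Q\geq4\).
For \(J=3\), i.e.\ cases (iv)--(vi), the last column imposes nothing, so \(b_Q+d\geq4\) is not the relevant input. Since \(d=0\), it is \eqref{rep:condition-b} that gives \(s_Q\leq1\) when \(b_Q=3\); without it, \(b_Q=3,s_Q=2\) would give \(Z\cdot Q=1\).
For \(J=4\), the last column with \(d=0\) gives \(s_Q=0\) when \(b_Q=3\).
Two case (ii) ends meeting directly give \(Z\cdot P=-4+1+1+2=0\).

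With these checks supplied, your argument becomes the paper's proof.
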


\begin{proof}
The coefficient bound follows from Table~\ref{rep:case-table}. We first compute the intersections for an endpoint piece whose adjacent exterior component has coefficient two; direct intersections with an endpoint piece of larger coefficient are treated below. All outward intersections are nonpositive by direct substitution. For example case (vii) gives intersections zero at \(P,B_1,A_1,K_1\), \(4-2a\) at \(A_2\), and \(-1\) at \(K_2\), when the adjacent remaining component has coefficient two. Case (viii) gives \(n_1-4 \leq 0\) at \(P\), \(4-2p \leq 0\) at \(B_1\), and nonpositive second differences along \(n_j = \min(4,l+1-j)\). In case (iv), the node intersection is \(-1\) when \(h = 0\), and zero when \(h > 0\); its last connecting component has intersection \(-1\). Cases (v), (vi) give the same conclusion by their displayed coefficients. In case (i), the intersection with the first component of the arm of length at least two is \(3-b_{A_1} \leq 0\); in cases (ii), (iii), the node intersections are respectively zero and at most \(-2b_P+4+s_P \leq 0\).

If an endpoint contributes \(J > 2\), its adjacent component of \(K\), when present, has self-intersection at most \(-3\), by maximality of the included sequence and the arm and chain forms in Table~\ref{intro:two-node-table}. Consequently every component of self-intersection \(-2\) remaining in \(K\) has neighboring cycle coefficients at most two. A component of self-intersection \(-c\), where \(c \geq 3\), at an end of a connecting chain of length at least two has intersection at most \(-2c+4+2 \leq 0\). For a single remaining component the condition is \(J_L+J_R \leq 2c\). It is automatic for \(c \geq 4\). For \(c = 3\), a piece of case (ii) or (iii) contributes \(J = 2\); if both branching components have self-intersection \(-2\), the condition in the last column of Table~\ref{intro:two-node-table} excludes cases (vii),(viii), leaving \(J_L,J_R \leq 3\).

When \(K = 0\) and one branching component is of case (iii), the intersection with that component is
\[
-2b_Q+2+s_Q+J.
\]
For \(J \leq 2\) this is nonpositive. For \(J = 3\), the restriction in \eqref{rep:condition-b} gives \(s_Q \leq 1\) when \(b_Q = 3\); for \(J = 4\), the condition in the last column of Table~\ref{intro:two-node-table} gives \(s_Q = 0\) when \(b_Q = 3\). For \(b_Q \geq 4\), the displayed intersection is nonpositive. When \(K = 0\) and both endpoint cycles are of case (ii), all intersections are also nonpositive. This proves anti-nefness.
\end{proof}

\subsection{Comparison with one-node configurations}

We use taut one-node configurations to compute restriction maps near the endpoint cycles. The comparison also gives the cohomology vanishing and lifting sequences in Lemma~\ref{rep:piece-vanishing}.

\begin{lemma}\label{rep:neighborhood-comparison}
Let \(W\) be an exceptional cycle, let \(C\) be a component not contained in its support, and put \(D = W_{\mathrm{red}}+C\). Suppose that \(D\) is connected, its graph is a negative-definite taut minimal good resolution graph, and every component of \(E-D\) is disjoint from \(W_{\mathrm{red}}\). A neighborhood of \(D\) is analytically isomorphic to a neighborhood of the exceptional divisor in any fixed realization of its graph. Transport \(W,C\) by this isomorphism. For every \(m\geq1\), the induced identifications of functions and logarithmic sections on \(W\) and \(W\cap mC\) commute with their restriction maps, multiplication of functions, and multiplication of sections by functions. The logarithmic bundle on the fixed realization is taken with respect to its full exceptional divisor. In the standard star-shaped realizations used below, the isomorphism may be chosen to preserve prescribed component labels.
\end{lemma}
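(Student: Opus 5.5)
The plan is to obtain the analytic isomorphism from tautness of the graph of \(D\), and then to check that the transported data on \(W\) and \(W\cap mC\) are intrinsic to a neighborhood of \(D\).

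First, take a small tubular neighborhood \(N\) of \(D\) in \(X\). By Grauert~\cite{Grauert1962}, since the intersection matrix of \(D\) is a principal submatrix of a negative-definite matrix and is therefore negative definite, \(D\) contracts to a normal surface singularity. Because \(D\) has smooth components, normal crossings, and weights of a minimal good resolution graph, the pair \((N,D)\) is the minimal good resolution of that singularity. Tautness of the graph of \(D\) then gives an isomorphism of singularities with the fixed realization. By uniqueness of the minimal good resolution, this lifts to a biholomorphism between neighborhoods of \(D\) and of the fixed exceptional divisor, taking \(D\) onto that divisor. The lift may permute components only by a graph automorphism. In the standard star-shaped realizations, Lemma~\ref{star:plumbing-automorphisms} lets us compose with a biholomorphism inducing the inverse automorphism, so that prescribed labels are preserved.

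Second, any biholomorphism \(\phi\) carrying \(D\) to \(D'\) carries the ideal sheaves \(\mathcal{O}(-W)\) and \(\mathcal{O}(-mC)\) to those of the transported cycles. It therefore induces ring isomorphisms \(\mathcal{O}_W\to\mathcal{O}_{W'}\) and \(\mathcal{O}_{W\cap mC}\to\mathcal{O}_{W'\cap mC'}\) compatible with the quotient maps, and hence with restriction and multiplication. Pushforward of vector fields, \(\phi_*\), gives \(\Theta_N(-\log D)\simeq\Theta_{N'}(-\log D')\) as modules over this ring isomorphism. Tensoring with the structure sheaves then yields identifications of the restricted sheaves that commute with restriction and with multiplication by functions.

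The main obstacle is that \(S_X\) is logarithmic along all of \(E\), whereas \(\phi\) only sees \(D\). Here the hypothesis that every component of \(E-D\) is disjoint from \(W_{\mathrm{red}}\) enters. Near \(W_{\mathrm{red}}\) the divisors \(E\) and \(D\) coincide as germs, so \(S_X|_W=\Theta_X(-\log D)|_W\). Likewise, the fixed realization's full exceptional divisor equals \(D'\), so its logarithmic bundle is \(\Theta(-\log D')\). Points of \(W\cap mC\) lie on \(W_{\mathrm{red}}\cap C\), where again only components of \(D\) pass, so the same identification holds there.

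The quotient in Lemma~\ref{fnd:forget-branches} is supported where other components of \(E-D\) meet \(C\) away from \(W\). It therefore does not affect these schemes. This yields the stated commuting identifications for every \(m\geq1\).
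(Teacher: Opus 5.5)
Your proposal is correct and follows essentially the same route as the paper. Both arguments contract \(D\) by Grauert, use tautness together with uniqueness of the minimal good resolution to lift the isomorphism, fix labels with Lemma~\ref{star:plumbing-automorphisms}, and observe that \(\Theta_X(-\log E)=\Theta_X(-\log D)\) near \(W_{\mathrm{red}}\), so the biholomorphism transports the Cartier ideals of \(W\) and \(mC\) and the logarithmic bundles compatibly. Your appeal to Lemma~\ref{fnd:forget-branches} in the last paragraph is harmless but unnecessary, since that local equality of logarithmic bundles already covers \(W\cap mC\).
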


\begin{proof}
Contract \(D\) on a sufficiently small neighborhood by the contraction theorem of Grauert~~\cite{Grauert1962}. The graph hypothesis makes this contraction the minimal good resolution of the resulting normal surface singularity. Tautness identifies this singularity with the one associated with the fixed realization. By the uniqueness of the minimal good resolution proved by Laufer~~\cite[Chapter~V]{Laufer1971}, this isomorphism lifts to an analytic isomorphism of neighborhoods of the exceptional divisors. Indeed, composing a resolution map with the singularity isomorphism gives another minimal good resolution of the same singularity, so uniqueness identifies the resolutions. The lift is unique because its restriction outside the exceptional divisor is prescribed. Label the components of the target by their inverse images under this lift. In every application below, the fixed realization is a standard plumbing realization of a trivalent star with all arm self-intersections at most \(-2\). If its components have already been labelled, the induced map differs from the prescribed labelling by a weighted graph automorphism \(\sigma\). Compose the lift with the biholomorphism inducing \(\sigma^{-1}\) from Lemma~\ref{star:plumbing-automorphisms}. In these applications the resulting lift preserves the prescribed labels, and hence the prescribed cycle \(W\) and component \(C\).

The omitted components are disjoint from the support of \(W\). Hence \(\Theta_X(-\log E) = \Theta_X(-\log D)\) on an open neighborhood of that support. The lift identifies the ambient logarithmic bundles and preserves the Cartier ideals defining \(W\) and \(mC\). It therefore identifies their scheme-theoretic intersection and both restriction maps. Since the identification comes from an analytic isomorphism, it also preserves multiplication of functions and its action on sections.
\end{proof}

\begin{lemma}\label{rep:piece-vanishing}
For every cycle \(W\) in Table~\ref{rep:case-table},
\[
H^1(W,S_X|_W) = H^1(W,\mathcal{O}_W) = 0.
\]
If \(K \neq 0\), then \(S_X|_K \simeq \mathcal{O}_K^{\oplus2}\), the line bundle \(I = \mathcal{O}_K(-K)\) has nonnegative component degrees, and
\[
H^1(2K,S_X|_{2K}) = H^1(2K,\mathcal{O}_{2K}) = 0.
\]
There are exact lifting sequences
\[
0 \to H^0(K,I)^{\oplus2} \to H^0(2K,S_X|_{2K}) \to H^0(K,S_X|_K) \to 0,
\]
\[
0 \to H^0(K,I) \to H^0(2K,\mathcal{O}_{2K}) \to \mathbb{C} \to 0.
\]
These conclusions do not require \(Z\) to be anti-nef.
\end{lemma}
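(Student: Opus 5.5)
For each cycle \(W\) in Table~\ref{rep:case-table} I will prove the two vanishings by transporting the question to a taut one-node configuration. Let \(D\) be the reduced support of \(W\) together with a suitable adjacent component. In cases (iv)--(viii) this is the node \(P\), its two outward arms, and enough connecting components to make a trivalent star. In cases (i)--(iii), \(W_{\mathrm{red}}\) is itself contained in a star. I will choose \(D\) to be a star whose arms are truncations of the outward arms and of the connecting chain, possibly continued along an outward arm of \(Q\) as in Lemma~\ref{rep:prefix-witness}. By Proposition~\ref{rep:local-restrictions}, each such truncated star lies in one of the families of Theorem~\ref{star:classification}. The three families with \(b=2\) correspond to the arm forms \([p],A\) with \(a_1\geq3\), \([p],[2,m]\) with \(p\geq3\), and \([2],[2,m]\) with a third arm of continued fraction \(\geq4/3\) or equal to \([2,2,2,q]\). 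The star is therefore rational and taut. Lemma~\ref{rep:neighborhood-comparison} then identifies a neighborhood of \(W\) with the corresponding neighborhood in the standard plumbing realization, compatibly with \(S_X\) restricted to \(W\). Logarithmic poles along omitted components do not matter, because those components are disjoint from \(W_{\mathrm{red}}\).

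On the star realization \(X_D\), rationality gives \(H^1(X_D,\mathcal{O})=0\), and tautness gives \(H^1(X_D,\Theta(-\log D))=0\) by Theorem~\ref{fnd:tautness}. Lemma~\ref{fnd:restriction} then gives surjections onto \(H^1(W,\mathcal{O}_W)\) and \(H^1(W,S|_W)\), so both groups vanish. One point needs care: the ambient logarithmic sheaf for the star is \(\Theta(-\log D)\), while \(S_X\) is logarithmic along all of \(E\). The extra component \(C\) of \(D\) not in \(W\) may meet \(W_{\mathrm{red}}\), and there the two sheaves agree only if \(C\) is also a component of \(E\). This holds because \(D\subset E\) is a full subgraph and every component of \(E-D\) is disjoint from \(W_{\mathrm{red}}\). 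For this to hold, the star must contain every neighbor of \(W_{\mathrm{red}}\). In each case the neighbors of \(W_{\mathrm{red}}\) outside it are a single connecting component, or the node \(Q\) when the chain is used up, so adding that one curve works. If that neighbor is \(Q\) itself, the configuration becomes a star with one new arm \([b_Q]\), and that arm is still allowed. I expect this case check to be the main obstacle. The delicate cases are (vi)--(viii), where one must confirm that the resulting third arm has continued fraction at least \(4/3\) or the form \([2,2,2,q]\).

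For \(K\neq0\), I will use Lemma~\ref{fnd:component}. Each component of \(K\) has valency two in \(E\), so \(S_X|_{K_j}\simeq\mathcal{O}^{\oplus2}\). By Lemma~\ref{fnd:two-point-transport}, the transition matrices \(T_{c}\) in the ordered logarithmic frames glue these to a trivial bundle on the chain, whose \(H^1\) is trivial. Since \(K\) is a chain, any gluing of trivial rank-two bundles with constant transitions is trivial, so \(S_X|_K\simeq\mathcal{O}_K^{\oplus2}\). The degrees of \(I=\mathcal{O}_K(-K)\) are \(c_j-2\) on interior components and \(c_j-1\) on end components of \(K\), because those meet \(P\) or \(Q\) or an endpoint piece. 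All of these are nonnegative. Normalization on the rational chain then gives \(H^1(K,I)=0\) and \(H^1(K,\mathcal{O}_K)=0\). The cycle sequence \eqref{fnd:cycle-sequence} with \(A=B=K\) gives
\[0\to I\otimes S_X|_K\to S_X|_{2K}\to S_X|_K\to0,\]
together with the analogous sequence for \(\mathcal{O}\). Vanishing of \(H^1\) of the outer terms yields the \(H^1\)-vanishings and the two short exact sequences of global sections, using \(H^0(\mathcal{O}_K)=\mathbb{C}\). None of these steps uses anti-nefness of \(Z\).
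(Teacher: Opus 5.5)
Your proposal is correct and follows the paper's proof. You complete \(W_{\mathrm{red}}\) by the single adjacent component \(C\) on the path between the nodes, get tautness and rationality of the resulting one-node graph from Theorem~\ref{star:classification}, use that \(\Theta_X(-\log D)\) agrees with \(S_X\) near \(W_{\mathrm{red}}\) and restrict, and then handle \(2K\) through the square-zero sequence with \(S_X|_K\simeq\mathcal{O}_K^{\oplus2}\) and \(H^1(K,I)=0\).

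One small correction to the case check you deferred: in case (viii) the long arm is not of the form \([2,m]\), so the relevant family is the first one (the single curve \([p]\) together with the added single curve \(C\)), not the third. The detour along an outward arm of \(Q\) is also unnecessary.
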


\begin{proof}
Let \(C\) be the unique component outside \(W_{\mathrm{red}}\) that meets \(W_{\mathrm{red}}\) along the path between the nodes. Complete \(W_{\mathrm{red}}\) by \(C\), with coefficient one, and omit the other branches incident to \(C\). This gives a one-node graph. The forgotten branches meet the added component away from \(W\), so the logarithmic pair on the completion agrees with the full pair in a formal neighborhood of \(W\).

The completed graphs are taut by Theorem~\ref{star:classification}. In cases (i),(ii), one arm consists of a single component and another has continued fraction at least \(2\). In case (iii), the branching component has self-intersection at most \(-3\). In case (iv), the arms \([p]\), \([2,a]\), \(p \geq 3\), permit an arbitrary third arm. In cases (v)--(vii) the third arm is \([c]\), \([2,c]\), or \([2,2,c]\), with \(c \geq 3\), whose continued fractions are at least \(4/3\). In case (viii) the third arm is the single component \([c]\), \(c \geq 3\). In cases (v)--(viii), if \(K=0\), the observation following the definition of \(Z\) shows that \(C\) is the opposite node of case (iii), so its self-intersection is at most \(-3\). Each listed completion is rational by the same theorem, so restriction also gives \(H^1(W,\mathcal{O}_W) = 0\). Thus each completion has vanishing logarithmic \(H^1\), and restriction to \(W\) is surjective on \(H^1\) by Lemma~\ref{fnd:restriction}. This proves the first assertion. The completed divisor is a subconfiguration of the given resolution, and its intersection matrix is negative definite. Lemma~\ref{rep:neighborhood-comparison} identifies its function and logarithmic restriction maps near \(W\) with those in the standard realization of the graph with one node. This analytic identification justifies the calculations below using Theorem~\ref{star:classification}.

For the second assertion, each component of \(K\) has valency two in the graph of the full exceptional divisor. Propagating trivializations through the rational tree gives \(S_X|_K \simeq \mathcal{O}_K^{\oplus2}\). Set \(I = \mathcal{O}_K(-K)\). Its degree on a component of self-intersection \(-c_i\) is \(c_i-\deg_K(K_i) \geq 0\), so \(H^1(K,I) = 0\). The exact sequence
\[
0 \to S_X(-K)|_K \to S_X|_{2K} \to S_X|_K \to 0
\]
has both outer \(H^1\) groups zero. The analogous sequence for the structure sheaf \(0 \to I \to \mathcal{O}_{2K} \to \mathcal{O}_K \to 0\) gives \(H^1(\mathcal{O}_{2K}) = 0\). Since \(H^1(K,I) = 0\), taking global sections in both sequences gives the asserted lifting sequences.
\end{proof}

\subsection{Endpoint restriction images}

Suppose first that \(K \neq 0\); the case \(K=0\) is treated in Remark~\ref{rep:direct-endpoint}. Write \(D_i = W_i\cap2K\), \(i = L,R\). At a crossing choose \(t_i\) tangent to \(K\), \(n_i\) normal to \(K\), so that the last component of \(W_i\) is \(t_i = 0\). Then
\[
Q_i = H^0(D_i,\mathcal{O}_{D_i})
 = \mathbb{C}[t_i,n_i]/(t_i^{J_i},n_i^2),
\qquad
T_i = \mathbb{C}[t_i]/(t_i^{J_i}).
\]
Put \(P_i = H^0(D_i,S_X|_{D_i})\), a free rank-two \(Q_i\)-module. Define the image algebras
\[
A_i = \operatorname{im}\left(H^0(W_i,\mathcal{O}_{W_i}) \to Q_i\right),\qquad
A_K = \operatorname{im}\left(H^0(2K,\mathcal{O}_{2K}) \to Q_L\oplus Q_R\right),
\]
and the images of the section restriction maps
\[
V_i = \operatorname{im}\left(H^0(W_i,S_X|_{W_i}) \to P_i\right),\qquad
V_K = \operatorname{im}\left(H^0(2K,S_X|_{2K}) \to P_L\oplus P_R\right).
\]
They are modules over \(A_i,A_K\), respectively: lift both an image function and an image section, multiply the lifts on the same cycle, and restrict. This proof gives no action by arbitrary elements of \(Q_i\). In general \(V_i\) is not a \(Q_i\)-submodule.

The maps defining \(A_K,V_K\) restrict simultaneously to \(D_L\) and \(D_R\). For \(I = \mathcal{O}_K(-K)\), the square-zero sequence gives
\[
A_K = \mathbb{C}(1,1)\oplus
\operatorname{im}\left(H^0(K,I) \to n_LT_L\oplus n_RT_R\right).
\]
The second summand has square zero; its two projections need not be independent.

Let \(\overline{V}_i\) be the reduction of \(V_i\) modulo \(n_i\), and put \(N_i = V_i\cap n_iT_i^{\oplus2}\). Use the ordered logarithmic coefficient basis \((t_i\partial_{t_i},n_i\partial_{n_i})\). Write
\[
\tau = 2-\frac1p-\frac1q
\]
in case (ii), and \(w_h = (h+1,-h)\) in case (iv).

\begin{proposition}\label{rep:endpoint-images}
After compatible choices of local coordinates and a local frame of \(S_X\),
\[
\overline{V}_i = \mathbb{C}w_i+t_iT_i^{\oplus2}.
\]
Function restriction maps onto \(T_i\), and
\begin{equation}\label{cls:scalar-orders}
A_i\cap n_iT_i=n_it_i^{\rho_i}T_i.
\end{equation}
The constant line vector \(w_i\), the kernel \(N_i\) with its factor \(n_i\) suppressed, and the scalar order \(\rho_i\) are as follows:
\begin{equation}
\begin{array}{c|c|c|c|c}
\text{case}&J_i&w_i&n_i^{-1}N_i&\rho_i\\ \hline
\text{(i)}&2&(1,0)&0&2\\
\text{(ii)}&2&(1,0)&t_i\mathbb{C}(\tau,-1)&2\\
\text{(iii)},\ b_P = 3,s_P = 2&2&(1,0)&t_i\mathbb{C}(1,0)&1\\
\text{(iii)},\ \text{otherwise}&2&(1,0)&t_iT_i^{\oplus2}&1\\
\text{(iv)}&3&w_h&t_i^2T_i^{\oplus2}&2\\
\text{(v)}&3&(1,0)&t_i^2T_i^{\oplus2}&2\\
\text{(vi)}&3&(2,-1)&t_i^2T_i^{\oplus2}&2\\
\text{(vii)}&4&(3,-2)&t_i^2\mathbb{C}(3,-2)+t_i^3T_i^{\oplus2}&2\\
\text{(viii)}&4&(1,0)&t_i^2\mathbb{C}(1,0)+t_i^3T_i^{\oplus2}&2
\end{array}
\label{rep:nil-table}
\end{equation}
\end{proposition}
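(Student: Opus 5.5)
By Lemma~\ref{rep:neighborhood-comparison} and the completion used in the proof of Lemma~\ref{rep:piece-vanishing}, a neighborhood of \(W_i\) together with the adjacent component \(C\) of \(K\) is identified, labels included, with a neighborhood in the standard plumbing realization of a taut one-node graph (Definition~\ref{star:standard-plumbing}). The identification commutes with restriction to \(W_i\cap mC\) and with the module structures. So I would compute every entry of the table in that standard realization, using its monomial charts and the torus action. The coordinates \(t_i,n_i\) are then toric coordinates at the crossing of the last component of \(T\) with \(C\), and the frame \((t_i\partial_{t_i},n_i\partial_{n_i})\) is the toric logarithmic frame. Because the cycle ideals and transitions are monomial, both images decompose by characters \(t_i^jn_i^k\), with \(j<J_i\) and \(k\leq1\). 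This reduces the computation to the two layers \(k=0\) (giving \(\overline V_i\)) and \(k=1\) (giving \(N_i\), and the scalar part \(A_i\cap n_iT_i\)).

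For each case and each character I would transport the coefficient vector from the crossing back through \(T\) (by the matrices \(T_2\) of Lemma~\ref{fnd:two-point-transport} along \(K_1,\dots,K_h\)) to the node \(P\). There I would use Lemma~\ref{star:vector-arm} on each outward arm: the condition that \(x^kt^j\) extends over the arm portion of \(W_i\) is the \(U_{k,j}\) condition with \(n/a\) read off the arm. Then I would impose the global section condition on \(P\), a polynomial problem as in the proof of Lemma~\ref{rep:boundary-values}, with the degree bounds fixed by the multiplicity of \(P\) in \(W\). The layer \(k=0\), \(j=0\) gives the constant line \(w_i\). For instance, in case (iv) it is the normal Euler vector transported through \(h\) copies of \(T_2\), which gives \((h+1,-h)\); in cases (vi) and (vii) it gives \((2,-1)\) and \((3,-2)\). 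The claim that all of \(t_iT_i^{\oplus2}\) is attained in layer \(0\) should follow because in positive \(t\)-degree the coefficient bundle on \(P\) becomes positive enough. Corollary~\ref{star:central-obstruction}, together with tautness of the completion (Theorem~\ref{star:classification}), ensures that no obstruction arises, so these restrictions are surjective onto the allowed jets.

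For the functions, I would note that \(H^0(W_i,\mathcal O)\to T_i\) is onto because of rationality (Lemma~\ref{rep:piece-vanishing}) and the character decomposition: the functions \(t^j\) extend, as in Lemma~\ref{star:plumbing-automorphisms}. The order \(\rho_i\) is the least \(j\) for which \(n_it_i^j\) extends over \(W_i\). I would get it from the Pinkham model (Lemma~\ref{star:plumbing-model}) via the regularity condition \(\operatorname{ord}(g)\geq\lceil ka/n\rceil\) in degree one, translated into local exponents at the crossing. In case (iii), \(\rho_i=1\) because \(b_P\geq3\) leaves room. In the other cases the \(-2\) chain forces order \(2\).

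I expect the main obstacle to be the \(k=1\) layer \(N_i\), especially cases (ii), (iii) with \(b_P=3,s_P=2\), (vii), and (viii), where the answer is a line rather than a full space. There the extension condition is an interior exponent equal to \(-1\), as in Lemma~\ref{star:vector-arm}, and it cuts out a line. I would first try to identify that line by transporting the terminal line \(nA+aB=0\), or the end conditions \(B(\pm1)=-2A(\pm1)/p\) from Lemma~\ref{rep:boundary-values}, back to the crossing. This should give \((\tau,-1)\) in case (ii), and \((3,-2)\) in case (vii), the transport of \(w_i\). I would check each row by a single explicit monomial computation, and I would verify that the coordinate and frame choices made for different rows are compatible.
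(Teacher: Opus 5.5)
Your overall route is the paper's. You transport to the standard plumbing by Lemma~\ref{rep:neighborhood-comparison}, write central sections as in \eqref{app:local:node-formula}, impose the outward-arm conditions, and transport along $K_1,\dots,K_h$ by $T_2$. Your constant lines $w_i$ are correct. But the step you single out as the main obstacle rests on two claims that fail.

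First, the images do not decompose by characters $t_i^jn_i^k$. Only scaling of the normal coordinate of $P$ acts globally, since scaling $z$ moves the attachment at $z=1$. When $h=0$, the coordinate $n_i=u$ has weight zero. For $h\ge1$ the weights $(h+1)q-h\ell$ do separate the characters, so the trouble is confined to $h=0$. In cases (ii), (iii) and (viii), therefore, your ``$k=1$ layer'' is not $N_i$. In case (ii), for instance, the $uv$-coefficients of degree-one sections fill $\mathbb{C}^2$. The line $\mathbb{C}(\tau,-1)$ appears only after imposing $a_{1,0}=0$, because $N_i$ is the kernel of reduction modulo $n_i$, not a graded piece.

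Second, the lines do not come from an exponent $-1$. An exponent $-1$ on a compact component forces vanishing (Lemma~\ref{rep:component-recursion}). In rows (iii) with $(b_P,s_P)=(3,2)$, (vii) and (viii), no line condition occurs at all in the relevant central degree. In those three rows the line comes from \eqref{app:local:infinity}. Vanishing of $a_{k,0},\dots,a_{k,d-1}$ is equivalent to $\deg f_k\le bk-d$ and $\deg g_k\le bk-d-1$. This vanishing is imposed by the kernel condition when $h=0$, and by regularity along the string when $h>0$. Together with the divisibility imposed by the arms, it forces $g_k=0$. Only the normal Euler direction survives, transported by $T_2^h$, which gives $(1,0)$, $(3,-2)$ and $(1,0)$. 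Only row (ii) involves leaf line conditions, coupled through the polynomial on $P$. Your appeal to Lemma~\ref{rep:boundary-values} can supply the nilpotent lines in rows (ii) and (iii), but it does not reach (vii) or (viii).

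The proposal also conflates extension over a neighborhood with sections on the cycle $W_i$. The extension criterion of Lemma~\ref{star:vector-arm} and Pinkham's inequality $\operatorname{ord}(g)\ge\lceil ka/n\rceil$ describe sections that extend, so they give only subspaces of $V_i$ and $A_i$. They cannot prove the upper bounds in the table, such as $n_it_i\notin A_i$ when $\rho_i=2$, or that $N_i$ is only a line. Incidentally, when $h=0$ the equality $\rho_i=2$ comes from $b_P=2$, not from a $-2$ string: the only degree-one function $xz(z-1)$ restricts to $v(1-u)$.

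Conversely, several lower bounds need the cutoff rule of Lemma~\ref{rep:component-recursion}: a monomial that reaches its cycle multiplicity has an arbitrary coefficient. In case (vii), the outgoing monomial $t_i^3$ has exponents $(9,6,3,0)$ along $P,K_1,K_2,C$. It is free only because $6$ reaches the multiplicity $5$ of $K_1$. Your transport back to $P$, with degrees bounded by the multiplicity $6$ of $P$, would miss it.

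Likewise, $H^1(W_i,\mathcal{O}_{W_i})=0$ and Corollary~\ref{star:central-obstruction} do not give the surjectivity onto $T_i$ and onto the jets at $D_i$ that you attribute to them. The corollary controls only jets at $P\cap K_1$ modulo the third arm. The paper obtains these surjectivities from the explicit central calculations.

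Finally, the paper checks that a branch-preserving coordinate change acts on each quotient $n_it_i^rT_i^{\oplus2}/n_it_i^{r+1}T_i^{\oplus2}$ by a common scalar. Hence the filtration and the lines do not depend on the chosen coordinates. Your proposal only gestures at this point.
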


\begin{proof}
Cases (i)--(iii) follow from Proposition~\ref{app:local:doubled-node}; cases (iv)--(vii) follow from Proposition~\ref{rep:absorbed-string-images}; and case (viii) is Lemma~\ref{app:local:case-eight}. These calculations use the full intersection schemes throughout.

A coordinate change preserving the two ordered branches has the form \(t'=tU(t,n)\), \(n'=nV(t,n)\), with \(U,V\) units. In the ordered logarithmic frames its Jacobian is
\[
J_{\log}=\begin{pmatrix}
1+tU_t/U&nU_n/U\\
tV_t/V&1+nV_n/V
\end{pmatrix}.
\]
On \(nT^{\oplus2}\), this matrix is reduced modulo \(n\), and its reduction modulo \(t\) is the identity. After also changing the generators \(t,n\), it therefore acts on each quotient \(nt^rT^{\oplus2}/nt^{r+1}T^{\oplus2}\) by a common nonzero scalar. It preserves the filtration and the leading lines in the table. The reduced image contains \(tT^{\oplus2}\), so it is determined by its constant line. The scalar ideal in \eqref{cls:scalar-orders} is preserved as well: surjectivity onto \(T_i\) makes it a \(T_i\)-submodule, and branch-preserving changes preserve its colength in \(n_iT_i\). The analytic identification in Lemma~\ref{rep:neighborhood-comparison} transports both incident function and section maps by this same coordinate and frame change; the maps are not normalized independently.
\end{proof}

\begin{remark}\label{rep:direct-endpoint}
Suppose that \(K=0\). Let \(C_i\) be the component of \(W_i\) meeting \(W_j\), where \(\{i,j\}=\{L,R\}\). If the coefficient of \(C_j\) in \(W_j\) is two, then
\[
D=W_i\cap W_j=W_i\cap2C_j,\qquad
H^0(D,\mathcal{O}_D)=\mathbb{C}[t,n]/(t^{J_i},n^2),
\]
where \(C_i=\{t=0\}\) and \(C_j=\{n=0\}\). Proposition~\ref{rep:endpoint-images} computes the function and section restriction images from \(W_i\). Indeed, Lemma~\ref{rep:neighborhood-comparison} applies with \(C=C_j\), and the local calculations use only this intersection and the components of \(W_i\).

If both coefficients are two, apply the proposition to both pieces, interchanging \(t\) and \(n\). Otherwise, the piece with coefficient two is of case (iii). Its restriction to the full intersection is computed by Proposition~\ref{app:local:doubled-node} with \(h=J_i\), and its logarithmic section image is described in Lemma~\ref{int:U-jets}. These alternatives exhaust the possibilities by the observation following the definition of \(Z\).
\end{remark}

\subsection{Local calculations}\label{app:local:calculations}

We prove the endpoint formulas in Proposition~\ref{rep:endpoint-images}. Lemma~\ref{rep:neighborhood-comparison} permits calculations in standard plumbing coordinates and transports both incident restriction maps by the same analytic isomorphism. All section spaces below are restrictions of the ambient logarithmic tangent bundle.

\subsubsection{Monomial restrictions}

The recurrence used along an arm also computes restrictions on cycles with unequal component multiplicities.

\begin{lemma}\label{rep:component-recursion}\label{star:nonuniform-arm}\label{star:arm-threshold}
Let successive components have self-intersections \(-b_i\), where \(b_i\geq2\), and positive cycle multiplicities \(\nu_i\). A monomial with initial exponents \(r_0=k,r_1=j\) has subsequent exponents and logarithmic coefficient transitions
\[
r_{i+1}=b_i r_i-r_{i-1},\qquad
T_{b_i}=\begin{pmatrix}b_i&1\\-1&0\end{pmatrix}.
\]
While the exponents on compact components satisfy \(0\leq r_i<\nu_i\), compatibility transports the coefficient vector by these invertible matrices. At the first compact component where this fails, the coefficient is arbitrary if \(r_i\geq\nu_i\), and zero if \(r_i<0\). If no compact cutoff occurs along a complete arm with continued fraction \(n/a\), regularity at the outer chart is imposed separately. Its terminal exponent is \(nj-ak\), and the permitted initial coefficient space is \(U_{k,j}\) from Lemma~\ref{star:vector-arm}: terminal exponent \(-1\) permits the line \(nA+aB=0\), terminal exponent at most \(-2\) forces vanishing, and nonnegative terminal exponent permits every coefficient vector. For functions the terminal exponent must be nonnegative. The conditions for distinct monomials are independent.
\end{lemma}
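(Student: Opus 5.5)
The plan is to work chart by chart in the toric plumbing coordinates of the string, following the proof of Lemma~\ref{star:vector-arm}. At the initial crossing take the monomial \(x^kt^j(A\,x\partial_x+B\,t\partial_t)\). Across a component of self-intersection \(-b_i\) the transition \((T',X')=(T^{-1},T^{b_i}X)\), followed by the exchange of roles at the next crossing, sends the exponent pair \((r_{i-1},r_i)\) to \((r_i,b_ir_i-r_{i-1})\). In the ordered logarithmic frame it multiplies the coefficient vector by \(T_{b_i}\), exactly as in Lemma~\ref{fnd:two-point-transport}. Since transitions are monomial and the ideals of the cycles \(\sum\nu_iC_i\) are monomial in each chart, restriction and compatibility respect the character decomposition. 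This gives independence of distinct monomials, so each character can be treated separately.

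Fix a character. On the chart around \(C_i\), the section of \(S_X|_{\nu_iC_i}\) (locally) lives in \(\mathcal{O}/(y^{\nu_i})\), where \(y\) is the normal coordinate to \(C_i\) and the exponent of \(y\) is \(r_i\). I will treat the three cases separately.

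\emph{Case \(0\le r_i<\nu_i\).} The monomial is a nonzero element of the restricted sheaf and is regular. Compatibility on the overlap then forces the coefficient vector on the next chart to be the transported vector, which is invertible and hence determined.

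\emph{Case \(r_i\ge\nu_i\).} The monomial is zero in \(\mathcal{O}_{\nu_iC_i}\). Compatibility imposes nothing beyond this point, so the earlier coefficient is arbitrary.

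\emph{Case \(r_i<0\).} The monomial is not regular, yet it is nonzero on the overlap with the previous chart (there \(r_{i-1}\) lay in range). So no regular extension exists and the coefficient must vanish, with no cancellation possible since characters are distinct. The one subtlety is the transversal direction on the chart of \(C_i\): the other exponent there is \(r_{i-1}\) or \(r_{i+1}\). I would note that the condition is checked only along \(C_i\) in the chart \(\mathcal{O}/(y^{\nu_i})\), where the transversal coordinate is a unit or has nonnegative exponent by the inductive hypothesis.

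If no cutoff occurs before the end of a complete arm, the last chart contains the noncompact outer divisor. There the terminal exponent is \(r_{s+1}=nj-ak\), by the continued-fraction recurrence as in Lemma~\ref{star:vector-arm}. The first row of \(T_{b_s}\cdots T_{b_1}\) is \((n,a)\), so with frame \((y\partial_y,\partial_q)\) the outer exponent \(-1\) allows exactly the line \(nA+aB=0\), exponent \(\le -2\) allows nothing, and a nonnegative exponent allows everything. This recovers \(U_{k,j}\). For functions no logarithmic shift occurs, so the terminal exponent must be nonnegative.

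The main obstacle is justifying that at the first cutoff with \(r_i\ge\nu_i\) the coefficient is genuinely free. One must check that vanishing in \(\mathcal{O}_{\nu_iC_i}\) decouples all later charts, including the case where later exponents become negative. I would handle this by observing that the section may be taken to be zero on all later charts, which glues because the monomial is zero on every overlap there.
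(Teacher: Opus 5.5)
Your proposal is correct and follows essentially the same route as the paper. Monomial transitions and monomial cycle ideals give independence of distinct characters. The plumbing transition gives the recurrence and \(T_{b_i}\). A negative compact exponent forces the coefficient to vanish, reaching a multiplicity lets the section extend by zero, and the terminal condition comes from Lemma~\ref{star:vector-arm}. Your zero-extension gluing in the case \(r_i\geq\nu_i\) is exactly the paper's ``extends by zero across the preceding intersection.''
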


\begin{proof}
At a crossing, the intersection of components of multiplicities \(\nu_i,\nu_{i+1}\) has ideal \((u^{\nu_i},v^{\nu_{i+1}})\). A monomial survives precisely when its exponents are nonnegative and below these bounds. Reaching a compact multiplicity makes its restriction zero, so the remaining section extends by zero across the preceding intersection; a negative compact exponent violates regularity and forces its coefficient to vanish. The coordinate transition gives the recurrence and the invertible matrix above. If no compact cutoff occurs, Lemma~\ref{star:vector-arm} gives the terminal condition, including the exceptional line at exponent \(-1\). These steps also construct every permitted section. Since transitions and cycle ideals are monomial, the compatibility equations for distinct monomials are independent. The same proof applies to functions, omitting the coefficient matrices and using nonnegative terminal exponent.
\end{proof}

For a sequence of components of self-intersection \(-2\), the recurrence has the following explicit form.

\begin{lemma}\label{rep:initial-string-characters}
Let \(P,K_1,\ldots,K_h\) be successive components, where \(K_i^2=-2\), with cycle multiplicities \(\mu_0,\ldots,\mu_h\), and let the adjoining exterior multiplicity be \(a\). For an outgoing monomial \(v^qu^\ell\), where \(0\leq q<\mu_h\) and \(0\leq\ell<a\), its preceding exponents are
\[
r_i=(h+1-i)q-(h-i)\ell,\qquad 0\leq i\leq h+1.
\]
Apply Lemma~\ref{rep:component-recursion} backwards. If no exponent becomes negative or reaches its multiplicity before \(P\), the outgoing coefficient is \(T_2^h a_{r_0,r_1}\), where \(a_{k,j}\) is the central coefficient of normal degree \(k\) and tangent order \(j\). The central coefficients must also satisfy regularity at every intermediate chart.
\end{lemma}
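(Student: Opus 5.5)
The statement is a direct specialization of Lemma~\ref{rep:component-recursion} to the string \(P,K_1,\ldots,K_h\) with all \(b_i=2\), run backwards from the outgoing chart. I would carry out the following steps in order.

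\emph{Coordinates and indexing.} Fix plumbing coordinates along the string as in Definition~\ref{star:standard-plumbing}. At the crossing of \(K_h\) with the exterior component, \(v\) is the coordinate normal to \(K_h\) (so the multiplicity bound is \(\mu_h\)) and \(u\) is the coordinate normal to the exterior component (bound \(a\)). Index the exponents so that \(r_{h+1}=\ell\) and \(r_h=q\): these are the exponents of the coordinates normal to the exterior component and to \(K_h\), respectively. With this convention, \(r_i\) is the order along the \(i\)-th component counted from \(P\) (\(r_0\) along \(P\)), and \(r_{h+1}\) along the exterior piece.

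\emph{The exponent formula.} Because every \(K_i\) has self-intersection \(-2\), the recurrence \(r_{i+1}=2r_i-r_{i-1}\) of Lemma~\ref{rep:component-recursion} says the second differences vanish. Read backwards, it becomes \(r_{i-1}=2r_i-r_{i+1}\). Hence the sequence is affine in \(i\), and the two endpoint values force
\[
r_i=q+(h-i)(q-\ell)=(h+1-i)q-(h-i)\ell .
\]
I would verify this by checking \(i=h,h+1\) and the linear recurrence, which is immediate. Note that only the transitions across the \(K_i\) are used (the \(b_P\) transition is not), which is why the formula holds up to \(i=0\) for all \(b_P\).

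\emph{Coefficient transport.} Each transition across \(K_i\) multiplies the logarithmic coefficient vector by \(T_2\) (Lemma~\ref{fnd:two-point-transport}, with the exchange convention). Moving backwards from the outgoing chart to the chart on \(P\) whose exponents are \((r_0,r_1)\) therefore inverts \(h\) such factors. Equivalently, the central coefficient \(a_{r_0,r_1}\) is sent to the outgoing coefficient \(T_2^h a_{r_0,r_1}\).

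\emph{Cutoff behaviour.} By Lemma~\ref{rep:component-recursion}, as long as every intermediate exponent satisfies \(0\le r_i<\mu_i\), the compatibility conditions are exactly this invertible transport, and the monomials are independent. If some \(r_i\) becomes negative or reaches \(\mu_i\) before \(P\), the lemma's cutoff rule applies instead, which is exactly the exclusion in the statement. The requirement that the central coefficients satisfy regularity in every intermediate chart is the statement that nonnegativity of each \(r_i\) is imposed, together with the arm conditions at \(P\) coming from Lemma~\ref{star:vector-arm}.

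\emph{Main obstacle.} The only delicate point is the bookkeeping of orientation, namely matching which coordinate plays tangent and which plays normal at each crossing. An error there would produce \(T_2^{-h}\) or a \(J\)-conjugate in place of \(T_2^h\). I would fix this by checking the case \(h=1\) directly from the transition \(z'=z^{-1},\ y'=z^2y\).
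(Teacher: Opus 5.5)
Your proposal is correct and matches the paper's own argument. The paper likewise checks that the affine sequence \(r_i=(h+1-i)q-(h-i)\ell\) solves \(r_{i+1}=2r_i-r_{i-1}\) with \((r_h,r_{h+1})=(q,\ell)\), and then reads off the \(T_2^h\) transport, the cutoff rule, and the regularity requirement directly from Lemma~\ref{rep:component-recursion}.
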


\begin{proof}
The displayed exponents solve \(r_{i+1}=2r_i-r_{i-1}\) with \((r_h,r_{h+1})=(q,\ell)\). The preceding lemma gives the coefficient rule and the regularity conditions.
\end{proof}

\subsubsection{Coefficients on the node}

At a trivalent component \(P^2=-b\), put its outward attachments at \(z=0,1\) and its external attachment at infinity. A section of central normal degree \(k\) is
\begin{equation}\label{app:local:node-formula}
x^k\left(f_k(z)x\partial_x+g_k(z)z(z-1)\partial_z\right),\qquad
\deg f_k\leq bk,\quad\deg g_k\leq bk-1.
\end{equation}
A negative degree bound means zero. With \(u=z^{-1}\), \(v=z^b x\), its coefficient in the ordered frame \((e_n,e_t)=(v\partial_v,u\partial_u)\) is
\begin{equation}\label{app:local:infinity}
v^k u^{bk}\left(f_k(1/u)+b(1/u-1)g_k(1/u),-(1/u-1)g_k(1/u)\right)
=v^k\sum_j a_{k,j}u^j.
\end{equation}
Consequently \(a_{k,0}=\cdots=a_{k,d-1}=0\) if and only if \(\deg f_k\leq bk-d\) and \(\deg g_k\leq bk-d-1\): first use the tangent coefficient and then the normal coefficient.

\begin{proposition}\label{app:local:doubled-node}
Let \(W=2P+\sum_j\nu_j C_j\), where the \(C_j\) are the components of the two complete outward arms and \(\nu_j\geq1\), and let \(s\) count the arms of length at least two. The bundle of section coefficients of normal degree one and its rank on jets of length \(h\) at the external attachment are
\[
\mathcal{F}_1\simeq\mathcal{O}_{\mathbb{P}^1}(b-2)\oplus\mathcal{O}_{\mathbb{P}^1}(b-s-1),\qquad
\min(b-1,h)+\min(b-s,h).
\]
The function image on \(W\cap hC\), with ring \(\mathcal{R}=\mathbb{C}[u,v]/(u^h,v^2)\), is \(A=\mathbb{C}\oplus v(1-u)P_{b-2}\), where \(P_d\) is the image of polynomials of degree at most \(d\) in \(\mathbb{C}[u]/(u^h)\). For \(h=2\) and \((b,s)\neq(2,2)\), the reduced section image modulo \(u\) is \(\mathbb{C}e_n+v\mathbb{C}^2\), and the kernel is
\[
\begin{array}{c|c}
(b,s)&V\cap u\mathcal{R}^{\oplus2}\\\hline
(2,1)&0\\
(2,0)&uv\mathbb{C}(2-1/p-1/q,-1)\\
(3,2)&uv\mathbb{C}(1,0)\\
b\geq3,\ (b,s)\neq(3,2)&uv\mathbb{C}^2.
\end{array}
\]
Here \(-p,-q\) are the outward self-intersections when \(s=0\).
\end{proposition}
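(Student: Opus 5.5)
The plan is to compute everything in the standard plumbing coordinates of \eqref{app:local:node-formula}. Lemma~\ref{rep:component-recursion} then reduces the outward arms to independent monomial conditions.

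\emph{The bundle \(\mathcal{F}_1\).} In normal degree one, the outward arms impose the conditions of Lemma~\ref{star:vector-arm} with \(k=1\), \(j=0\), with one adjustment: the multiplicities \(\nu_j\) replace the full arm. Since the coefficient of \(P\) is two, only \(k\le1\) matters. By Lemma~\ref{rep:component-recursion}, a compact cutoff can only occur if an exponent reaches \(\nu_j\). For \(k=1,j=0\) the exponents \(r_i\) along an arm are \(1,0,-1,\dots\) only when the arm has length at least two. In that case the second component forces vanishing, so \(d_i=1\), \(\epsilon_i=0\). For a single end component \([p]\) we have \(nj-ak=-1\), which gives the line condition, so \(d_i=0\), \(\epsilon_i=1\).

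I then run the argument of Proposition~\ref{star:central-bundle}, but only at the two outward points, leaving the external point free. The Euler line \(\mathcal{O}(b)\) is twisted down once at each outward point, either through vanishing or through the line condition. This gives \(\mathcal{O}(b-2)\). The quotient \(\mathcal{O}(b-1)\) is twisted only by the \(s\) arms of length at least two, which gives \(\mathcal{O}(b-1-s)\). The extension class lies in \(H^1(\mathcal{O}(s-1))\), which vanishes once I rule out the excluded case \(s=0\) with the ordering handled. So the bundle splits.

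\emph{Jet rank, functions, and the low-order table.} Using \eqref{app:local:infinity}, the jet rank at infinity of sections of \(\mathcal{O}(\delta)\) on jets of length \(h\) is \(\min(\delta+1,h)\). This gives \(\min(b-1,h)+\min(b-s,h)\). Degree-zero sections contribute only in normal degree zero, where the vector is a constant multiple of \(e_n\) plus a tangent term that is killed by the two arms when \(b\ge 2\).

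For functions the same monomial rule applies. In degree zero only constants survive. In degree one the function is \(x f(z)\), and it must vanish at both outward points because \(\lceil a/n\rceil=1\). So \(f=z(z-1)g\) with \(\deg g\le b-2\). In the coordinate \(u\), this becomes \(v(1-u)u^{\,b-2-\deg}\cdots\), which yields \(v(1-u)P_{b-2}\).

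For \(h=2\), the reduced image modulo \(u\) is spanned by the degree-zero Euler constant \(e_n\) together with the degree-one values at \(u=0\). The latter are two-dimensional whenever both summands have nonnegative degree, and this fails only at \((2,2)\).

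\emph{The kernel \(V\cap u\mathcal R^{\oplus2}\).} This kernel consists of the \(u^1v\) coefficient vectors \(a_{1,1}\) realised by sections with \(a_{1,0}=0\). By the criterion after \eqref{app:local:infinity}, these are sections with \(\deg f\le b-1\) and \(\deg g\le b-2\), subject to the arm conditions. This is the same interpolation already carried out in the proof of Lemma~\ref{rep:boundary-values}, after interchanging the roles of \(0\) and \(\infty\). I expect the rows to come out as follows:
\begin{itemize}
\item For \((2,1)\) the space is zero.
\item For \((2,0)\) it is the line fixed by \(B(\pm1)\)-type conditions, which is \((\tau,-1)\).
\item For \((3,2)\), \(g=0\) and only \(f\) survives, giving \((1,0)\).
\item In all other cases it is everything.
\end{itemize}

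The main obstacle is bookkeeping the precise value of \(\tau\) and the sign conventions in the frame \((e_n,e_t)\). I would verify these by writing out \(f,g\) explicitly for \(b=2,s=0\) and solving the two line conditions \(n_iA+a_iB=0\) at \(0,1\).
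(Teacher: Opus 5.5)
Your treatment of \(\mathcal{F}_1\), the jet rank, the function image and the reduced image follows the paper's proof. The exponent recursion gives vanishing at a long arm and the line \(pA+B=0\) at a single curve. The argument of Proposition~\ref{star:central-bundle} is then run at the two outward points only, and \(xz(z-1)H(z)\) restricts to \(v(1-u)u^{b-2}H(1/u)\).

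Two slips need fixing:
\begin{enumerate}
\item No case has to be excluded in the splitting. The extension class lies in \(H^1(\mathbb{P}^1,\mathcal{O}(s-1))\), which also vanishes for \(s=0\), since \(H^1(\mathbb{P}^1,\mathcal{O}(-1))=0\). Moreover, \(s=0\) is exactly what the \((2,0)\) row needs.
\item In normal degree zero the tangent coefficient vanishes by the degree bound \(\deg g_0\leq-1\), equivalently \(S_X|_P\simeq\mathcal{O}\oplus\mathcal{O}(-1)\). The arms impose nothing in that degree.
\end{enumerate}

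The kernel table is where you depart from the paper, and as written its rows are only predicted. The paper computes them directly:
\begin{enumerate}
\item For \((2,1)\), \(f_1=(z-1)(\alpha z+\beta)\) and \(g_1=p\beta(z-1)\) give \(a_{1,0}=(\alpha+2p\beta,-p\beta)\), an invertible function of \((\alpha,\beta)\).
\item For \((2,0)\), the leaf equations \(\eta=p\gamma\), \(\delta+\eta=-q(\alpha+\beta+\gamma)\), together with \(a_{1,0}=(\alpha+2\delta,-\delta)=0\), leave \(a_{1,1}=\eta(2-1/p-1/q,-1)\).
\item For \((3,2)\), the condition \(a_{1,0}=(\alpha+3\gamma,-\gamma)=0\) kills \(g_1\) and leaves \(\beta e_n\).
\item The last row is the rank count \(\min(b-1,2)+\min(b-s,2)=4\).
\end{enumerate}

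Your shortcut through Lemma~\ref{rep:boundary-values} is valid, but the identification has to be stated. The degree-one sections with \(a_{1,0}=0\) form \(S_X|_P\otimes L\) with \(\deg L=b-1-s\): the Euler subbundle has degree \(b-1-s\) and the tangent quotient has degree \(b-2-s\). At each single-curve arm they are constrained by \(p\cdot(\text{normal})+(\text{tangent})=0\). This is the problem solved there for \(S_X(-E')|_P\). Both evaluations take values in the canonical logarithmic fiber at the third point, with the basis order reversed. Thus \((1,-\tau_P)\) and \((0,1)\) there become \((\tau,-1)\) and \((1,0)\) here.

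The direct computation is self-contained and reads the line off in the frame \((e_n,e_t)\). Your reduction avoids recomputation and explains why the two tables agree after exchanging coordinates.
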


\begin{proof}
In central normal degree one, tangent exponent zero gives the initial exponents \((1,0)\). An arm of length at least two then gives exponent \(-1\), so both coefficients must vanish at its attachment. A single component of self-intersection \(-p\) instead imposes the line condition \(pA+B=0\). Every tangent exponent at least one extends, since its subsequent exponents are nonnegative and nondecreasing. Thus the common vanishing order at the two attachments is \(s\), and there are \(2-s\) additional line conditions. The proof of Proposition~\ref{star:central-bundle}, now at two attachments, gives the displayed splitting. Evaluation of \(\mathcal{O}(d)\) on jets of length \(h\) has rank \(\min(\max(d+1,0),h)\), proving the rank formula.

Functions in normal degree one are \(xz(z-1)H(z)\), with \(\deg H\leq b-2\); their image at infinity is \(v(1-u)u^{b-2}H(1/u)\), proving the formula for \(A\). Degree zero gives constants and the normal Euler vector.

For \(b=2,s=1\), put the short arm \([p]\) at zero. The sections are
\[
f_1=(z-1)(\alpha z+\beta),\qquad g_1=p\beta(z-1),\qquad
a_{1,0}=(\alpha+2p\beta,-p\beta).
\]
This coefficient map is invertible, giving zero kernel. For \(b=2,s=0\), write \(f_1=\alpha z^2+\beta z+\gamma\), \(g_1=\delta z+\eta\). The arm equations are \(\eta=p\gamma\), \(\delta+\eta=-q(\alpha+\beta+\gamma)\). The constant coefficient \((\alpha+2\delta,-\delta)\) is arbitrary; setting it to zero leaves linear coefficient \(\eta(2-1/p-1/q,-1)\). For \(b=3,s=2\), write \(f_1=z(z-1)(\alpha z+\beta)\), \(g_1=\gamma z(z-1)\). Vanishing of \((\alpha+3\gamma,-\gamma)\) leaves \(\beta e_n\) in order one. In all other cases with \(b\geq3\), both summands have degree at least one, so evaluation on length-two jets is surjective.
\end{proof}

\subsubsection{Cases (iv)--(vii)}

For these cases \(b=2\). Write \(B_1^2=-p\), \(A_1^2=-2\), \(A_2^2=-a\), and use the coefficients \(a_{k,j}\) in \eqref{app:local:infinity}.

\begin{proposition}\label{rep:absorbed-string-images}
In cases (iv)--(vii), the reduced section images and their nilpotent kernels are the corresponding rows of \eqref{rep:nil-table}. Function restriction is onto the reduced intersection algebra, and its nilpotent kernel has order two, as in \eqref{cls:scalar-orders}.
\end{proposition}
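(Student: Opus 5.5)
We will compute directly in the standard plumbing realization of the completed one-node graph, which Lemma~\ref{rep:neighborhood-comparison} permits. The node \(P\) has \(b=2\). The arm \([p]\) is attached at \(z=0\), the arm \([2,a]\) at \(z=1\), and the external attachment (through \(K_1,\ldots,K_h\) when \(h>0\)) at infinity. A section of the ambient logarithmic bundle on the cycle \(W\) of Table~\ref{rep:case-table} decomposes by the monomial independence of Lemma~\ref{rep:component-recursion}. It is a sum of terms of central normal degree \(k\) of the form \eqref{app:local:node-formula}, with \(k\) below the coefficient of \(P\). The intersection \(D=W\cap 2K\) has ring \(\mathbb{C}[u,v]/(u^J,v^2)\), and only outgoing monomials \(v^qu^\ell\) with \(q\in\{0,1\}\) and \(\ell<J\) matter. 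We will therefore work backward from each such monomial. Lemma~\ref{rep:initial-string-characters} gives the preceding exponents \(r_i=(h+1-i)q-(h-i)\ell\) along \(K_h,\ldots,K_1\), and the outgoing coefficient equals \(T_2^h a_{r_0,r_1}\). This reduces everything to deciding which central coefficients \(a_{k,j}\) are attainable under the arm conditions and the multiplicity cutoffs.

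The steps, in order, are as follows.

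\emph{Outward arm conditions.} For each needed degree \(k\le 3\), apply Lemma~\ref{rep:component-recursion} along the outward arms using the multiplicities \(\nu\) from the table, namely \(B_1\) with coefficient \(1,2,2,3\) and \((A_1,A_2)\) with coefficients \((2,1),(2,1),(3,2),(4,2)\) in cases (iv)--(vii). This gives the vanishing orders and the line conditions \(pA+B=0\) at \(z=0\), together with the analogous conditions for \([2,a]\) at \(z=1\). The key point is that the multiplicity cutoff \(r_i\ge\nu_i\) frees a coefficient before the terminal condition of Lemma~\ref{star:vector-arm} applies. As a result, the conditions are weaker than those in Proposition~\ref{star:central-bundle}.

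\emph{Central bundle.} Describe the resulting space of \((f_k,g_k)\) explicitly, as in the proof of Proposition~\ref{app:local:doubled-node}. Then read off the jets \(a_{k,j}\) at infinity using \eqref{app:local:infinity}.

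\emph{Reduced image in degree zero.} Take \(q=0\) with \(\ell=0,\ldots,J-1\). The preceding exponents are \(r_i=-(h-i)\ell\), so for \(\ell\ge1\) the central exponent is negative unless \(h=0\). In that situation the cutoff at \(K_h\), whose coefficient is \(J\), must be used instead, and \(\ell\ge J\) is what makes the coefficient free. We will check that \(tT^{\oplus2}\) is attained and that the constant term is the transported Euler line. Since \(T_2^h(1,0)^{\mathsf T}\) combined with the exchange gives \(w_h=(h+1,-h)\), this accounts for the values \((1,0),(2,-1),(3,-2)\) in cases (v)--(vii).

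\emph{Nilpotent kernel.} Take \(q=1\). The kernel \(N\) consists of the \(v\)-parts of sections whose \(q=0\) part vanishes. We compute which \(v u^\ell\) coefficients survive. For \(\ell\ge2\) the central exponents are large enough for the \(\mathcal{O}(m)\)-type bundle to be globally generated on the jet, which gives \(t^2T^{\oplus2}\). For \(\ell=0,1\), the space is cut down to zero by the \([p]\) line and by the Euler constraint linking \(a_{k,0}\) and \(a_{k,1}\). The exceptional row (vii) will appear at \(\ell=2\) with \(J=4\). There the exponents \((r_0,r_1)\) land exactly on a terminal line condition, and only the transported line \(T_2^2(\cdot)=(3,-2)\) survives.

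\emph{Functions.} The same recursion applied to \(x^kH(z)\), with nonnegative terminal exponents, shows that the constants together with \(u^\ell\) for \(q=0\) give all of \(T\). For \(q=1\), the functions \(vu^\ell\) survive exactly when \(\ell\ge2\), because \(\ell=0,1\) forces negative exponents on the arm \([2,a]\) or on \(B_1\) at these multiplicities. This gives \(\rho=2\).

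\textbf{Main obstacle.} The hardest part will be the bookkeeping in case (vii): there \(h=2\), \(J=4\), and the coefficients \(5,4\) on \(K_1,K_2\) interact with cutoffs at \(A_1\) (coefficient \(4\)) in central degrees up to \(3\). I would organize it as a table of pairs \((q,\ell)\) against the exponent sequences, and check each entry against both the arm cutoffs and the intermediate regularity required in Lemma~\ref{rep:initial-string-characters}. Case (iv) with general \(h\) follows the same pattern, with \(w_h\) coming from \(T_2^h\).
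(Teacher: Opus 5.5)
Your architecture is the paper's: standard plumbing via Lemma~\ref{rep:neighborhood-comparison}, monomial independence, backward propagation through \(K_1,\dots,K_h\) by \(T_2^h\), and explicit central polynomials. However, the bookkeeping at the outgoing crossing is inverted, and the steps built on it fail.

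In \(W\cap2K\), the coordinate whose \(J\)-th power vanishes is the equation \(t\) of the last component of \(T\). The square-zero coordinate \(n\) is the equation of the exterior component. Lemma~\ref{rep:initial-string-characters} needs \(r_h\) to be the order along that last component, i.e.\ the exponent of \(t\), below \(J\). It needs \(r_{h+1}\) to be the order along the exterior, i.e.\ the exponent of \(n\), in \(\{0,1\}\). You put your nilpotent exponent \(q\in\{0,1\}\) in the slot of \(r_h\).

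Hence your reduced-image exponents \(r_i=-(h-i)\ell\) are wrong. The monomial \(t^\ell\) has \(r_i=(h+1-i)\ell\geq0\). It either reaches a cycle multiplicity and is free (for \(t^2\) in case (vii), \(r_0=6\) at \(P\)), or it lands on the central coefficient \(a_{(h+1)\ell,h\ell}\). Examples are \(a_{2,1}\) in case (iv) with \(h=1\) and in case (vi), and \(a_{3,2}\) in case (vii); attainability of these must be proved. Your patch ``\(\ell\geq J\) makes the coefficient free'' is vacuous, since \(\ell<J\).

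The same inversion breaks the nilpotent step. For \(nt^2\), your formula gives \((r_0,r_1)=(0,1)\) when \(h=1\), i.e.\ \(a_{0,1}=0\), and \(r_0<0\) when \(h\geq2\). So it would kill exactly the coefficients that must be free or carry a line. The correct exponents for \(nt^\ell\) are \(r_i=(h+1-i)\ell-(h-i)\).

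The case (vii) line is also misdiagnosed, and it lies outside the degrees you plan to treat. With the correct bookkeeping, \(nt^2\) has exponents \((4,3,2,1)\), so its coefficient is \(T_2^2a_{4,3}\), coming from central degree four. Since \(P\) has multiplicity six, degrees up to five occur, not only \(k\leq3\). In degree four the outward arms impose \(z^2(z-1)^3\mid f_4,g_4\) and no line condition at all. The line appears only after imposing \(a_{4,0}=a_{4,1}=a_{4,2}=0\). This is forced because those monomials propagate to negative exponents along the string or at the outgoing crossing. By the degree criterion following \eqref{app:local:infinity}, this gives \(\deg f_4\leq5\) and \(\deg g_4\leq4\). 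Hence \(f_4=cz^2(z-1)^3\), \(g_4=0\), and \(a_{4,3}=c(1,0)\), which is transported to \((3,-2)\).

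This is the decisive step your plan only gestures at. First list the central coefficients killed by intermediate and outer regularity; in case (vii) these are \(a_{1,0}\); \(a_{2,0},a_{2,1}\); \(a_{3,0},a_{3,1}\); \(a_{4,0},a_{4,1},a_{4,2}\); and \(a_{5,0},\dots,a_{5,3}\). Then determine, degree by degree, what these vanishings leave for the coefficients that actually reach \(D\). Thus \(a_{1,1}\) dies with \(a_{1,0}\), because the degree-one coefficient map is invertible. The coefficients \(a_{2,1}\) and \(a_{3,2}\) remain free where they are needed, and \(a_{4,3}\) is confined to \(\mathbb{C}(1,0)\).

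The same linkage, rather than negative arm exponents, governs functions. The only degree-one function \(xz(z-1)\) has expansion \(v(1-u)\) in the coordinates of \eqref{app:local:infinity}. The coefficient of \(nt\) can come only from its term \(uv\), which is tied to the term \(v\). That term must vanish on the kernel when \(h=0\), and it cannot extend along the string when \(h\geq1\). This is why \(\rho=2\).
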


\begin{proof}
We separate the central coefficient calculation, transport along the included string, and function restriction.

\emph{Central coefficients.} In normal degree one use the \((b,s)=(2,1)\) polynomials in Proposition~\ref{app:local:doubled-node}. Their constant coefficient \(a_{1,0}\) is arbitrary, and \(a_{1,0}=0\) forces the entire degree-one section, including \(a_{1,1}\), to vanish. In degree two, Lemma~\ref{star:nonuniform-arm} gives
\[
\begin{gathered}
f_2=z(z-1)(\alpha z^2+\beta z+\gamma),\qquad
g_2=z(z-1)(\delta z+\epsilon),\\
(2a-1)(\alpha+\beta+\gamma)+a(\delta+\epsilon)=0.
\end{gathered}
\]
The short arm forces the common zero at zero. The other arm forces the common zero at one and the displayed line condition. The pair \((a_{2,0},a_{2,1})\) has four coordinates
\[
(\alpha+2\delta,-\delta,\ \beta-\alpha+2\epsilon-4\delta,2\delta-\epsilon).
\]
They are arbitrary: they determine \(\alpha,\beta,\delta,\epsilon\), and the arm equation then determines \(\gamma\).

For cases (vi),(vii), the degree-three arm conditions are
\[
z(z-1)^2\mid f_3,g_3,\qquad 2f_3'(0)-g_3'(0)=0.
\]
The short arm gives the zero and line condition at zero. On \([2,a]\), tangent exponents zero and one are forbidden, whereas exponent two gives \((3,2,1)\) with terminal exponent \(a-2\geq0\). Once the connecting-string conditions \(a_{3,0}=a_{3,1}=0\) are imposed, the solution is
\[
f_3=z(z-1)^2(cz+d),\qquad
g_3=2dz(z-1)^2,\qquad
a_{3,2}=(c+4d,-2d).
\]
Thus \(a_{3,2}\) is arbitrary. In case (vii), the degree-four arm rule is \(z^2(z-1)^3\mid f_4,g_4\). At the short arm tangent exponents zero and one are forbidden. At the long arm exponent two gives terminal exponent \(-2\), whereas exponent three reaches the multiplicity of \(A_2\). The connecting-string conditions \(a_{4,0}=a_{4,1}=a_{4,2}=0\) therefore leave precisely
\[
f_4=c z^2(z-1)^3,\qquad g_4=0,\qquad a_{4,3}=c(1,0).
\]

\emph{Transport to the external intersection.} For \(h=0\), degrees one and two already prove rows (iv),(v): reduction in degree one is onto and has zero kernel, and the two degree-two coefficients are independent. For the other cases, Lemma~\ref{rep:initial-string-characters} gives the following regularity conditions:
\[
\begin{array}{c|l}
\text{case}&\text{coefficients required to vanish}\\ \hline
\text{(iv), }h=1&a_{1,0},\ a_{2,0}\\
\text{(iv), }h\geq2&a_{1,0},\ a_{2,0},\ a_{2,1}\\
\text{(vi)}&a_{1,0},\ a_{2,0},\ a_{3,0},\ a_{3,1}\\
\text{(vii)}&a_{1,0},\ a_{2,0},a_{2,1},\ a_{3,0},a_{3,1},\
a_{4,0},a_{4,1},a_{4,2},\ a_{5,0},\ldots,a_{5,3}.
\end{array}
\]
These are all the connecting-string conditions: the excluded exponent sequences decrease to a negative value, and every other sequence remains nonnegative until it reaches a multiplicity or the outgoing crossing. In case (vi), the degree-two calculation above still applies, since tangent exponent two reaches the multiplicity of \(A_2\). In case (vii), degrees two and five may be set to zero, since neither contributes a required outgoing term.

Write \([M]\) for the outgoing coefficient vector of a monomial \(M\), and let \(\ast\) denote an independently arbitrary vector. Backward propagation gives
\[
\begin{array}{c|c|c|c}
\text{case}&[v]&[uv]&[uv^2]\\ \hline
\text{(iv), }h=1&T_2a_{2,1}&T_2a_{1,1}=0&\ast\\
\text{(iv), }h\geq2&\ast&T_2^ha_{1,1}=0&\ast\\
\text{(vi)}&T_2a_{2,1}&T_2a_{1,1}=0&T_2a_{3,2}\\
\text{(vii)}&T_2^2a_{3,2}&T_2^2a_{1,1}=0&T_2^2a_{4,3}.
\end{array}
\]
In addition, the coefficient of \(v^2\) is free in every row, and those of \(v^3,uv^3\) are free in case (vii). For example, in case (vii) the three displayed transported coefficients have central pairs \((k,j)=(3,2),(1,1),(4,3)\). Every coefficient marked or described as free comes from a monomial that vanishes on a preceding intersection after reaching a cycle multiplicity. Such a monomial extends independently by zero across that intersection, by Lemma~\ref{rep:component-recursion}. The central coefficient maps above show that the other asserted parameters are independent as well. The constant line is transported to \(\mathbb{C}T_2^h(1,0)\), where
\[
T_2^h(1,0)=(h+1,-h).
\]
The table therefore proves rows (iv)--(vii) of \eqref{rep:nil-table}; in particular \(a_{4,3}=c(1,0)\) gives the line \(\mathbb{C}(3,-2)\) in case (vii).

\emph{Function restriction.} Put \(B=\mathbb{C}[u]/(u^2)\). For cases (iv),(v), the central function spaces in degrees one and two are \(\mathbb{C}z(z-1)\) and \(z(z-1)^2\mathbb{C}[z]_{\leq1}\). The latter has arbitrary first two infinity coefficients. Applying the same exponent recurrence gives
\[
\begin{array}{c|c}
\text{case}&\text{function image}\\ \hline
\text{(iv), }h=0,\ \text{or (v)}
&\mathbb{C}+\mathbb{C}v(1-u)+v^2B\\
\text{(iv), }h\geq1,\ \text{or (vi)}
&\mathbb{C}+\mathbb{C}v+v^2B\\
\text{(vii)}
&\mathbb{C}+\mathbb{C}v+v^2B+v^3B.
\end{array}
\]
In case (vi), the degree-three polynomial \(z^2(z-1)^2\) supplies \(uv^2\). In case (vii), that polynomial supplies \(v\), and the degree-four polynomial \(z^2(z-1)^3\) supplies \(uv^2\); the remaining coefficients are free at preceding intersections. Each displayed algebra maps onto the reduced intersection algebra, and its kernel is \(uv^2\mathbb{C}[v]/(v^{J-2})\). With \(t=v,n=u\), this is exactly \(nt^2T\), as asserted in \eqref{cls:scalar-orders}.
\end{proof}

\subsubsection{Case (viii)}

The last row follows from the same central coefficient calculation, without connecting components in the endpoint piece.

\begin{lemma}\label{app:local:case-eight}
The section and function restriction images in case (viii) satisfy \eqref{rep:nil-table} and \eqref{cls:scalar-orders} for every arm length and self-intersection allowed in that case.
\end{lemma}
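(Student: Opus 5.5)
We work in the standard plumbing coordinates, which Lemma~\ref{rep:neighborhood-comparison} allows. The cycle is \(W=4P+2B_1+\sum_j n_jA_j\), where \(n_j=\min(4,l+1-j)\). The central curve has \(b=2\), with the short arm \([p]\) at \(z=0\), the long arm \(A=[2,a_2,\dots,a_l]\) at \(z=1\), and the external attachment at infinity. There \(J=4\), and the intersection ring is \(\mathbb{C}[u,v]/(u^2,v^4)\) with \(t=v\), \(n=u\). By \eqref{app:local:infinity}, a section or function of central normal degree \(k\) contributes only to \(v^k\). The intersection ring truncates at \(v^4\), so only degrees \(k=0,1,2,3\) and tangent orders \(j=0,1\) in \(u\) matter.

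For each \(k\leq3\), we will determine the admissible \(f_k,g_k\) by applying Lemma~\ref{rep:component-recursion} along both arms with the multiplicities of \(W\).

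On the short arm \([p]\), the initial exponents are \((k,j)\) and the multiplicity of \(B_1\) is \(2\). The conditions are a common zero of order \(\lceil (k-1)/p\rceil\) at \(0\), together with a line condition when \(j p=k-1\). We will list these explicitly:
\begin{itemize}
\item \(k=1\): the line \(pA+B=0\);
\item \(k=2\): a simple zero;
\item \(k=3\): a simple zero, and a line condition if \(p=2\). If \(p=2\) the exponent reaches the multiplicity \(2\) of \(B_1\) instead, so we must check which alternative applies.
\end{itemize}

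On the long arm, the multiplicities are \(n_1,n_2,\dots\), which start at \(\min(4,l)\geq3\) and decrease to \(1\) at the end. We run the recurrence \(r_{i+1}=b_ir_i-r_{i-1}\) from \((k,j)\) with \(b_1=2\).
\begin{itemize}
\item \(j=0\): the exponents decrease linearly (\(k,0,-k\)), so they are forbidden for \(k\geq1\).
\item \(j=1\): the sequence is \(k,1,2-k,\dots\). This is forbidden for \(k\geq3\), unless some exponent first reaches a multiplicity.
\item \(j\geq2\): the exponents quickly reach the multiplicities \(n_j\), which are at most \(4\), and then become free.
\end{itemize}
This yields divisibility conditions of the form \((z-1)^{e_k}\mid f_k,g_k\), possibly with one extra line condition. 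They parallel the degree-two and degree-three computations in the proof of Proposition~\ref{rep:absorbed-string-images}, except that there is no connecting string.

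Next we compute the coefficients \(a_{k,0},a_{k,1}\) at infinity and read off the table entries.
\begin{itemize}
\item Degree \(0\) gives the Euler vector \((1,0)\). Degree \(1\) uses the \((b,s)=(2,1)\) polynomials of Proposition~\ref{app:local:doubled-node}, so \(a_{1,0}\) is arbitrary. Hence \(\overline V=\mathbb{C}(1,0)+tT^{\oplus2}\).
\item Degree \(1\) with \(a_{1,0}=0\) forces the whole section to vanish, so the order-\(u\) part has no contribution at \(v^0,v^1\).
\item Degree \(2\) with \(a_{2,0}=0\) should likewise leave no \(u\)-term.
\item In degree \(3\), the arm conditions should force the form \(f_3=cz(z-1)^3\), \(g_3=0\), up to a term killed by \(a_{3,0}=0\). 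This gives \(a_{3,1}\in\mathbb{C}(1,0)\), which is the entry \(t^2\mathbb{C}(1,0)\) (the \(uv^2\) coefficient). We must double-check the indexing, because \(n^{-1}N\) at \(t^2\) corresponds to \(uv^2\). So the line must appear in degree \(2\), and degree \(3\) must be fully free as \(t^3T^{\oplus2}\).
\end{itemize}
The main obstacle is therefore this bookkeeping: matching which of the pairs \((k,j)\) with \(j=1\) survive. We will first handle the worst case \(l=3\), where \(n=(3,2,1)\), and then the case \(l\geq4\), where \(n=(4,3,2,1,\dots)\). We will check that in each case the degree-\(2\), \(j=1\) space is exactly one line, namely \((1,0)\), and that degree \(3\) gives a two-dimensional space.

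For functions, the central polynomials are \(z^{\lceil k/p\rceil}(z-1)^{e_k}H\) with \(\deg H\leq 2k-\lceil k/p\rceil-e_k\). Their values at infinity give constants \(\mathbb{C}\), then \(\mathbb{C}v\) with no free \(uv\), and then free \(v^2B\) and \(v^3B\). This yields surjectivity onto \(T\) and kernel \(nt^2T\), so \(\rho=2\). Independence of distinct monomials from Lemma~\ref{rep:component-recursion} will justify combining all degrees. The arbitrary arm lengths and self-intersections beyond \(A_4\) never matter, because every relevant exponent is cut off by the multiplicities before reaching them.
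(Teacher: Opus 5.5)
Your route is the paper's: standard plumbing coordinates, the arm recursion of Lemma~\ref{rep:component-recursion} in each central degree \(k\leq3\), and the coefficients at infinity from \eqref{app:local:infinity}. However, the one computation that distinguishes case (viii) from cases (iv)--(vi) is missing, and the rule you put in its place is wrong.

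On the long arm, tangent exponent \(j=1\) is already forbidden for \(k=2\). The sequence is \((2,1,0,-1)\). The multiplicities \(n_1\geq3\) and \(n_2\geq2\) cause no earlier cutoff, and the \(-1\) lies on the compact component \(A_3\), which exists because \(l\geq3\). Hence \(f_2,g_2\) must vanish to order two at \(z=1\). Together with the simple zero at \(z=0\), the exact degree-two family is \(f_2=z(z-1)^2(az+b)\), \(g_2=cz(z-1)^2\). Its outgoing image is \(v^2((a+2c,-c)+u(b-2a-6c,3c))\). Reduction modulo \(u\) is onto \(v^2\mathbb{C}^2\), and the kernel is exactly \(uv^2\mathbb{C}(1,0)\).

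Under your rule (\(j=1\) forbidden only for \(k\geq3\)), only a simple zero at \(z=1\) would be imposed. Then \(f_2=z(z-1)(\alpha z^2+\beta z+\gamma)\), \(g_2=z(z-1)(\delta z+\epsilon)\) would be admissible. After \(a_{2,0}=0\), its \(uv^2\) coefficient is \((\beta+2\epsilon,-\epsilon)\), which is arbitrary. You would obtain \(t^2T^{\oplus2}\), the pattern of rows (iv)--(vi), rather than \(t^2\mathbb{C}(1,0)+t^3T^{\oplus2}\).

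Your intermediate predictions are also false: there is a \(u\)-term in degree two, and degree three is not forced to \(f_3=cz(z-1)^3\), \(g_3=0\). You then read the placement of the line off the table rather than deriving it. ``Check that the degree-two space is exactly one line'' is the content of the lemma, not a bookkeeping step.

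Degree three has a second, smaller gap. Your claim that long-arm exponents \(j\geq2\) are always free fails there. If \(A_2^2=-2\), the sequence \((3,2,1,0,-1)\) puts \(-1\) on the compact \(A_4\) when \(l\geq4\), and imposes a terminal line condition when \(l=3\). Also, for \(p=2\) the short-arm exponent at \(j=1\) is \(1<2\), so it does not reach the multiplicity of \(B_1\). The line condition \(g_3'(0)=2f_3'(0)\) therefore does apply. So your rules do not describe the degree-three space.

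You do not need that space exactly, only surjectivity onto \(v^3B^{\oplus2}\) with \(B=\mathbb{C}[u]/(u^2)\), and for this you need an admissible subfamily. Vanishing to order three at \(z=1\) gives nondecreasing exponents, so it extends for every permitted arm. For example, \(f_3=z(z-1)^3(az^2+bz+e/2)\), \(g_3=z(z-1)^3(dz+e)\) satisfies the short-arm condition for every \(p\). Its outgoing coefficients \((a+2d,\,b-3a+2e-8d,\,-d,\,4d-e)\) are independent.

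The same care is needed for functions, since you never fixed the orders at \(z=1\). Use \(z(z-1)^2\mathbb{C}[z]_{\leq1}\) in degree two and \(z^2(z-1)^3\mathbb{C}[z]_{\leq1}\) in degree three. Both are admissible for every \(p\) and every arm, and they give the image \(\mathbb{C}+\mathbb{C}v(1-u)+v^2B+v^3B\), with kernel \(uv^2B+uv^3B\).
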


\begin{proof}
Write \(W=4P+2B_1+\sum_{i=1}^l\min(4,l+1-i)A_i\), where \(l\geq3\), \(A_1^2=-2\), and \(B_1^2=-p\). Use \(\mathbb{C}[u,v]/(u^2,v^4)\) at the external attachment. Lemma~\ref{star:nonuniform-arm} imposes a zero at zero in central degrees two and three and, if \(p=2\), the additional condition \(g_3'(0)=2f_3'(0)\). At the long arm, degree one requires a common zero at one; degree two requires a double zero there, because tangent exponent one gives the sequence \((2,1,0,-1)\). In these two degrees, every tangent exponent at least the indicated vanishing order is permitted, either by nonnegative propagation or by reaching a cycle multiplicity.

Degree zero gives \(\mathbb{C}e_n\). Degree one is the \((b,s)=(2,1)\) calculation above: its reduced coefficient map is invertible, so its nilpotent kernel is zero. In degree two the exact solution is
\[
f_2=z(z-1)^2(az+b),\qquad g_2=cz(z-1)^2.
\]
Its outgoing image is
\[
v^2\left(a+2c+u(b-2a-6c),-c+3cu\right).
\]
Reduction modulo \(u\) is onto \(v^2\mathbb{C}^2\), and its kernel is \(uv^2\mathbb{C}e_n\). In degree three the family
\[
f_3=z(z-1)^3(az^2+bz+e/2),\qquad g_3=z(z-1)^3(dz+e)
\]
satisfies the short-arm condition and vanishes to order three at the long arm. Its exponents there are nonnegative and nondecreasing, so it extends for every permitted arm. Its outgoing jet coefficients are
\[
(a+2d,\ b-3a+2e-8d,\ -d,\ 4d-e).
\]
This map is invertible: the last two coordinates determine \(d,e\), and the first two then determine \(a,b\). Thus degree three supplies all \(v^3B^{\oplus2}\), where \(B=\mathbb{C}[u]/(u^2)\). With \(t=v,n=u\), these images give precisely the reduced image and nilpotent kernel in the last row of \eqref{rep:nil-table}.

For functions, central degree one is \(\mathbb{C}z(z-1)\). In degrees two and three the extendable families \(z(z-1)^2\mathbb{C}[z]_{\leq1}\) and \(z^2(z-1)^3\mathbb{C}[z]_{\leq1}\) give the entire outgoing targets. Hence the function image is \(\mathbb{C}+\mathbb{C}v(1-u)+v^2B+v^3B\), which is onto the reduced algebra and has kernel \(uv^2B+uv^3B\).
\end{proof}

\section{Representations and interpolation}\label{sec:interpolation}\label{sec:quiver-interpolation}

We express \(H^1(Z,S_X|_Z)\) as an extension group and compute it by interval decomposition and interpolation on the connecting chain.

\subsection{Restriction representations}
\label{fnd:representations}\label{fnd:fixed-quiver}

Let \(\Lambda\) be the path algebra of the quiver
\[
1 \to 2 \leftarrow 3 \to 4 \leftarrow 5.
\]
For \(1 \leq i \leq j \leq 5\), let \(I[i,j]\) have a one-dimensional space at vertices \(i,\ldots,j\), zero elsewhere, and identity maps between its nonzero spaces. Put \(J_0 = I[1,5]\). The algebra \(\Lambda\) and the module \(J_0\) have dimensions nine and five.

For the cycle \(Z = W_L+2K+W_R\) constructed above, put \(D_i = W_i\cap2K\) and \(P_i = H^0(D_i,S_X|_{D_i})\). Define
\[
V_i = \operatorname{im}\left(H^0(W_i,S_X|_{W_i}) \to P_i\right),
\quad
V_K = \operatorname{im}\left(H^0(2K,S_X|_{2K}) \to P_L\oplus P_R\right).
\]
The inclusions and coordinate projections define a representation
\[
M_0 = \left(V_L \to P_L \leftarrow V_K \to P_R \leftarrow V_R\right).
\]
In particular, \(V_K\) is the simultaneous image of the two restrictions. If \(K\) is empty, use the representation of the two pieces and their single intersection at vertices one, two, and three, and put zero at vertices four and five.

\begin{proposition}\label{fnd:representation}\label{rep:ext-comparison}
For every analytic realization and every cycle \(Z\) constructed above, there is a natural isomorphism
\begin{equation}\label{fnd:ext-comparison}
H^1(Z,S_X|_Z) \simeq \operatorname{Ext}^1_\Lambda(J_0,M_0).
\end{equation}
The cycle \(Z\) need not be anti-nef.
\end{proposition}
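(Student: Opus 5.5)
The plan has two halves. First, identify \(H^1(Z,S_X|_Z)\) with the cokernel of the difference map on global sections. Second, identify that cokernel with \(\operatorname{Ext}^1_\Lambda(J_0,M_0)\) through an explicit projective resolution of \(J_0\).

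\emph{Step 1: the Mayer--Vietoris sequence.} When \(K\neq0\), the pieces \(W_L\), \(2K\), \(W_R\) are Cartier cycles. \(W_L\) and \(W_R\) have disjoint supports, since the included strings do not overlap and \(K\) separates them. Hence \(\mathcal{O}_{W_L+W_R}=\mathcal{O}_{W_L}\oplus\mathcal{O}_{W_R}\). Apply the standard exact sequence for a sum of effective divisors,
\[
0\to\mathcal{O}_{A+B}\to\mathcal{O}_A\oplus\mathcal{O}_B\to\mathcal{O}_{A\cap B}\to0,
\]
with \(A=W_L+W_R\) and \(B=2K\). Here \(A\cap B\) denotes the scheme defined by the sum of the two ideals. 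It is the disjoint union \(D_L\sqcup D_R\), which has finite length.

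This sequence is exact locally. At a crossing with \(A=(t^{J})\) and \(B=(n^2)\), the ideals satisfy \((t^J)\cap(n^2)=(t^Jn^2)\), because \(t\) and \(n\) form a regular sequence. Tensor with the locally free sheaf \(S_X\) and take cohomology. By Lemma~\ref{rep:piece-vanishing}, \(H^1\) vanishes on \(W_L\), on \(W_R\) and on \(2K\). The intersection has finite length, so its \(H^1\) also vanishes. Therefore \(H^1(Z,S_X|_Z)\) is the cokernel of the difference map
\[
\delta\colon H^0(W_L)\oplus H^0(2K)\oplus H^0(W_R)\to P_L\oplus P_R .
\]
Replacing each source by its image in the target does not change the cokernel. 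So the cokernel equals \((P_L\oplus P_R)/(V_L+V_K+V_R)\), where \(V_L\subset P_L\), \(V_R\subset P_R\), and \(V_K\subset P_L\oplus P_R\) is embedded diagonally.

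When \(K=0\), the same argument applies to the two pieces \(W_L,W_R\) and their single intersection \(D\). The cokernel is then \(P/(V_L+V_R)\).

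\emph{Step 2: the Ext computation.} The quiver is \(1\to2\leftarrow3\to4\leftarrow5\). Its indecomposable projectives are \(P(2)=I[2,2]\), \(P(4)=I[4,4]\), \(P(1)=I[1,2]\), \(P(3)=I[2,4]\) and \(P(5)=I[4,5]\). The module \(J_0=I[1,5]\) has top at vertices \(1,3,5\), so there is a projective resolution
\[
0\to P(2)\oplus P(4)\to P(1)\oplus P(3)\oplus P(5)\to J_0\to0 .
\]
A dimension count checks exactness: \(2+3+2-1-1=5\). The map \(P(2)\) hits the vertex-2 parts of \(P(1)\) and \(P(3)\) with opposite signs, and \(P(4)\) does the same for \(P(3)\) and \(P(5)\).

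Apply \(\operatorname{Hom}_\Lambda(-,M_0)\) and use \(\operatorname{Hom}(P(i),M)=M_i\). This gives the complex
\[
V_L\oplus V_K\oplus V_R\to P_L\oplus P_R .
\]
The map sends \((x,y,z)\) to \((x-y_L,\;y_R-z)\), up to signs. Its cokernel is exactly the quotient from Step 1, and the remaining term of the long exact sequence vanishes because the resolution has length one. For \(K=0\), the representation is supported on vertices \(1,2,3\), and the same resolution reduces to \(P(1)\oplus P(3)\) mapping from \(P(2)\). In that case \(P(3)\) contributes only its vertex-\(2,3\) part, and we recover \(P/(V_L+V_R)\).

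Naturality follows because every identification is induced by restriction maps. Anti-nefness of \(Z\) is never used, since Lemma~\ref{rep:piece-vanishing} holds without it.

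The main obstacle is the local exactness in Step 1: confirming that the scheme-theoretic intersection is precisely \(D_L\sqcup D_R\) and that the Mayer--Vietoris sequence holds for nonreduced cycles with common components. I would handle this by computing ideals monomially at each crossing, and by noting that \(W_i\) and \(2K\) share no components.
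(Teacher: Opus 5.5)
Your proposal is correct and is essentially the paper's proof: the union sequence for \(W_L\), \(2K\), \(W_R\) is exact because the local equations form a regular sequence at each crossing and there is no triple intersection. Combined with the \(H^1\)-vanishing on the pieces from Lemma~\ref{rep:piece-vanishing}, it identifies \(H^1(Z,S_X|_Z)\) with the cokernel of the difference of restriction images, and the length-one resolution \(0\to\Lambda e_2\oplus\Lambda e_4\to\Lambda e_1\oplus\Lambda e_3\oplus\Lambda e_5\to J_0\to0\) identifies \(\operatorname{Ext}^1_\Lambda(J_0,M_0)\) with the same cokernel. One point to tighten: your dimension count proves exactness of that resolution only together with injectivity of the first map, surjectivity of the second, and vanishing of their composite, all of which are immediate from the maps you describe.
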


\begin{proof}
The module \(J_0\) has the projective resolution
\begin{equation}\label{fnd:projective-resolution}
0 \to \Lambda e_2\oplus\Lambda e_4
 \to \Lambda e_1\oplus\Lambda e_3\oplus\Lambda e_5
 \to J_0 \to 0.
\end{equation}
At each source the last map is the identity. At a sink its map is addition from the two incident copies of \(\mathbb{C}\), and the first map sends a scalar to its two opposite values. This verifies exactness at every vertex. Applying \(\operatorname{Hom}_\Lambda(-,M_0)\) identifies the extension group with the cokernel of
\[
V_L\oplus V_K\oplus V_R \to P_L\oplus P_R,
\quad
(v_L,v_K,v_R) \mapsto 
\left(v_L-\operatorname{pr}_L(v_K),v_R-\operatorname{pr}_R(v_K)\right).
\]
The union sequence
\[
0 \to S_X|_Z \to 
S_X|_{W_L}\oplus S_X|_{2K}\oplus S_X|_{W_R}
 \to S_X|_{D_L}\oplus S_X|_{D_R} \to 0
\]
is exact: locally at each intersection it is the sequence for two Cartier equations, and there is no triple intersection. Lemma~\ref{rep:piece-vanishing} gives vanishing of first cohomology on all three pieces. The union sequence therefore identifies \(H^1(Z,S_X|_Z)\) with the same cokernel. Replacing global section spaces by the images of the restriction maps does not change this cokernel. If the two endpoint pieces meet directly, the corresponding union sequence and the representation supported on vertices one, two, and three give the same proof.
\end{proof}

By Ringel~~\cite[p.~1, Theorem]{Ringel2013}, every finite-dimensional representation of a quiver of type \(A_n\) is a direct sum of interval representations. Its multiplicities are unique by the Krull--Schmidt theorem.

\begin{proposition}\label{thm:three-intervals}\label{prop:multiplicities}
Let
\[
M = \left(M_1\xrightarrow{a}M_2\xleftarrow{b}M_3\xrightarrow{c}M_4\xleftarrow{d}M_5\right)
\simeq \bigoplus_{i\leq j} I[i,j]^{\oplus m_{ij}}.
\]
Put \(A = M_2/\operatorname{im}a\), \(B = M_4/\operatorname{im}d\), and let \(u:M_3\to A\), \(v:M_3\to B\) be the induced maps. There is a canonical exact sequence
\[
0 \to \frac{M_3}{\ker u+\ker v}
 \to \operatorname{Ext}^1_\Lambda(J_0,M)
 \to \operatorname{coker}u\oplus\operatorname{coker}v \to 0.
\]
The first map sends the class of \(z\) to the class of \((u(z),0)\) under the identification \(\operatorname{Ext}^1_\Lambda(J_0,M)=(A\oplus B)/\operatorname{im}(u,v)\). The interval multiplicities satisfy
\[
\begin{aligned}
m_{22}&=\dim\operatorname{coker}u,\qquad m_{44}=\dim\operatorname{coker}v,\\
m_{24}&=\dim\frac{M_3}{\ker u+\ker v}\\
&=\operatorname{rank}u+\operatorname{rank}v-\operatorname{rank}(u,v).
\end{aligned}
\]
In particular, \(\dim\operatorname{Ext}^1_\Lambda(J_0,M)=m_{22}+m_{24}+m_{44}\).
\end{proposition}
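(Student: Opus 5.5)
The plan is to start from the description of \(\operatorname{Ext}^1_\Lambda(J_0,M)\) obtained in the proof of Proposition~\ref{rep:ext-comparison}. Applying \(\operatorname{Hom}_\Lambda(-,M)\) to the projective resolution \eqref{fnd:projective-resolution} identifies the extension group with the cokernel of
\[
M_1\oplus M_3\oplus M_5\to M_2\oplus M_4,\qquad (x,z,y)\mapsto\bigl(a(x)-b(z),\,d(y)-c(z)\bigr).
\]
First we quotient by the images of \(M_1\) and \(M_5\), which gives \(\operatorname{Ext}^1_\Lambda(J_0,M)\simeq (A\oplus B)/\operatorname{im}(u,v)\) after replacing \(u\) by \(-u\) or adjusting signs; signs do not affect images or kernels.

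Next we derive the exact sequence. The projection \(A\oplus B\to\operatorname{coker}u\oplus\operatorname{coker}v\) is surjective and vanishes on \(\operatorname{im}(u,v)\). Its kernel modulo \(\operatorname{im}(u,v)\) is \((\operatorname{im}u\oplus\operatorname{im}v)/\operatorname{im}(u,v)\). The map \(M_3\to(\operatorname{im}u\oplus\operatorname{im}v)/\operatorname{im}(u,v)\) given by \(z\mapsto(u(z),0)\) is surjective, because \((u(z),v(z'))\equiv(u(z-z'),0)\). Its kernel consists of those \(z\) with \((u(z),0)=(u(w),v(w))\) for some \(w\), that is, \(z=(z-w)+w\) with \(z-w\in\ker u\) and \(w\in\ker v\). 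The kernel is therefore \(\ker u+\ker v\), which gives the sequence and the stated first map. Its dimension is \(\dim M_3-\dim(\ker u+\ker v)=\dim M_3-\dim\ker u-\dim\ker v+\dim(\ker u\cap\ker v)\), and this equals \(\operatorname{rank}u+\operatorname{rank}v-\operatorname{rank}(u,v)\) since \(\ker(u,v)=\ker u\cap\ker v\).

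Finally we identify the multiplicities. Every quantity involved (\(\operatorname{coker}u\), \(\operatorname{coker}v\), \(M_3/(\ker u+\ker v)\)) is additive over direct sums, because \(A\), \(B\), \(u\), \(v\) split along a decomposition. It therefore suffices to compute these quantities on each interval \(I[i,j]\), a routine check over the fifteen intervals. \(A\) is nonzero exactly when vertex 2 is included and vertex 1 is not, that is, \(i=2\). Then \(u\) is surjective exactly when \(3\le j\), so \(\operatorname{coker}u\neq0\) only for \(I[2,2]\). Symmetrically, \(\operatorname{coker}v\neq0\) only for \(I[4,4]\). The space \(M_3/(\ker u+\ker v)\) requires both \(u\neq0\) and \(v\neq0\), forcing \(i=2\) and \(j=4\); for \(I[2,4]\) it is one-dimensional. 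Summing gives the three formulas and \(\dim\operatorname{Ext}^1=m_{22}+m_{24}+m_{44}\).

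The main thing to be careful about is the sign and identification bookkeeping in the first step, so that the first map in the sequence matches the stated form. The interval check is the bulk of the work, but it is mechanical.
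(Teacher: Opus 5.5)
Your proposal is correct and follows the paper's proof essentially step for step: you reduce the resolution to \((A\oplus B)/\operatorname{im}(u,v)\), identify the kernel of the projection with \(M_3/(\ker u+\ker v)\) via \(z\mapsto(u(z),0)\), and finish with the interval-by-interval check combined with additivity. Your explicit verification of which intervals contribute, and of the rank identity through \(\ker(u,v)=\ker u\cap\ker v\), fills in details that the paper states without computation.
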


\begin{proof}
Resolution~\eqref{fnd:projective-resolution} identifies the extension group with \((A\oplus B)/\operatorname{im}(u,v)\). Projection gives the surjection in the statement. Its kernel is the image of \(z\mapsto[(u(z),0)]\): a representative \((u(x),v(y))\) differs from \((u(x-y),0)\) by \((u(y),v(y))\). The kernel of this map is \(\ker u+\ker v\), since \((u(z),0)=(u(w),v(w))\) is equivalent to \(w\in\ker v\) and \(z-w\in\ker u\). This proves exactness.

On an interval summand, \(\operatorname{coker}u\) is one-dimensional exactly for \(I[2,2]\), and \(\operatorname{coker}v\) exactly for \(I[4,4]\). The quotient by \(\ker u+\ker v\) is one-dimensional exactly for \(I[2,4]\). Additivity gives the multiplicity formulas and the dimension of the extension group. The rank identity follows from the dimension formula for a sum of two subspaces.
\end{proof}

When the remaining connecting chain is nonempty, \(a,d,(b,c)\) are injective, so the interval representations \(I[1,1],I[3,3],I[5,5]\), each supported at a single source vertex, do not occur. If \(q_i = \dim H^0(D_i,\mathcal{O}_{D_i})\), then \(P_i\) has dimension \(2q_i\). The sum of the dimensions of the two endpoint restriction images is at most \(2(q_L+q_R)\), and the same bound holds for the image of simultaneous restriction from \(2K\). Since \(q_L+q_R \leq 16\) for the cycles above, this gives \(\dim M_0 \leq 96\). This bound is uniform over all two-node graphs satisfying the local restrictions, independently of the number of components and their self-intersections. When the endpoint cycles intersect and no connecting component remains, the same estimate gives \(\dim M_0 \leq 48\), and only \(I[2,2]\) can contribute to the extension group.

\subsection{\texorpdfstring{Reduction to the ideal of \(K\) in \(2K\)}{Reduction to the ideal of K in 2K}}

Assume the remaining connecting chain \(K\) is nonempty. Write \(p_L,p_R\) for its marked endpoints, and put \(I = \mathcal{O}_K(-K)\), \(S_K = S_X|_K\). We continue to use \(P_i = H^0(D_i,S_X|_{D_i})\). The endpoint restriction images are those of Proposition~\ref{rep:endpoint-images} and Table~\eqref{rep:nil-table}, with endpoint subscripts on their parameters when needed. Lemma~\ref{rep:piece-vanishing} gives \(S_K \simeq \mathcal{O}_K^{\oplus2}\), \(H^1(K,I) = 0\), and the surjectivity of restriction from \(H^0(2K,S_X|_{2K})\) to \(H^0(K,S_K)\), with kernel \(H^0(K,I\otimes S_K)\).

Set \(r_i = 1\) in cases (ii),(iii), and \(r_i = 2\) otherwise. The nilpotent image is either \(N_i = n_it_i^{r_i}T_i^{\oplus2}\), or
\[
N_i = n_it_i^{r_i}\ell_i+n_it_i^{r_i+1}T_i^{\oplus2}
\]
for a line \(\ell_i\). Put \(\epsilon_i = 0\) in the first case and \(\epsilon_i = 1\) in the second. An additional line in the coefficient of \(n_it_i^2\) occurs only in cases (vii) and (viii), by \eqref{rep:nil-table}. In case (vii) the entire maximal initial sequence of connecting components of self-intersection \(-2\) is included in the endpoint piece; in case (viii) that sequence is empty by the arm and chain conditions in Table~\ref{intro:two-node-table}. Thus in either case the adjacent component of \(K\) has self-intersection at most \(-3\).

\begin{proposition}\label{prop:single-end}
For either endpoint, the map
\[
V_i\oplus V_K \to P_i
\]
obtained by taking the difference of the restriction maps is surjective. Consequently
\[
m_{22} = m_{44} = 0,\qquad
\dim\Ext^1_{\Lambda}(J_0,M_0) = m_{24}.
\]
Moreover, if \(u:V_K \to P_L/V_L\) and \(v:V_K \to P_R/V_R\) are the quotient restrictions, then
\[
\operatorname{Ext}^1_{\Lambda}(J_0,M_0) \simeq V_K/(\ker u+\ker v).
\]
\end{proposition}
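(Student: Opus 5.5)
The plan is to prove surjectivity of \(V_i\oplus V_K\to P_i\) one endpoint at a time, and then read off the consequences from Proposition~\ref{thm:three-intervals}. In that proposition, \(\operatorname{coker}u\) at vertex \(2\) is exactly \(P_L/(V_L+\operatorname{pr}_L V_K)\), so the surjectivity statement for \(i=L\) says \(\operatorname{coker}u=0\). Hence \(m_{22}=0\), and symmetrically \(m_{44}=0\). The exact sequence of Proposition~\ref{thm:three-intervals} then collapses to \(\operatorname{Ext}^1_\Lambda(J_0,M_0)\simeq M_3/(\ker u+\ker v)=V_K/(\ker u+\ker v)\). Combined with Proposition~\ref{rep:ext-comparison}, this gives all the displayed conclusions. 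So everything reduces to the single-end surjectivity, which I would prove using the filtration of \(P_i\) by the square-zero ideal \(n_i\).

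Fix an endpoint \(i\). The module \(P_i\) is free of rank two over \(Q_i=\mathbb{C}[t_i,n_i]/(t_i^{J_i},n_i^2)\), and it sits in an exact sequence \(0\to n_iT_i^{\oplus2}\to P_i\to T_i^{\oplus2}\to 0\).

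\emph{Reduced part.} By Proposition~\ref{rep:endpoint-images}, \(\overline V_i=\mathbb{C}w_i+t_iT_i^{\oplus2}\). It therefore suffices for \(V_K\), reduced modulo \(n_i\), to supply a complementary constant vector. Sections of \(S_K\simeq\mathcal{O}_K^{\oplus2}\) are constant, with prescribed values in the logarithmic fiber at \(p_i\). These lift to \(2K\) by the lifting sequence of Lemma~\ref{rep:piece-vanishing}, so the reduced part is covered.

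\emph{Nilpotent part.} It remains to show \(n_iT_i^{\oplus2}\subseteq V_i+V_K+(\text{reductions already matched})\). Here I use the exact sequence \(0\to H^0(K,I)^{\oplus2}\to H^0(2K,S_X|_{2K})\to\cdots\). Sections of \(I\otimes S_K\) restrict at \(D_i\) to \(n_i\cdot(\text{jet of order}<J_i\text{ in }t_i)\). Since \(I\) has nonnegative degree on every component, the evaluation of \(H^0(K,I)\) on jets at the single point \(p_i\) should have large image. The needed jets are only those of \(t_i\)-order below \(r_i\), together with the missing line when \(\epsilon_i=1\); the rest already lies in \(N_i\). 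Concretely, I would show that the jet map \(H^0(K,I)\to I|_{p_i}\otimes\mathbb{C}[t_i]/(t_i^{r_i+\epsilon_i})\) is surjective.

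When the adjacent component of \(K\) has weight \(-2\), only \(r_i\le2\) with \(\epsilon_i=0\) is needed. In that situation \(K\) is a chain, and \(I\) has degree \(c_j-\deg_K\) on each component: degree \(\geq 1\) at the end component of \(K\) (for a single component, \(c-\) something \(\geq 0\)). I would build the sections by propagating from the far end using \(H^1(K,I(-\text{jet}))=0\); concretely, a line bundle on a rational chain with degree \(\geq r-1\) at the end component containing \(p_i\) and nonnegative elsewhere surjects onto \(r\)-jets there. When \(\epsilon_i=1\) (cases (vii),(viii)), the adjacent component has weight \(\le -3\), which raises the degree at \(p_i\) by one and gives the extra jet. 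The same degree count handles \(r_i=2\) in cases (iv)–(vi), provided the adjacent component is \(-2\)-free by maximality of the included string, or else I would use the transport \(T_2\) of Lemma~\ref{fnd:two-point-transport} to check that the needed jets are still reached.

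The main obstacle is this jet-surjectivity count. One difficulty is that the degree of \(I\) on the end component of \(K\) depends on whether \(K\) has length one and on the weights, so I would split into \(K\) of length one versus longer. The other is that \(V_i\) is not a \(Q_i\)-submodule, so the reduced and nilpotent parts cannot be matched independently. I would handle that interaction by first correcting the reduced part with a lift from \(V_K\), and then absorbing the resulting nilpotent error using the lower-order jets just shown to be available.
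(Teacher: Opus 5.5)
Your proposal is correct and follows the paper's proof essentially step for step. Constant sections of \(S_K\), lifted to \(2K\), cover the reduced cokernel. The jets of length \(r_i+\epsilon_i\) at \(p_i\) of \(H^0(K,I\otimes S_K)\) cover \(n_iT_i^{\oplus2}/N_i\), because \(I\) has degree \(c_i-1\) (or \(c_i\) when \(K\) has one component) on the end component. Proposition~\ref{thm:three-intervals} then gives the remaining assertions.

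One small correction: \(\epsilon_i=1\) also occurs in case (ii) and in case (iii) with \((b_P,s_P)=(3,2)\), not only in cases (vii) and (viii). There \(r_i+\epsilon_i=2\), so degree at least one suffices, and the same observation makes your fallback via \(T_2\) in cases (iv)--(vi) unnecessary.
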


\begin{proof}
The reduced endpoint image contains every positive-order coefficient, and evaluation of \(H^0(K,S_K)\) supplies the constant coefficient. By Lemma~\ref{rep:piece-vanishing}, these sections lift to \(2K\), so the reduced difference map is surjective. Its nilpotent cokernel is a quotient of \(n_iT_i^{\oplus2}/N_i\), which requires jets of length at most \(r_i+\epsilon_i\).

On an end component \(K_i\) of self-intersection \(-c_i\), the degree of \(I\) is \(c_i-1\), or \(c_i\) if \(K\) has one component. This degree is at least \(r_i+\epsilon_i-1\): the only case requiring degree two has \(r_i=2\), \(\epsilon_i=1\), and \(c_i\geq3\), as observed above. Evaluation on \(K_i\) therefore supplies arbitrary jets of the required length. Here no value is prescribed at the opposite endpoint. Each chosen section extends along the chain by matching one value at each successive attaching point, since all other component degrees of \(I\) are nonnegative. Thus \(H^0(K,I\otimes S_K)\), the kernel of reduction from \(2K\), surjects onto \(n_iT_i^{\oplus2}/N_i\). The difference map is surjective, and Proposition~\ref{prop:multiplicities} gives the remaining assertions.
\end{proof}

\subsection{Transport of the endpoint lines}

We compute determinants of transported endpoint vectors using continuants. These formulas determine when the endpoint lines coincide.

\begin{proposition}\label{int:transport-determinant}
Let \(b_1,\ldots,b_m \geq 2\), \(v = (a,-c)\), and \(v' = (a',-c')\), where \(a,a' > 0\), \(0 \leq c \leq a\), and \(0 \leq c' \leq a'\). Put \(\rho = c/a\), \(\rho' = c'/a'\), and
\begin{equation}\label{int:transport}
\mathcal{T} = T_{b_m}\cdots T_{b_1},\qquad T_b = \begin{pmatrix}b&1\\-1&0\end{pmatrix},\qquad D = \frac{\det(\mathcal{T}v,(-c',a'))}{aa'}.
\end{equation}
Then \(D \geq 0\), with equality if and only if \(\rho = \rho' = 1\) and every \(b_j = 2\). For \(m = 0\), take \(\mathcal{T}\) to be the identity.
\end{proposition}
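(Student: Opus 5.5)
The plan is to normalize by \(a\) and follow the transported vector through a cone that every \(T_b\) preserves, as in the proof of Proposition~\ref{rep:first-layer}.

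\emph{Reduction.} Put \(w=\mathcal{T}v/a=(\alpha,\beta)\). Expanding the determinant gives
\[
\det\bigl(\mathcal{T}v,(-c',a')\bigr)=a'(\mathcal{T}v)_1+c'(\mathcal{T}v)_2 .
\]
Dividing by \(aa'\) yields
\[
D=\alpha+\rho'\beta .
\]
The vector \(w\) is obtained by applying \(T_{b_1},\ldots,T_{b_m}\) in turn to \(w_0=(1,-\rho)\).

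\emph{Invariant cone.} Let \(\mathcal{C}\) be the set of \((\alpha,\beta)\) with \(\alpha>0\), \(\beta\le0\) and \(\alpha+\beta\ge0\). The hypothesis \(0\le\rho\le1\) puts \(w_0\) in \(\mathcal{C}\). For \(b\ge2\) and \((\alpha,\beta)\in\mathcal{C}\) we have
\[
T_b(\alpha,\beta)=(b\alpha+\beta,\,-\alpha).
\]
Its second entry \(-\alpha\) is strictly negative. Its coordinate sum is
\[
(b-2)\alpha+(\alpha+\beta)\ge0 .
\]
Hence its first entry is at least \(\alpha>0\). So \(\mathcal{C}\) is preserved, and by induction every transported vector, including \(w\), lies in \(\mathcal{C}\).

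\emph{Inequality.} Since \(\beta\le0\) and \(\rho'\le1\), we get
\[
D=\alpha+\rho'\beta\ \ge\ \alpha+\beta\ \ge\ 0 .
\]

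\emph{Equality.} The converse is a direct check: if \(\rho=\rho'=1\) and all \(b_j=2\), then \(w_0=(1,-1)\), and \(T_2(\alpha,-\alpha)=(\alpha,-\alpha)\). So \(w=(1,-1)\) and \(D=0\).

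For the forward direction, suppose \(D=0\). Both inequalities above must then be equalities, so \(\alpha+\beta=0\) and \((1-\rho')\beta=0\). Since \(\alpha>0\), the first gives \(\beta=-\alpha\neq0\), and the second then forces \(\rho'=1\). Now argue backwards through the chain. If a step \(T_{b_j}\) produces coordinate sum zero, the displayed sum formula forces \((b_j-2)\alpha=0\) and \(\alpha+\beta=0\) at the previous stage. Since \(\alpha>0\), this gives \(b_j=2\) and a zero coordinate sum one step earlier. Descending to \(w_0\), we obtain every \(b_j=2\) and \(1-\rho=0\).

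\emph{The case \(m=0\).} Here \(w=w_0\), and the formula reads
\[
D=1-\rho\rho' .
\]
With \(\rho,\rho'\in[0,1]\) this is nonnegative and vanishes exactly when \(\rho=\rho'=1\). The condition on the \(b_j\) is vacuous.

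The main obstacle is the equality analysis. The point to handle carefully is that the strict positivity \(\alpha>0\) is preserved along the chain, because it is what forces \(\beta\neq0\) at the end and turns each zero coordinate sum into the condition \(b_j=2\).
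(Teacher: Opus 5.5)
Your proof is correct, and it takes a different route from the paper.

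\emph{The paper's route.} For $m\ge1$, the paper uses the continuant recurrence to identify $D$ with $\det A_{\rho,\rho'}$. Here $A_{\rho,\rho'}$ is the tridiagonal matrix with diagonal $b_1,\ldots,b_m$ and off-diagonal entries $-1$, with $\rho$ and $\rho'$ subtracted from the two corner diagonal entries. The paper then writes its quadratic form as
$\sum_j(x_j-x_{j+1})^2+\sum_j(b_j-2)x_j^2+(1-\rho)x_1^2+(1-\rho')x_m^2$.
So the matrix is positive semidefinite. It is singular exactly when a constant kernel vector exists, that is, when $\rho=\rho'=1$ and every $b_j=2$; otherwise it is positive definite. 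The case $m=0$ is checked directly.

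\emph{Your route.} You reuse the invariant cone from Proposition~\ref{rep:first-layer} and rewrite $D$ as $\alpha+\rho'\beta$. You then get the inequality by forward induction and the equality case by backward descent. Unrolling your recursion even gives an explicit nonnegative formula. Write $w_j=(\alpha_j,\beta_j)$ for the vector after $j$ steps, so that $\alpha_j\ge1$ for all $j$. Then, for $m\ge1$,
$D=(1-\rho)+\sum_{j=1}^m(b_j-2)\alpha_{j-1}+(1-\rho')\alpha_{m-1}$.

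\emph{What each buys.} Your argument is more elementary and self-contained. It avoids the continuant identity, which the paper only asserts, and it treats $m=0$ uniformly. The paper's route produces the identity $D=\det A_{\rho,\rho'}$ itself. That identity is used in the proof of Corollary~\ref{int:four-arm-discriminant}: there the transported determinant is matched, through a Schur complement, with $\det A_\Gamma/(pqp'q')$. Your proof establishes the proposition as stated but not that identity, so the identity would have to be supplied separately at that point.
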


\begin{proof}
For \(m \geq 1\), let \(A\) be the tridiagonal matrix with diagonal \(b_1,\ldots,b_m\) and adjacent entries minus one, and set \(A_{\rho,\rho'} = A-\rho e_1e_1^{\mathsf{T}}-\rho'e_me_m^{\mathsf{T}}\). The continuant recurrence gives \(D=\det A_{\rho,\rho'}\); when \(m=1\), both sides are \(b_1-\rho-\rho'\). For every \(x \in \mathbb{R}^m\),
\[
x^{\mathsf{T}}A_{\rho,\rho'}x = \sum_{j = 1}^{m-1}(x_j-x_{j+1})^2+\sum_{j = 1}^m(b_j-2)x_j^2+(1-\rho)x_1^2+(1-\rho')x_m^2.
\]
All summands are nonnegative. A nonzero kernel vector must be constant, and exists if and only if both endpoint coefficients \(1-\rho,1-\rho'\) and all \(b_j-2\) vanish. Otherwise the matrix is positive definite. This proves the assertion for \(m \geq 1\), including \(m = 1\), when the two endpoint terms concern the same variable. For \(m = 0\), one has \(D = 1-\rho\rho'\).
\end{proof}

\begin{corollary}\label{int:four-arm-discriminant}
Suppose that both endpoint nodes have self-intersection \(-2\) and that their four outward arms are single components of self-intersections \(-p,-q,-p',-q'\), where \(p,q,p',q' \geq 2\). Put \(\tau = 2-1/p-1/q\), \(\sigma = 2-1/p'-1/q'\), and let \(A_\Gamma\) be the negative of the intersection matrix of the full graph. If \(\mathcal{T} = T_{b_m}\cdots T_{b_1}\) is the transport along the connecting chain, then
\[
\det\left(\mathcal{T}(\tau,-1),(-1,\sigma)\right)
 = \frac{\det A_\Gamma}{pqp'q'}.
\]
If every component of the connecting chain has self-intersection \(-2\), this common value is
\[
(\tau-1)+(\sigma-1)+(m+1)(\tau-1)(\sigma-1).
\]
\end{corollary}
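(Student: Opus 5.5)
The plan is to reduce both sides to the same tridiagonal determinant. Proposition~\ref{int:transport-determinant} handles the transport side; Schur complements, or equivalently eliminating the end vertices of the continuant, handle the graph side.

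First, rewrite the left side with Proposition~\ref{int:transport-determinant}. Take \(v=(a,-c)=(\tau,-1)\) and \((-c',a')=(-1,\sigma)\), so that \(a=\tau\), \(c=1\), \(a'=\sigma\), \(c'=1\). We have \(\tau,\sigma\geq1\) because \(p,q,p',q'\geq2\), so \(0\leq c\leq a\) and \(0\leq c'\leq a'\) hold. The proposition, or rather its continuant identity \(D=\det A_{\rho,\rho'}\), then gives
\[
\det\bigl(\mathcal{T}(\tau,-1),(-1,\sigma)\bigr)=\tau\sigma\,\det\bigl(A-\tfrac1\tau e_1e_1^{\mathsf T}-\tfrac1\sigma e_me_m^{\mathsf T}\bigr)\qquad(m\geq1).
\]
For \(m=0\) it gives \(\tau\sigma(1-1/(\tau\sigma))=\tau\sigma-1\).

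Second, compute \(\det A_\Gamma\). Order the vertices as the four end components, then \(P\), the chain \(C_1,\dots,C_m\), and \(Q\). The four end vertices are diagonal with entries \(p,q,p',q'\), and each is attached by a single \(-1\) to \(P\) or \(Q\). The Schur complement formula therefore gives \(\det A_\Gamma=pqp'q'\det M\). Here \(M\) is the tridiagonal matrix with diagonal \(\tau,c_1,\dots,c_m,\sigma\) and off-diagonal entries \(-1\), since \(2-1/p-1/q=\tau\) and \(2-1/p'-1/q'=\sigma\).

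Next, eliminate the first and last vertices of \(M\) by another Schur complement. This gives \(\det M=\tau\sigma\det(A-\tfrac1\tau e_1e_1^{\mathsf T}-\tfrac1\sigma e_me_m^{\mathsf T})\). The case \(m=1\) needs care, since both corrections then land on the single diagonal entry \(c_1\); eliminating the two ends in turn still gives \(c_1-1/\tau-1/\sigma\), which agrees with the proposition's convention. For \(m=0\), \(\det M=\tau\sigma-1\) directly. Comparing with the first step proves the identity.

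Finally, for the all-\((-2)\) chain, put \(x=\tau-1\) and \(y=\sigma-1\). Then \(M\) is the size-\((m+2)\) tridiagonal matrix with diagonal \(1+x,2,\dots,2,1+y\). I will show \(\det M=x+y+(m+1)xy\) by induction on \(m\), expanding along the last row. The leading principal minors with diagonal \(1+x,2,\dots,2\) of size \(k\) equal \(1+kx\), which follows from the two-term recurrence. Then \(\det M=(1+y)(1+(m+1)x)-(1+mx)=x+y+(m+1)xy\).

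The only delicate point is the bookkeeping of signs and orientation. The vector \((-1,\sigma)\) must correspond to the right endpoint read in the exchanged ordered basis, so that \(\rho'=1/\sigma\) sits at \(e_m\); I would verify this against the \(m=0\) and \(m=1\) cases explicitly.
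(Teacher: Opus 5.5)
Your proposal is correct and follows the paper's own argument. The Schur complement of the four leaf entries gives the tridiagonal matrix with diagonal \(\tau,b_1,\dots,b_m,\sigma\), and eliminating its two ends gives \(\tau\sigma\det A_{1/\tau,1/\sigma}\), which equals the transported determinant by the continuant identity in Proposition~\ref{int:transport-determinant}; for \(m=0\) both sides are \(\tau\sigma-1\). The all-\((-2)\) formula is the same continuant computation.

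The orientation check you flag is unnecessary. The left side is a fixed matrix expression, and \(1/\tau\) and \(1/\sigma\) land at \(e_1\) and \(e_m\) on both sides by construction.
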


\begin{proof}
The Schur complement of the four leaf entries gives the tridiagonal matrix with diagonal \(\tau,b_1,\ldots,b_m,\sigma\), whose determinant is \(\det A_\Gamma/(pqp'q')\). For \(m\geq1\), eliminating its first and last entries gives \(\tau\sigma\det A_{1/\tau,1/\sigma}\), which is the transported determinant by Proposition~\ref{int:transport-determinant}; for \(m=0\), both are \(\tau\sigma-1\). If all \(b_j=2\), the continuant of \(r\) such entries is \(r+1\), giving the last formula.
\end{proof}

\begin{corollary}\label{int:actual-lines-distinct}
The endpoint lines in \eqref{int:orders} are distinct after transport through the remaining connecting chain by the logarithmic restriction maps, unless both endpoints are case (ii), all four outward leaves have self-intersection \(-2\), and every component of the remaining connecting chain has self-intersection \(-2\). The exceptional graph is not negative definite. The one-dimensional images in the constant coefficient of the reduced restriction in \eqref{rep:nil-table} are always distinct after the same comparison.
\end{corollary}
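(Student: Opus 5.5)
The plan is to reduce distinctness to the nonvanishing of the transport determinant in Proposition~\ref{int:transport-determinant}. Throughout, we work in the coordinates and frames fixed in Proposition~\ref{rep:endpoint-images}.

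First we list the relevant endpoint lines from \eqref{rep:nil-table}. The constant lines \(w_i\) are \((1,0)\), \(w_h=(h+1,-h)\), \((2,-1)\) and \((3,-2)\). The nilpotent lines \(\ell_i\) are \((\tau,-1)\) in case (ii), \((1,0)\) in case (iii) with \(b_P=3,s_P=2\), \((3,-2)\) in case (vii) and \((1,0)\) in case (viii). Every constant line has the form \((a,-c)\) with \(a>0\) and \(0\le c<a\), so \(\rho=c/a<1\). For case (ii), \((\tau,-1)\) has \(\rho=1/\tau\le 1\), with equality exactly when \(p=q=2\).

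The comparison is the one used for the \(m_{24}\) calculation. One endpoint vector is transported along the remaining chain by \(\mathcal{T}=T_{b_m}\cdots T_{b_1}\). The entries are then exchanged at the far crossing, since \(t\) and \(n\) switch roles, as for \(Q_m=T_2^mJ\) in Lemma~\ref{rep:boundary-values}. Finally the result is compared with the vector at the other end, written in that end's ordered basis. Coincidence of the lines means \(\det(\mathcal{T}v,Jv')=0\).

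We write \(v'=(a',-c')\), so that \(Jv'=(-c',a')\). Then \(\det(\mathcal{T}v,Jv')=aa'D\), and Proposition~\ref{int:transport-determinant} gives \(D\ge0\), with equality only if \(\rho=\rho'=1\) and every \(b_j=2\). A sign normalization is needed for the nilpotent lines: the factors \(n_i t_i^{r_i}\) and the orientation of the frames may flip a sign. We check this using Lemma~\ref{fnd:two-point-transport} and the Jacobian remark in the proof of Proposition~\ref{rep:endpoint-images}, which says that a branch-preserving change acts on each graded piece by a common scalar.

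For the constant lines, \(\rho<1\) always holds, so \(D>0\) and the lines are distinct in every case. For the nilpotent lines, \(D=0\) forces \(\rho=\rho'=1\). Among the lines listed above, only case (ii) with \(p=q=2\) gives \(\rho=1\), and it does so with \(\tau=1\). So both endpoints must be of case (ii), all four leaves must be \(-2\)-curves, and all remaining chain weights must equal \(2\). This is exactly the stated exception.

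To see that the exceptional graph is not negative definite, we apply Corollary~\ref{int:four-arm-discriminant} with \(\tau=\sigma=1\). The value \((\tau-1)+(\sigma-1)+(m+1)(\tau-1)(\sigma-1)\) vanishes, so \(\det A_\Gamma=0\). Indeed, this graph is the affine \(\tilde D\) configuration. Since no included connecting components occur in case (ii), the remaining chain is the whole connecting chain, so the corollary applies directly.

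The main obstacle is bookkeeping. We must confirm that the comparison used in \(V_K\) really is \(\det(\mathcal{T}v,Jv')\) with no extra sign or exchange. We must also handle cases (iii), (vii) and (viii), whose nilpotent lines are paired with lines of a different kind at the other end, where \(\rho<1\) still gives \(D>0\). I would verify the sign first on \(m=0\), where \(D=1-\rho\rho'\), against Lemma~\ref{rep:boundary-values}.
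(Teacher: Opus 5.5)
Your proposal is correct and follows the paper's proof. You read off the slopes \(\rho=c/a\) of the lines in the convention of Proposition~\ref{int:transport-determinant}, observe that \(\rho<1\) for every constant line (including \(h/(h+1)\) in case (iv)) and that \(\rho\le1\) for the additional lines with equality only in case (ii) with \(p=q=2\), and then apply the equality criterion \(D=0\iff\rho=\rho'=1\) with all \(b_j=2\). The only difference is minor: you show the exceptional graph is not negative definite by letting the determinant formula of Corollary~\ref{int:four-arm-discriminant} vanish at \(\tau=\sigma=1\), whereas the paper exhibits the null cycle with multiplicity two on the path between the nodes and one on the four leaves.
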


\begin{proof}
In the convention of Proposition~\ref{int:transport-determinant}, the slopes of the additional lines are
\[
\rho = \frac1{\tau}\quad\text{in case (ii)},\qquad
\rho = 0\quad\text{in cases (iii) and (viii)},\qquad
\rho = \frac23\quad\text{in case (vii)}.
\]
Here \(\tau = 2-1/p-1/q \geq 1\). Thus \(\rho \leq 1\), with equality if and only if the endpoint is case (ii) and \(p = q = 2\). Proposition~\ref{int:transport-determinant} gives the stated exceptional graph. The cycle with multiplicity two on the path between the nodes and multiplicity one on its four leaves has zero intersection with every component, so its intersection matrix is not negative definite. Every line in the constant coefficient of the reduced restriction has slope strictly less than one, including the slopes \(h/(h+1)\) in case (iv), so the same proposition proves the last assertion.
\end{proof}

\begin{proposition}\label{int:reduced}
The quotient of the Mayer--Vietoris complex of the restriction maps by its nilpotent subcomplex, defined as the kernel of reduction modulo the ideal of \(K\), is acyclic in degrees zero and one. Consequently
\begin{equation}\label{int:exact-nil-test}
\operatorname{Ext}^1_{\Lambda}(J_0,M_0)
 \simeq 
\operatorname{coker}\left[
H^0(K,I\otimes S_K) \to 
\frac{n_LT_L^{\oplus2}}{N_L}\oplus
\frac{n_RT_R^{\oplus2}}{N_R}
\right].
\end{equation}
Here \(M_0\) is defined by the simultaneous restriction maps on sections in Proposition~\ref{rep:ext-comparison}.
\end{proposition}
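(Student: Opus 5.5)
We will view the Mayer--Vietoris complex
\[
\mathcal{C}:\ V_L\oplus V_K\oplus V_R\longrightarrow P_L\oplus P_R
\]
as a two-term complex whose degree-one cohomology is \(\operatorname{Ext}^1_\Lambda(J_0,M_0)\). This identification comes from the projective resolution \eqref{fnd:projective-resolution} in Proposition~\ref{rep:ext-comparison}. Reduction modulo the ideal of \(K\), that is modulo \(n_L,n_R\), gives a surjection of complexes \(\mathcal{C}\to\overline{\mathcal{C}}\), where
\[
\overline{\mathcal{C}}:\ \overline V_L\oplus\overline V_K\oplus\overline V_R\to T_L^{\oplus2}\oplus T_R^{\oplus2}.
\]
Its kernel is the nilpotent subcomplex
\[
\mathcal{N}:\ N_L\oplus H^0(K,I\otimes S_K)\oplus N_R\to n_LT_L^{\oplus2}\oplus n_RT_R^{\oplus2}.
\]
Here \(\overline V_K\) is the image of \(H^0(K,S_K)\), and the middle term of \(\mathcal{N}\) is the kernel of reduction by the lifting sequence in Lemma~\ref{rep:piece-vanishing}. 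I will check that \(0\to\mathcal{N}\to\mathcal{C}\to\overline{\mathcal{C}}\to0\) is exact termwise. In degree one this is clear. In degree zero, exactness at the \(V_i\) is the definition of \(N_i\) together with the reduction \(\overline V_i\). Exactness at \(V_K\) holds because the lifting sequence
\[
0\to H^0(K,I)^{\oplus2}\to H^0(2K,S_X|_{2K})\to H^0(K,S_K)\to0
\]
is compatible with the simultaneous restriction map. The only subtlety is that \(H^0(K,I\otimes S_K)\) maps to \(V_K\) and not necessarily injectively, so I will work with its image and keep the source in the final cokernel, which does not change the cokernel.

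The main step is to show that \(\overline{\mathcal{C}}\) is acyclic in degrees zero and one, that is, that \(\overline{\mathcal{C}}\) is an isomorphism. By Proposition~\ref{rep:endpoint-images}, \(\overline V_i=\mathbb{C}w_i+t_iT_i^{\oplus2}\). Modulo the positive-order parts, which lie in both the source and the target, the map \(\overline{\mathcal{C}}\) reduces to
\[
\mathbb{C}w_L\oplus H^0(K,S_K)\oplus\mathbb{C}w_R\to\mathbb{C}^2\oplus\mathbb{C}^2.
\]
Because \(S_K\simeq\mathcal{O}_K^{\oplus2}\), the space \(H^0(K,S_K)\) is \(2\)-dimensional, and its evaluations at \(p_L\) and \(p_R\) differ by the transport \(\mathcal{T}\) along \(K\), composed with the exchange, as in Lemma~\ref{fnd:two-point-transport}. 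Both sides therefore have dimension four. The map is injective, hence bijective, exactly when \(\mathcal{T}w_L\) and \(w_R\), in suitable normalisation, are not proportional. That they are not proportional is the last assertion of Corollary~\ref{int:actual-lines-distinct}, which is derived from Proposition~\ref{int:transport-determinant}. I expect the main obstacle to be checking that the conventions for ordered logarithmic bases, the exchange \(J\), and the sign of the vector \((-c',a')\) in \eqref{int:transport} match those of the constant lines \(w_i\) in \eqref{rep:nil-table}. A careless orientation could turn ``distinct'' into ``coincident''. I would first verify the convention in the simplest case \(m=0\), where \(D=1-\rho\rho'\).

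Given acyclicity, the long exact cohomology sequence of \(0\to\mathcal{N}\to\mathcal{C}\to\overline{\mathcal{C}}\to0\) gives \(H^1(\mathcal{N})\simeq H^1(\mathcal{C})\), since \(H^0(\overline{\mathcal{C}})=H^1(\overline{\mathcal{C}})=0\). Finally, \(H^1(\mathcal{N})\) is the cokernel of the map
\[
N_L\oplus H^0(K,I\otimes S_K)\oplus N_R\to n_LT_L^{\oplus2}\oplus n_RT_R^{\oplus2}.
\]
Quotienting the target by \(N_L\oplus N_R\) yields exactly the cokernel displayed in \eqref{int:exact-nil-test}.
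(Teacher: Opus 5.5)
Your proposal is correct and follows the paper's proof essentially step for step. Both use the short exact sequence of two-term complexes $0\to\mathcal{N}\to\mathcal{C}\to\overline{\mathcal{C}}\to0$. Both get acyclicity of $\overline{\mathcal{C}}$ from the distinctness of the transported constant lines in Corollary~\ref{int:actual-lines-distinct}; your dimension count plus injectivity is equivalent to the paper's cancellation of the endpoint summands. Both then conclude $H^1(\mathcal{N})\simeq H^1(\mathcal{C})$.

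The one point you should state explicitly, as the paper does, is that evaluation at either endpoint is injective on $H^0(K,S_K)$. This is what identifies $\overline{V}_K$ with $H^0(K,S_K)$. It is also what shows that the kernel of $V_K\to\overline{V}_K$ is exactly the image of $H^0(K,I\otimes S_K)$, which the lifting sequence alone does not give.
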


\begin{proof}
Use the two-term complex, placed in degrees zero and one,
\[
\mathcal{C}^{\bullet}
=\left[V_L\oplus V_K\oplus V_R \to P_L\oplus P_R\right],
\]
whose differential is the difference of the restriction maps. Proposition~\ref{rep:ext-comparison} identifies its first cohomology with \(\operatorname{Ext}^1_\Lambda(J_0,M_0)\). Reduction modulo \(n_L,n_R\) gives a quotient complex \(\overline{\mathcal{C}}^{\bullet}\). The reduction of \(V_K\) identifies with \(H^0(K,S_K)\): every section on \(K\) lifts to \(2K\) by Lemma~\ref{rep:piece-vanishing}, and evaluation at either endpoint is injective on \(H^0(K,S_K)\).

The endpoint summands of \(\overline{\mathcal{C}}^{\bullet}\) map isomorphically onto their images in the target. Canceling these summands leaves the complex
\[
H^0(K,S_K) \to
\mathbb{C}^2/\mathbb{C}w_L\oplus
\mathbb{C}^2/\mathbb{C}w_R.
\]
Lemma~\ref{fnd:two-point-transport} identifies the endpoint fibers by the product of the matrices \(T_b\). Under this identification the two lines are distinct by Corollary~\ref{int:actual-lines-distinct}, so the displayed map is an isomorphism. Thus \(\overline{\mathcal{C}}^{\bullet}\) is acyclic.

Put \(\mathcal{N}^{\bullet}=\ker(\mathcal{C}^{\bullet}\to\overline{\mathcal{C}}^{\bullet})\). The exact sequence of complexes identifies \(H^1(\mathcal{C}^{\bullet})\) with \(H^1(\mathcal{N}^{\bullet})\). Its endpoint summands have images \(N_L,N_R\). Its middle summand is the restriction image of \(H^0(K,I\otimes S_K)\): a section on \(2K\) whose endpoint reductions vanish has zero reduction on all of \(K\), by the injectivity just noted. Quotienting the target of this nilpotent complex by \(N_L\oplus N_R\) gives~\eqref{int:exact-nil-test}.
\end{proof}

\subsection{Interpolation at the two ends}

The nilpotent images in \eqref{rep:nil-table} can be written as follows. Assign an integer \(r_i\) and, in some cases, a line \(\ell_i\):
\begin{equation}\label{int:orders}
\begin{array}{c|c|c}
\text{case}&r_i&\text{additional line}\\ \hline
\text{(ii)}&1&\mathbb{C}(\tau_i,-1)\\
\text{(iii), }b_P = 3,\ s_P = 2&1&\mathbb{C}(1,0)\\
\text{(iii), otherwise}&1&\text{none}\\
\text{(i), (iv), (v), (vi)}&2&\text{none}\\
\text{(vii)}&2&\mathbb{C}(3,-2)\\
\text{(viii)}&2&\mathbb{C}(1,0)
\end{array}
\end{equation}
Thus one first prescribes all vector coefficients below order \(r_i\); if a line occurs, one also prescribes the coefficient of order \(r_i\) modulo that line. Write \(K=K_1+\cdots+K_m\), with \(K_j^2=-b_j\). Only components of self-intersection \(-2\) have been absorbed into the endpoint cycles, so \(\sum_j(b_j-2)=d\), with \(d\) as defined for the original connecting chain. Moreover, \(\deg I=-K^2=d+2\). Let \(f\) be the number of additional lines, and put
\[
B = I(-r_Lp_L-r_Rp_R),\qquad
\delta = \deg B = d+2-r_L-r_R.
\]

\begin{lemma}\label{int:jet-evaluation}
Let \(C\) be a reduced rational chain and \(\mathcal{A}\) a line bundle. If \(C\) has at least two components, assume that its internal degrees are nonnegative and its end degrees are at least minus one. Then
\[
h^1(C,\mathcal{A}) = \max\{-\deg\mathcal{A}-1,0\}.
\]
In particular, this applies to \(I(-ap_L-bp_R)\) for any \(a,b \in \{1,2\}\). It also applies to the line bundle summands in \eqref{int:modification}, obtained from \(B\otimes S_K\) by the additional line conditions in \eqref{int:orders}.
\end{lemma}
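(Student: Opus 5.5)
We prove the formula by induction on the number of components of \(C\). First we locate a single component carrying all the negativity, and then we peel off the components of nonnegative degree one at a time. Write \(C=C_1+\cdots+C_m\) in chain order, let \(\alpha_j\) be the degree of \(\mathcal{A}\) on \(C_j\), and let \(x_j=C_j\cap C_{j+1}\) be the nodes.

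For \(m=1\) the statement is \(h^1(\mathbb{P}^1,\mathcal{O}(\alpha))=\max(-\alpha-1,0)\). For \(m\geq2\), choose an end component whose degree is nonnegative, say \(C_m\). If no end has this property, both end degrees are \(-1\). In that case we detach an end of degree \(-1\) and use the observation that \(\mathcal{O}(-1)\) has no cohomology, which makes it behave like a neutral end.

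More precisely, use the partial normalization sequence at the node \(x_{m-1}\):
\[
0\to\mathcal{A}\to\mathcal{A}|_{C'}\oplus\mathcal{A}|_{C_m}\to\mathcal{A}|_{x_{m-1}}\to0,\qquad C'=C_1+\cdots+C_{m-1}.
\]
The argument then splits according to \(\alpha_m\).

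\emph{Case \(\alpha_m\geq0\).} Evaluation \(H^0(C_m,\mathcal{O}(\alpha_m))\to\mathbb{C}\) is surjective and \(H^1(C_m,\mathcal{A})=0\). Hence \(h^1(C,\mathcal{A})=h^1(C',\mathcal{A}|_{C'})\). The degree \(\alpha_{m-1}\) was nonnegative as an internal degree, so \(C'\) again satisfies the hypotheses. The total degree changes from \(\deg\mathcal{A}\) to \(\deg\mathcal{A}-\alpha_m\). So \(-\deg-1\) can only increase when we pass to \(C'\), and the formula does not follow directly.

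The correct bookkeeping is therefore the following. Peel off nonnegative ends until the remaining subchain \(C''\) either is a single component or has both end degrees equal to \(-1\). Along the way, use the connecting map to compare \(H^0\)'s, not only \(H^1\)'s. We do this by computing \(\chi\) and \(h^0\) directly. We have \(\chi(C,\mathcal{A})=\deg\mathcal{A}+1\) by the normalization sequence, because the arithmetic genus is \(0\). So it suffices to show \(h^0(C,\mathcal{A})=\max(\deg\mathcal{A}+1,0)\).

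\emph{Computing \(h^0\).} A section is a tuple of sections \(s_j\in H^0(\mathcal{O}(\alpha_j))\) whose values agree at the nodes.
\begin{itemize}
\item If every \(\alpha_j\geq0\), impose the matching conditions successively. Each condition is a single linear condition on a component of nonnegative degree. These conditions are independent, since evaluation at one point of \(\mathcal{O}(\alpha)\), \(\alpha\geq0\), is onto. This gives \(h^0=\sum(\alpha_j+1)-(m-1)=\deg+1\).
\item If an end has degree \(-1\), its section is zero. This forces the neighbouring component's section to vanish at the node, which is the same as replacing that neighbour's degree by \(\alpha-1\geq-1\). Iterating this along the chain expresses \(h^0\) as \(h^0\) of a chain with one fewer component and with total degree unchanged. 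The degree-\(-1\) end drops out, and the shift is exactly compensated by the removed matching condition.
\item Each such step preserves the hypotheses, with a new end degree at least \(-1\). By induction, \(h^0=\max(\deg+1,0)\), and hence \(h^1=h^0-\chi=\max(-\deg-1,0)\).
\end{itemize}
The main obstacle is the case where internal degrees become \(-1\) after this propagation, for example when a zero-degree internal component is hit from both sides. We handle it by propagating from one end only and noting that a negative degree can arise only at the new end. The total degree stays at least \(-m\), so the count remains consistent.

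\emph{The particular bundles.} For \(I(-ap_L-bp_R)\), Lemma~\ref{rep:piece-vanishing} shows that the internal degrees of \(I\) are \(c_j-2\geq0\). The end degrees are \(c_j-1\), or \(c_1\) if \(m=1\), and the twist subtracts at most \(2\).
\begin{itemize}
\item If \(c_j\geq3\) at an end, the end degree after the twist is at least \(0\).
\item If \(c_j=2\) at an end, a twist by \(2\) could give end degree \(-1\). This is still allowed.
\end{itemize}
For the line-bundle summands in \eqref{int:modification}, each additional line condition lowers the degree of one summand by one at an endpoint. Such a condition occurs only in cases (ii), (iii), (vii), and (viii). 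In cases (vii) and (viii), the adjacent component has \(c\geq3\), so the end degree after the twist stays at least \(-1\). In cases (ii) and (iii), \(r_i=1\), so the total twist at that end is at most \(2\). In every case the end degree is therefore at least \(-1\), and the general formula applies.
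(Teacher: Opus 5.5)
Your argument is correct and is essentially the paper's. Both peel off end components of degree \(-1\) by the normalization sequence, keeping the total degree fixed, until a single \(\mathbb{P}^1\) or a chain of nonnegative degrees remains, and your checks for \(I(-ap_L-bp_R)\) and for the summands of \eqref{int:modification} coincide with the paper's. The only difference is bookkeeping (you track \(h^0\) and recover \(h^1\) from \(\chi=\deg\mathcal{A}+1\), whereas the paper identifies both cohomology groups at each step), and the abandoned case \(\alpha_m\geq0\) and the ``main obstacle'' paragraph can be dropped, since a component loses degree from both sides only when it is the last one left.
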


\begin{proof}
For one component this is the cohomology formula on \(\mathbb{P}^1\). On a chain with more than one component, a section vanishes on an end of degree minus one. The normalization sequence identifies both cohomology groups with those of the remaining chain after subtracting its attaching point. The total degree is unchanged. Repeat while an end component of degree minus one remains. The process either ends with one component, or leaves nonnegative component degrees, in which case normalization gives \(H^1 = 0\). In the latter case the total degree is nonnegative. This proves the formula.

The end degrees of \(I\) are \(b_i-1\), and its internal degrees are \(b_j-2\). Subtracting an order at most two therefore leaves end degrees at least minus one. An additional vanishing condition from a line in \eqref{int:orders} can lower an end degree once more only when the prescribed order is two; the adjacent component of \(K\) then has self-intersection at most \(-3\), as shown before Proposition~\ref{prop:single-end}. Thus every summand used below satisfies the same bound. On a chain consisting of one component there is no restriction on the degree.
\end{proof}

\begin{proposition}\label{int:interpolation}\label{int:chain-excess-reduction}
Assume that the endpoint cycles are those of Proposition~\ref{rep:cycles}, that \(H^1(W_i,S_X|_{W_i}) = 0\), and that the remaining connecting chain is nonempty. For a negative-definite graph,
\begin{equation}\label{int:dimension}
\dim_{\mathbb{C}}\operatorname{Ext}^1_{\Lambda}(J_0,M_0) = 
\begin{cases}
2,&\delta = -2,\\
f,&\delta = -1,\\
0,&\delta \geq 0.
\end{cases}
\end{equation}
In particular, inserting components of self-intersection \(-2\) into the connecting chain leaves the displayed dimension unchanged whenever the resulting graph is negative definite and satisfies the same endpoint hypotheses.
\end{proposition}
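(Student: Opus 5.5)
Starting from \eqref{int:exact-nil-test} in Proposition~\ref{int:reduced}, I will compute the cokernel of the evaluation map
\[
H^0(K,I\otimes S_K)\to \frac{n_LT_L^{\oplus2}}{N_L}\oplus\frac{n_RT_R^{\oplus2}}{N_R}.
\]
Since \(S_K\simeq\mathcal{O}_K^{\oplus2}\), the source is \(H^0(K,I)^{\oplus2}\). By \eqref{int:orders}, the target consists of all vector jets of order \(<r_i\) at \(p_i\), plus, when a line \(\ell_i\) occurs, the order-\(r_i\) coefficient modulo \(\ell_i\). I will factor the evaluation through two steps. The first step evaluates full jets of order \(<r_i\), with kernel \(H^0(B\otimes S_K)=H^0(B)^{\oplus2}\). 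The second step imposes the extra line conditions on this kernel.

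For the first step, the sequence \(0\to B\to I\to I|_{r_Lp_L+r_Rp_R}\to0\) and \(H^1(K,I)=0\) (Lemma~\ref{rep:piece-vanishing}) identify the cokernel of full-jet evaluation with \(H^1(K,B)^{\oplus2}\). Lemma~\ref{int:jet-evaluation} applies to \(B\), so this has dimension \(2\max(-\delta-1,0)\).

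For the second step I will use the modification \eqref{int:modification}. At an endpoint carrying a line \(\ell_i\), choose a constant frame of \(S_K\) adapted to \(\ell_i\). The condition ``order-\(r_i\) coefficient lies in \(\ell_i\)'' then becomes extra vanishing of one coordinate. Because the frame must be constant along \(K\), the two endpoints can only be adapted simultaneously after transporting one line to the other end by \(\mathcal{T}=T_{b_m}\cdots T_{b_1}\) (Lemma~\ref{fnd:two-point-transport}). By Corollary~\ref{int:actual-lines-distinct}, for a negative-definite graph the transported lines are distinct, so a common frame adapted to both lines exists. The subsheaf of \(B\otimes S_K\) satisfying the line conditions is then a sum of two line bundles. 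Their degrees are \(\delta-\epsilon_L\) and \(\delta-\epsilon_R\), up to assigning the line conditions to different summands, and the total degree is \(2\delta-f\). Lemma~\ref{int:jet-evaluation} applies to these summands.

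The Ext dimension is then \(\chi\) of the target minus \(\chi\) of the source, plus \(h^1\) of the kernel sheaf, since \(H^1(K,I\otimes S_K)=0\). Equivalently, it is \(h^1\) of the modified sheaf. The three cases follow.
\begin{itemize}
\item If \(\delta\ge0\), both summands have degree \(\ge-1\), so \(h^1=0\) and the dimension is \(0\).
\item If \(\delta=-1\), the summands have degrees \(-1-\epsilon\), so \(h^1=\epsilon_L+\epsilon_R=f\).
\item If \(\delta=-2\), first note that \(r_L+r_R\le4\) and \(d\ge0\) force \(d=0\), \(r_L=r_R=2\). Then the lines must have appeared only where the adjacent component has self-intersection \(\le-3\), forcing \(d>0\), a contradiction. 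Hence \(f=0\) and \(h^1=2\cdot1=2\).
\end{itemize}
The main obstacle is the distinctness argument that lets a single constant frame absorb both line conditions. One must also check the degenerate situation where one summand receives both conditions; negative definiteness via Proposition~\ref{int:transport-determinant} rules out coincidence. The final claim about inserting \((-2)\)-curves follows because \(\delta\), \(f\), and the endpoint data are unchanged, and the distinctness still holds by negative definiteness.
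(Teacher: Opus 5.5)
Your proposal is correct and is essentially the paper's proof: the cokernel in \eqref{int:exact-nil-test} is \(H^1\) of the kernel sheaf, an elementary modification of \(B\otimes S_K\) that splits in a constant frame adapted to the transported lines (distinct by Corollary~\ref{int:actual-lines-distinct}), after which Lemma~\ref{int:jet-evaluation} and the adjacency observation excluding additional lines when \(\delta=-2\) give the three values. One small slip: the cokernel dimension is \(\chi\) of the target minus \(\chi\) of the source plus \(h^0\) (not \(h^1\)) of the kernel sheaf, but your next sentence identifying it with \(h^1\) of the modified sheaf is correct, and that is all you actually use.
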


\begin{proof}
Let \(\mathcal{F}\) be the kernel of the sheaf morphism inducing the evaluation map in \eqref{int:exact-nil-test}. Since \(H^1(K,I\otimes S_K) = 0\), the cokernel of evaluation on global sections is \(H^1(K,\mathcal{F})\). The kernel of evaluation below the two prescribed orders is \(B\otimes S_K\), where \(\deg B = \delta\). The additional line conditions are elementary modifications at the two smooth marked points of \(K\). Trivialize \(S_K\) globally, so these lines belong to one vector space. A constant basis adapted to them gives
\begin{equation}\label{int:modification}
\begin{array}{c|c}
\text{additional lines}&\mathcal{F}\\ \hline
\text{none}&B\oplus B\\
\text{one, at }p&B\oplus B(-p)\\
\text{two, distinct}&B(-p_L)\oplus B(-p_R)\\
\text{two, equal}&B\oplus B(-p_L-p_R).
\end{array}
\end{equation}
For example, with distinct lines choose one basis vector in each; the coefficient in the opposite line must vanish at each marked point. Corollary~\ref{int:actual-lines-distinct} excludes equal lines in the negative-definite cases. Lemma~\ref{int:jet-evaluation} computes the cohomology of each summand. For \(\delta = -2\), both orders are two and every component of \(K\) has self-intersection \(-2\), so the adjacency requirement excludes any additional line. The dimension is two. For \(\delta = -1\), each additional line contributes one and the dimension is \(f\). For \(\delta \geq 0\), all summands have zero \(H^1\).
\end{proof}

\begin{corollary}\label{int:nonempty-vanishing}
Every graph satisfying the two-node conditions of Theorem~\ref{thm:classification} and having a nonempty remaining connecting chain satisfies
\[
\operatorname{Ext}^1_{\Lambda}(J_0,M_0) = 0
\]
for every analytic realization.
\end{corollary}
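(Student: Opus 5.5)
By Proposition~\ref{int:interpolation}, the dimension of \(\operatorname{Ext}^1_\Lambda(J_0,M_0)\) depends only on \(\delta=d+2-r_L-r_R\) and on the number \(f\) of additional lines. It vanishes when \(\delta\ge0\), and when \(\delta=-1\) and \(f=0\). So the plan is to check, case by case over the endpoint types in Table~\ref{rep:case-table}, that an admitted graph with nonempty remaining chain never has \(\delta=-2\), and never has \(\delta=-1\) together with \(f\ge1\). The hypotheses of that proposition hold for every analytic realization. The endpoint cycles are defined for every realization by Proposition~\ref{rep:cycles}. The vanishing \(H^1(W_i,S_X|_{W_i})=0\) is Lemma~\ref{rep:piece-vanishing}. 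The graph is negative definite by assumption. Hence the conclusion will hold uniformly in the realization.

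First I would tabulate \(r_i\) and whether a line \(\ell_i\) occurs, using \eqref{int:orders}. Cases (ii) and (iii) have \(r_i=1\), and the others have \(r_i=2\). A line occurs in case (ii), in case (iii) with \(b_P=3,s_P=2\), and in cases (vii) and (viii). Since \(r_L+r_R\le4\), the case \(\delta=-2\) forces \(d=0\) and \(r_L=r_R=2\). But \(d=0\) with a nonempty chain means every connecting component is a \((-2)\)-curve. If both ends have \(r=2\), then both nodes have \(b=2\) and \(s=1\), since cases (i) and (iv)--(viii) all have \(b_P=2,s_P=1\). Proposition~\ref{rep:local-restrictions}, via \eqref{rep:condition-b}, then requires \(b_Q\ge3\), which is a contradiction. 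So \(\delta\ge-1\).

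Next I would treat \(\delta=-1\) with \(f\ge1\). Either \(d=1\) and \(r_L=r_R=2\), or \(d=0\) and \(r_L+r_R=3\).

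\emph{Subcase \(d=1\), \(r_L=r_R=2\).} Both nodes have \(b=2\), \(s=1\), and \(s_Q\ge1\), so the last column of Table~\ref{intro:two-node-table} applies and requires \(b_Q+d\ge4\), i.e.\ \(3\ge4\). That condition is imposed only for the rows with arm \([2,m]\) and chain \([2,2,\dots]\), and for the row with a long arm beginning with \(2\). These are exactly the rows producing cases (vii) and (viii), which are the only \(r=2\) cases carrying a line. So if \(f\ge1\), one endpoint is of type (vii) or (viii), its row demands \(b_Q+d\ge4\), and this fails. That is the contradiction.

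\emph{Subcase \(d=0\), \(r_L+r_R=3\).} One end, say \(P\), has \(r=2\), so \(b_P=2\), \(s_P=1\). Condition \eqref{rep:condition-b} then forces the other end \(Q\) to satisfy \(b_Q\ge4\) or \(b_Q=3,s_Q\le1\). So \(Q\) is of case (iii) but not the subcase \(b_Q=3,s_Q=2\), and it carries no line. It remains to exclude a line at \(P\). If \(P\) is of case (vii), its row requires \(b_Q+d\ge4\) when \(s_Q\ge1\); if \(s_Q=0\) and \(b_Q=3\), I must use the anti-nef hypothesis in a different way. If \(P\) is of case (viii), its chain must be empty or begin with a curve of weight at most \(-3\). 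The latter is impossible with \(d=0\) and a nonempty chain.

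This last case analysis is the main obstacle. I would first determine exactly which table rows apply, and check whether \(b_Q=3,s_Q=0\) with case (vii) at \(P\) and \(d=0\) can actually occur. If it can, I would check whether the chain left over after absorbing \(K_1,K_2\) is empty, making \(K=0\) and putting the graph outside the hypothesis. Otherwise I would examine whether the last-column condition of Table~\ref{intro:two-node-table} excludes it, since \(s_Q\) counts arms of length at least two and the condition is stated only for \(s_Q\ge1\).
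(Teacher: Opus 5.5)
Your treatment of \(\delta=-2\) and of the subcase \(d=1\), \(r_L=r_R=2\) is correct and matches the paper's argument. The gap is in the subcase \(d=0\), \(r_L+r_R=3\) when the order-two endpoint \(P\) is of case (vii).

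Your proposed exclusion through the last column of Table~\ref{intro:two-node-table} cannot work. With \(d=0\) that condition reads \(b_Q\geq4\) when \(s_Q\geq1\). It is simply satisfied when \(b_Q\geq4\), and it imposes nothing when \(s_Q=0\). For example, \(b_P=2\) with arms \([2],[2,m]\), connecting chain \([2,2]\), and \(b_Q\geq4\) with arbitrary arms is an admitted graph. Your argument does not rule out \(\delta=-1\), \(f=1\) for it. Proposition~\ref{int:interpolation} would then give a one-dimensional extension group. Anti-nefness cannot help either, since neither the corollary nor Proposition~\ref{int:interpolation} uses it.

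The missing idea is structural. Case (vii) absorbs \(K_1,K_2\), which is the entire maximal initial sequence of \((-2)\)-curves of the connecting chain. Its row in Table~\ref{intro:two-node-table} allows only the chains \([2,2]\) and \([2,2,c_3,\dotsc]\) with \(c_3\geq3\). With \(d=0\) the chain is exactly \([2,2]\), so it lies entirely in \(W_P\) and \(K=0\), contrary to the hypothesis. This argument works for every \(b_Q,s_Q\). You mention checking whether the leftover chain is empty only as a contingency for \(b_Q=3\), \(s_Q=0\), and you do not carry it out, but it is needed in general.

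The paper's proof makes this observation for all order-two cases at once. When \(d=0\), cases (iv), (vi), (vii) absorb the whole chain, all of whose curves have weight \(-2\), while cases (v) and (viii) force the chain to be empty. So the order-two endpoint must be case (i), which carries no line. Then \eqref{rep:condition-b} makes the opposite endpoint case (iii) without its line, giving \(f=0\).
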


\begin{proof}
The extension group can be nonzero only when \(\delta = -2,-1\). If \(r_L+r_R = 4\), then \(\delta = d-2\). The value \(d = 0\) is excluded by the restriction in \eqref{rep:condition-b} at a node of self-intersection \(-2\) with one short and one long outward arm. If \(d = 1\), an additional line occurs only in case (vii) or (viii), and the condition in the last column of Table~\ref{intro:two-node-table} excludes both because the opposite node has self-intersection \(-2\) and one long outward arm. Thus \(f = 0\).

If \(r_L+r_R=3\), only \(d=0\) needs consideration. The order-two endpoint is then case (i): cases (iv), (vi), and (vii) include the maximal initial sequence of components of self-intersection \(-2\), whereas cases (v) and (viii) require the next component to have self-intersection at most \(-3\). The restriction in \eqref{rep:condition-b} makes the opposite endpoint case (iii) with a surjective order-one coefficient map. Thus \(f=0\). If \(r_L+r_R=2\), then \(\delta=d\geq0\). Proposition~\ref{int:interpolation} proves every case.
\end{proof}

\subsection{Direct intersections of the two endpoint pieces}

When no connecting component remains, the two pieces meet in one intersection scheme. Except when both ends are case (ii), the restriction in \eqref{rep:condition-b} leaves a case (iii) endpoint. Write its node self-intersection as \(-b\), where \(b \geq 3\), and let \(s\) be its number of long outward arms. The intersection ring is \(\mathbb{C}[t,n]/(t^J,n^2)\), where \(n\) is normal to this node and \(t\) is tangent to it. The ordered logarithmic basis is \((t\partial_t,n\partial_n)\). Its reduced restriction image is \(\mathbb{C}(0,1)\), whereas the reduced restriction image of the other endpoint is \(\mathbb{C}w+tT^{\oplus2}\) with first coordinate of \(w\) nonzero. Their reduced differential is an isomorphism. The cokernel of the restriction complex is consequently the cokernel of the subcomplex consisting of kernels of reduction, by the same long exact cohomology sequence as in Proposition~\ref{int:reduced}.

\begin{lemma}\label{int:U-jets}
In case (iii), without a line bundle twist, evaluation onto vector-valued jets of length two is surjective if and only if \(b \geq 4\), or \(b = 3\) and \(s \leq 1\). For \(b=3,s=2\), after setting the order-zero coefficient to zero, the order-one image is the normal Euler line.

Evaluation onto jets of length three is surjective for \(b\geq5\), and for \(b=4,s\leq1\). For \(b=4,s=2\), its cokernel is one-dimensional; after the coefficients of orders zero and one have been set to zero, the order-two image is the normal Euler line. For \(b = 3\), the image modulo \(nt^3\) of sections vanishing on the reduced node has dimension \(5-s\).

When \(b = 3,s = 0\), set the coefficients of orders zero and one to zero. The annihilator of the resulting order-two image is spanned by the linear functional on \((\alpha,\beta)\) given by
\begin{equation}\label{int:U-functional}
\beta+\left(3-\frac1p-\frac1q\right)\alpha,
\end{equation}
where \(-p,-q\) are the self-intersections of its two outward arms, each consisting of one component.
\end{lemma}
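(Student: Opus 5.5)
The plan is to compute directly with the node formula \eqref{app:local:node-formula} and the coefficients at infinity \eqref{app:local:infinity}, as in the proof of Proposition~\ref{app:local:doubled-node}. The case (iii) piece has multiplicity two on the node. Its outward arms are complete, with multiplicity two on nonterminal components and one on end components. By Lemma~\ref{rep:component-recursion}, in central normal degree \(k=1\) an outward arm of length at least two imposes a common zero of \(f_1,g_1\) at its attachment. An arm consisting of a single component \([p]\) imposes instead the line condition \(pA+B=0\). Exactly as in Proposition~\ref{star:central-bundle}, the degree-one coefficient bundle is therefore
\[
\mathcal{F}_1\simeq\mathcal{O}(b-2)\oplus\mathcal{O}(b-s-1).
\]
Since the node has multiplicity two, only degree one contributes to the ideal \(n\mathbb{C}[t,n]/(t^J,n^2)\). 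Degree zero contributes the normal Euler line, and this matches the reduced image \(\mathbb{C}(0,1)\) already noted before the lemma.

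The surjectivity statements then follow from the rank formula \(\min(b-1,h)+\min(b-s,h)\) of Proposition~\ref{app:local:doubled-node}. For \(h=2\), full rank four holds if and only if \(b-1\geq2\) and \(b-s\geq2\), that is, \(b\geq4\), or \(b=3\) with \(s\leq1\). For \(h=3\), full rank six requires \(b\geq4\) and \(b-s\geq3\). This gives \(b\geq5\), or \(b=4\) with \(s\leq1\); for \(b=4,s=2\) the rank is five, so the cokernel is one-dimensional. For \(b=3\) and \(h=3\), the rank \(2+\min(3-s,3)=5-s\) is the dimension of the image modulo \(nt^3\) of the degree-one sections, which are exactly the sections vanishing on the reduced node.

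The identification of the surviving lines uses the vanishing-order criterion after \eqref{app:local:infinity}. Imposing \(a_{1,0}=\dots=a_{1,h-1}=0\) forces \(\deg f_1\leq b-h\) and \(\deg g_1\leq b-h-1\). Consider \(b=3,s=2\) with \(h=1\), or \(b=4,s=2\) with \(h=2\). Both long arms force \(z(z-1)\mid f_1,g_1\), so \(g_1=0\) and \(f_1=cz(z-1)\). The next coefficient is then \(c(1,0)\) in the frame \((e_n,e_t)\), which is the normal Euler line. I will check that the coordinate ordering \((t\partial_t,n\partial_n)\) versus \((e_n,e_t)\) is handled consistently, since this is easy to get wrong.

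The main obstacle is the functional \eqref{int:U-functional} for \(b=3,s=0\). Here I will write \(f_1=\alpha_3z^3+\dots+\alpha_0\) and \(g_1=\gamma_2z^2+\gamma_1z+\gamma_0\). The conditions at \(0\) and \(1\) are \(p f_1(0)+g_1(0)\cdot(\text{sign})=0\) and the analogous condition with \(q\); I will rederive the exact sign and normalization from the \((2,0)\) computation in Proposition~\ref{app:local:doubled-node}. Imposing \(a_{1,0}=a_{1,1}=0\) leaves \(\deg f_1\leq1\) and \(g_1=0\). The two arm conditions then become \(p f_1(0)\) and \(q f_1(1)\) constraints that also involve the \(g\)-terms. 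I expect to solve for the remaining one-parameter family, expand \(v\,u^{3}(\dots)\) to order \(u^2\), and read off the resulting vector \((\alpha,\beta)\). The annihilator should then come out as \(\beta+(3-1/p-1/q)\alpha\), the analogue of \(\tau=2-1/p-1/q\) shifted by \(b-2=1\). If the sign bookkeeping disagrees, I will recheck it against the known \(b=2\) line \((\tau,-1)\).
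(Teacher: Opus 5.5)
Your treatment of the rank statements and of the two normal-Euler-line identifications matches the paper's proof. It uses the bundle $\mathcal{O}(b-2)\oplus\mathcal{O}(b-s-1)$ from Proposition~\ref{app:local:doubled-node} and the rank $\min(b-1,h)+\min(b-s,h)$. For $s=2$, it uses $z(z-1)\mid f_1,g_1$ together with the degree bounds after \eqref{app:local:infinity}, which force $g_1=0$ and $f_1=cz(z-1)$. Those parts are fine.

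The gap is in the last assertion, which you only sketch, and the sketch contains a wrong step. For $b=3$, imposing $a_{1,0}=a_{1,1}=0$ gives $\deg f_1\le 1$ and $\deg g_1\le 0$, by the very criterion you quote. So $g_1=\gamma$ is a constant, not zero, and this constant is precisely the free parameter. The leaf equations, with the signs of the $(b,s)=(2,0)$ computation, are $g_1(0)=pf_1(0)$ and $g_1(1)=-qf_1(1)$. If you set $g_1=0$, they force $f_1(0)=f_1(1)=0$, hence $f_1=0$. The order-two image would then be zero, with a two-dimensional annihilator, rather than the line your ``one-parameter family'' anticipates.

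Keeping $g_1=\gamma$, the leaf equations give $f_1=\gamma\bigl(\tfrac1p-(\tfrac1p+\tfrac1q)z\bigr)$. The $u^2$ coefficient of \eqref{app:local:infinity} is then $\gamma\bigl(3-\tfrac1p-\tfrac1q,\,-1\bigr)$ in the frame $(e_n,e_t)$. In the lemma's ordering $(t\partial_t,n\partial_n)$, that is (tangent, normal), this reads $(\alpha,\beta)=\gamma\bigl(-1,\,3-\tfrac1p-\tfrac1q\bigr)$, and it is annihilated by \eqref{int:U-functional}. This is exactly the paper's computation, with $j=\gamma$, $h=\gamma/p$, and $e=-\gamma(1/p+1/q)$. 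It also settles the ordering issue you flagged: the functional must be written on (tangent, normal) coordinates, not on $(e_n,e_t)$. With this correction your route coincides with the paper's proof.
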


\begin{proof}
By Proposition~\ref{app:local:doubled-node}, the conditions at the two outward arms in order one in the normal coordinate give the bundle
\[
\mathcal{O}_{\mathbb{P}^1}(b-2)\oplus\mathcal{O}_{\mathbb{P}^1}(b-s-1).
\]
Evaluation on its global sections at the marked endpoint onto jets of length \(a\) has rank \(\min(b-1,a)+\min(b-s,a)\). For \(a = 2\), this is four except when \(b = 3,s = 2\), when it is three; the corresponding row of \eqref{rep:nil-table}, with the two branches interchanged, identifies the remaining order-one image with the normal Euler line. For \(a=3\), the rank is six for \(b\geq5\) and for \(b=4,s\leq1\), five for \(b=4,s=2\), and \(5-s\) for \(b=3\).

For \(b=4,s=2\), both polynomials in \eqref{app:local:node-formula} are divisible by \(x(x-1)\). Vanishing of the first two outgoing coefficients gives \(\deg f\leq2\) and \(\deg g\leq1\), by \eqref{app:local:infinity}. Thus \(g=0\), whereas \(f\) may be a multiple of \(x(x-1)\). The resulting order-two image is the normal Euler line, as asserted.

For the last assertion, put the attachment points of the two short arms at \(x = 0,1\). Write the section of order one in the normal coordinate using
\[
f = ax^3+cx^2+ex+h,\qquad
g = dx^2+zx+j.
\]
The leaf equations are \(j = ph\) and \(d+z+j = -q(a+c+e+h)\). Changing to the infinity chart, the first three coefficient columns are
\[
(-d,a+3d),\qquad(d-z,c+3(z-d)),\qquad(z-j,e+3(j-z)).
\]
Vanishing of the first two gives \(a = c = d = z = 0\). The last is \((\alpha,\beta) = (-j,e+3j)\), while the leaf equations give \(e = -j(1/p+1/q)\). Thus this image is the kernel of the functional \eqref{int:U-functional}.
\end{proof}

For an opposite endpoint at \(P\), the nilpotent quotient has the following form, by Remark~\ref{rep:direct-endpoint}. A line in the last column is quotiented from the highest coefficient of the indicated vector-valued jets; all lower coefficients are unrestricted.
\begin{equation}\label{int:direct-targets}
\begin{array}{c|c|c}
\text{opposite endpoint}&\text{jet length}&\text{line in the highest coefficient}\\ \hline
\text{(iii), }(b_P,s_P)\neq(3,2)&1&\text{none}\\
\text{(i), (iv), (v), (vi)}&2&\text{none}\\
\text{(ii)}&2&\mathbb{C}(\tau,-1)\\
\text{(iii), }b_P=3,\ s_P=2&2&\mathbb{C}(1,0)\\
\text{(vii)}&3&\mathbb{C}(3,-2)\\
\text{(viii)}&3&\mathbb{C}(1,0)
\end{array}
\end{equation}
When the lower coefficients vanish, a change preserving the ordered branches multiplies the leading coefficient vector by a common nonzero scalar, by Proposition~\ref{rep:endpoint-images}. Thus the lines in the table can be compared in the ordered basis fixed above.

\begin{proposition}\label{int:direct}
For every analytic realization satisfying the two-node conditions of Theorem~\ref{thm:classification} and having no remaining connecting component, \(\operatorname{Ext}^1_{\Lambda}(J_0,M_0) = 0\).
\end{proposition}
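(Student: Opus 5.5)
We prove vanishing by splitting into the two configurations that the observation following the definition of \(Z\) leaves when \(K=0\). Either both endpoint pieces are of case (ii), or one piece, say \(W_R\) at \(Q\), is of case (iii) with \(b=b_Q\geq3\), and \(W_L\) is any of the eight cases. In both situations the representation \(M_0\) is supported on vertices one, two, three. By Proposition~\ref{prop:multiplicities}, \(\operatorname{Ext}^1_\Lambda(J_0,M_0)\) is then the cokernel of the difference map \(V_L\oplus V_R\to P\), where \(P=H^0(D,S_X|_D)\) on the single intersection scheme \(D=W_L\cap W_R\). It therefore suffices to show that this difference map is surjective.

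In the case (iii) situation, the paragraph preceding Lemma~\ref{int:U-jets} already shows that the reduced differential is an isomorphism. The problem therefore reduces to surjectivity on the nilpotent parts. The target is the quotient in \eqref{int:direct-targets}: jets of length \(J_L\) (equal to \(1,2,3\) according to the opposite case), with the highest coefficient taken modulo the indicated line. The source is the image from \(W_R\) of sections vanishing on the reduced node, computed in Lemma~\ref{int:U-jets}.

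For jet length one, the length-two surjectivity of Lemma~\ref{int:U-jets} suffices, since \((b,s)\neq(3,2)\) in that row. For jet length two, if \(b\geq4\) or \(b=3,s\leq1\), Lemma~\ref{int:U-jets} gives surjectivity outright. The remaining case is \(b=3,s=2\). Here \eqref{rep:condition-b} together with the last column of Table~\ref{intro:two-node-table} restricts which opposite cases can occur, and then I would check the line condition directly. The order-one image is the normal Euler line \(\mathbb{C}(0,1)\) in the \((t,n)\) basis of the node. It must be transversal to the opposite quotient line once that line is written in the same ordered basis with branches interchanged. The line \(\mathbb{C}(\tau,-1)\) becomes, after interchange, a vector with nonzero first coordinate, and similarly \(\mathbb{C}(1,0)\) becomes \(\mathbb{C}(0,1)\). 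So the potentially bad subcases are exactly those in which the Euler line coincides with the opposite line; I expect the table conditions to exclude these.

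For jet length three (cases (vii), (viii)), the last column of the table forces \(b_Q+d\geq4\) with \(d=0\), so \(b\geq4\). Jets of length three are then surjective except when \(b=4,s=2\). In that subcase the table requires \(s_Q\leq b_Q-2=2\), so it is allowed. The cokernel is one-dimensional, and the order-two image is the normal Euler line. This line must be compared with \(\mathbb{C}(3,-2)\) or \(\mathbb{C}(1,0)\) after interchange: \(\mathbb{C}(3,-2)\) becomes \(\mathbb{C}(-2,3)\), which is distinct from \(\mathbb{C}(0,1)\), and \(\mathbb{C}(1,0)\) becomes \(\mathbb{C}(0,1)\), which coincides with it. I expect the equal-line configuration in case (viii) to be the main obstacle. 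My first attempt would be to verify, through the normalization in Proposition~\ref{rep:endpoint-images}, which branch the pair \((1,0)\) refers to, so that after interchange the line is not the Euler line of \(Q\). Failing that, I would show that the table's conditions (for example \(s_Q\leq b_Q-2\) with \(d=0\), or the last-row requirement in Table~\ref{intro:two-node-table}) exclude this configuration. Where Lemma~\ref{int:U-jets} and \eqref{int:direct-targets} involve the functional \eqref{int:U-functional}, I would likewise check that the opposite line \(\mathbb{C}(\tau,-1)\) is not in its kernel: its value \(-1+\tau(3-1/p-1/q)\) is positive since \(\tau\geq1\).

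When both pieces are of case (ii), I would apply Proposition~\ref{rep:endpoint-images} to both pieces with \(t,n\) interchanged, as in Remark~\ref{rep:direct-endpoint}. The reduced images are \(\mathbb{C}(1,0)\) and, after interchange, \(\mathbb{C}(0,1)\), and these span. The order-one kernels are the lines \(\mathbb{C}(\tau,-1)\) and \(\mathbb{C}(-1,\tau')\). They are distinct because the \(m=0\) determinant \(\tau\tau'-1\) of Corollary~\ref{int:four-arm-discriminant} is positive by negative definiteness. Hence these kernels fill the length-two nilpotent target, and the cokernel vanishes.
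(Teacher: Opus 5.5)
Your reduction matches the paper's. With a case-(iii) endpoint the reduced differential is an isomorphism, so everything comes down to comparing the nilpotent targets in \eqref{int:direct-targets} with Lemma~\ref{int:U-jets}. Your treatment of two case-(ii) endpoints (lines \((\tau,-1)\), \((-1,\sigma)\), determinant \(\tau\sigma-1>0\)) is exactly the paper's. The gap is in the transversality checks at a case-(iii) endpoint, which you carry out in the wrong basis.

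The lines in \eqref{int:direct-targets} must not be interchanged. Remark~\ref{rep:direct-endpoint} puts the opposite piece's last component at \(\{t=0\}\) and the case-(iii) node at \(\{n=0\}\). Proposition~\ref{rep:endpoint-images} uses the frame \((t\partial_t,n\partial_n)\). So \((1,0)\) is \(t\partial_t\), the normal Euler field of the \emph{opposite} node. That is the tangential logarithmic direction along the case-(iii) node, whose own Euler line is \((0,1)\).

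The interchange in that remark is needed only for data computed from the piece whose last component is \(\{n=0\}\), i.e.\ your case-(iii) piece. Lemma~\ref{int:U-jets} already states those data in the common basis. Note that in your two-(ii) argument you correctly leave \(W_L\) uninterchanged.

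Your interchange turns \((1,0)\) into \((0,1)\) and manufactures coincidences in admissible graphs. The fallback you propose, that the table conditions exclude them, is false:
\begin{itemize}
\item two adjacent nodes of weight \(-3\), each with two long arms, form an admissible graph, since both \(b\geq3\);
\item a case-(viii) end with empty chain opposite \(b_Q=4\), \(s_Q=2\) satisfies \(b_Q+d\geq4\) and \(s_Q\leq b_Q-2\).
\end{itemize}
As written, your argument leaves these graphs unproved, or even predicts a nonzero cokernel. In the correct basis, the only cokernel of the case-(iii) evaluation, at \((b,s)=(3,2)\) in length two and at \((b,s)=(4,2)\) in length three, is the first (tangent) coefficient. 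Every displayed line \((\tau,-1)\), \((1,0)\), \((3,-2)\) has nonzero first coordinate, which is the paper's argument.

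Second, your claim that for jet length three the last column of Table~\ref{intro:two-node-table} forces \(b\geq4\) is wrong. That condition applies only when \(s_Q\geq1\), so \(b_Q=3\), \(s_Q=0\), \(d=0\) is admissible opposite case (vii) or (viii); there \(b=3\) forces \(s=0\). In this case length-three evaluation has a one-dimensional cokernel, cut out by \eqref{int:U-functional}. You must evaluate that functional on the lines \((1,0)\) and \((3,-2)\); the values \(3-1/p-1/q\) and \(7-3/p-3/q\geq4\) are nonzero.

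Your evaluation on \((\tau,-1)\) does not address this. That line occurs with jet length two, where case (iii) with \(b=3\), \(s=0\) is already surjective. Also, the \(p,q\) in \eqref{int:U-functional} are the leaves at the case-(iii) node, not at the opposite one.

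A minor point: in the length-one row, the condition \((b_P,s_P)\neq(3,2)\) concerns the opposite node. What you need there is that length-one evaluation is surjective for every \(b\geq3\).
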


\begin{proof}
Except when both endpoints are case (ii), choose a case-(iii) endpoint as above and use Table~\eqref{int:direct-targets}. Length-one evaluation is always surjective. For the second row, Lemma~\ref{int:U-jets} gives surjectivity by \eqref{rep:condition-b}. For the next two rows, the only possible cokernel of length-two evaluation occurs at \(b=3,s=2\); it is the tangent coefficient in order one. Both displayed lines have nonzero tangent coefficient and therefore remove this cokernel.

For the last two rows, length-three evaluation is surjective when \(b\geq5\). At \(b=4\), Lemma~\ref{int:U-jets} shows that the only possible cokernel is the tangent coefficient in order two, and both displayed lines remove it. At \(b=3\), the graph conditions require \(s=0\). Formula~\eqref{int:U-functional} is nonzero on \((1,0)\), and on \((3,-2)\) its value is
\[
7-\frac3p-\frac3q \geq 4.
\]
The additional line therefore surjects onto the one-dimensional cokernel.

It remains to consider two case-(ii) endpoints. Write their parameters as \(\tau,\sigma \geq 1\). After eliminating the reduced differential and the constant coefficient of the nilpotent restriction, the remaining quotient is \(\mathbb{C}^2\) modulo the sum of the lines spanned by \((\tau,-1)\) and \((-1,\sigma)\). Their determinant is \(\tau\sigma-1\). It vanishes only if all four outward leaves have self-intersection \(-2\), when the graph is not negative definite. Thus the difference of the two restriction maps is surjective in every admissible case.
\end{proof}

\subsection{The last necessary inequality}

We prove the last two-node restriction on a cycle contained in \(5E\). The same calculation gives nonvanishing of the extension group and supplies the final obstruction used in Theorem~\ref{thick:criterion}.

\begin{proposition}\label{int:critical-necessity}
Assume that every component has self-intersection at most \(-2\) and that the restrictions stated in Proposition~\ref{rep:local-restrictions} hold. If an endpoint has case \textup{(vii)} or \textup{(viii)}, \(s_Q\geq1\), and \(b_Q+d<4\), then there is an effective cycle \(Y\leq Z\), with \(Y\leq5E\), such that \(\chi(Y,S_X|_Y)=-1\). Consequently, \(H^1(Y,S_X|_Y)\neq0\) and \(\operatorname{Ext}^1_{\Lambda}(J_0,M_0)\neq0\) for every analytic realization. Anti-nefness of \(Z\) is not required.
\end{proposition}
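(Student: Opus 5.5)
The plan is to avoid computing cohomology directly and to use Riemann--Roch on an exceptional cycle. For an effective cycle \(Y\) and the rank-two bundle \(S_X\) with \(c_1(S_X)=-K_X-E\), one has
\[
\chi(Y,S_X|_Y)=2\chi(\mathcal{O}_Y)+(-K_X-E)\cdot Y=-Y^2-2K_X\cdot Y-E\cdot Y,
\]
using \(\chi(\mathcal{O}_Y)=-(Y^2+K_X\cdot Y)/2\). By adjunction on rational components, \(K_X\cdot E_v=b_v-2\). Hence \(\chi\) is an explicit quadratic function of the coefficients of \(Y\). It depends only on the weights and adjacencies, and therefore is the same for every analytic realization. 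Once a cycle with \(\chi=-1\) is found, \(h^1(Y,S_X|_Y)=h^0+1>0\). If \(Y\leq Z\), Lemma~\ref{fnd:restriction} gives a surjection \(H^1(Z,S_X|_Z)\to H^1(Y,S_X|_Y)\). Proposition~\ref{rep:ext-comparison} then gives \(\operatorname{Ext}^1_\Lambda(J_0,M_0)\neq0\). None of this uses anti-nefness.

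First I reduce the numerical cases. Since \(b_Q\geq2\) and \(d\geq0\), the condition \(b_Q+d<4\) leaves \((b_Q,d)\in\{(2,0),(2,1),(3,0)\}\). The endpoint at \(P\) is of case (vii) or (viii), so \(b_P=2\) and \(s_P=1\). If \(d=0\), then \eqref{rep:condition-b} from Proposition~\ref{rep:local-restrictions} forces \(b_Q\geq3\), and for \(b_Q=3\) it forces \(s_Q\leq1\). The remaining cases are therefore:
\begin{enumerate}
\item \(b_Q=3\), \(s_Q=1\), \(d=0\), so that \(W_R\) is of case (iii);
\item \(b_Q=2\), \(d=1\), so that the remaining chain \(K\) contains exactly one component of weight \(-3\), and the \(Q\)-piece is of one of cases (i), (iv)--(viii) with \(s_Q=1\).
\end{enumerate}
Combined with the two choices (vii) and (viii) at \(P\), this gives a short finite list of local configurations.

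In each configuration I take \(Y=W_L+Y'\), where \(Y'\leq 2K+W_R\) is chosen as follows.
\begin{enumerate}
\item \(Y'\) has coefficient two on the remaining connecting components.
\item \(Y'\) has coefficient two on \(Q\).
\item On the long outward arm of \(Q\), \(Y'\) has coefficients tapering to one, exactly as in \(W_R\).
\item \(Y'\) has coefficient at most one on the short arm of \(Q\).
\end{enumerate}
All coefficients are at most six by Table~\ref{rep:case-table}, so \(Y\leq 5E\) needs checking only at \(P\) in case (vii), whose coefficient is six. There I would lower the coefficient of \(P\), or take the \(4\)-capped version of (vii), and recheck \(\chi\).

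To evaluate \(\chi\), I split \(Y\) into the pieces near \(P\), near the chain, and near \(Q\). Each arm tail contributes zero, because along a tail of constant or linearly tapering coefficients \(\chi\) telescopes, as in the anti-nef computation of Proposition~\ref{rep:cycles}. The result is therefore independent of the arm lengths and of weights \(\leq-3\) beyond the first few components.

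The main obstacle is finding the right coefficients so that the total is exactly \(-1\) rather than \(0\). I expect the count to work as follows. The local terms from \(W_L\) in cases (vii) and (viii) each carry a deficit of \(-2\); this reflects the extra line in \eqref{int:orders}, which costs one more than the chain supplies. This deficit must be balanced by the contribution \(b_Q+d-2\) from the connecting chain and \(Q\), which is at most \(1\) precisely when \(b_Q+d<4\). This matches the interpolation count \(\delta=-1\), \(f\geq1\) in Proposition~\ref{int:interpolation}.

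I would first test the candidate \(Y\) on the minimal graphs, with arms of smallest length and weights \(-2\) wherever allowed. I would then adjust the coefficient of \(Q\) or of its short arm until \(\chi=-1\). Finally I would verify that the same coefficients give \(-1\) for all longer arms, using the telescoping remark above.
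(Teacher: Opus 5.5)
Your framework matches the paper's. Riemann--Roch makes \(\chi(Y,S_X|_Y)\) a realization-independent quadratic function of the multiplicities. Then \(\chi=-1\), \(Y\leq Z\), Lemma~\ref{fnd:restriction} and Proposition~\ref{rep:ext-comparison} give every conclusion without anti-nefness. Your reduction to \((b_Q,d,s_Q)=(3,0,1)\) or \((2,1,1)\) is also right.

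The gap is the cycle: \(Y=W_L+Y'\) does not have \(\chi=-1\). In
\[
\chi(Y,S_X|_Y)=\sum_v\bigl[b_vy_v(y_v-1)+(4-t_v)y_v\bigr]-2\sum_{\{v,w\}\ \text{an edge}}y_vy_w
\]
a component of weight \(-b_v\) contributes \(b_vy_v(y_v-1)\), so its weight enters as soon as \(y_v\geq2\). Your claim that constant or tapering tails telescope away is therefore false.

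The cycle \(W_L\) has coefficient at least two on components of unconstrained weight: on \(B_1\) and \(A_2\) in case (viii), and on \(A_2\) in case (vii). \(Y'\) does not touch them. So \(\chi(W_L+Y')\) grows linearly in \(p,a_2\), resp.\ \(a\), while the rest is independent of these weights. For large weights no choice of \(Y'\) gives \(-1\).

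Concretely, take the most favourable \(Y'\) of your shape: coefficient two on the connecting components outside \(W_L\) and on \(Q\), and one on \(D_1,D_2\) only. Copying the long arm of \(W_R\) gives a larger value. One gets \(\chi=2p+2a_2-8\) in case (viii) with \(l=3\), and \(\chi=2a-4\) in case (vii). These are \(0\), not \(-1\), on your minimal test graphs. Lowering the coefficient of \(Q\) or of its short arm only increases them. In case (vii), \(W_L\) also contains \(6P\), so \(Y\nleq5E\).

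The missing idea is to go below \(W_L\) near \(P\). Give coefficient at most one to every component whose weight is not fixed by the case data. Give coefficient two to the connecting chain and to \(Q\), so that \(d\) and \(b_Q\) enter only as \(2d\) and \(2b_Q\). The paper takes
\[
Y=3P+B+2A_1+A_2+2\sum_iC_i+2Q+D_1+D_2
\]
in case (viii), and
\[
Y=5P+2B+3A_1+A_2+4C_1+3C_2+2\sum_{i\geq3}C_i+2Q+D_1+D_2
\]
in case (vii). The contributions are:
\begin{itemize}
\item the terms near \(P\) give \(6\), resp.\ \(20\);
\item the connecting terms, including both attaching edges, give \(2d-12\), resp.\ \(2d-26\);
\item \(Q+D_1+D_2\) gives \(2b_Q-s_Q\).
\end{itemize}
Thus \(\chi=2(b_Q+d)-s_Q-6=-1\) in both remaining cases, and these cycles are visibly \(\leq Z\) and \(\leq5E\).
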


\begin{proof}
The local restrictions give \(s_Q\leq1\) when \(b_Q=2\), while \eqref{rep:condition-b} excludes \((b_Q,d)=(2,0)\) and \((b_Q,d,s_Q)=(3,0,2)\). Hence
\begin{equation}\label{thick:failure-triples}
(b_Q,d,s_Q)=(3,0,1)\quad\text{or}\quad(2,1,1).
\end{equation}
Write the outward arms at \(P\) as \(B\) and \(A_1+\cdots+A_\ell\), and let \(D_1,D_2\) be the first components of the outward arms at \(Q\). Denote the entire connecting chain by \(C_1,\ldots,C_h\), read from \(P\), with \(C_i^2=-c_i\) and \(d=\sum_i(c_i-2)\). Define \(Y\) by
\[
\begin{aligned}
\text{(viii)}:\quad Y&=3P+B+2A_1+A_2+2\sum_{i=1}^h C_i+2Q+D_1+D_2,\\
\text{(vii)}:\quad Y&=5P+2B+3A_1+A_2+4C_1+3C_2+2\sum_{i=3}^h C_i+2Q+D_1+D_2.
\end{aligned}
\]
An empty sum is zero. In case \textup{(viii)}, \(\ell\geq3\), so \(A_2\) has valency two. In case \textup{(vii)}, \(h\geq2\) and \(c_1=c_2=2\). Table~\ref{rep:case-table} gives \(Y\leq Z\) in both cases; the displayed multiplicities also give \(Y\leq5E\).

For a rank-two bundle \(\mathcal{G}\), Riemann--Roch gives \(\chi(Y,\mathcal{G}|_Y)=-Y\cdot(Y+K_X)+Y\cdot\det\mathcal{G}\); see Wahl~~\cite[proof of Proposition~2.6]{Wahl2016}. Since \(\det S_X=\mathcal{O}_X(-K_X-E)\), adjunction gives, for \(Y=\sum_vy_vE_v\),
\begin{equation}\label{thick:euler-formula}
\chi(Y,S_X|_Y)=\sum_v\left[b_vy_v(y_v-1)+(4-t_v)y_v\right]-2\sum_{\{v,w\}\text{ an edge}}y_vy_w,
\end{equation}
where \(E_v^2=-b_v\) and \(t_v\) is its valency in the graph of \(E\). Group the terms in this formula into the two endpoint configurations and the connecting terms. Include internal edges with their endpoint configuration, and include the two attaching edges with the connecting chain. Direct substitution gives
\[
\begin{array}{c|c|c|c}
\text{case}&\text{near }P&\text{connecting terms}&Q+D_1+D_2\\ \hline
\text{(viii)}&6&2d-12&2b_Q-s_Q\\
\text{(vii)}&20&2d-26&2b_Q-s_Q
\end{array}
\]
If the connecting chain is empty in case \textup{(viii)}, its column consists of the edge joining \(P\) and \(Q\), which contributes \(-12\). Both rows give
\begin{equation}\label{thick:critical-euler}
\chi(Y,S_X|_Y)=-6+2b_Q-s_Q+2d=-1,
\end{equation}
by \eqref{thick:failure-triples}. Thus \(h^1(Y,S_X|_Y)=h^0(Y,S_X|_Y)+1>0\). Since \(Y\leq Z\), Lemma~\ref{fnd:restriction} gives nonvanishing on \(Z\). Lemma~\ref{rep:piece-vanishing} and Proposition~\ref{rep:ext-comparison} identify this cohomology with the stated extension group without using anti-nefness.
\end{proof}

\section{Vanishing and the classification theorem}\label{sec:classification-proof}\label{sec:classification}

We extend the vanishing on \(Z\) to the resolution, complete the classification, and prove the optimal criterion on \(5E\).

\subsection{\texorpdfstring{Restriction maps for \(\mathcal{O}_Z\)}{Restriction maps for the structure sheaf}}

Use the decomposition of \(Z\) from Proposition~\ref{rep:cycles}, the endpoint algebras \(A_i\subset Q_i=T_i[n_i]/(n_i^2)\), and the scalar orders \(\rho_i\) of Table~\eqref{rep:nil-table}. If the remaining connecting chain \(K\) is nonempty, put \(I=\mathcal{O}_K(-K)\).

\begin{proposition}\label{cls:scalar}
For every two-node graph satisfying the two-node conditions of Theorem~\ref{thm:classification},
\[
h^1(Z,\mathcal{O}_Z) = 
\begin{cases}
1,&\text{case (ii) at both ends, all connecting self-intersections }-2,\\
0,&\text{otherwise}.
\end{cases}
\]
The first alternative includes the case in which no connecting component lies between the nodes.

If \(K\neq0\), the more precise formula is
\[
h^1(Z,\mathcal{O}_Z)
=\max\{\rho_L+\rho_R-d-3,0\},
\qquad d=\sum_j(b_j-2),
\]
where the remaining connecting components have self-intersections \(-b_j\).
\end{proposition}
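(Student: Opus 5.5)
The plan is to compute \(h^1(Z,\mathcal{O}_Z)\) exactly as \(h^1(Z,S_X|_Z)\) was computed, but with the structure sheaf: a Mayer--Vietoris complex for \(Z=W_L+2K+W_R\), or for \(Z=W_L+W_R\) when \(K=0\).

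\emph{Reduction to a cokernel.} By Lemma~\ref{rep:piece-vanishing}, \(H^1(W_i,\mathcal{O}_{W_i})=0\) and \(H^1(2K,\mathcal{O}_{2K})=0\). The union sequence for \(\mathcal{O}_Z\), which is exact exactly as in Proposition~\ref{rep:ext-comparison}, then identifies \(H^1(Z,\mathcal{O}_Z)\) with the cokernel of
\[
A_L\oplus A_K\oplus A_R\to Q_L\oplus Q_R,
\]
given by differences of the restriction maps.

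\emph{The reduced part.} By Proposition~\ref{rep:endpoint-images}, each \(A_i\) maps onto \(T_i\). Moreover \(A_K\) contains the diagonal constants \(\mathbb{C}(1,1)\). So the reduced part of the complex is surjective, and its kernel is only the diagonal constants, which are matched by constants on all three pieces.

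\emph{The nilpotent part.} As in Proposition~\ref{int:reduced}, the cokernel reduces to the cokernel of
\[
H^0(K,I)\to n_LT_L/n_Lt_L^{\rho_L}T_L\oplus n_RT_R/n_Rt_R^{\rho_R}T_R,
\]
using~\eqref{cls:scalar-orders}. This is evaluation of sections of \(I\) on jets of length \(\rho_L\) at \(p_L\) and \(\rho_R\) at \(p_R\). Its cokernel is \(H^1(K,I(-\rho_Lp_L-\rho_Rp_R))\), because \(H^1(K,I)=0\). The line bundle has degree \(d+2-\rho_L-\rho_R\), and Lemma~\ref{int:jet-evaluation} applies since \(\rho_i\le 2\). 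This gives \(h^1=\max\{\rho_L+\rho_R-d-3,0\}\), which is the precise formula for \(K\neq0\).

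\emph{Case analysis when \(K\neq0\).} A nonzero value needs \(\rho_L+\rho_R\ge d+4\), so \(\rho_L=\rho_R=2\) and \(d=0\).
\begin{itemize}
\item If some endpoint is of case (iii), then \(\rho\) is \(1\) there and the value is \(0\).
\item If both endpoints have \(\rho=2\) and \(d=0\), each endpoint is of case (i), (ii), or (iv)--(viii).
\item If some endpoint has \(b_P=2\) and \(s_P=1\) (every case except (ii)), then \(d=0\) is excluded by~\eqref{rep:condition-b}, because the opposite node would have to be case (iii).
\end{itemize}
The only survivor is case (ii) at both ends with \(d=0\), which gives \(h^1=1\).

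\emph{The case \(K=0\).} The two pieces meet in one scheme. If one endpoint is of case (iii), its function image has order \(\rho=1\) in the relevant direction, that is, \(v\mathbb{C}\) up to units by Proposition~\ref{app:local:doubled-node}. The opposite endpoint has reduced function image onto \(T\). I expect the cokernel then vanishes by the same two-step reduction, with the nilpotent direction filled by the case (iii) image. If both endpoints are of case (ii), the intersection is \(\mathbb{C}[t,n]/(t^2,n^2)\). Each side contributes constants, a term \(n(1-\cdot)\) or \(t(1-\cdot)\), and nothing in the \(tn\) coordinate, since \(\rho=2\) on both sides by the table with the roles of \(t,n\) swapped. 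So the \(tn\) direction is a one-dimensional cokernel, and \(h^1=1\); this matches the statement's inclusion of the empty chain.

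\emph{Main obstacle.} The delicate step is the \(K=0\) comparison. The two local function images are computed in different coordinates, with \(t\) and \(n\) interchanged. I must check via Lemma~\ref{rep:neighborhood-comparison} that the mixed term \(tn\) is indeed missed by both images in the case (ii)–(ii) situation. I also must check that it is hit whenever a case (iii) endpoint is present, including the (iii)–(iii) configuration with two nodes of self-intersection at most \(-3\).
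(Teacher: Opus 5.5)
Your treatment of \(K\neq0\) is the paper's argument. It uses Mayer--Vietoris with the vanishing from Lemma~\ref{rep:piece-vanishing}, the isomorphism \(Q_i/A_i\simeq n_iT_i/n_it_i^{\rho_i}T_i\), the cokernel \(H^1(K,I(-\rho_Lp_L-\rho_Rp_R))\) computed by Lemma~\ref{int:jet-evaluation}, and the case analysis through \eqref{rep:condition-b}. Your count for two case (ii) pieces meeting directly is also correct. The coordinate worry you raise there is unnecessary: each image is \(\mathbb{C}\) plus one element vanishing on its own node, of the form \(t+\alpha tn\) or \(n+\beta tn\). The sum is therefore three-dimensional in the four-dimensional ring for any \(\alpha,\beta\).

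The gap is the case \(K=0\) with a case (iii) endpoint. You leave it as an expectation, and the mechanism you describe would not work. Work in \(\mathbb{C}[t,n]/(t^J,n^2)\), with \(n\) normal to the case (iii) node. The opposite image maps onto \(T\) and meets \(nT\) in \(nt^{\rho}T\). Unless the opposite endpoint is also case (iii), \(\rho=2\), so the quotient to be filled is \(nT/nt^2T\), spanned by \(n\) and \(nt\). A single nilpotent direction ``\(v\mathbb{C}\) up to units'' would leave a one-dimensional cokernel, for example for a (ii)--(iii) or (i)--(iii) graph with empty connecting chain. The value \(\rho=1\) for case (iii) in \eqref{rep:nil-table} does not describe this intersection. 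It refers to the intersection with \(2K\), where the square-zero ideal is generated by the coordinate tangent to the node rather than normal to it.

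What you need is Proposition~\ref{app:local:doubled-node} with \(h=J\). The case (iii) function image is \(\mathbb{C}\oplus n(1-t)P_{b-2}\). Since \(b\geq3\), \(P_{b-2}\) contains \(1\) and \(t\) modulo \(t^2\), so functions vanishing on the reduced node surject onto \(nT/nt^2T\). Geometrically, these functions restrict on the node to sections of a line bundle of degree \(b-2\geq1\), and these sections extend along the outward chains, where the degrees are nonnegative. Combine this with surjectivity of the opposite image onto \(T\) and the inclusion \(nt^2T\subseteq A_i\): the two images then span the whole intersection ring. This also settles the (iii)--(iii) configuration.
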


\begin{proof}
Suppose first that \(K\neq0\). Lemma~\ref{rep:piece-vanishing} gives \(H^1(W,\mathcal{O}_W) = H^1(2K,\mathcal{O}_{2K}) = 0\) for \(W = W_L,W_R\). The Mayer--Vietoris sequence for \(\mathcal{O}_Z\) therefore computes \(H^1(Z,\mathcal{O}_Z)\) as the cokernel of the restriction maps on functions. Since \(A_i \to T_i\) is onto, inclusion of the square-zero ideal \(n_iT_i\) induces
\[
Q_i/A_i \simeq n_iT_i/(A_i\cap n_iT_i).
\]
This quotient isomorphism does not require a splitting of the algebra extension. Functions on \(2K\) are constants plus sections of \(I\); constants lie in both endpoint algebras. Thus
\begin{equation}\label{cls:scalar-evaluation}
H^1(Z,\mathcal{O}_Z) \simeq 
\operatorname{coker}\left[
H^0(K,I) \to I|_{\rho_Lp_L}\oplus I|_{\rho_Rp_R}
\right].
\end{equation}
The cokernel in \eqref{cls:scalar-evaluation} is \(H^1(K,I(-\rho_Lp_L-\rho_Rp_R))\). Lemma~\ref{int:jet-evaluation} gives its dimension as \(\max\{\rho_L+\rho_R-d-3,0\}\), where \(d = \sum_j(b_j-2)\). Since \(\rho_L,\rho_R \leq 2\) and \(d \geq 0\), this dimension is one if both vanishing orders are two and every connecting curve has self-intersection \(-2\), and is zero otherwise.

If every component of \(K\) has self-intersection \(-2\) and neither end has case (iii), the graph conditions force both ends to have case (ii). A node of self-intersection \(-2\) with one outward arm of length at least two would require the opposite node to have self-intersection at most \(-3\). In cases (iv), (vi), and (vii), the initial connecting components of self-intersection \(-2\) are included in \(W_L\) or \(W_R\). Thus these cases leave no nonempty \(K\) whose components all have self-intersection \(-2\).

Suppose now that \(K=0\). By the observation preceding Proposition~\ref{rep:cycles}, either one endpoint has case \textup{(iii)} or both have case \textup{(ii)}. In the first alternative, choose \(n\) normal to the node of case \textup{(iii)}. The restriction image of functions vanishing on the reduced node maps surjectively to \(nT/(nt^2T)\): the corresponding line bundle on the node has degree \(b-2 \geq 1\), and its sections extend along the outward chains, where the line bundle has nonnegative component degrees. The function algebra from the opposite end maps onto \(T\) and contains \(nt^2T\) in its intersection with \(nT\). The sum of the two images is therefore the entire intersection algebra.

If both endpoints have case \textup{(ii)}, the intersection ring is \(\mathbb{C}[t,n]/(t^2,n^2)\). The two function images have the form \(\mathbb{C}+\mathbb{C}(t+\alpha tn)\) and \(\mathbb{C}+\mathbb{C}(n+\beta tn)\), after rescaling their generators. Their intersection consists of the constants, so their sum has dimension three and its cokernel has dimension one.
\end{proof}

\begin{proposition}\label{cls:nonexceptional-vanishing}
Let \((X,E)\) be any analytic realization of a two-node graph satisfying the conditions of Theorem~\ref{thm:classification}, other than the exceptional family in Proposition~\ref{cls:scalar}. For the anti-nef cycle \(Z\) of Proposition~\ref{rep:cycles},
\[
H^1(X,S_X) = 0
\quad \Longleftrightarrow \quad
H^1(Z,S_X|_Z) = 0
\quad \Longleftrightarrow \quad
\operatorname{Ext}^1_{\Lambda}(J_0,M_0) = 0.
\]
\end{proposition}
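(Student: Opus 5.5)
The second equivalence is Proposition~\ref{rep:ext-comparison}, so the content is the first one. The implication from \(H^1(X,S_X)=0\) to \(H^1(Z,S_X|_Z)=0\) is the restriction surjection of Lemma~\ref{fnd:restriction}. For the converse I plan to use Lemma~\ref{fnd:formal-vanishing} with \(\mathcal{F}=S_X\). It then suffices to prove
\[
H^1\bigl(Z,S_X(-kZ)|_Z\bigr)=0\qquad\text{for all }k\geq0.
\]
The support of \(Z\) is all of \(E\): the endpoint cycles contain both nodes and all outward arms, and \(2K\) covers the remaining connecting chain.

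The case \(k=0\) is the hypothesis. For \(k\geq1\), write \(S_X(-kZ)|_Z=S_X|_Z\otimes\mathcal{A}\) with \(\mathcal{A}=\mathcal{O}_X(-kZ)|_Z\). Since \(Z\) is anti-nef by Proposition~\ref{rep:cycles}, \(\mathcal{A}\) has degree \(-kZ\cdot E_v\geq0\) on every component. I will apply Lemma~\ref{fnd:nef-twists} with \(C=Z\) and \(\mathcal{F}=S_X|_Z\). That lemma has three hypotheses: the reduced divisor of \(Z\) is a rational tree, which holds for a two-node graph; \(H^1(Z,S_X|_Z)=0\), which is assumed; and \(H^1(Z,\mathcal{O}_Z)=0\). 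The last is exactly what Proposition~\ref{cls:scalar} gives outside the exceptional family, and this is why that family is excluded from the statement. The lemma then yields \(H^1(Z,S_X|_Z\otimes\mathcal{A})=0\) for every \(k\), and Lemma~\ref{fnd:formal-vanishing} gives \(H^1(X,S_X)=0\).

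\textbf{Main obstacle.} Anti-nefness in Proposition~\ref{rep:cycles} is stated under the additional hypothesis that the last column of Table~\ref{intro:two-node-table} holds. Since the graph satisfies all conditions of Theorem~\ref{thm:classification}, that column holds, so I will check only that the hypotheses of Proposition~\ref{rep:cycles} are genuinely met. A second point needs care: Lemma~\ref{fnd:nef-twists} requires \(Z\) to be a positive cycle whose twist is computed via a divisor \(B\) at smooth points. This is valid because \(\operatorname{Pic}(Z)\simeq\operatorname{Pic}(Z_{\mathrm{red}})\) once \(H^1(\mathcal{O}_Z)=0\), so the degree description identifies \(\mathcal{O}_X(-kZ)|_Z\) with \(\mathcal{O}_Z(B)\).
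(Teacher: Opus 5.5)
Your proposal is correct and follows the paper's proof essentially verbatim. The second equivalence comes from Proposition~\ref{rep:ext-comparison} and one direction of the first from Lemma~\ref{fnd:restriction}. For the converse you use Lemma~\ref{fnd:nef-twists}, with \(H^1(Z,\mathcal{O}_Z)=0\) from Proposition~\ref{cls:scalar} and anti-nefness from Proposition~\ref{rep:cycles}, followed by Lemma~\ref{fnd:formal-vanishing}. Your explicit check that the support of \(Z\) is all of \(E\), which Lemma~\ref{fnd:formal-vanishing} requires, is a detail the paper leaves implicit.
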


\begin{proof}
Proposition~\ref{rep:ext-comparison} gives the comparison with the extension group. By Proposition~\ref{cls:scalar}, \(H^1(Z,\mathcal{O}_Z) = 0\). Anti-nefness makes \(\mathcal{O}_Z(-kZ)\) have nonnegative component degrees. By Lemma~\ref{fnd:nef-twists}, if \(H^1(Z,S_X|_Z) = 0\), then \(H^1(Z,S_X(-kZ)|_Z) = 0\) for all \(k \geq 0\). So Lemma~\ref{fnd:formal-vanishing} gives \(H^1(X,S_X) = 0\). Conversely, restriction is onto on first cohomology by Lemma~\ref{fnd:restriction}.
\end{proof}

\subsection{The nonrational family with four arms of length one}

Let both nodes have self-intersection \(-2\), let their four outward arms be single curves \(B_1,\ldots,B_4\) with \(B_i^2 = -p_i\), and let every connecting curve have self-intersection \(-2\). Denote by \(T\) the reduced path between the nodes, including both nodes. Put
\[
E = T+\sum_{i = 1}^4B_i,\qquad Z = E+T.
\]
Negative definiteness excludes \(p_1 = \cdots = p_4 = 2\): the cycle with multiplicity two on \(T\) and one on each leaf would otherwise have zero intersection with every component. Thus at least one \(p_i > 2\). Directly,
\begin{equation}\label{cls:four-short-intersections}
Z\cdot T_j = 0,\qquad Z\cdot B_i = 2-p_i \leq 0.
\end{equation}
In particular \(Z\) is anti-nef.

\begin{lemma}\label{cls:four-short-boundary}
Put \(F=S_X(-T)|_E\). At either end of \(T\), let \(R=P+B+B'\) consist of the node and its two outward leaves, with \(B^2=-p\) and \((B')^2=-q\). Then \(H^1(R,F|_R)=0\), and the image of restriction from \(H^0(R,F|_R)\) to the fiber at the attachment to the rest of \(T\) is
\[
\mathbb{C}(1,-\tau),\qquad
\tau=2-\frac1p-\frac1q,
\]
in the ordered tangent and normal logarithmic basis on \(P\), after choosing a frame for \(\mathcal{O}_P(-T)\). These assertions hold for every analytic realization.
\end{lemma}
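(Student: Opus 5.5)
The computation will be carried out on the normalization of \(R\), which is legitimate because \(R\) is reduced. First I determine the twist \(F|_R\) component by component. On \(P\), \(S_X|_P\simeq\mathcal{O}\oplus\mathcal{O}(-1)\) by Lemma~\ref{fnd:component}, since \(P\) is trivalent. Also \(T\cdot P=-2+1=-1\), because \(P\) is an end of \(T\) and has one neighbour in \(T\). Hence \(F|_P\simeq\mathcal{O}(1)\oplus\mathcal{O}\).

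On a leaf \(B\) the bundle \(S_X|_B\simeq\mathcal{O}\oplus\mathcal{O}(1)\) has valency one. The leaf meets \(T\) only at the point of \(P\), so \(\mathcal{O}(-T)|_B=\mathcal{O}(-1)\) and \(F|_B\simeq\mathcal{O}(-1)\oplus\mathcal{O}\).

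These degrees give vanishing. On each leaf \(H^1(B,F|_B)=0\), and on \(P\) we have \(H^1=0\). The normalization sequence for \(R\), a tree with two nodes of gluing, then reduces \(H^1(R,F|_R)\) to the cokernel of
\[
H^0(P)\oplus H^0(B)\oplus H^0(B')\to F|_{P\cap B}\oplus F|_{P\cap B'}.
\]
I will prove that this map is surjective by computing it explicitly.

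For the explicit computation I work as in the proof of Lemma~\ref{rep:boundary-values}, with \((b_P,s_P)=(2,0)\). Place the attachment to the rest of \(T\) at \(x=0\) and the leaves at \(x=\pm1\). Choose a local generator of \(\mathcal{O}_X(-T)\) along \(P\). Then sections of \(F|_P\) are
\[
A\,x(x^2-1)\partial_x+B(x)\,y\partial_y
\]
with \(A\) constant, \(\deg B\le 2\), and leading coefficient of \(B\) equal to \(-2A\). This is the same polynomial description used in that lemma, and the twist by \(-T\) replaces the twist by \(-E'\) there. The only difference is that \(\mathcal{O}(-T)\) has degree one on \(P\) rather than \(-E'\cdot P=1\); the two agree, so the description is unchanged.

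A leaf of self-intersection \(-p\) imposes \(B(1)=-2A(1)/p\), and the analogue holds at \(-1\). The reason is that the section space \(H^0(B,F|_B)\simeq H^0(\mathcal{O}(-1))\oplus H^0(\mathcal{O})\) contributes only the normal Euler direction, and the transport rule of Lemma~\ref{fnd:two-point-transport} fixes the matching line. Solving the two conditions gives the fibre value at \(0\), namely \((-A,B(0))\). This is proportional to \((1,-\tau)\) with \(\tau=2-1/p-1/q\), exactly as in the third row of the table in Lemma~\ref{rep:boundary-values}.

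The same count gives \(H^1=0\). Sections of \(P\) form a three-dimensional space, with parameters \(A\), the linear coefficient and the constant coefficient of \(B\). Each leaf adds one dimension. The target of the gluing map is four-dimensional. The gluing map is surjective because the four parameters \(A\), the linear coefficient and the two leaf constants already hit both leaf fibres independently.

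Finally, independence of the realization. Every object used is the reduced cycle \(R\) together with the restriction of \(S_X\) and of \(\mathcal{O}_X(-T)\) to it, and these depend only on the component normal bundles and the crossing data. On a rational tree these are determined by the degrees, via Lemma~\ref{fnd:component}, the splitting, and the degree-determined line bundles on reduced trees as in Lemma~\ref{fnd:nef-twists}. The transport matrices of Lemma~\ref{fnd:two-point-transport} also hold in any realization. Alternatively, one can invoke Lemma~\ref{rep:neighborhood-comparison} with the taut star obtained by adding the adjacent component of \(T\).

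The main obstacle is to justify that the ordered logarithmic coefficient at the leaf crossing is \((A(\pm1),B(\pm1))\) up to the factor fixed by the leaf degree. In particular, the sign and the factor \(2=b_P\) must be handled correctly under the twist by \(\mathcal{O}(-T)\), whose frame on the leaf differs from the one on \(P\). I would check this in the standard plumbing coordinates, where the generator of \(\mathcal{O}(-T)\) near the leaf point is a unit times the node's normal coordinate.
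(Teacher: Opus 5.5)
Your route is the paper's: the same splittings \(F|_P\simeq\mathcal{O}(1)\oplus\mathcal{O}\) and \(F|_B\simeq\mathcal{O}(-1)\oplus\mathcal{O}\), the same three-parameter family of sections on \(P\), the same leaf equations, and the same count (source of dimension five, target of dimension four, one-dimensional kernel). Your unverified claim that four of the parameters already reach both leaf fibres is true; that \(4\times4\) determinant is \(-2\tau\neq0\).

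In this family the four leaves are exactly the end components, so \(E'=T\) and \(R=E_P\). The boundary line is therefore literally the row \((b_P,s_P)=(2,0)\) of Lemma~\ref{rep:boundary-values}, and there is no difference between the two twists to reconcile. Placing the path attachment at \(0\) instead of \(\infty\) is immaterial.

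Two justifications are wrong as written.

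First, the leaf does \emph{not} contribute the normal Euler direction. After the twist, the Euler subbundle \(\mathcal{O}_B\subset S_X|_B\) becomes \(\mathcal{O}(-1)\) and has no sections; the single section comes from the tangent quotient. Take \(w\) tangent to \(B\) with the attachment at \(w=0\), and \(r\) normal to \(B\). The transition \(w\partial_w=-w'\partial_{w'}+pr'\partial_{r'}\) forces that section to be \(w\,(w\partial_w-pr\partial_r)\). Dividing by the frame \(w\) of \(\mathcal{O}_B(-T)\) and exchanging entries gives \(\mathbb{C}(1,-1/p)\) in the ordered basis of \(P\), hence \(B(1)=-2A/p\). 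Your stated reason, taken literally, gives instead the line \(\mathbb{C}(1,0)\), the equations \(B(\pm1)=0\), and the wrong boundary line \(\mathbb{C}(1,-2)\), independent of \(p,q\). So the formula you use is right, but the reason you give for it contradicts it.

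Second, realization independence is not a matter of degrees alone. You need the first-order neighbourhood of the trivalent \(P\), with its three marked branches, to be in normal form. Lemma~\ref{fnd:two-point-transport} treats only two marked points. The paper gets the three-point case from \(H^1(\mathbb{P}^1,\Theta_{\mathbb{P}^1}(-[0]-[1]-[\infty])\otimes\mathcal{O}(2))=H^1(\mathcal{O}(1))=0\).

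Your alternative route does work:
\begin{enumerate}
\item[(1)] The star on \(P\) with arms \([p],[q],[2]\) is taut by Theorem~\ref{star:classification}.
\item[(2)] The remaining components do not meet \(R\).
\item[(3)] The isomorphism of Lemma~\ref{rep:neighborhood-comparison} preserves the ideals of \(P\) and of the adjacent component, hence \(\mathcal{O}(-T)\) near \(R\).
\end{enumerate}
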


\begin{proof}
The component restrictions of \(F\) on \(P,B,B'\) are respectively \(\mathcal{O}(1)\oplus\mathcal{O}\), \(\mathcal{O}(-1)\oplus\mathcal{O}\), and \(\mathcal{O}(-1)\oplus\mathcal{O}\), so their first cohomology vanishes. Place the two outward attachment points on \(P\) at \(z=0,1\), and the path attachment at \(u=1/z=0\). The first-order normalization of Lemma~\ref{fnd:two-point-transport} applies while preserving these three marked branches: its obstruction lies in
\[
H^1\left(\mathbb{P}^1,\Theta_{\mathbb{P}^1}(-[0]-[1]-[\infty])\otimes\mathcal{O}_{\mathbb{P}^1}(2)\right)
=H^1(\mathbb{P}^1,\mathcal{O}_{\mathbb{P}^1}(1))=0.
\]
Thus the normal transition is \(y'=z^2y\). Let \(e\) be the frame of \(\mathcal{O}_P(-T)\) induced by \(y\), and let \(e_\infty\) be induced by \(uy'\) at the path attachment; then \(e=ue_\infty\). The sections of \(F|_P\) are
\[
e\left(\alpha z(z-1)\partial_z+(\beta z+\gamma)y\partial_y\right),
\qquad \alpha,\beta,\gamma \in \mathbb{C}.
\]
The tangent quotient is \(\mathcal{O}_P\), and the normal Euler subbundle is \(\mathcal{O}_P(1)\); their sections give the constant coefficient \(\alpha\) and the linear expression \(\beta z+\gamma\). At the path attachment, the value in the stated basis and frame \(e_\infty\) is
\[
(-\alpha,\beta+2\alpha).
\]

On the leaf of self-intersection \(-p\), choose tangent coordinate \(w\), with the attachment at \(w=0\), and normal coordinate \(r\). The transition \(r'=w^pr\) shows that the sections of \(S_X\) restricted to the leaf and vanishing at the attachment are \(w\lambda(w\partial_w-pr\partial_r)\), where \(\lambda\in\mathbb{C}\). Dividing by the frame \(w\) of \(\mathcal{O}_{B}(-T)\) and exchanging tangent and normal entries gives the line \(\mathbb{C}(1,-1/p)\) in the node fiber. The two matching equations are therefore
\[
\gamma=\alpha/p,\qquad \beta+\gamma=-\alpha/q.
\]
They are independent. Thus \(\beta=-\alpha(1/p+1/q)\), and the remaining path value is the line
\[
\mathbb{C}(1,-\tau),\qquad
\tau = 2-\frac1p-\frac1q.
\]
The normalization map for \(R\) has source dimension five and target dimension four. The two independent leaf equations leave the one-dimensional kernel just computed, so the map is surjective and \(H^1(R,F|_R)=0\). Its kernel has the asserted boundary image. The calculation uses only first-order normal transitions preserving the marked branches, and therefore holds for every analytic realization.
\end{proof}

\begin{lemma}\label{cls:four-short-reduced}
For this family,
\[
H^1(T,S_X|_T)=0,\qquad
H^1(E,S_X(-T)|_E)=0.
\]
\end{lemma}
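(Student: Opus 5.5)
For \(H^1(T,S_X|_T)=0\), the plan is to run the argument of Proposition~\ref{rep:first-layer}. The reduced path \(T\) consists of the two nodes and the connecting curves. Each node \(P\) has valency three in \(E\), so by Lemma~\ref{fnd:component} one has \(S_X|_P\simeq\mathcal{O}\oplus\mathcal{O}(-1)\); each connecting curve has valency two and gets \(\mathcal{O}^{\oplus2}\). With \(r\) components of \(T\), the normalization sequence then gives
\[
\chi(T,S_X|_T)=(2+2+2(r-2))-2(r-1)=0,
\]
since the nodes contribute one section each and each of the \(r-1\) crossings imposes a two-dimensional condition. So it is enough to prove \(H^0(T,S_X|_T)=0\).

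A global section restricts on each node to a multiple of the normal Euler field, so at the crossing its value lies on the line \((1,0)\) in the ordered basis. Transporting this along the chain of \(-2\) curves uses \(T_2\) (Lemma~\ref{fnd:two-point-transport}). The cone-preservation argument in Proposition~\ref{rep:first-layer} shows that the transported vector cannot reach the opposite normal Euler line \((0,1)\). Hence both Euler coefficients vanish, and then the chain values vanish by transport. For an empty chain the two lines \((1,0)\) and \((0,1)\) at the single crossing are already distinct.

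For \(H^1(E,S_X(-T)|_E)=0\), put \(F=S_X(-T)|_E\) and decompose \(E\) into the two end pieces \(R_L,R_R\) of Lemma~\ref{cls:four-short-boundary} and the interior connecting curves \(C_1,\dots,C_m\). On each \(C_j\) the degree of \(\mathcal{O}(-T)\) is \(2-2=0\), so \(F|_{C_j}\simeq\mathcal{O}^{\oplus2}\). Lemma~\ref{cls:four-short-boundary} gives \(H^1(R_i,F|_{R_i})=0\). The Mayer--Vietoris/normalization sequence along the chain therefore identifies \(H^1(E,F)\) with the cokernel of
\[
W_L\oplus W_R\to\mathbb{C}^2,
\]
after the chain fibers have been identified by transport. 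Here the lines are \(W_L=\mathbb{C}(1,-\tau)\) and \(W_R=\mathbb{C}(1,-\sigma)\), and the transport is \(\mathcal{T}=T_2^mJ\) from Lemma~\ref{rep:boundary-values} (with \(d=0\)). This is exactly the \(d=0\) formula \(\mathbb{C}^2/(Q_mW_L+W_R)\) of that lemma, whose hypotheses I would check hold in the present situation.

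It remains to show that the two lines span \(\mathbb{C}^2\). Up to the exchange \(J\) and signs, this is the determinant in Corollary~\ref{int:four-arm-discriminant}, namely \(\det(\mathcal{T}(\tau,-1),(-1,\sigma))\). All connecting curves are \(-2\), so its value is
\[
(\tau-1)+(\sigma-1)+(m+1)(\tau-1)(\sigma-1),
\]
which equals \(\det A_\Gamma/(p_1p_2p_3p_4)>0\) by negative definiteness. Equivalently, it vanishes only when \(\tau=\sigma=1\), i.e.\ \(p_i=2\) for all \(i\), and that case is excluded.

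The main obstacle is the sign and ordering conventions. The line from Lemma~\ref{cls:four-short-boundary} is written in the frame \(e_\infty\) twisted by \(\mathcal{O}(-T)\), and the transport of Lemma~\ref{fnd:two-point-transport} must be applied to that twisted frame. I would check that the twist has degree zero on the interior curves, so the frames propagate trivially and the matrices \(T_2\) apply unchanged. I would also check that the two endpoint lines really correspond to \((\tau,-1)\) and \((-1,\sigma)\) up to the exchange \(J\).
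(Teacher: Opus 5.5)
Your proposal is correct and follows the paper's proof: for \(T\) you use the normalization count plus transport of the Euler line by \(T_2\), and for \(E\) you eliminate the leaves with Lemma~\ref{cls:four-short-boundary} and conclude from the transported determinant of Corollary~\ref{int:four-arm-discriminant}. Your cone argument from Proposition~\ref{rep:first-layer} (note \(T=E'\) here) replaces the paper's explicit \(T_2^m(1,0)=(m+1,-m)\), and \(\det(Q_m(1,-\tau),(1,-\sigma))\) coincides exactly with \((\tau-1)+(\sigma-1)+(m+1)(\tau-1)(\sigma-1)\), so the sign and ordering checks you flag do work out. One slip to fix: each node contributes \(\chi(\mathcal{O}\oplus\mathcal{O}(-1))=1\), so the count should read \((1+1+2(r-2))-2(r-1)=0\); your displayed expression \((2+2+2(r-2))-2(r-1)\) evaluates to \(2\).
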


\begin{proof}
Let \(m\) be the number of internal components of \(T\). The restrictions of \(S_X\) to the nodes are \(\mathcal{O}\oplus\mathcal{O}(-1)\), and its restrictions to the internal components are trivial of rank two. The normalization map therefore has source and target of equal dimension. At the left node the Euler line, after exchanging tangent and normal entries at the first crossing, is represented by \((1,0)\). Its transport to the right node is
\[
T_2^m(1,0)=(m+1,-m).
\]
The right node's Euler line is represented by \((0,1)\); the determinant is \(m+1\neq0\), also when \(m=0\). The normalization map is thus an isomorphism, proving \(H^1(T,S_X|_T)=0\).

For the second assertion, put \(F=S_X(-T)|_E\). Lemma~\ref{cls:four-short-boundary} eliminates the two outward leaves at each node and supplies its boundary line. On each internal component \(F\) is trivial of rank two, and \(\mathcal{O}(-T)\) has multidegree zero along the internal chain. A trivialization of this line bundle makes the transport there the same as for \(S_X\). With the fiber conventions of Corollary~\ref{int:four-arm-discriminant}, the two boundary lines admit representatives \((\tau,-1)\) and \((-1,\sigma)\). Their transported determinant is
\[
\frac{\det A_\Gamma}{p_1p_2p_3p_4} > 0,
\]
where \(A_\Gamma\) is the negative intersection matrix. The normalization map after eliminating the leaves has source and target of equal dimension, so this nonzero determinant makes it an isomorphism. This proves \(H^1(E,F)=0\).
\end{proof}

\begin{proposition}\label{cls:four-short}
Every graph in this family satisfies \(H^1(X,S_X) = 0\) for every analytic realization.
\end{proposition}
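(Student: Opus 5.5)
We apply Lemma~\ref{fnd:formal-vanishing} to the anti-nef cycle \(Z=E+T\). We cannot use Lemma~\ref{fnd:nef-twists} directly, because \(h^1(Z,\mathcal{O}_Z)=1\) by Proposition~\ref{cls:scalar}. Instead we must prove \(H^1(Z,S_X(-kZ)|_Z)=0\) for every \(k\geq0\) by hand, using the decomposition \(Z=E+T\) and the cycle sequence \eqref{fnd:cycle-sequence} with \(A=E\), \(B=T\):
\[
0\to S_X(-kZ-E)|_T\to S_X(-kZ)|_Z\to S_X(-kZ)|_E\to0 .
\]
Alternatively, we may use \(A=T\), \(B=E\), whose kernel is \(S_X(-kZ-T)|_E\) and whose quotient is \(S_X(-kZ)|_T\). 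We use whichever ordering places the already computed groups of Lemma~\ref{cls:four-short-reduced} in the case \(k=0\).

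For \(k=0\), the second ordering gives the sequence with outer terms \(S_X(-T)|_E\) and \(S_X|_T\). Both have vanishing \(H^1\) by Lemma~\ref{cls:four-short-reduced}, so \(H^1(Z,S_X|_Z)=0\).

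For \(k\geq1\), the same ordering leaves us to show
\[
H^1(E,S_X(-kZ-T)|_E)=0\quad\text{and}\quad H^1(T,S_X(-kZ)|_T)=0 .
\]
By \eqref{cls:four-short-intersections}, \(\mathcal{O}(-kZ)\) has degree zero on every component of \(T\) and degree \(k(p_i-2)\geq0\) on the leaves. Its restriction to the reduced tree \(T\) is therefore trivial, since \(T\) is a rational tree and \(\operatorname{Pic}\) is determined by multidegree. Hence \(S_X(-kZ)|_T\simeq S_X|_T\), and its \(H^1\) vanishes.

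On the reduced tree \(E\), with \(H^1(E,\mathcal{O}_E)=0\), we apply Lemma~\ref{fnd:nef-twists} with \(C=E\), \(\mathcal{F}=S_X(-T)|_E\) and \(\mathcal{A}=\mathcal{O}_E(-kZ)\). This is a nef twist of a bundle whose \(H^1\) vanishes by Lemma~\ref{cls:four-short-reduced}, so \(H^1(E,S_X(-kZ-T)|_E)=0\). Combining the two vanishings in the exact sequence gives \(H^1(Z,S_X(-kZ)|_Z)=0\) for all \(k\), and Lemma~\ref{fnd:formal-vanishing} yields \(H^1(X,S_X)=0\).

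Every ingredient depends only on first-order data. Lemmas~\ref{cls:four-short-boundary} and~\ref{cls:four-short-reduced} hold for every analytic realization, and the Picard identification uses only the reduced tree. The argument therefore applies to every analytic realization.

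The main point to check is that applying Lemma~\ref{fnd:nef-twists} on the reduced cycle \(E\) is legitimate. The hypothesis \(H^1(C,\mathcal{O}_C)=0\) concerns only \(C=E\), which is a reduced rational tree, so it holds even though \(H^1(Z,\mathcal{O}_Z)\neq0\). The lemma is stated for positive exceptional cycles, and \(E\) is such a cycle. The nonrationality of \(Z\) is thus bypassed entirely, because we never twist on \(Z\) itself.
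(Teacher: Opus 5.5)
Your proposal is correct and is essentially the paper's proof. Both use the cycle sequence with kernel \(S_X(-T-kZ)|_E\) and quotient \(S_X(-kZ)|_T\), the triviality of \(\mathcal{O}_T(-kZ)\) on the rational tree \(T\), Lemma~\ref{fnd:nef-twists} on the reduced tree \(E\) rather than on \(Z\), and Lemma~\ref{cls:four-short-reduced} followed by Lemma~\ref{fnd:formal-vanishing}. The only cosmetic difference is that you treat \(k=0\) separately, whereas the paper handles all \(k\geq0\) uniformly.
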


\begin{proof}
For every \(k \geq 0\), the cycle sequence is
\[
0 \to S_X(-T-kZ)|_E
 \to S_X(-kZ)|_Z
 \to S_X(-kZ)|_T
 \to 0.
\]
By \eqref{cls:four-short-intersections}, the line bundle \(\mathcal{O}_T(-kZ)\) has multidegree zero. It is trivial because \(T\) is a rational tree. Thus the right term has zero \(H^1\) by Lemma~\ref{cls:four-short-reduced}. The left term is \(S_X(-T)|_E\) tensored with the nef line bundle \(\mathcal{O}_E(-kZ)\) on the reduced rational tree \(E\). Lemma~\ref{fnd:nef-twists}, applied to \(E\), gives its \(H^1\)-vanishing for every \(k\). Hence \(H^1(Z,S_X(-kZ)|_Z) = 0\) for all \(k\). Lemma~\ref{fnd:formal-vanishing} finishes the proof.
\end{proof}

\subsection{Completion of the classification}

We combine the necessary restrictions and the vanishing results to prove Theorem~\ref{thm:classification}. The same cycle calculations determine which of the taut graphs are rational.

\begin{proof}[Proof of Theorem~\ref{thm:classification}]
Theorem~\ref{red:reduction} leaves chains, cycles, and trees of rational components with one or two trivalent nodes. Propositions~\ref{red:chains} and \ref{red:cycles} and Theorem~\ref{star:classification} prove the first three classes.

For two nodes, Propositions~\ref{rep:local-restrictions} and \ref{int:critical-necessity} prove necessity. Conversely, Proposition~\ref{rep:cycles} constructs an anti-nef cycle \(Z\). Corollary~\ref{int:nonempty-vanishing} and Proposition~\ref{int:direct} give a vanishing extension group, according as the remaining connecting chain is nonempty or empty. Proposition~\ref{rep:ext-comparison} identifies this group with \(H^1(Z,S_X|_Z)\). Propositions~\ref{cls:nonexceptional-vanishing} and \ref{cls:four-short} then give \(H^1(X,S_X)=0\) for every analytic realization. Theorem~\ref{fnd:tautness} proves tautness.

Every listed graph has an analytic realization by Lemma~\ref{fnd:plumbing}, and its self-intersections ensure that the resolution is minimal and good.
\end{proof}

\begin{corollary}\label{cls:rationality}
Among the graphs in Theorem~\ref{thm:classification}, the nonrational graphs are the cycles and the trees with four arms of length one in which every curve on the path between the nodes, including the nodes, has self-intersection \(-2\). In particular, taut surface singularities need not be rational.
\end{corollary}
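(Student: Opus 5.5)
The statement to prove is Corollary~\ref{cls:rationality}: among the taut graphs, which are rational. Since rationality is a property of the weighted graph (Artin's criterion), I will decide it class by class, using the vanishing of \(H^1(X,\mathcal{O}_X)\) or of \(H^1\) on a suitable cycle.

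\emph{Chains.} Proposition~\ref{red:chains} already proves that chains are rational.

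\emph{Cycles.} These are nonrational. Let \(E\) be a cycle of smooth rational curves. The normalization sequence gives \(h^1(E,\mathcal{O}_E)=1\), since the dual graph has first Betti number one. Restriction \(H^1(X,\mathcal{O}_X)\to H^1(E,\mathcal{O}_E)\) is surjective by Lemma~\ref{fnd:restriction}. Hence \(H^1(X,\mathcal{O}_X)\neq0\). This argument also covers the two-vertex cycle with weights \(-1,-c\).

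\emph{One-node trees.} Theorem~\ref{star:classification} asserts that every listed graph of this kind is rational.

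\emph{Two-node trees outside the exceptional family.} Proposition~\ref{cls:scalar} gives \(H^1(Z,\mathcal{O}_Z)=0\), and Proposition~\ref{rep:cycles} shows that \(Z\) is anti-nef. So \(\mathcal{O}_Z(-kZ)\) has nonnegative component degrees. Lemma~\ref{fnd:nef-twists} with \(\mathcal{F}=\mathcal{O}_Z\) then gives \(H^1(Z,\mathcal{O}_Z(-kZ))=0\) for all \(k\geq0\), and Lemma~\ref{fnd:formal-vanishing} gives \(H^1(X,\mathcal{O}_X)=0\). These graphs are therefore rational.

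\emph{The exceptional family.} These are the graphs with four arms of length one and all curves on the path between the nodes, nodes included, of weight \(-2\). For them Proposition~\ref{cls:scalar} gives \(h^1(Z,\mathcal{O}_Z)=1\) for the chosen cycle \(Z\). Lemma~\ref{fnd:restriction} gives a surjection \(H^1(X,\mathcal{O}_X)\to H^1(Z,\mathcal{O}_Z)\), so \(p_g\geq1\) and the graph is nonrational.

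The main point to check is that this family matches the description in the corollary. The exceptional case in Proposition~\ref{cls:scalar} is stated as ``case (ii) at both ends with all connecting self-intersections \(-2\)''. Case (ii) means \(b_P=2\) with two end components, so both nodes have weight \(-2\) and all four arms have length one. Since no connecting components are absorbed in case (ii), the condition on \(K\) is exactly that the whole connecting chain has weight \(-2\). Conversely, every graph of the stated shape that is negative definite is admitted by Table~\ref{intro:two-node-table} (row \(s_P=0\)), so it is taut and its endpoints are of case (ii). Taut surface singularities therefore need not be rational.
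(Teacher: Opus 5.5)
Your proposal is correct and follows essentially the same route as the paper. Chains and one-node graphs are rational by Proposition~\ref{red:chains} and Theorem~\ref{star:classification}. For the remaining two-node graphs, you combine the anti-nef cycle \(Z\), Proposition~\ref{cls:scalar}, Lemma~\ref{fnd:nef-twists} and Lemma~\ref{fnd:formal-vanishing}. For nonrationality, you use the restriction surjections onto \(H^1(Z,\mathcal{O}_Z)\) and \(H^1(E,\mathcal{O}_E)\). Your explicit check that the exceptional alternative of Proposition~\ref{cls:scalar} (case (ii) at both ends, with no absorbed connecting components) is exactly the family named in the corollary is a detail the paper leaves implicit.
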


\begin{proof}
Chains and the graphs with one node listed in the theorem are rational by Proposition~\ref{red:chains} and Theorem~\ref{star:classification}. Outside the exceptional two-node family, Proposition~\ref{cls:scalar} gives \(H^1(Z,\mathcal{O}_Z) = 0\); anti-nefness, Lemma~\ref{fnd:nef-twists}, and Lemma~\ref{fnd:formal-vanishing}, applied to \(\mathcal{O}_X\), give \(H^1(X,\mathcal{O}_X) = 0\). In the exceptional family, \(h^1(\mathcal{O}_Z) = 1\), so restriction gives \(p_g \geq 1\). For a reduced cycle \(E\), normalization gives \(\chi(\mathcal{O}_E) = \#\{\text{vertices}\}-\#\{\text{edges}\} = 0\) and \(h^0(\mathcal{O}_E) = 1\), hence \(h^1(\mathcal{O}_E) = 1\); restriction again gives \(p_g \geq 1\).
\end{proof}

\subsection{Comparison with Laufer's list}
\label{cls:laufer-comparison}

The families and parameter ranges agree with those in Laufer~~\cite[pp.~136--139]{Laufer1973}; this comparison is not used in the proof. Chains give families \textup{I} and \textup{II}, and cycles give \textup{(V.1)--(V.2)}. For one node, central self-intersection at most \(-3\) gives \textup{(III.1)}; with central self-intersection \(-2\), the three families correspond to \textup{(III.2)--(III.3)}, \textup{(III.4)}, and \textup{(III.5)--(III.9)}, respectively. Terminal parameters equal to two are included. Laufer also requires the components of a good resolution to be smooth.

For two nodes, the endpoint patterns correspond as follows:
\[
\begin{array}{c|cccccccc}
\text{case in Table~\ref{rep:case-table}}&\text{(iii)}&\text{(ii)}&\text{(i)}&\text{(iv)}&\text{(v)}&\text{(vi)}&\text{(vii)}&\text{(viii)}\\ \hline
\text{Laufer's row}&L_1&L_2&L_3&L_4&L_5&L_6&L_7&L_8.
\end{array}
\]
At an end of case \textup{(iii)}, put \(\alpha = 1\) if \((b,s) = (3,2)\), and \(\alpha = 0\) otherwise. Put \(\beta = 1\) if \(b = 3\) and \(s \geq 1\), and \(\beta = 0\) otherwise. The least allowed value of \(d\) is
\[
\begin{array}{c|cccc}
&\text{(ii)}&\text{(i), (iv)--(vi)}&\text{(vii), (viii)}&\text{(iii)}\\ \hline
\text{(ii)}&0&1&1&0\\
\text{(i), (iv)--(vi)}&1&1&2&\alpha\\
\text{(vii), (viii)}&1&2&2&\beta\\
\text{(iii)}&0&\alpha&\beta&0.
\end{array}
\]
In an entry involving case \textup{(iii)}, use its parameters \(b,s\). The local restrictions of Table~\ref{rep:case-table} and negative definiteness are still imposed. The matrix is equivalent to \eqref{rep:condition-b} and the condition in the last column of Table~\ref{intro:two-node-table}, applied in both orientations. In particular, opposite case \textup{(vii)} or \textup{(viii)}, a node with \(b = 3\) requires \(s = 0\) when \(d = 0\); when \(d \geq 1\), either outward arm may have any length.

The joiner \(J_1\) permits an arbitrary chain, whereas \(J_2\) requires an entry at least \(3\). Each of \(L_5,\ldots,L_8\) already contains one such entry. The operation \(C_1\) replaces two such entries by one entry at least \(3\), while \(C_2\) replaces them by one entry at least \(4\). Together with \(C_3\) and \(C_4\), these rules give the displayed bounds on \(d\) from Laufer's Table~\textup{(IV)}. The operations \(C_3,C_4\) permit \(d = 0\) opposite case \textup{(iii)}, with the alternatives encoded by \(\alpha,\beta\). These are the combinatorial substitutions specified by Laufer.

Our case \textup{(iv)} includes the maximal initial sequence of connecting components of self-intersection \(-2\), which Laufer leaves in the joiner. His uncontracted \(L_5,\ldots,L_8\) additionally include the first subsequent component of self-intersection at most \(-3\), whereas we leave that component in the remaining chain. When no such component exists, \(C_3\) or \(C_4\) gives the corresponding full graph. Thus the comparison includes an empty remaining chain and pieces meeting directly. Including components of self-intersection \(-2\) does not change \(d\). Finally, the nonrational exceptions in Laufer~~\cite[p.~139]{Laufer1973} are exactly those in Corollary~\ref{cls:rationality}.

\subsection{\texorpdfstring{A criterion on \(5E\)}{A criterion on 5E}}\label{thick:section}\label{sec:thickenings}

Every excluded graph has an obstruction on a cycle contained in \(5E\). This gives a uniform criterion for tautness.

\begin{theorem}\label{thick:fifth}\label{thick:criterion}
Let \(\Gamma\) be a connected negative-definite graph of a minimal good resolution of a normal complex surface singularity. Then \(\Gamma\) is taut if and only if \(H^1(5E,S_X|_{5E}) = 0\) for every analytic realization \((X,E)\)  of \(\Gamma\).
\end{theorem}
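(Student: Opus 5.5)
The criterion has two directions, and I would treat them separately.

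\emph{Taut implies vanishing on \(5E\).} If \(\Gamma\) is taut, Theorem~\ref{fnd:tautness} gives \(H^1(X,S_X)=0\) for every analytic realization. Lemma~\ref{fnd:restriction} makes restriction from \(X\) to \(5E\) surjective on \(H^1\). Hence \(H^1(5E,S_X|_{5E})=0\) for every realization.

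\emph{Vanishing on \(5E\) implies taut.} I would argue by contraposition: for every non-taut graph, produce one analytic realization and an effective cycle \(Y\leq5E\) with \(H^1(Y,S_X|_Y)\neq0\). The second surjection of Lemma~\ref{fnd:restriction} then gives \(H^1(5E,S_X|_{5E})\neq0\). The case analysis follows the proof of Theorem~\ref{thm:classification}.
\begin{enumerate}
\item \emph{Reduction step.} If some component has positive genus or valency at least four, Proposition~\ref{red:component-obstructions} already gives a nonzero group on \(Y=E\). The same holds if the graph has a cycle together with a node, or if it is a tree with at least three nodes: Proposition~\ref{red:reduced-dimension} gives \(h^1(E,S_X|_E)\geq1\), after checking \(n+2c-2+\epsilon\geq1\) in the excluded cases. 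Chains and cycles are all taut by Propositions~\ref{red:chains} and~\ref{red:cycles}, so they give nothing to check.
\item \emph{One node.} By Theorem~\ref{star:classification} and its proof, every excluded graph has a standard plumbing realization with nonzero \(H^1(rE,S_X|_{rE})\) for some \(r\in\{2,3,5\}\). This comes from the obstruction in degrees one, two, or four via Corollary~\ref{star:central-obstruction}.
\item \emph{Two nodes.} I would go through the necessary conditions in order.
  \begin{itemize}
  \item Proposition~\ref{rep:first-layer} handles \(b_P=1\), and \(b_P=2\) with \(s_P=2\), using \(Z_2\leq2E\).
  \item Lemma~\ref{rep:boundary-values} handles the failure of \eqref{rep:condition-b}, again using \(Z_2\leq 2E\).
  \item Lemma~\ref{rep:prefix-witness} handles the prefix conditions, with \(Y\leq3E\) or \(Y\leq5E\) in a plumbing realization built by Lemma~\ref{fnd:plumbing}.
  \item Proposition~\ref{int:critical-necessity} handles the last-column condition \(b_Q+d\geq4\), with \(Y\leq5E\) valid for every realization.
  \end{itemize}
  Together, Proposition~\ref{rep:local-restrictions} and these results account for every two-node graph outside the list.
\end{enumerate}

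\emph{Main obstacle.} The hard part is bookkeeping rather than a new idea: I must confirm that every obstruction cycle used in the necessity proofs lies inside \(5E\). For the one-node stars and the prefix witnesses, the cycle is \((k+1)D\) with \(k\leq4\). I also need to check that passing from \(D\) to the full \(E\) via Lemma~\ref{fnd:forget-branches} keeps the bound, since \(5D\leq5E\). For the reduction step, \(Y=E\) trivially lies inside \(5E\).

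The final point is to confirm that the two-node case list really is exhaustive. I would do this by matching the failure of each row of Table~\ref{intro:two-node-table} to one of the four cited results, as in the proof of Theorem~\ref{thm:classification}.
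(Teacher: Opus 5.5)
Your proposal is correct and follows essentially the same route as the paper. The forward direction uses Theorem~\ref{fnd:tautness} with Lemma~\ref{fnd:restriction}; the converse checks, case by case (reduction step, one-node arm obstructions, Proposition~\ref{rep:first-layer} and \eqref{rep:condition-b} on \(Z_2\leq 2E\), Lemma~\ref{rep:prefix-witness}, and Proposition~\ref{int:critical-necessity}), that every excluded graph has a realization with an obstruction on some cycle \(Y\leq 5E\), exactly as in the paper's table.
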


\begin{proof}
If \(\Gamma\) is taut, Theorem~\ref{fnd:tautness} gives \(H^1(X,S_X) = 0\) for every realization. Restriction is onto by Lemma~\ref{fnd:restriction}, proving vanishing on \(5E\).

Conversely, it suffices to find one analytic realization with nonzero restriction cohomology for each excluded graph. The exclusions in the proof of Theorem~\ref{thm:classification} have the following exhaustive bounds on their obstruction cycles.

\begin{center}
\small
\begin{tabular}{@{}>{\raggedright\arraybackslash}p{0.43\linewidth}p{0.12\linewidth}>{\raggedright\arraybackslash}p{0.36\linewidth}@{}}
\toprule
Excluded configuration & Bound & Calculation \\ \midrule
Genus, valency, or reduced incidence & \(Y\leq E\) & Section~\ref{sec:reduction} \\
First restrictions at two nodes & \(Y\leq2E\) & Proposition~\ref{rep:first-layer} and \eqref{rep:condition-b} \\
Arm restrictions at one node & \(Y\leq5E\) & Theorem~\ref{star:classification}, Corollary~\ref{star:central-obstruction} \\
Excluded initial self-intersection sequences & \(Y\leq5E\) & Lemma~\ref{rep:prefix-witness} \\
Last column of Table~\ref{intro:two-node-table} & \(Y\leq5E\) & Proposition~\ref{int:critical-necessity} \\
\bottomrule
\end{tabular}
\end{center}

For the second row, the boundary calculation after Proposition~\ref{rep:first-layer} proves \eqref{rep:condition-b} on \(Z_2\leq2E\). For the third and fourth rows, use the standard plumbing realizations of the one-node graph and of the chosen star-shaped subgraph, respectively. In the latter case, Lemma~\ref{fnd:plumbing} extends the realization, and Lemma~\ref{fnd:forget-branches} preserves the obstruction. Since the chosen reduced subgraph is contained in \(E\), these operations preserve the bound \(Y\leq5E\). The final row gives an obstruction for every realization. Negative-definite chains and cycles that occur as dual graphs of minimal good resolutions are already among the taut classes.

Thus every non-taut graph has an analytic realization and a cycle \(Y \leq 5E\) with \(H^1(Y,S_X|_Y) \neq 0\). Lemma~\ref{fnd:restriction} gives a surjection from \(H^1(5E,S_X|_{5E})\) onto this group, contradicting vanishing for every realization.
\end{proof}

\begin{example}\label{thick:optimality}
Let \(\Gamma_0\) be the star-shaped graph with central weight \(-2\) and arms
\[
[2],\qquad [2,3],\qquad [2,2,2,2,2],
\]
read from the central vertex towards their ends. Then \(\Gamma_0\) is not taut, but
\[
H^1(rE,S_X|_{rE})=0
\qquad\text{for }1 \leq r \leq 4
\]
for every analytic realization \((X,E)\) of \(\Gamma_0\). Its weighted homogeneous realization \((X_{\mathrm{w}},E_{\mathrm{w}})\) satisfies
\[
h^1(5E_{\mathrm{w}},S_{X_{\mathrm{w}}}|_{5E_{\mathrm{w}}})=1.
\]
Consequently, the integer \(5\) in Theorem~\ref{thick:criterion} cannot be replaced by a smaller positive integer.
\end{example}

\begin{proof}
The Seifert invariants are \((n_1,a_1)=(2,1)\), \((n_2,a_2)=(5,3)\), and \((n_3,a_3)=(6,5)\). The graph is negative definite because
\[
2-\frac12-\frac35-\frac56=\frac1{15}>0.
\]
Let \((X_{\mathrm{w}},E_{\mathrm{w}})\) be the minimal good resolution of Pinkham's weighted homogeneous model, and write \(S_{\mathrm{w}}=S_{X_{\mathrm{w}}}\). By Lemma~\ref{star:plumbing-model}, its exceptional neighborhood and grading agree with those of the standard plumbing realization. The integers \(q_k,c_k\) of Section~\ref{star:section} satisfy
\[
q_{k+30}=q_k+2,\qquad c_{k+30}=c_k.
\]
For \(0\leq k<30\), the negative values of \(q_k\) are all \(-1\), at exactly the following indices:
\[
\begin{array}{c|rrrrrrrrr}
k&1&2&3&4&5&7&9&11&17\\ \hline
c_k&1&1&1&0&2&2&1&2&3.
\end{array}
\]
Thus \(q_k\geq-1\) in every nonnegative degree, and \(q_k+c_k<0\) only at \(k=4\), where its value is \(-1\). Proposition~\ref{star:graded} gives \(H^1(X_{\mathrm{w}},\mathcal{O}_{X_{\mathrm{w}}})=0\), and its Euler sequence gives
\[
\dim H^1(X_{\mathrm{w}},L_{X_{\mathrm{w}}})_k
=\dim H^1(X_{\mathrm{w}},S_{\mathrm{w}})_k
=\begin{cases}1,&k=4,\\0,&k\neq4.\end{cases}
\]
Here degree is measured in the normal coordinate of the central component.

Write \(E_{\mathrm{w}}=P+A_1+A_2+A_3\), where \(P\) is central and \(A_i\) are the reduced arms. On \(P\), the successive layers for \(4P\) are
\[
S_{\mathrm{w}}(-jP)|_P\simeq\mathcal{O}_{\mathbb{P}^1}(2j)\oplus\mathcal{O}_{\mathbb{P}^1}(2j-1),\qquad0\leq j\leq3,
\]
so they have vanishing first cohomology. On each arm, \(S_{\mathrm{w}}\) has component restrictions \(\mathcal{O}^{\oplus2}\) or \(\mathcal{O}\oplus\mathcal{O}(1)\), and \(\mathcal{O}_{A_i}(-A_i)\) has nonnegative component degrees. Normalization of the chain therefore gives \(H^1(A_i,S_{\mathrm{w}}(-jA_i)|_{A_i})=0\) for \(j\geq0\). The thickening sequences yield
\[
H^1(4P,S_{\mathrm{w}}|_{4P})=H^1(4A_i,S_{\mathrm{w}}|_{4A_i})=0.
\]
Put \(H=H^1(4E_{\mathrm{w}},S_{\mathrm{w}}|_{4E_{\mathrm{w}}})\) and \(T_i=4P\cap4A_i\). Since the pieces have zero first cohomology, the equivariant Mayer--Vietoris sequence presents \(H\) as a quotient of \(\bigoplus_iH^0(T_i,S_{\mathrm{w}}|_{T_i})\). In coordinates \(P=(x=0)\), \(A_i=(t=0)\), one has
\[
\mathcal{O}_{T_i}\simeq\mathbb{C}\{x,t\}/(x^4,t^4).
\]
The central normal scaling gives \(x\) degree one and \(t\) degree zero; the logarithmic frame \(x\partial_x,t\partial_t\) is invariant. Every monomial in the displayed intersection ring therefore has normal degree \(0,1,2\), or \(3\), so these are the only possible weights of \(H\). On the other hand, restriction gives an equivariant surjection
\[
H^1(X_{\mathrm{w}},S_{\mathrm{w}})\twoheadrightarrow H.
\]
Its source is concentrated in degree \(4\), so \(H\) can have weight only \(4\). The two weight sets are disjoint; hence \(H=0\).

Since \(q_4=-1\) and \(c_4=0\), Corollary~\ref{star:central-obstruction} gives nonzero first cohomology on \(5E_{\mathrm{w}}\). Restriction from the one-dimensional space \(H^1(X_{\mathrm{w}},S_{\mathrm{w}})\) is surjective, so \(h^1(5E_{\mathrm{w}},S_{\mathrm{w}}|_{5E_{\mathrm{w}}})=1\). In particular, \(\Gamma_0\) is not taut.

It remains to prove vanishing on \(4E\) for every analytic realization. Every realization is rational because rationality depends only on the graph. Wahl~~\cite[Corollary~4.5(iii)]{Wahl2015} gives \(h^1(X,L_X)=1\) for all of them. By Wahl~~\cite[Proposition~4.8]{Wahl2015}, a realization which is not quasi-homogeneous satisfies \(h^1(X,S_X)<1\), hence \(h^1(X,S_X)=0\). Restriction then gives vanishing on every exceptional cycle of that realization.

For a quasi-homogeneous realization, Pinkham's data have the form
\[
\mathcal{D}
=B-\frac12p_1-\frac35p_2-\frac56p_3,
\qquad \deg B=2.
\]
An automorphism of \(\mathbb{P}^1\) takes the three marked points to \(0,1,\infty\), and \(B\) is linearly equivalent to \(2[\infty]\). Replacing \(B\) by a linearly equivalent divisor gives an isomorphic graded ring. Thus the quasi-homogeneous realization is unique up to analytic isomorphism, and the preceding calculation applies to it. We have proved vanishing on \(4E\) for every realization. Restriction gives vanishing on \(rE\) for \(1\leq r\leq4\), proving optimality.
\end{proof}

Sch\"uller~~\cite[Corollary~2.11 and Propositions~3.15--3.16]{Schueller2013} gives finite criteria with graph-dependent multiplicities and the intrinsic tangent sheaf of a plumbing scheme. Theorem~\ref{thick:criterion} uses the fixed cycle \(5E\), the ambient logarithmic tangent bundle, and vanishing for every analytic realization.

\end{document}